\colorlet{shadecolor}{gray!22} 
\newcommand{\executeiffilenewer}[3]{\ifnum\pdfstrcmp{\pdffilemoddate{#1}}{\pdffilemoddate{#2}}>0{\immediate\write18{#3}}\fi} 
\newcolumntype{L}[1]{>{\raggedright\arraybackslash}p{#1}}
\newcolumntype{C}[1]{>{\centering\arraybackslash}p{#1}}
\newcolumntype{R}[1]{>{\raggedleft\arraybackslash}p{#1}}
\newcolumntype{J}[1]{>{\justifying\arraybackslash}p{#1}}
\newcommand\tabrotate[1]{\begin{turn}{90}\rlap{#1}\end{turn}}
\space\printfield{pages}\adddot}%
\theoremstyle{plain}
\newtheorem{theo}{Theorem}[section]
\newtheorem{prop}[theo]{Proposition}
\newtheorem{lem}[theo]{Lemma}
\newtheorem{cor}[theo]{Corollary}
\newtheorem*{thmA}{Theorem A}
\newtheorem*{thmB}{Theorem B}
\theoremstyle{definition}
\newtheorem*{rem}{Remark}
\newcommand{\ia}{\hspace*{2mm}{\rm({\it i\/}\rm)}\hspace{2mm}}
\newcommand{\ii}{\hspace*{2mm}{\rm({\it ii\/}\rm)}\hspace{2mm}}
\newcommand{\br}{\\[0.3em]}
\newcommand{\R}{\mathbb{R}}
\newcommand{\Z}{\mathbb{Z}}
\newcommand{\s}{\mathbb{S}}
\newcommand{\M}{\mathbb{M}}
\newcommand{\E}{\mathbb{E}}
\renewcommand{\H}{\mathbb{H}}
\newcommand{\J}{\mathcal{J}}
\newcommand{\Berger}{\s^3_\text{Berg}}
\newcommand{\EKT}{\E(\kappa,\tau)}
\newcommand{\HR}{\H^2\times\R}
\newcommand{\Nildrei}{\Nil_3}
\newcommand{\SR}{\s^2\times\R}
\newcommand{\SKR}{\s^2(\kappa)\times\R}
\newcommand{\MK}{\M^2(\kappa)}
\newcommand{\PSLzwei}{\widetilde{\PSL}_2(\R)}
\newcommand{\Soldrei}{\Sol_3}
\newcommand{\XKT}{X_a(\kappa,\tau)}
\newcommand{\norm}[1]{\left\lVert#1\right\rVert}
\DeclareMathOperator{\Isom}{Isom}
\DeclareMathOperator{\Nil}{Nil}
\DeclareMathOperator{\PSL}{PSL}
\DeclareMathOperator{\sgn}{sgn}
\DeclareMathOperator{\Sol}{Sol}
\DeclareMathOperator{\sn}{sn_\kappa}
\DeclareMathOperator{\sns}{sn^2_\kappa}
\DeclareMathOperator{\cs}{cs_\kappa}
\DeclareMathOperator{\ct}{ct_\kappa}
\DeclareMathOperator{\cts}{ct^2_\kappa}
\DeclareMathOperator{\arcs}{arcs_\kappa}
\DeclareMathOperator{\arct}{arct_\kappa}
\DeclareMathOperator{\tn}{tn_\kappa}
\newcommand{\Jmin}{J_\mathrm{min}}
\newcommand{\Jmax}{J_\mathrm{max}}
\newcommand{\Jtube}{J_\mathrm{tube}}
\newcommand{\family}{\mathcal{F}_{a,H}}
\renewcommand{\mod}{\operatorname{mod}}
\renewcommand{\epsilon}{\varepsilon}
\renewcommand{\theta}{\vartheta}
\renewcommand{\phi}{\varphi}
\newcommand{\terma}{\dfrac{\sns(r)}{\sns(r)+\left(4\tau\sns\!\left(\frac{r}{2}\right)-a\right)^2}} 
\newcommand{\termb}{\dfrac{\sns(r)+\left(4\tau\sns\!\left(\frac{r}{2}\right)-a\right)^2}{\sns(r)}} 
\newcommand{\termbs}{\frac{\sns(r)+\left(4\tau\sns\!\left(\frac{r}{2}\right)-a\right)^2}{\sns(r)}} 
\title[Screw motion CMC surfaces in homogeneous 3-manifolds]{Screw motion surfaces of constant mean curvature in homogeneous 3-manifolds}
\author{Philipp Käse}
\address{Technische Universität Darmstadt, Fachbereich Mathematik, AG Geometrie und Approximation, Schlossgartenstr. 7, 64289 Darmstadt, Germany}
\email{kaese@mathematik.tu-darmstadt.de}
\subjclass[2010]{53A10; 53C30, 53C42}
\keywords{constant mean curvature, equivariant geometry, invariant surfaces, screw motion, homogeneous 3-manifolds}
\begin{document}

\maketitle

\begin{abstract}
	We study the geometry of non-minimal surfaces of supercritical constant mean curvature invariant under screw motions in the homogeneous 3-manifolds $\EKT$ including the space-forms of non-negative curvature. We give a complete classification, thereby unifying and extending various previous results. We give the first classification for the Berger sphere case, and we exhibit a new family of screw motion CMC surfaces, called tubes.
\end{abstract}

\section{Introduction}

Surfaces of constant mean curvature (CMC) play an important role in differential geometry and arise in various real world problems. For example, CMC surfaces are closely related to the isoperimetric problem, they arise naturally in general relativity, and they model interfaces.

The study of CMC surfaces invariant under a one-parameter group of isometries has received particular attention. In 1841 Delaunay described surfaces of mean curvature $H=1$ in Euclidean space invariant under rotation: cylinder, unduloids, sphere, and nodoids \cite{delaunay}. We refer to these surfaces as Delaunay surfaces. More than a century later, DoCarmo and Dajczer generalized this classification to CMC surfaces invariant under a screw motion \cite{docarmo_dajczer}, that is a rotation composed with a translation along the rotational axis.

In place of $\R^3$ it is natural to consider other ambient 3-dimensional Riemannian manifolds whose isometry groups contain screw motions, such as the space-forms $\s^3$ and $\H^3$, or more generally, the two-parameter family of spaces $\left\lbrace\EKT\colon(\kappa,\tau)\in\R^2\right\rbrace$. All $\EKT$-spaces are simply connected homogeneous manifold and there exists a Riemannian fibration
\begin{equation*}
	\EKT\overset{\pi}{\longrightarrow}\MK
\end{equation*}
over a simply connected 2-dimensional manifold $\MK$ of constant curvature $\kappa$ with geodesic fibers. We call $\kappa$ the \textit{base curvature} and $\tau\coloneqq\frac{1}{2}\,g\big([E_1,E_2],E_3\big)$ the \textit{bundle curvature}, where $(E_1,E_2,E_3)$ is orthonormal on $\EKT$ such that $(\pi(E_1),\pi(E_2))$ is an orthonormal frame on $\MK$. That is, $E_1$ and $E_2$ are \textit{horizontal} and $E_3$ is \textit{vertical} $(d\pi(E_3)=0)$.

For $\kappa-4\tau^2\neq0$ the isometry group is 4-dimensional, while for $\kappa-4\tau^2=0$ we recover the space-forms $\R^3$ and $\s^3(\kappa)$ with their 6-dimensional isometry group. The space-form $\H^3$ is not included since it does not admit a Riemannian fibration. In all cases the isometry group contains screw motions with respect to the fibers. The following geometries can be identified with $\EKT$ for different choices of $\kappa$ and $\tau$:

\begin{table}[H]
	\label{table:ekt}
	\renewcommand{\arraystretch}{2.0}
	\begin{tabular}{C{20mm}C{20mm}C{20mm}C{20mm}}
		\hline
		& $\kappa<0$ & $\kappa=0$ & $\kappa>0$ \\
		\hline
		$\tau=0$ & $\HR$ & $\big(\R^3\big)$ & $\SR$ \\
		$\tau\neq0$ & $\PSLzwei$ & $\Nildrei$ & $\Berger$ \\
		\hline
	\end{tabular}
\end{table}

All these manifolds are Thurston geometries \cite[Chap.~3.8]{thurston}, except for the Berger spheres $\Berger$, which do not satisfy the maximality condition on the isometry group. For a detailed account we refer to \cite[Chap.~4]{scott}.

Rotational and screw motion CMC surfaces in $\EKT$ are natural generalizations of the Delaunay surfaces which have been studied extensively over the past decades. Hsiang and Hsiang \cite{hsiang_hsiang} studied rotational CMC surfaces in the product space $\H^2\times\R$. The generalization to screw motion surfaces was done by Montaldo and Onnis \cite{montaldo_onnis}, and in a different setup by Sa Earp and Toubiana \cite{saearp_toubiana}, who also treated the case of positive base curvature $\s^2\times\R$. The closely related case $\s^1\times\MK$ has been studied by Pedrosa and Ritor\'{e} \cite{pedrosa_ritore}. For Heisenberg space $\Nildrei$, the screw motion surfaces were completely described by Figueroa, Mercuri, and Pedrosa \cite{figueroa_mercuri_pedrosa}, while the rotational case was first described by Tompter \cite{tompter} and Caddeo, Piu, and Ratto \cite{caddeo_piu_ratto}. Rotational and screw motion surfaces in $\PSLzwei$ have been studied by Pe\~{n}afiel \cite{penafiel12,penafiel15}. Rotational surfaces in Berger spheres $\Berger$ have been studied by Torralbo \cite{torralbo}. Moreover, there is a recent result by Manzano \cite{manzano} dealing with screw motions with horizontal orbits in $\Berger$, where new interesting examples arise. However, by our knowledge there is no work about the classification of screw motion CMC surfaces in $\s^3(\kappa)$ and $\Berger$.

The above mentioned results were phrased as follows: All non-minimal screw motion CMC surfaces with supercritical mean curvature $(4H^2+\kappa>0)$ are of generalized Delaunay type. However, this classification is not complete as we will prove in this paper. We mention that for critical $(4H^2+\kappa=0)$ and subcritical mean curvature $(4H^2+\kappa<0)$ the geometry is quite different, see \cite{montaldo_onnis,saearp_toubiana,penafiel12,penafiel15}, and so we exclude this case.

There are two approaches: Instead of looking at the 2-dimensional surface in the ambient 3-dimensional space, the invariance under screw motion can be used to consider the problem either in the 2-dimensional abstract quotient space or in the $xz$-plane for a particular model of $\EKT$ diffeomorphic to $\R^3$ by solving an ordinary differential equation. The authors of \cite{hsiang_hsiang,tompter,figueroa_mercuri_pedrosa,montaldo_onnis,torralbo} follow the first approach and consider parameterized curves in the quotient space which generate the CMC surfaces. This procedure is often referred to as the reduction procedure \cite{back_docarmo_hsiang}. The authors of \cite{docarmo_dajczer,saearp_toubiana,penafiel12,penafiel15} use the latter approach and consider a graph in the $xz$-plane. This leads to a non-parametric description. 

The present paper provides a complete classification of non-minimal screw motion CMC surfaces with supercritical mean curvature $(4H^2+\kappa>0)$ in the homogeneous 3-manifolds $\EKT$, which are understood to include the space-forms of non-negative curvature $\R^3$ and $\s^3(\kappa)$. The proof is based on the reduction procedure. Rather than considering just the canonical cases $\kappa=-1,0,1$ and $\tau=0,1$, we provide a unified treatment of all parameters $(\kappa,\tau)\in\R^2$. Therefore, our results can be understood to give a moduli space picture of the screw motion CMC surfaces in all $\EKT$-spaces. In particular, we present the first classification of screw motion CMC surfaces in $\s^3(\kappa)$ and Berger spheres:

\begin{thmA}
	The screw motion invariant surfaces in $\EKT$ of non-minimal supercritical constant mean curvature form a one-parameter family and are of one of the following six types: vertical cylinder, unduloid type, sphere type, nodoid type I, nodoid type II, or tube.
\end{thmA}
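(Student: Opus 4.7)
The plan is to follow the reduction procedure advocated in the paper: use the screw motion invariance to convert the classification of invariant CMC surfaces in $\EKT$ to the analysis of a single ODE for a planar profile curve, and then identify the six types by phase-plane methods.

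First I would fix a model of $\EKT$ with cylindrical-type coordinates $(r,\theta,z)$ adapted to the axis of the screw motion, so that the screw motion acts as $(r,\theta,z)\mapsto(r,\theta+cs,z+hs)$. Any invariant non-minimal surface is then generated from a profile curve $\gamma(s)=(r(s),z(s))$ in the half-plane $r\ge 0$. Using the orthonormal frame $(E_1,E_2,E_3)$ on $\EKT$ I would compute the mean curvature of the resulting surface explicitly in terms of $r$, $z$, and their derivatives, producing a second-order ODE depending on $H$, $\kappa$, $\tau$, and the screw pitch.

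The second step is to reduce the order of the ODE. Because the CMC functional is invariant under the one-parameter group of screw motions, a Noether-type argument produces a conserved quantity; this furnishes the parameter $a$ in the notation $\family=\mathcal{F}_{a,H}$ and yields the one-parameter family asserted in the theorem (with $H$ held fixed). The expressions $\terma$ and $\termb$, in which the combination $4\tau\sns(r/2)-a$ appears prominently, strongly suggest that the first integral takes a form in which this combination plays the role of a shifted momentum, the shift by $4\tau\sns(r/2)$ reflecting the coupling between the bundle curvature $\tau$ and the translational component of the screw motion. After this reduction the problem becomes an energy-type equation $(r')^2=\Phi_{a,H,\kappa,\tau}(r)$ on the admissible interval for $r$. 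The third step is then a qualitative study of $\Phi_{a,H,\kappa,\tau}$: its zeros and critical points determine the admissible range of $r$ and the monotonicity of $z$ along the profile. The six types correspond to distinct regimes of $a$: the \textbf{vertical cylinder} to the constant solution at a critical point of $\Phi$; the \textbf{sphere type} to trajectories meeting the axis $r=0$; the \textbf{unduloid type} to periodic oscillations of $r$ with monotone $z$; the two \textbf{nodoid types} to periodic oscillations with non-monotone $z$, distinguished by the shape of the trace in the $rz$-plane; and the new \textbf{tube} family to a bounded regime that stays away from the axis and has no previously recognized analogue.

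The principal obstacle will be twofold. First, establishing the existence of the \emph{tube} family and showing that it is genuinely disjoint from the Delaunay-type regimes requires a careful examination of $\Phi_{a,H,\kappa,\tau}$ across the full parameter range $(\kappa,\tau)\in\R^2$, pinpointing a regime that the case-by-case treatments in $\HR$, $\Nildrei$, $\PSLzwei$, etc.\ apparently overlooked; the unified $(\kappa,\tau)$-dependent form of $\Phi$ is what should make this new regime visible. Second, completeness of the classification requires a uniform analysis treating all parameters simultaneously, with particular care in the compact cases $\kappa>0$ (sphere $\s^3(\kappa)$ and Berger sphere $\Berger$), where bounded profile curves raise additional closure and embeddedness issues absent in the non-compact models, and where the first classification is being given here.
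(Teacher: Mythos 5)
Your overall strategy---reduction to the quotient space, a Noether first integral, and a qualitative ODE analysis of the profile curve---is the same as the paper's. But there are two genuine gaps. First, you misidentify the family parameter: the Noether conserved quantity is not the pitch $a$ but the energy $J(r,\sigma)=\tfrac{2H}{\kappa}\bigl(\cs(r)-1\bigr)+\sn(r)\sin\sigma$ of Proposition~\ref{prop:J}. The pitch $a$ is fixed data determining the group $G_a$ and the quotient metric $g_a$; the one-parameter family $\family$ in Theorem~\ref{theo:classification} is parameterized by $J\in(-\infty,\Jmax]$ (resp.\ $J\in[-\tfrac{2H}{\kappa},\Jmax]$ for $\kappa>0$) with \emph{both} $a$ and $H$ held fixed. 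Consequently the six types are not ``distinct regimes of $a$'' but distinct energy levels: $J=\Jmax$ gives the vertical cylinder, $J\in(0,\Jmax)$ the unduloid type, $J=0$ the sphere type, and $J<0$ the nodoid types and the tube.

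Second, and more seriously, your proposed reduction to an equation $(r')^2=\Phi_{a,H,\kappa,\tau}(r)$ cannot separate nodoid type I, the tube, and nodoid type II. All three occur at negative energy and share the same radial range $[r_-,r_+]$ with $r_->0$ (Lemma~\ref{lem:nodoid}), hence the same function $\Phi$; in particular the tube is not ``a bounded regime that stays away from the axis,'' since the nodoids do that as well. The distinguishing invariant is the vertical period $\Delta=h(\sigma+2\pi)-h(\sigma)$ of the profile curve: $\Delta>0$ gives nodoid type I, $\Delta<0$ nodoid type II, and $\Delta=0$ the tube, whose profile is a closed loop. Showing that $\Delta$ actually changes sign---and hence that tubes exist for suitable $(\kappa,\tau,a,H)$---is the content of Lemma~\ref{lem:height_inequalities} and Theorem~\ref{theo:tube_existence}: one compares $\tfrac{dh}{d\sigma}$ at paired angles $\hat\sigma,\tilde\sigma$ with $\sin\hat\sigma=-\sin\tilde\sigma$, reduces the comparison to the sign of an auxiliary expression $\psi_2$, and applies the intermediate value theorem in $J$. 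Without an analysis of $h$ over a full period of $r$, your scheme yields at most five types and misses precisely the new phenomenon the theorem asserts.
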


The precise statement can be found in Theorem~\ref{theo:classification} and a moduli space is shown in Figure~\ref{fig:modulispace}.  This classification includes a new family of screw motion CMC surfaces in $\EKT$, which we call \textit{tubes}. In particular, this shows that previous classifications were not complete:

\begin{thmB}
	For certain choices of $\kappa$, $\tau$ and $H$, there exist examples whose profile curves are simple loops and generate screw motion CMC surfaces, which are topological cylinders or tori, different from Delaunay type surfaces.
\end{thmB}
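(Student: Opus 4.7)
The plan is to leverage the reduction procedure that underlies Theorem~A. Screw motion invariance reduces the CMC equation to a planar ODE system on the 2-dimensional orbit space, admitting a first integral $J$ whose level sets partition the phase plane into the profile curves of all non-minimal invariant CMC surfaces of fixed mean curvature $H$. In this picture the classical Delaunay-type trajectories are well understood: vertical cylinders appear as equilibria, sphere-type surfaces as separatrix loops through singular points, and unduloid and nodoid orbits as families of periodic orbits distinguished by their position relative to the rotation axis. Theorem~B amounts to producing, in a separate region of the phase portrait, a family of periodic orbits --- the tubes --- and verifying that they generate topologically new surfaces.

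First I would locate, for suitable values of $(\kappa,\tau,H)$ with $4H^2+\kappa>0$, an additional equilibrium of the reduced system of centre type, disconnected from the Delaunay centre. A linearisation at this point, together with the definiteness of the Hessian of $J$ there, shows that a full neighbourhood is foliated by simple closed level curves. These level curves are precisely the one-parameter family of tube profile curves of Theorem~A. Since each profile curve is a simple closed loop lying in the regular part of the orbit space, lifting it along the screw motion orbit immediately yields a topological cylinder, which proves the cylinder case of Theorem~B. To obtain topological tori I would restrict to the compact ambient spaces $\s^3(\kappa)$ and $\Berger$ and impose a closing condition on the screw motion: whenever the translation per profile period is commensurable with the fibre length, the lift closes up after finitely many periods, and a continuity/intermediate value argument on the screw parameter produces such resonant choices within the tube subfamily.

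The remaining point is to show that tubes are genuinely distinct from Delaunay-type surfaces. The qualitative distinction is read off from the phase portrait: unduloid and nodoid orbits either extend unboundedly along the axial direction in the graph picture or intersect the rotation axis in the reduction picture, whereas tube orbits are simple closed loops enclosing a centre that lies off the rotation axis. This configuration cannot be matched by any Delaunay-type orbit under a reparameterisation or an ambient isometry of $\EKT$, so the resulting surfaces are not of Delaunay type. The main obstacle, and the step that forces an explicit computation, will be verifying that the extra centre exists and is non-degenerate throughout an open range of supercritical parameters, and controlling the level sets of $J$ far enough from it to guarantee that they remain embedded simple loops across the whole tube family; once this is established, the cylinder statement is immediate and the torus statement reduces to the resonance argument above.
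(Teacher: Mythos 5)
The central step of your argument --- that a non-degenerate centre of the reduced system gives a neighbourhood foliated by simple closed level curves of $J$, and that these level curves ``are precisely the one-parameter family of tube profile curves'' --- conflates two different planes. The first integral $J$ is a function of $(r,\sigma)$, so its level sets live in the phase plane of the reduced ODE, not in the orbit space $\XKT\ni(r,h)$ where the profile curve actually lies. For \emph{every} negative energy level the pair $(r,\sigma\bmod 2\pi)$ is already periodic (Proposition~\ref{prop:nodoid}), i.e.\ all negative-energy trajectories are ``closed'' in the phase-plane sense; what decides whether the profile curve closes up in $(r,h)$ is the height holonomy $\Delta=h(t+2t_2)-h(t)$ accumulated over one phase period, and a tube exists precisely when this period integral vanishes. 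That condition is invisible to the topology of the level sets of $J$. Concretely, your criterion would predict tubes around the second equilibrium $\sigma=\tfrac{3\pi}{2}$, $\ct(r)=-2H$ for every $\kappa>0$ and every pitch $a$, whereas in the Berger spheres $\Berger$ tubes exist only for $a$ in a bounded interval (Theorem~\ref{theo:tube_existence}, Table~\ref{tab:tube_existence}); conversely, for $\kappa\le0$ one has $\ct(r)>0$ everywhere, so there is no second equilibrium at all, and yet tubes do exist in $\Nildrei$ and $\PSLzwei$ whenever $a\tau>\tfrac12$ and $H$ is large enough. The existence of an extra centre is therefore neither necessary nor sufficient, and the step you flag as ``the main obstacle'' is not the actual obstacle.

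What the paper does instead is study the sign of $\Delta$ as a function of $J$: Lemma~\ref{lem:height_inequalities} compares $dh/d\sigma$ on the two half-arcs $\sigma\in(\tfrac{\pi}{2},\pi)$ and $\sigma\in(\pi,\tfrac{3\pi}{2})$ and exhibits explicit thresholds $\J_1,\J_2$ (depending on $\kappa,\tau,a,H$) with $h_2>h_0$ on one side and $h_2<h_0$ on the other; the intermediate value theorem then yields $\Jtube$ with $\Delta=0$, and the hypotheses of Theorem~\ref{theo:tube_existence} are exactly what is needed to place both thresholds inside the admissible energy range. Note that the pitch $a$ and the bundle curvature $\tau$ enter essentially at this step (through the coefficients $C_4,C_5$); your proposal never uses $a$ at the decisive moment, which is a further sign it cannot be completed as stated. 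Your remarks on cylinders versus tori (closedness of the $G_a$-orbits, resonance in the compact spaces) and on distinguishing tubes from Delaunay-type surfaces are reasonable but downstream of the missing period argument.
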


For the precise statement see Theorem \ref{theo:tube_existence} and Table \ref{tab:tube_existence}. An example of a tube can be found in Figure \ref{fig:nodoidfamily}. Our result includes the new examples recently found by Manzano \cite{manzano}.

\begin{figure}[H]
	\centering
	\tiny
	\def\svgwidth{0.95\textwidth}\executeiffilenewer{nodo_family.svg}{nodo_family.pdf}{inkscape -z -D --file=nodo_family.svg 	--export-pdf=nodo_family.pdf --export-latex}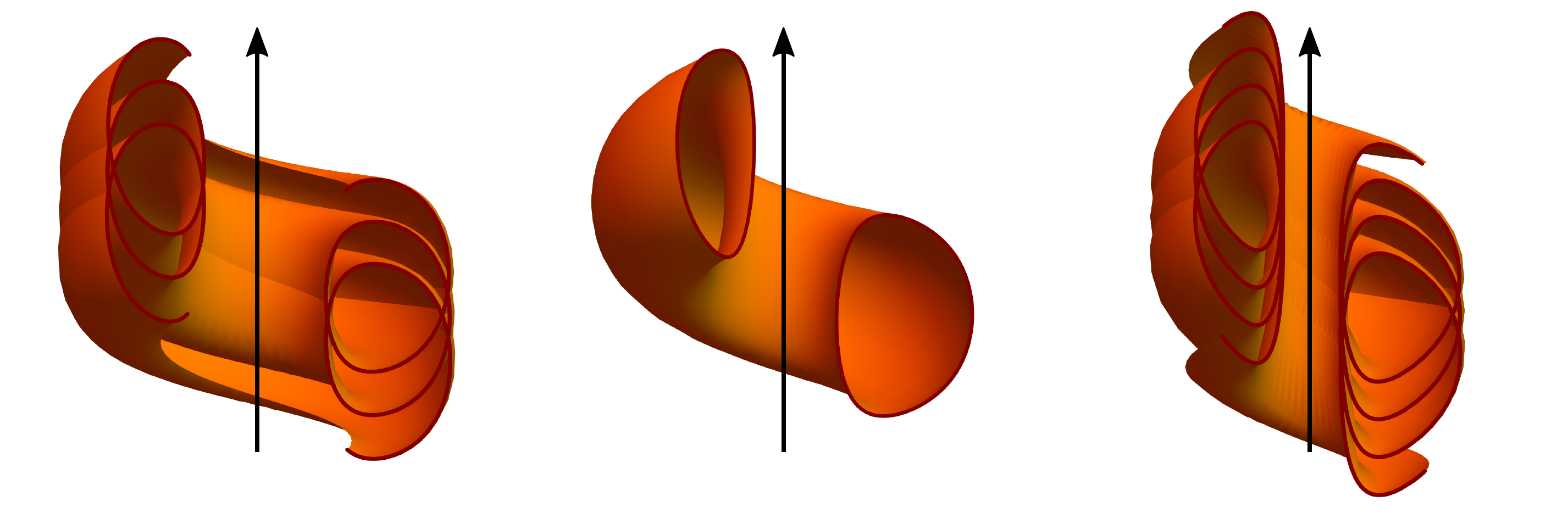
	\caption{Numerical plots of screw motion CMC surfaces in $\E(0,0.5)\cong\Nildrei$ for $H=0.5$. \textit{Left}: Nodoid type I. \textit{Center}: Tube. \textit{Right}: Nodoid type II.}
	\label{fig:nodoidfamily}
\end{figure}

\subsubsection*{Acknowledgments} The author would like to thank Karsten Große-Brauckmann and Francisco Torralbo for valuable comments and inspiring discussions. This paper is part of the authors PhD project.

\section{The ODE of constant mean curvature}

A common framework for all $\EKT$-spaces was originally introduced by Cartan \cite{cartan}, see also \cite{daniel_hauswirth_mira}. But in order to study screw motion surfaces, it is advantageous to use geodesic polar coordinates $(r, \theta, z)$ such that $r$ measures the arc-length of a geodesic in $\MK$. Therefore, we use the following model
\begin{equation}\label{model}
	I_\kappa\times[0,2\pi]\times\R\ni(r,\theta,z),
	\hspace{4mm}
	g\coloneqq dr^2
	+\sns\left(r\right)d\theta^2
	+\left(4\tau\sns\!\left(\frac{r}{2}\right)d\theta-dz\right)^2,
\end{equation}

where
\begin{equation*}
	I_\kappa\coloneqq\left\lbrace\begin{array}{ll}
		(0,\frac{\pi}{\sqrt{\kappa}}) & \text{ for } \kappa>0, \\[0.5em]
		(0,\infty) & \text{ for } \kappa\leq0.
	\end{array}\right.
\end{equation*}

We use here and in the following the generalized trigonometric functions
\begin{equation*}
	\sn(r)\coloneqq\left\lbrace\begin{array}{ll}
		\frac{1}{\sqrt{\kappa}}\sin\left(\sqrt{\kappa}r\right) \\[0.5em]
		r \\[0.5em]
		\frac{1}{\sqrt{-\kappa}}\sinh\left(\sqrt{-\kappa}r\right)
	\end{array}\right.
	\hspace{8mm}
	\cs(r)\coloneqq\left\lbrace\begin{array}{ll}
		\cos\left(\sqrt{\kappa}r\right) & \text{ for } \kappa>0, \\[0.5em]
		1 & \text{ for } \kappa=0, \\[0.5em]
		\cosh\left(\sqrt{-\kappa}r\right) \hspace{8mm}\, & \text{ for } \kappa<0,
	\end{array}\right.
\end{equation*}
as well as $\tn(r)\coloneqq\frac{\sn(r)}{\cs(r)}$ and $\ct(r)\coloneqq\frac{\cs(r)}{\sn(r)}$, all depending continuously on the base curvature $\kappa$.

For $\kappa\leq0$ this is a global model of $\EKT$ with the fiber at $r=0$ removed. For $\kappa>0$ it is a model for the fiber-wise universal covering of $\EKT$ without the fibers through the poles of the underlying $\s^2(\kappa)$ at $r=0$ and $r=\frac{\pi}{\sqrt{\kappa}}$. In particular, this applies to the case $\kappa-4\tau^2=0$ of the space-form $\s^3(\kappa)$. However, the regular part of $\EKT$ with respect to screw motion does not include the fiber at $r=0$, and no problem arises.

In geodesic polar coordinates, a screw motion with pitch $a\in\R$ by an angle $\phi\in\R$ is given by $L_{a,\phi}(r,\theta,z)=(r,\theta+\phi,z+a\phi)$. The set $G_a\coloneqq\lbrace L_{a,\phi}\,|\,\phi\in\R\rbrace$ of screw motions with fixed pitch is a closed one-parameter subgroup of $\Isom(\EKT)$ and therefore a Lie subgroup. This includes the rotational case for $a=0$ with closed orbits. Note however, for $\tau\neq0$ the orbits of the rotation are not horizontal with respect to the Riemannian fibration. The axis of the screw motion is the set of points invariant under $G_a$. It is the fiber over $r=0$.

It is a common procedure in equivariant geometry to use symmetry to reduce the dimension of the geometric problem: Instead of studying surfaces in $\EKT$, we consider the generating curves in the 2-dimensional quotient space $\EKT/G_a$. This so-called reduction procedure goes back to Back, DoCarmo and Hsiang, belatedly published in \cite{back_docarmo_hsiang}. It is also used in \cite{hsiang_hsiang,tompter,figueroa_mercuri_pedrosa,montaldo_onnis,torralbo}.

Since $G_a$ is a Lie group acting on $\EKT$ by isometries, the metric on the quotient space $\XKT\coloneqq\EKT/G_a$ can be constructed explicitly: There is one linearly independent Killing field $\xi_a=\frac{\partial}{\partial\theta} +a\frac{\partial}{\partial z}$ associated with the $G_a$-action. The quotient $\XKT$ can be locally parameterized by the $G_a$-invariant functions $r$ and $h\coloneqq z-a\theta$ of this Killing field, which induces a unique metric:

\begin{lem}\label{lem:quotient}
	Consider the model \eqref{model} and the one-parameter group of screw motions $G_a$ associated with the Killing field $\xi_a=\frac{\partial}{\partial\theta} +a\frac{\partial}{\partial z}$. The quotient space $\XKT=\EKT/G_a$ with the quotient metric $g_a$ is given by
	\begin{equation*}
		\XKT=I_\kappa\times\R\ni(r,h), \hspace{8mm}
		g_a= dr^2 +\terma\,dh^2.
	\end{equation*}
\end{lem}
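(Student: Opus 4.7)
The plan is to identify $(r,h)$ as coordinates on the quotient and then read off the metric from the Riemannian submersion $\pi\colon\EKT\to\XKT$.

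First, I would verify that $r$ and $h\coloneqq z-a\theta$ are $G_a$-invariants: applying $L_{a,\phi}(r,\theta,z)=(r,\theta+\phi,z+a\phi)$ leaves both unchanged. Hence $\pi(r,\theta,z)=(r,z-a\theta)$ descends to well-defined coordinates on $\XKT$, with differential $d\pi(\partial_r)=\partial_r$, $d\pi(\partial_\theta)=-a\,\partial_h$ and $d\pi(\partial_z)=\partial_h$. A quick computation of $|\xi_a|^2$ from the components of \eqref{model},
\begin{equation*}
|\xi_a|^2 = g(\partial_\theta,\partial_\theta)+2a\,g(\partial_\theta,\partial_z)+a^2\,g(\partial_z,\partial_z) = \sns(r)+\bigl(4\tau\sns(r/2)-a\bigr)^2,
\end{equation*}
shows that $\xi_a$ never vanishes on $I_\kappa\times[0,2\pi]\times\R$, so the action is free and proper and the quotient is a smooth 2-manifold.

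Next, I would compute $g_a$ via the submersion relation $g_a(d\pi X,d\pi Y)=g(X,Y)$ for $X,Y$ horizontal. Since $\partial_r$ is already orthogonal to both $\partial_\theta$ and $\partial_z$, it is horizontal and projects to $\partial_r$, giving $g_a(\partial_r,\partial_r)=1$ and $g_a(\partial_r,\partial_h)=0$. For the $\partial_h$-component I would find the horizontal lift $X=\beta\partial_\theta+\gamma\partial_z$ characterized by the two linear equations $d\pi(X)=\partial_h$, i.e. $\gamma-a\beta=1$, and $g(X,\xi_a)=0$. Solving this $2\times2$ system and substituting gives
\begin{equation*}
g_a(\partial_h,\partial_h) \;=\; g(X,X) \;=\; (\gamma-a\beta)\,g(X,\partial_z) \;=\; g(X,\partial_z),
\end{equation*}
where the middle equality uses $g(X,\partial_\theta)=-a\,g(X,\partial_z)$ coming from horizontality. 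Plugging in the values of $\beta,\gamma$ collapses the expression to $\sns(r)/\bigl(\sns(r)+(4\tau\sns(r/2)-a)^2\bigr)$.

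The main (and essentially only) obstacle is the algebraic simplification in this last step, and the trick above—reducing $g(X,X)$ to a single inner product with $\partial_z$—keeps it clean; everything else is bookkeeping. Assembling the three components gives the stated expression for $g_a$.
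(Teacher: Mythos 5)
Your proof is correct, but it takes a genuinely different route from the paper's. The paper disposes of the lemma in two lines by citing Back--DoCarmo--Hsiang: the quotient metric is the inverse of the Gram matrix $h_{ij}=g(\nabla f_i,\nabla f_j)$ of the invariant functions $f_1=r$, $f_2=z-a\theta$; since that matrix is diagonal here, one only computes $g(\nabla h,\nabla h)=\bigl(\sns(r)+(4\tau\sns\!\left(\tfrac{r}{2}\right)-a)^2\bigr)/\sns(r)$ and inverts. You instead argue from the definition of a Riemannian submersion, constructing the horizontal lift $X=\beta\partial_\theta+\gamma\partial_z$ of $\partial_h$ from the $2\times2$ system $\gamma-a\beta=1$, $g(X,\xi_a)=0$; your reduction $g(X,X)=(\gamma-a\beta)\,g(X,\partial_z)=g(X,\partial_z)$ neatly avoids the quadratic expansion, and with $b\coloneqq4\tau\sns\!\left(\tfrac{r}{2}\right)$ one indeed gets $\beta=(b-a)/(\sns(r)+(b-a)^2)$ and $g(X,\partial_z)=\gamma-\beta b=1-\beta(b-a)=\sns(r)/(\sns(r)+(b-a)^2)$, matching the claim (the degenerate case $b=a$ gives $\beta=0$, $\gamma=1$ and is consistent). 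The two computations are dual to one another (gradients of the invariants versus horizontal lifts of the coordinate fields) and cost about the same: the paper's route requires the inverse metric $g^{ij}$ to form the gradients, yours requires solving the linear system. Yours buys a self-contained argument not resting on an external proposition; the paper's buys brevity. The only loose point is your passing claim that nonvanishing of $\xi_a$ makes the action free and proper --- that needs more than local freeness in general, but it is not needed for the metric computation and the paper does not address it either.
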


\begin{proof}
	As stated in \cite[Prop.~2.1]{back_docarmo_hsiang} the quotient metric is given by $(g_a)_{ij}=h^{ij}$ with $h_{ij}=g(\nabla f_i,\nabla f_j)$, where in our case the invariant functions are $f_1=r$ and $f_2=h=z-a\theta$. A direct computation yields the stated result.
\end{proof}

In $\SKR$ or $\Berger$ the screw motion has two axes leading to further symmetries of the the quotient space. A screw motion around the fiber over the north pole $r=0$ agrees with the screw motion around the fiber over the south pole $r=\frac{\pi}{2\sqrt{\kappa}}$, provided we change the pitch from $a$ to $\tilde{a}=\frac{4\tau}{\kappa}-a$. For pitch $a=\frac{2\tau}{\kappa}$ with horizontal orbits we have $\tilde{a}=a$. This transformation is represented by reflection at the equatorial line $\lbrace\frac{\pi}{2\sqrt{\kappa}}\rbrace\times\R$:

\begin{lem}\label{lem:isometry}
	Suppose $\kappa>0$. For $\tilde a\coloneqq\frac{4\tau}{\kappa}-a$ the map
	\begin{equation*}
		\Psi_a\!:\quad \XKT\to X_{\tilde a}(\kappa,\tau),\quad (r,h)\mapsto\left(\frac{\pi}{\sqrt{\kappa}}-r,h\right)
	\end{equation*}
	is an isometry.
\end{lem}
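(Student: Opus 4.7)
The plan is to verify the isometry claim by a direct pullback computation in the coordinates $(r,h)$. Writing $\tilde r = \pi/\sqrt{\kappa} - r$ for the first coordinate of $\Psi_a(r,h)$, I would first observe that $d\tilde r = -dr$, so $\Psi_a^*(d\tilde r^2) = dr^2$, while the second coordinate is left untouched. The task thus reduces to showing that the $dh^2$-coefficient of $g_{\tilde a}$ evaluated at $(\tilde r,h)$ equals the $dh^2$-coefficient of $g_a$ evaluated at $(r,h)$, i.e.
\begin{equation*}
    \frac{\sns(\tilde r)}{\sns(\tilde r)+\bigl(4\tau\sns(\tilde r/2)-\tilde a\bigr)^2}
    =\frac{\sns(r)}{\sns(r)+\bigl(4\tau\sns(r/2)-a\bigr)^2}.
\end{equation*}

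The key is to reduce this to two elementary identities for the generalized sine. Since $\sn(r)=\sin(\sqrt{\kappa}r)/\sqrt{\kappa}$ for $\kappa>0$, the symmetry of sine about $\pi/2$ gives immediately $\sns(\tilde r)=\sns(r)$. For the half-angle term I would use $\sin(\pi/2-\sqrt{\kappa}r/2)=\cos(\sqrt{\kappa}r/2)$ to get
\begin{equation*}
    \sns(\tilde r/2)=\tfrac{1}{\kappa}\cos^2(\sqrt{\kappa}r/2)
    =\tfrac{1}{\kappa}-\sns(r/2),
\end{equation*}
so that $4\tau\sns(\tilde r/2)=\tfrac{4\tau}{\kappa}-4\tau\sns(r/2)$. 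Substituting the definition $\tilde a=\tfrac{4\tau}{\kappa}-a$ then produces the clean cancellation
\begin{equation*}
    4\tau\sns(\tilde r/2)-\tilde a
    =\Bigl(\tfrac{4\tau}{\kappa}-4\tau\sns(r/2)\Bigr)-\Bigl(\tfrac{4\tau}{\kappa}-a\Bigr)
    =-\bigl(4\tau\sns(r/2)-a\bigr).
\end{equation*}
Squaring the last equation and combining with $\sns(\tilde r)=\sns(r)$ shows that numerator and denominator of the two $dh^2$-coefficients agree, completing the verification that $\Psi_a^* g_{\tilde a}=g_a$.

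Since $\Psi_a$ is clearly a smooth diffeomorphism from $I_\kappa\times\R$ to itself (it is an involution up to the change of pitch), the metric identity is all that is needed. There is no real obstacle here; the only point requiring care is the bookkeeping of the half-angle identity and making sure the sign produced by the cancellation disappears upon squaring inside the denominator. The whole argument is essentially a two-line trigonometric computation once the correct identity $\sns(\tilde r/2)=\tfrac{1}{\kappa}-\sns(r/2)$ is in hand, and it explains conceptually why the prescribed map $a\mapsto\tfrac{4\tau}{\kappa}-a$ is the natural one: it is precisely the change of pitch needed to compensate for the antipodal swap of the two axes of rotation on $\MK$.
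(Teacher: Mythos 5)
Your computation is correct and is precisely the "straightforward computation using various trigonometric identities" that the paper leaves to the reader: the identities $\sns(\tilde r)=\sns(r)$ and $\sns(\tilde r/2)=\tfrac{1}{\kappa}-\sns(r/2)$, combined with the definition $\tilde a=\tfrac{4\tau}{\kappa}-a$, give exactly the sign flip $4\tau\sns(\tilde r/2)-\tilde a=-\bigl(4\tau\sns(r/2)-a\bigr)$ that makes the $dh^2$-coefficients of $g_a$ and $g_{\tilde a}$ match after squaring. Nothing is missing; your write-up simply supplies the details the paper omits.
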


\begin{proof}
	Straightforward computation using various trigonometric identities.
\end{proof}

Now let $\gamma(t)=(r(t),h(t))$ be a unit-speed curve in the quotient space $\XKT$ and let $\Sigma_\gamma$ be the surface generated by $\gamma$ under $G_a$. Furthermore, let $\sigma(t)\in\R$ be the angle between the coordinate direction $\frac{\partial}{\partial r}$ and the tangent vector $\gamma'(t)$. We obtain the following ODE which $\gamma$ must satisfy in order for $\Sigma_\gamma$ to have constant mean curvature $H$:

\begin{theo}\label{theo:ode}
	Let $\gamma=(r,h)$ be a unit-speed curve in the orbit space $\XKT=\EKT/G_a$ and $\Sigma_\gamma=G_a(\gamma)$ the screw motion surface generated by $\gamma$ under $G_a$. Then $\Sigma_\gamma$ has constant mean curvature $H$ if and only if
	\begin{equation}\label{ode3}
		2H =\ct(r)\sin\sigma+\sigma'.
	\end{equation}
\end{theo}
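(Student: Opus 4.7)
The plan is to apply the reduction procedure described above: since $\Sigma_\gamma$ is $G_a$-invariant, the condition of constant mean curvature $H$ descends from a PDE on $\Sigma_\gamma$ to an ODE for the generating curve $\gamma$ in the quotient $(\XKT,g_a)$.

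I would begin by parametrizing $\Sigma_\gamma$ in the model \eqref{model} by $(t,\phi)\mapsto L_{a,\phi}(\gamma(t))=(r(t),\phi,h(t)+a\phi)$. A direct computation of the first fundamental form yields
\begin{equation*}
\sqrt{EG-F^2}\;=\;\|\xi_a\|\;=\;\sqrt{\sns(r)+\Bigl(4\tau\sns\!\left(\tfrac{r}{2}\right)-a\Bigr)^{\!2}},
\end{equation*}
so the area of a fundamental piece of $\Sigma_\gamma$ reduces to $\int\|\xi_a\|\,ds_{g_a}$, while the ambient volume element on $\EKT$ factorizes as $\|\xi_a\|\,d\phi$ times $dA_{g_a}$ on $\XKT$. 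Writing down the first variation of area under a $G_a$-equivariant, volume-preserving normal deformation then identifies the CMC condition with the weighted Euler--Lagrange equation
\begin{equation*}
\kappa_\gamma\;=\;2H+\partial_\nu\log\|\xi_a\|
\end{equation*}
in $(\XKT,g_a)$, where $\kappa_\gamma$ is the geodesic curvature of $\gamma$ and $\nu$ is a suitably oriented unit normal.

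The metric from Lemma \ref{lem:quotient} has the warped-product form $dr^2+A(r)^2\,dh^2$ with $A(r)^2=\sns(r)/\|\xi_a\|^2$. Writing $\gamma'=\cos\sigma\,\partial_r+\sin\sigma\,A^{-1}\partial_h$, a standard connection-form calculation gives $\kappa_\gamma=\sigma'+(A'/A)\sin\sigma$, while $\partial_\nu\log\|\xi_a\|=-\sin\sigma\,\partial_r\log\|\xi_a\|$ since $\|\xi_a\|$ depends only on $r$. The conceptual heart of the proof is the identity
\begin{equation*}
A(r)\cdot\|\xi_a\|\;=\;\sn(r),
\end{equation*}
which is immediate from the definition of $A$. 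Logarithmic differentiation yields $(A'/A)+\partial_r\log\|\xi_a\|=\sn'(r)/\sn(r)=\ct(r)$, so the two $r$-dependent pieces combine cleanly into $\ct(r)\sin\sigma$, producing exactly $2H=\sigma'+\ct(r)\sin\sigma$ as claimed.

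The main obstacle is not the algebra but the careful bookkeeping of orientation signs---of $\sigma$, of the chosen normal $\nu$, and in the first-variation formula---so that the contributions $(A'/A)\sin\sigma$ and $\partial_\nu\log\|\xi_a\|$ add rather than cancel. A less conceptual alternative would be to compute $H$ directly from the second fundamental form of $\Sigma_\gamma$ in $\EKT$, which requires writing down an explicit unit normal and tracing against the inverse first fundamental form; this avoids the variational formalism at the cost of obscuring the identity $A\|\xi_a\|=\sn$ that is doing all the work.
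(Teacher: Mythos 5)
Your proposal is correct and follows essentially the same route as the paper: the paper simply cites the Reduction Theorem of Back--DoCarmo--Hsiang for the equation $2H=k_{\mathrm{geod}}-\partial_{n}\ln\norm{\xi_a}_g$ and states that a straightforward computation yields \eqref{ode3}, which is exactly the computation you carry out (your identity $A(r)\,\norm{\xi_a}=\sn(r)$ is the content of that ``straightforward computation''). The only difference is that you sketch a first-variation derivation of the reduction formula where the paper cites it, and you make the cancellation mechanism explicit; both are consistent and your sign bookkeeping checks out against the quotient metric of Lemma \ref{lem:quotient}.
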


\begin{proof}
	The Reduction Theorem of Back, DoCarmo and Hsiang \cite[Prop.~4.1]{back_docarmo_hsiang} gives the differential equation $2H=k_\mathrm{geod}-\partial_{n}\ln\norm{\xi_a}_g$, where $k_\mathrm{geod}$ is the geodesic curvature of $\gamma$ and $\partial_{n}\ln\norm{\xi_a}_g=g_a(\nabla\ln\norm{\xi_a}_g,n)$ is the normal derivative. A straightforward computation gives the stated ODE.
\end{proof}

Equation \eqref{ode3} together with the unit-speed condition $\norm{\gamma'}_{g_a}=1$ gives the following system of ODEs for $\gamma$:
\begin{equation}\label{ode}\tag{ODE}
	\left\lbrace\begin{array}{l}
		r' =\cos\sigma, \\[1.0em]
		h'=\sqrt{\termb}\,\sin\sigma, \\[1.6em]
		2H =\ct(r)\sin\sigma+\sigma'.
	\end{array}\right.
\end{equation}\vspace{2mm}

The ODE is Lipschitz for $r\in I_\kappa$ and solutions are unique for given initial data. A trivial solution is $r(t)=r_0, \sigma(t)=\frac{\pi}{2}$, which generates a vertical cylinder of CMC $H=\frac{1}{2}\ct(r_0)$.

We gather some elementary observations about \eqref{ode}, which follow from uniqueness:

\begin{lem}\label{lem:properties_of_ode}
	Let $\gamma=(r,h,\sigma)$ be a solution curve of \eqref{ode} for mean curvature $H$.\vspace{-2mm}
	\begin{enumerate}[leftmargin=10mm]\setlength{\itemsep}{2mm}
		\item Any vertical translate of $\gamma$ is again a solution curve for $H$.
		\item Any reflection of $\gamma$ across a line $h=h_0$ is a solution curve for $-H$.
		\item Any linear reparametrization $\gamma\circ\phi$ with $\phi(t)=\epsilon t+b$, where $\epsilon\in\lbrace\pm1\rbrace$ and $b\in\R$, is a solution curve for $\epsilon H$.
		\item If $\gamma$ is defined for $t\in(t_0-\epsilon,t_0]$ with $r'(t_0)=0$, then $\gamma$ can be extended to a solution curve defined on the interval $(t_0-\epsilon,t_0+\epsilon)$ by reflection across the line $h=h(t_0)$ and linear reparametrization.
	\end{enumerate}
\end{lem}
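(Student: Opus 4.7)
The plan is to verify (i)--(iii) by direct substitution into \eqref{ode}, exploiting that none of the right-hand sides depends explicitly on $h$ or on $t$, and then obtain (iv) by combining (ii) and (iii) with the Lipschitz uniqueness already noted after Theorem~\ref{theo:ode}.

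For (i) I would simply observe that $h$ does not appear on the right-hand side of \eqref{ode}, so $(r,h+c,\sigma)$ is again a solution with the same $H$. For (ii) I would set $\tilde\gamma(t)=(r(t),\,2h_0-h(t),\,-\sigma(t))$; the first two equations of \eqref{ode} hold because $\tilde r'=r'=\cos(-\sigma)$ and $\tilde h'=-h'=\sqrt{\termb}\sin(-\sigma)$, while the third equation picks up an overall minus sign, yielding $\ct(\tilde r)\sin\tilde\sigma+\tilde\sigma'=-\bigl(\ct(r)\sin\sigma+\sigma'\bigr)=-2H$. For (iii), the case $\epsilon=1$ is a time shift respected by the autonomous system. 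For $\epsilon=-1$ the tangent vector reverses, so the new angle is $\sigma+\pi$, giving $\cos\tilde\sigma=-\cos\sigma$, $\sin\tilde\sigma=-\sin\sigma$, and $\tilde\sigma'=-\sigma'$; plugging these into \eqref{ode} produces a solution for $-H$.

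The interesting step is (iv). I would first apply (ii) with $h_0=h(t_0)$ to convert $\gamma|_{(t_0-\epsilon,t_0]}$ into a solution for $-H$, and then reparametrize via (iii) with $\phi(t)=2t_0-t$ (so $\epsilon=-1$, $b=2t_0$) to recover CMC $H$ on $[t_0,t_0+\epsilon)$. The resulting curve $\tilde\gamma$ satisfies $\tilde\gamma(t_0)=\gamma(t_0)$, and a quick chase through the two transformations shows its angle at $t_0$ is $\pi-\sigma(t_0)$. The crucial point is that the hypothesis $r'(t_0)=\cos\sigma(t_0)=0$ forces $\sigma(t_0)\in\tfrac{\pi}{2}+\pi\Z$, which is exactly the condition for $\pi-\sigma(t_0)\equiv\sigma(t_0)\pmod{2\pi}$. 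So $\gamma$ and $\tilde\gamma$ share Cauchy data at $t_0$, and Lipschitz uniqueness of \eqref{ode} on $I_\kappa$ glues them into a single smooth solution on $(t_0-\epsilon,t_0+\epsilon)$.

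The only genuine obstacle is the angle bookkeeping in (iv)---keeping track of whether $\sigma$ transforms as $-\sigma$, $\sigma+\pi$, or $\pi-\sigma$ under the two operations, and recognizing that the matching of initial data at $t_0$ is precisely what the $\cos\sigma(t_0)=0$ hypothesis buys; everything else reduces to a mechanical check against the three lines of \eqref{ode}.
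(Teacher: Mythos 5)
Your verification is correct, and it is exactly the argument the paper intends: the paper states this lemma without proof, remarking only that the observations ``follow from uniqueness,'' and your direct substitution into \eqref{ode} for (i)--(iii) together with the uniqueness-based gluing for (iv) fills in precisely that. In particular, your angle bookkeeping in (iv) --- that the two operations send $\sigma$ to $\pi-\sigma$, which agrees with $\sigma$ modulo $2\pi$ exactly when $\cos\sigma(t_0)=0$ --- is the one point that genuinely needs checking, and you check it correctly.
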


Because of the invariance under vertical translation by Lemma \ref{lem:properties_of_ode}, there exists a conserved quantity according to the Noether Theorem:
\begin{prop}\label{prop:J}
	The surface $\Sigma_\gamma$ generated by $\gamma(t)=(r(t),h(t),\sigma(t))$ has constant mean curvature $H$ if and only if the energy
	\begin{equation}\label{J}
		J(r(t),\sigma(t))\coloneqq\left\lbrace\begin{array}{ll}
			\dfrac{2H}{\kappa}\Big(\!\cs(r(t))-1\Big) +\sn(r(t))\sin\sigma(t) \hspace{6mm}\, & \text{ for } \kappa\neq0 \\[1.4em]
			-Hr^2+r\sin\sigma & \text{ for } \kappa=0
		\end{array}\right.
	\end{equation}
	is a constant function of $t$.
\end{prop}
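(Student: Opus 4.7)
The plan is to verify the equivalence by a direct differentiation of $J$ along $\gamma$, exploiting the two elementary identities $\sn'(r)=\cs(r)$ and $\cs'(r)=-\kappa\sn(r)$. These hold uniformly in $\kappa$, and also extend to the limit $\kappa=0$ where $\sn(r)=r$, $\cs(r)=1$, $\ct(r)=1/r$.

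First I would use $r'=\cos\sigma$ from the unit-speed relation (the first equation of \eqref{ode}) to expand $\frac{d}{dt}J$. In the case $\kappa\neq 0$, chain-rule differentiation of \eqref{J} gives
\begin{equation*}
    \frac{d}{dt}J = -2H\sn(r)\cos\sigma + \cs(r)\cos\sigma\sin\sigma + \sn(r)\cos\sigma\,\sigma',
\end{equation*}
and for $\kappa=0$ the analogous computation with $J=-Hr^2+r\sin\sigma$ produces the same expression with $\ct(r)=1/r$ replacing $\cs(r)/\sn(r)$. Factoring out $\sn(r)\cos\sigma$ yields the clean identity
\begin{equation*}
    \frac{dJ}{dt} \;=\; \sn(r)\cos\sigma\,\big[\sigma' + \ct(r)\sin\sigma - 2H\big],
\end{equation*}
whose bracket is precisely the CMC ODE \eqref{ode3}.

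From this identity the forward implication is immediate: if $\Sigma_\gamma$ is CMC $H$, Theorem \ref{theo:ode} gives that the bracket vanishes identically, so $J'\equiv 0$. For the converse, $J'\equiv 0$ together with $\sn(r)>0$ (since $r\in I_\kappa$) means that on every open subinterval on which $\cos\sigma\neq 0$ the bracket vanishes, i.e.\ \eqref{ode3} holds, and by continuity this extends to the full domain. The only subtle point is the degenerate situation in which $\cos\sigma$ vanishes on an open set: there $r\equiv r_0$ and $\sigma\equiv\pm\pi/2$ are forced by $r'=\cos\sigma$ and $\sigma'=0$, so $\gamma$ generates the vertical cylinder of Lemma-style CMC $\frac{1}{2}\ct(r_0)$, and one checks the equivalence directly in this case.

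The bulk of the argument is therefore purely algebraic. I expect the main obstacle to be not conceptual but notational: packaging the three cases $\kappa>0$, $\kappa<0$, $\kappa=0$ into a single factorized identity, and being careful that the derivative relations for $\sn$ and $\cs$ are applied with consistent signs so that the awkward prefactor $2H/\kappa$ in \eqref{J} cancels cleanly against $\cs'(r)=-\kappa\sn(r)$.
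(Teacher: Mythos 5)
Your computation is correct and is, in substance, the verification the paper leaves implicit: the paper prints no proof of Proposition~\ref{prop:J}, merely attributing the existence of a conserved quantity to the Noether theorem via the vertical-translation invariance of Lemma~\ref{lem:properties_of_ode}. Your identity
\begin{equation*}
	\frac{dJ}{dt}=\sn(r)\cos\sigma\,\bigl[\sigma'+\ct(r)\sin\sigma-2H\bigr]
\end{equation*}
follows exactly as you say from $\sn'=\cs$, $\cs'=-\kappa\sn$ and $r'=\cos\sigma$, the prefactor $2H/\kappa$ cancels as expected, and the $\kappa=0$ case is the same formula with $\sn(r)=r$, $\ct(r)=1/r$. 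Together with Theorem~\ref{theo:ode} this gives the forward implication cleanly, and the converse on the closure of $\{\cos\sigma\neq0\}$ by the density/continuity argument you describe. So the approach is sound and arguably more informative than the paper's one-line appeal to Noether.

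The one point I would push back on is your dismissal of the degenerate case. If $\cos\sigma\equiv0$ on an open interval, then $r\equiv r_0$ and $\sigma\equiv\pm\pi/2$ there, so $J$ is constant \emph{for every choice of} $H$ in \eqref{J}, while the generated vertical cylinder has the one specific mean curvature $\tfrac12\ct(r_0)$. Hence the literal biconditional of the proposition fails for a vertical line paired with the ``wrong'' $H$, and ``one checks the equivalence directly'' cannot repair this --- the equivalence is simply false there. This is best read as an imprecision in the statement rather than a gap in your argument: the intended content is that along any curve whose generated surface has CMC $H$ the quantity $J$ is a first integral, and conversely that constancy of $J$ forces \eqref{ode3} wherever the curve is not a vertical segment; the vertical cylinders are then recovered separately as the $J=\Jmax$ members of the family via Lemma~\ref{lem:J}. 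A sentence making that restriction explicit would complete your proof.
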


\begin{rem}
	The energy $J$ extends to $\kappa=0$ continuously by taking the limit $\kappa\to0$. For simplicity, we only address the case $\kappa\neq0$ in the following with the understanding that the case $\kappa=0$ is obtained by taking the limit.
\end{rem}

From now on we restrict our considerations to non-minimal $(H>0)$ and supercritical mean curvature $(4H^2+\kappa>0)$ as explained in the introduction.

\begin{lem}\label{lem:J}
	For non-minimal and supercritical mean curvature $H$ the energy \eqref{J} is bounded from above and
	\begin{equation*}
		J(r(t),\sigma(t))\leq\frac{\sqrt{4H^2+\kappa}-2H}{\kappa}\eqqcolon \Jmax
	\end{equation*}
	holds with equality precisely for the vertical cylinder $r(t)=\arct(2H)$ and $\sigma(t)=\frac{\pi}{2}(\mod 2\pi)$.
\end{lem}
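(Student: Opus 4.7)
The plan is to treat $J\colon I_\kappa\times\R\to\R$ as a smooth function of two variables and locate its global maximum; since $J$ is conserved along solution curves by Proposition~\ref{prop:J}, any upper bound on $J$ on its domain is automatically an upper bound along every solution, and equality forces the solution to be stationary at a critical point, i.e. a vertical cylinder.

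First I would compute the partial derivatives using $\sn'=\cs$ and $\cs'=-\kappa\sn$:
\begin{equation*}
    \partial_\sigma J =\sn(r)\cos\sigma,\qquad
    \partial_r J =-2H\sn(r)+\cs(r)\sin\sigma.
\end{equation*}
Since $\sn(r)>0$ on $I_\kappa$, critical points require $\cos\sigma=0$, i.e. $\sin\sigma=\pm1$, together with the relation $\sin\sigma=2H\tn(r)$. The case $\sin\sigma=+1$ yields $\tn(r)=\tfrac{1}{2H}$, i.e. $r_0=\arct(2H)\in I_\kappa$ (for $\kappa>0$ this lies in the hemisphere where $\cs>0$).

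Next I would evaluate $J(r_0,\tfrac{\pi}{2})$. From $\cs(r_0)=2H\sn(r_0)$ and the generalized Pythagorean identity $\cs^2+\kappa\sns=1$, one reads off $\sn(r_0)=(4H^2+\kappa)^{-1/2}$ and $\cs(r_0)=2H(4H^2+\kappa)^{-1/2}$, and a short algebraic simplification of the definition of $J$ produces exactly $\Jmax=\tfrac{\sqrt{4H^2+\kappa}-2H}{\kappa}$. The symmetric critical point with $\sin\sigma=-1$ (which only exists for $\kappa>0$, as $\tn>0$ on $I_\kappa$ when $\kappa\le0$) similarly yields the value $-\tfrac{\sqrt{4H^2+\kappa}+2H}{\kappa}<\Jmax$, so it is not relevant to the upper bound.

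To upgrade local to global I would analyze the behavior of $J$ on $\partial I_\kappa\times\R$. For $\kappa>0$ the domain in $r$ is bounded, and both limits $r\to 0$ and $r\to\tfrac{\pi}{\sqrt\kappa}$ give finite values strictly less than $\Jmax$, so together with the uniqueness (up to the irrelevant saddle above) of the interior critical point this identifies $(r_0,\tfrac{\pi}{2})$ as the global max. For $\kappa<0$ the crucial observation — and the one step I expect to require the most care — is that the supercritical assumption $4H^2+\kappa>0$ is exactly what makes the dominant exponential coefficient $\tfrac{H}{\kappa}+\tfrac{1}{2\sqrt{-\kappa}}$ negative, so that $J\to-\infty$ as $r\to\infty$; this is where the hypothesis is really used, and without it $J$ would be unbounded above. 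Finally, I would identify the critical point with the vertical cylinder by noting that the trivial solution $r\equiv r_0$, $\sigma\equiv\tfrac{\pi}{2}$ of \eqref{ode} requires $2H=\ct(r_0)$, hence $r_0=\arct(2H)$, matching the maximizer exactly; conservation of $J$ then shows no non-constant solution can attain the bound.
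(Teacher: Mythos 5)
Your proposal is correct and follows essentially the same route as the paper: the paper first uses $\sin\sigma\le 1$ (valid since $\sn(r)>0$) and then maximizes the resulting function of $r$ alone by differentiation, which is exactly your critical-point computation in disguise. Your additional checks — the boundary values for $\kappa>0$, the decay $J\to-\infty$ as $r\to\infty$ for $\kappa<0$ under supercriticality, and the dismissal of the second critical point (which is in fact a local minimum with value $\Jmin$, not a saddle) — only make explicit what the paper's terse ``by differentiating the right hand side'' leaves implicit.
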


\begin{proof}
	The inequality
	\begin{equation*}
		J =\frac{2H}{\kappa}\Big(\!\cs(r)-1\Big) +\sn(r)\sin\sigma
		\leq \frac{2H}{\kappa}\Big(\!\cs(r)-1\Big) +\sn(r),
	\end{equation*}
	holds with equality if and only if $\sigma=\frac{\pi}{2}(\mod 2\pi)$. By differentiating the right hand side we see that it attains its maximum if and only if $r=\arct(2H)$. Thus,
	\begin{equation*}
		J \leq \frac{2H}{\kappa}\Big(\!\cs(\arct(2H))-1\Big) +\sn(\arct(2H))
		=\frac{\sqrt{4H^2+\kappa}-2H}{\kappa}.
		\qedhere
	\end{equation*}
\end{proof}

The isometry $\Psi_a$ from Lemma \ref{lem:isometry} implies a symmetry for the energy:

\begin{lem}\label{lem:symmetry}
	Suppose $\kappa>0$. For every $G_a$-invariant CMC-$H$ surface $\Sigma_\gamma$ with energy $J$ there is a $G_a$-invariant CMC-$H$ surface $\Sigma_{\tilde\gamma}$ with energy $\tilde J=-J-\frac{4H}{\kappa}$. The relation between the generating curves $\gamma=(r,h,\sigma)$ and $\tilde\gamma=(\tilde r,\tilde h,\tilde\sigma)$ is given via $\tilde r=\frac{\pi}{\sqrt{\kappa}}-r$ and $\tilde\sigma=\sigma+\pi$. Moreover, if $\tau=0$ then there exists a constant $h_0\in\R$ such that $\tilde h=-h+h_0$.
\end{lem}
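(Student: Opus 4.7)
The plan is to prove the lemma by direct construction and verification, using Lemma~\ref{lem:isometry} only as geometric motivation. A naive pushforward by $\Psi_a$ does not produce the desired $\tilde\gamma$: since $\Psi_a$ is a reflection in $r$, it is orientation-reversing and hence sends a CMC-$H$ surface to a CMC-$(-H)$ surface in the image quotient. The additional shift $\sigma\mapsto\sigma+\pi$ in the statement is the ingredient needed to restore the sign of $H$ while producing a new curve in the original quotient $\XKT$.

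Concretely, given a solution $\gamma=(r,h,\sigma)$ of \eqref{ode}, I set $\tilde r:=\frac{\pi}{\sqrt\kappa}-r$, $\tilde\sigma:=\sigma+\pi$, and let $\tilde h$ be chosen so that the second (unit-speed) equation of \eqref{ode} holds for $\tilde\gamma=(\tilde r,\tilde h,\tilde\sigma)$; this determines $\tilde h$ up to an additive constant. To verify the first and third equations, I use
\begin{equation*}
\sn\!\Bigl(\tfrac{\pi}{\sqrt\kappa}-r\Bigr)=\sn(r),\qquad
\cs\!\Bigl(\tfrac{\pi}{\sqrt\kappa}-r\Bigr)=-\cs(r),\qquad
\ct\!\Bigl(\tfrac{\pi}{\sqrt\kappa}-r\Bigr)=-\ct(r),
\end{equation*}
together with $\cos(\sigma+\pi)=-\cos\sigma$ and $\sin(\sigma+\pi)=-\sin\sigma$. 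The first equation reads $\tilde r'=-r'=\cos\tilde\sigma$; in the third equation the two sign flips cancel, giving $\ct(\tilde r)\sin\tilde\sigma+\tilde\sigma'=\ct(r)\sin\sigma+\sigma'=2H$. Hence $\tilde\gamma$ generates a $G_a$-invariant CMC-$H$ surface $\Sigma_{\tilde\gamma}$.

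Substituting $(\tilde r,\tilde\sigma)$ into \eqref{J} and using the same identities yields
\begin{equation*}
\tilde J=\tfrac{2H}{\kappa}\bigl(-\cs(r)-1\bigr)-\sn(r)\sin\sigma=-J-\tfrac{4H}{\kappa},
\end{equation*}
as claimed; constancy of $\tilde J$ along $\tilde\gamma$ is automatic from Proposition~\ref{prop:J}. For the last assertion, when $\tau=0$ the coefficient $\termbs$ collapses to $(\sns(r)+a^2)/\sns(r)$, which is invariant under $r\mapsto\frac{\pi}{\sqrt\kappa}-r$, so $\tilde h'=-h'$ and $\tilde h=-h+h_0$ for some constant $h_0$. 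When $\tau\neq 0$, the half-angle identity $\sns(\tfrac{\pi}{2\sqrt\kappa}-r/2)=\tfrac{1}{\kappa}-\sns(r/2)$ sends the bundle term $(4\tau\sns(r/2)-a)^2$ to $(4\tau\sns(r/2)-\tilde a)^2$, so this $h$-reflection generically breaks. The whole argument is essentially bookkeeping with complementary-angle identities; the only delicate point is the sign accounting described in the first paragraph, explaining why the angular shift must be by $\pi$ rather than the $\pi-\sigma$ that one would obtain from the bare pushforward under $\Psi_a$.
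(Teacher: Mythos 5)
Your argument is correct and follows essentially the same route as the paper's proof: substitute $\tilde r=\frac{\pi}{\sqrt{\kappa}}-r$, $\tilde\sigma=\sigma+\pi$ into \eqref{ode} and \eqref{J}, use the complementary-angle identities $\sn(\tfrac{\pi}{\sqrt{\kappa}}-r)=\sn(r)$, $\cs(\tfrac{\pi}{\sqrt{\kappa}}-r)=-\cs(r)$ to see that the CMC equation is preserved with the same $H$ and that $\tilde J=-J-\frac{4H}{\kappa}$, and observe $\tilde h'=-h'$ when $\tau=0$. You simply spell out the sign bookkeeping (and the role of the shift by $\pi$ versus the bare pushforward under $\Psi_a$) more explicitly than the paper does.
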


\begin{proof}
	Suppose $\tilde\gamma=(\tilde r,\tilde h,\tilde\sigma)$ with $\tilde r=\frac{\pi}{\sqrt{\kappa}}-r$ and $\tilde\sigma=\sigma+\pi$. Then
	\begin{equation*}
		\begin{aligned}
			J\left(\frac{\pi}{\sqrt{\kappa}}-r,\sigma+\pi\right)
			&=2H\cos(\pi-\sqrt{\kappa}r) +\sqrt{\kappa}\sin(\pi-\sqrt{\kappa}r)\sin(\sigma+\pi) \\
			&=-2H\cos(\sqrt{\kappa}r) -\sqrt{\kappa}\sin(\sqrt{\kappa}r)\sin\sigma \\
			&=-J-\frac{4H}{\kappa}.
		\end{aligned}
	\end{equation*}
	
	Furthermore, \eqref{ode} yields $\tilde H=H$ and $\tilde h'=-h'$ if $\tau=0$.
\end{proof}

\begin{rem}
	The previous lemma establishes a one-to-one correspondence between surfaces with $J>-\frac{2H}{\kappa}$ and surfaces with $J<-\frac{2H}{\kappa}$ for $\kappa>0$ due to the change of axis. Therefore it suffices to restrict our discussion to $J\geq-\frac{2H}{\kappa}$. It further implies that the energy is also bounded from below by $\Jmin\coloneqq-\Jmax-\frac{4H}{\kappa}$. However, for $\kappa\leq0$ the energy is not bounded from below and $\Jmin\to-\infty$ as $\kappa\searrow0$.
\end{rem}

\section{Classification of screw motion CMC surfaces}
\label{sec:solutions}

We classify the screw motion CMC surfaces in $\EKT$ including $\R^3$ and $\s^3(\kappa)$ by classifying the solution curves of \eqref{ode} in terms of the energy $J$. Our result includes and extends the works \cite{delaunay,docarmo_dajczer,figueroa_mercuri_pedrosa,hsiang_hsiang,montaldo_onnis,penafiel12,penafiel15,saearp_toubiana,tompter,torralbo}, and generalizes \cite[Thm.~1.1]{manzano_torralbo}:

\begin{theo}[Classification]\label{theo:classification}
	The non-minimal surfaces in $\EKT$ of supercritical constant mean curvature $H$, which are invariant under screw motions of pitch $a\in\R$, form a continuous one-parameter family
	\begin{equation}\label{eq:family}
		\family\coloneqq \Big\lbrace J\mapsto\Sigma_{a,H,J}\colon
		J\in(-\infty,\Jmax] \text{ if } \kappa\leq0 \text{ resp. } J\in[-\tfrac{2H}{\kappa},\Jmax] \text{ if } \kappa>0 \Big\rbrace
	\end{equation}
	parameterized by the energy $J$, where $\Jmax\coloneqq(\sqrt{4H^2+\kappa}-2H)/\kappa$. All surfaces are cylindrically bounded. The geometry of the surface $\Sigma_{a,H,J}$ with profile curve $\gamma=(r,h,\sigma)$ depends on the value of $J$:\vspace{-2mm}
	\begin{enumerate}[leftmargin=10mm]\setlength{\itemsep}{2mm}
		\item $J=\Jmax$: The profile curve is a vertical straight line and the surface is a \textbf{vertical round cylinder} of radius $r=\arct(2H)$.
		\item $J\in(0,\Jmax)$: The profile curve has $h$ increasing and $r$ and $\sigma$ are periodic. The surface is topologically an embedded cylinder $(a=0)$ or an immersed plane $(a\neq0)$ and we call it an \textbf{unduloid type surface}.
		\item $J=0$: The profile curve is homeomorphic to a semi-circle touching the screw motion axis and $h$ is increasing. The surface is topologically a doubly punctured sphere $(a=0)$ or an immersed incomplete strip $(a\neq0)$ and we call it a \textbf{sphere type surface}.
		\item $J<0$: The profile curve has $\sigma$ increasing and $r$, $h'$, $\sigma(\mod 2\pi)$ are periodic. We distinguish further according to the sign of the vertical period $\Delta\coloneqq h(\sigma+2\pi)-h(\sigma)$:\vspace{2mm}
		\begin{enumerate}\setlength{\itemsep}{2mm}
			\item $\Delta>0$: For positive period the surface is topologically an immersed cylinder $(a=0)$ or an immersed plane $(a\neq0)$ and we call it a \textbf{nodoid type I surface}.
			\item $\Delta=0$: For vanishing period the profile curve is a simple loop. The surface is topologically an embedded or immersed torus or cylinder and we call it a \textbf{tube}.
			\item $\Delta<0$: For negative period the surface is topologically an immersed cylinder $(a=0)$ or an immersed plane $(a\neq0)$ and we call it a \textbf{nodoid type II surface}.
		\end{enumerate}
	\end{enumerate}
	The profile curves are visualized in Figure \ref{fig:classification}. An example of the moduli space is displayed in Figure \ref{fig:modulispace}.
\end{theo}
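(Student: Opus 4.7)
The approach is to analyze the system \eqref{ode} by fixing the conserved energy $J$ of Proposition~\ref{prop:J} and classifying solutions according to the resulting phase portrait on the $(r,\sigma)$-cylinder. For each admissible value of $J$ (as characterized by Lemma~\ref{lem:J} and the remark following Lemma~\ref{lem:symmetry}), the identity
\[
\sin\sigma=\frac{J-\tfrac{2H}{\kappa}\bigl(\cs(r)-1\bigr)}{\sn(r)}
\]
constrains $r$ to the closed interval $I(J)\subset I_\kappa$ defined by $|\sin\sigma|\le 1$; this interval is compactly contained in $I_\kappa$ except that $r=0$ is a boundary point precisely when $J=0$, from which cylindrical boundedness is immediate. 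At the endpoints of $I(J)$ one has $\cos\sigma=0$ and hence $r'=0$, so Lemma~\ref{lem:properties_of_ode}(iv) provides a reflective extension of $\gamma$ through each turning point. Solution curves can thus be built by successive reflections between the endpoints of $I(J)$.

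Next I would identify the equilibria of \eqref{ode} as $(r_0,\pi/2)$ with $r_0=\arct(2H)$ and, when it lies in $I_\kappa$, $(r_1,3\pi/2)$ with $r_1=\arct(-2H)$; these are critical points of $J$ realizing $\Jmax$ and $\Jmin$ respectively, and correspond to the vertical cylinders. The case distinction in the theorem then proceeds from the topology of the level sets of $J$ near these critical points. For $J=\Jmax$ the level set is a single point. For $J\in(0,\Jmax)$ one checks that $\sin\sigma>0$ on all of $I(J)$, so $h$ is strictly monotone and the level curve is a small loop around $(r_0,\pi/2)$, yielding the unduloid type. For $J=0$ the interval $I(J)$ abuts the axis, the profile reaches $r=0$ tangentially, and monotone $h$ again gives the sphere type. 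For $J<0$ one argues that $\sigma'=2H-\ct(r)\sin\sigma$ does not vanish on the level set, so $\sigma$ is strictly monotone and the orbit winds once around the phase cylinder per period, producing the nodoid regime.

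The delicate part is the sub-classification for $J<0$ via the vertical period
\[
\Delta(J)=\oint h'\,dt \;=\; \oint\frac{h'}{\sigma'}\,d\sigma,
\]
computed over one $2\pi$-winding of $\sigma$. I would establish continuity of $\Delta$ in $J$ on the nodoid interval, study its limits as $J\to 0^{-}$ and as $J$ approaches the lower endpoint of the admissible interval, and show that in appropriate parameter regions $\Delta$ changes sign. The intermediate value theorem then yields a distinguished $J^{*}$ with $\Delta(J^{*})=0$, whose profile closes into a simple loop and generates the tube; the signs $\Delta>0$ and $\Delta<0$ label nodoid types~I and II, respectively. I expect this period analysis — uniform in the parameters $(\kappa,\tau,H,a)$, and determining precisely for which parameters a tube exists — to be the technical heart of the argument, corresponding to Theorem~\ref{theo:tube_existence}.

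Finally, the topology of $\Sigma_\gamma=G_a(\gamma)$ is read off by a standard covering argument. For $a=0$ the $G_a$-orbits are circles, so a profile curve which is a closed loop generates an embedded or immersed torus (the tube), a profile periodic in $h$ with nonzero vertical period produces a generally immersed cylinder (the unduloid and two nodoid types), and the sphere-type profile gives a doubly punctured sphere. For $a\neq 0$ the orbits are non-closed helices and the surface lifts to an immersed plane, degenerating to an immersed incomplete strip in the sphere case. This completes the classification as stated.
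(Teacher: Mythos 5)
Your proposal follows essentially the same route as the paper: the paper likewise uses the conserved energy $J$ to confine $r$ to $[r_-,r_+]$ and to determine the monotonicity of $h$ (for $J>0$) or $\sigma$ (for $J<0$) (Lemmas~\ref{lem:unduloid} and \ref{lem:nodoid}), builds the full profile curve by reflection at the turning points via Lemma~\ref{lem:properties_of_ode} (Propositions~\ref{prop:unduloid} and \ref{prop:nodoid}), treats $J=0$ as the limiting case touching the axis (Proposition~\ref{prop:sphere}), and obtains tubes exactly as you propose, by an intermediate value argument on the sign of the vertical period $h_2-h_0$ (Lemma~\ref{lem:height_inequalities} and Theorem~\ref{theo:tube_existence}). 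Your phase-portrait/level-set packaging is only a cosmetic variant of the paper's direct ODE arguments, and the one technical step you defer (the sign analysis of $\Delta$ via the explicit formula for $dh/d\sigma$) is correctly located as the content of Theorem~\ref{theo:tube_existence}.
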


\begin{figure}[h]
	\centering
	\tiny
	\def\svgwidth{\textwidth}\executeiffilenewer{curve_family.svg}{curve_family.pdf}{inkscape -z -D --file=curve_family.svg 	--export-pdf=curve_family.pdf --export-latex}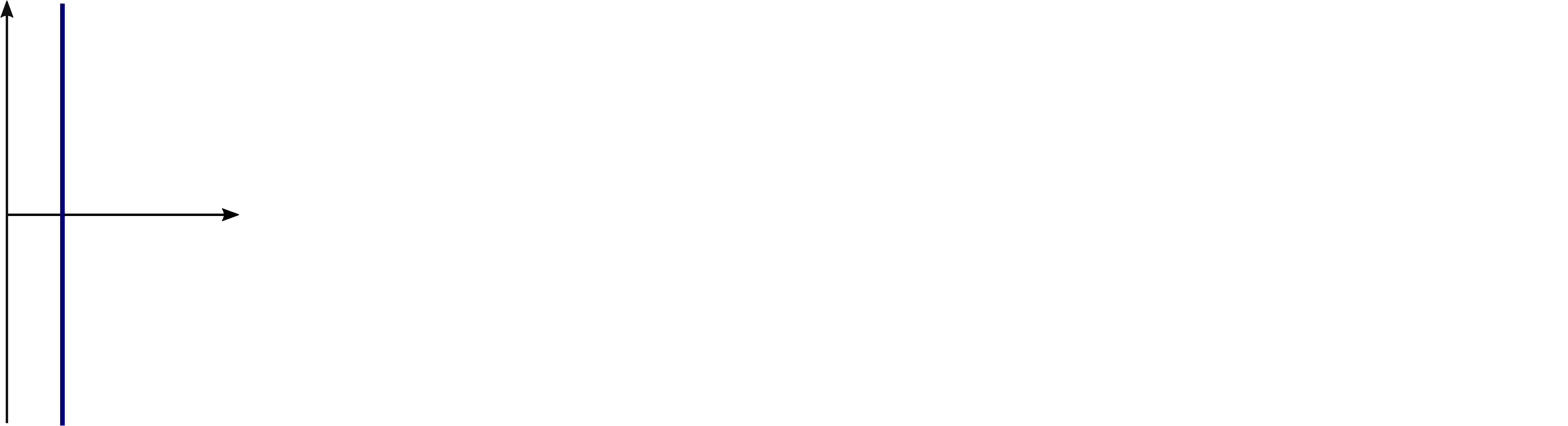
	\caption{Profile curves of the screw motion CMC surfaces in $\EKT$ according to Theorem \ref{theo:classification} (left to right): vertical cylinder, unduloid type, sphere type, nodoid type I, tube, nodoid type II. In the second and third picture the vertical cylinder is drawn for reference, while in the other cases the position depends on the choice of parameters.}
	\label{fig:classification}
\end{figure}

\begin{figure}[h]
	\centering
	\scriptsize
	\def\svgwidth{0.85\textwidth}\executeiffilenewer{Modulraum.svg}{Modulraum.pdf}{inkscape -z -D --file=Modulraum.svg 	--export-pdf=Modulraum.pdf --export-latex}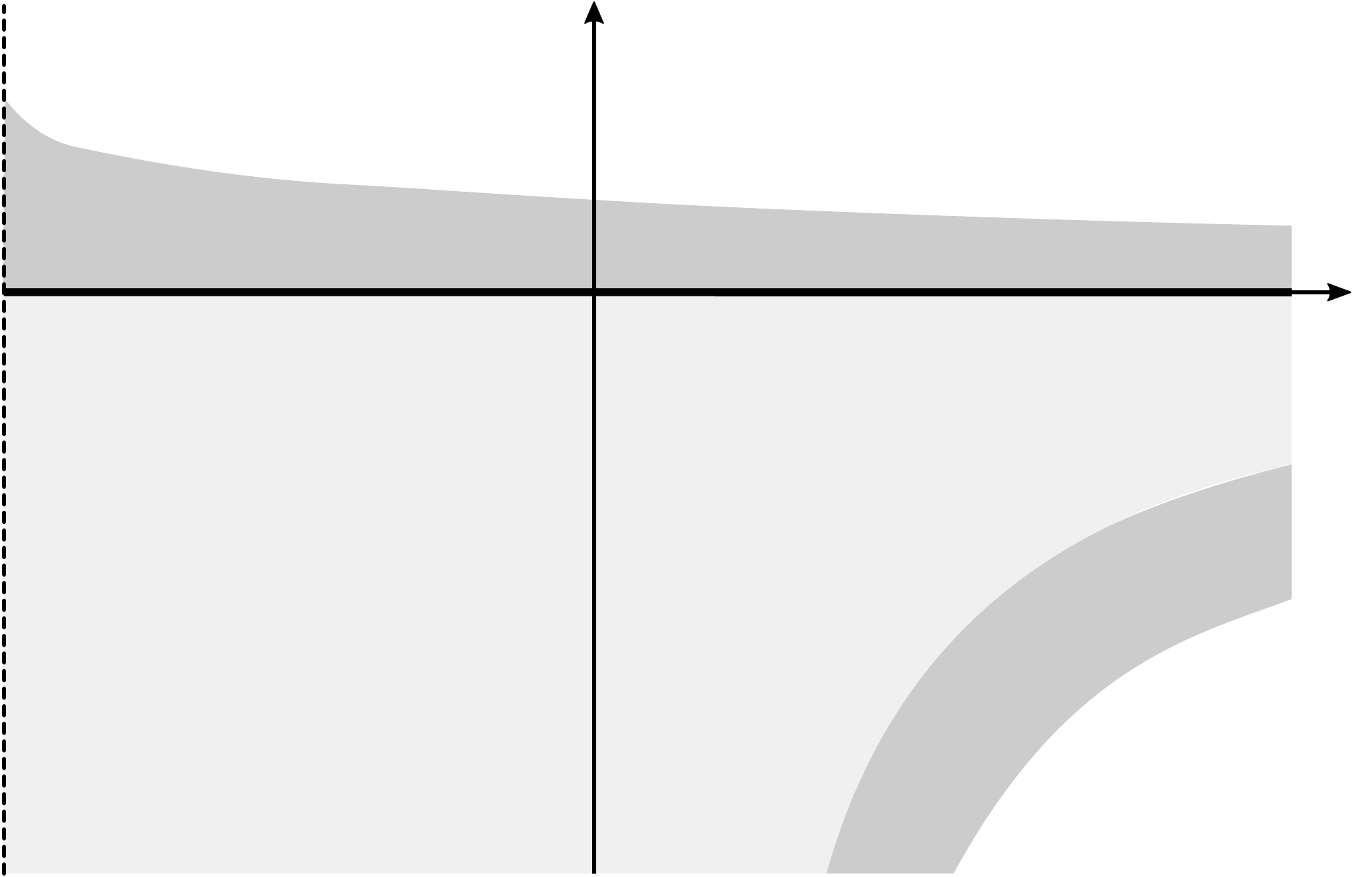
	\caption{Moduli space of the family $\family$ of screw motion CMC surfaces in $\E(\kappa,\tau)$ depending on the energy $J\in(-\infty,\Jmax]$ resp. $J\in[\Jmin,\Jmax]$ and the base curvature $\kappa$ for fixed mean curvature $H$, $\tau\neq0$ (chose pitch $a>\frac{1}{2\tau}$), according to Theorems \ref{theo:classification} and \ref{theo:tube_existence}. The dotted line for the tube is obtained numerically, see Corollary \ref{cor:energy_range} and the remark afterwards.}
	\label{fig:modulispace}
\end{figure}

Our terminology for the surfaces is based on their rotational invariant counterpart. While the profile curves are closely related, the topology of the surface can be different. The term \textit{tube} refers to the fact that the surface may be a torus or a cylinder. However, there is no additional geometric meaning associated in this context, i.e., they are not Riemannian tubes. Numerical examples of profile curves are shown in Figures \ref{fig:curves_skr}, \ref{fig:curves_hkr}, \ref{fig:curves_berger} for the rotational case ($a=0$). Varying the pitch does not change the picture qualitatively.

To prove Theorem \ref{theo:classification} we study the behavior of the generating profile curves at different energy levels. Section \ref{sec:positiveenergy} covers positive energy and Section \ref{sec:negativeenergy} negative energy. Each section is divided into two parts: First we find bounds on $r$ and study the monotonicity of~$h$ and~$\sigma$, see Lemmas \ref{lem:unduloid} and \ref{lem:nodoid}. Second we choose initial conditions followed by a qualitative description of the profile curve, see Propositions \ref{prop:unduloid} and \ref{prop:nodoid}. For negative energy we further discuss the existence of tubes in Theorem \ref{theo:tube_existence}. The case of zero energy then follows as a limiting case, see Proposition \ref{prop:sphere}. Our reasoning generalizes the analysis given in \cite{tompter} for the rotational case $(a=0)$ in Heisenberg space $\Nildrei$.

\subsection{Solution curves of positive energy}
\label{sec:positiveenergy}

The profile curves are given as solutions of \eqref{ode}. For $J=\Jmax$ the solution curve $\gamma$ is a vertical straight line and the generated surface a vertical cylinder of radius $\arct(2H)$, see Lemma \ref{lem:J}. For $J\in(0,\Jmax)$ the solution curve is not a straight line:

\begin{lem}\label{lem:unduloid}
	Let $\gamma=(r,h,\sigma)$ be a solution curve with $J\in(0,\Jmax)$. Then: \br
	\ia $h$ is everywhere strictly increasing and $\sigma(t)$ can be assumed to be in $(0,\pi)$ for all $t$. \br
	\ii $r$ attains values exactly in the interval $[r_-,r_+]$, where
	\begin{equation*}
		\begin{aligned}
			r_\pm&=\arct\!\left(2H\right) \pm\arcs\!\left(\frac{\kappa J+2H}{\sqrt{4H^2+\kappa}}\right).
		\end{aligned}
	\end{equation*}
	The minimal or maximal value are attained if and only if $\sigma(t)=\frac{\pi}{2}$.
\end{lem}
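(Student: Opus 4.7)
The plan is to prove the two claims in sequence, since (ii) depends on the sign of $\sin\sigma$ established in (i).

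For (i), I would first rule out any zero of $\sin\sigma$ by evaluating the energy. If $\sin\sigma(t_0)=0$, then \eqref{J} reduces to $J=\frac{2H}{\kappa}\bigl(\cs(r(t_0))-1\bigr)$, and a short case distinction on the sign of $\kappa$ (using $\cs\leq 1$ for $\kappa>0$ and $\cs\geq 1$ for $\kappa<0$) shows this quantity is $\leq 0$, contradicting $J>0$. So $\sin\sigma$ is nowhere zero and hence of constant sign. The case $\sin\sigma<0$ throughout is excluded by the same energy expression, since both summands in \eqref{J} would then be non-positive with the second strictly negative, forcing $J<0$. Hence $\sin\sigma>0$ throughout, and by continuity together with the $2\pi$-periodicity of \eqref{ode} in $\sigma$ we may shift $\sigma$ by a multiple of $2\pi$ to place it in $(0,\pi)$. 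The second line of \eqref{ode} then yields $h'>0$.

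For (ii), claim (i) forces the turning points $r'=\cos\sigma=0$ to occur precisely at $\sigma=\pi/2$ (since $\sigma\in(0,\pi)$ and $\sin\sigma=1$ there). At such points the conserved energy reads $J=f(r)$ with
\[ f(r):=\tfrac{2H}{\kappa}\bigl(\cs(r)-1\bigr)+\sn(r). \]
Using $\sn'=\cs$ and $\cs'=-\kappa\sn$, the derivative $f'(r)=\cs(r)-2H\sn(r)$ vanishes uniquely at $r^\star:=\arct(2H)$, where by the proof of Lemma~\ref{lem:J} the function attains its global maximum $\Jmax$. Since $f(0^+)=0$ and $f$ strictly decreases after $r^\star$ (to $-4H/\kappa$ at $\pi/\sqrt{\kappa}$ if $\kappa>0$, and to $-\infty$ if $\kappa<0$), for every $J\in(0,\Jmax)$ the equation $f(r)=J$ has exactly two solutions $r_-<r^\star<r_+$, and $\{r\in I_\kappa : f(r)\geq J\}=[r_-,r_+]$.

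To derive the explicit formula, I would substitute $r=r^\star\pm v$ into $f$ and apply the addition identities
\[ \cs(r_1+r_2)=\cs(r_1)\cs(r_2)-\kappa\sn(r_1)\sn(r_2),\qquad \sn(r_1+r_2)=\sn(r_1)\cs(r_2)+\cs(r_1)\sn(r_2), \]
together with $\cs(r^\star)=2H\sn(r^\star)$ and the identity $\cs^2+\kappa\sns\equiv 1$ (whence $\sn(r^\star)=1/\sqrt{4H^2+\kappa}$). The $\sn(v)$-terms cancel, leaving $f(r^\star\pm v)=\bigl(\sqrt{4H^2+\kappa}\,\cs(v)-2H\bigr)/\kappa$; setting this equal to $J$ and inverting gives the claimed $v=\arcs\bigl((\kappa J+2H)/\sqrt{4H^2+\kappa}\bigr)$. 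It remains to verify that $r$ actually attains both endpoints: solving the energy equation for $\sin\sigma$ and substituting into $(r')^2=1-\sin^2\sigma$ reduces \eqref{ode} to a first-order autonomous ODE in $r$ alone, whose right-hand side vanishes to exactly first order at $r_\pm$; the separation-of-variables integral converges, so $r$ reaches either endpoint in finite time, and Lemma~\ref{lem:properties_of_ode}(iv) then extends the solution by reflection, forcing oscillation between $r_-$ and $r_+$. The main technical obstacle is the addition-formula computation in the third paragraph; the main conceptual point is the energy argument in (i), which is precisely where positive energy distinguishes itself from the nodoid regime.
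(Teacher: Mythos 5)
Your proof is correct. Part (i) is essentially the paper's argument: both solve \eqref{J} for $\sin\sigma$ and use $J>0$ to force $\sin\sigma>0$; your version merely splits the one-line chain of inequalities into an explicit sign analysis of the two summands. Part (ii) takes a genuinely different route to the same bounds. The paper rewrites the energy via a phase-shift identity as $J=\tfrac{1}{\kappa}\sqrt{4H^2+\kappa\sin^2\sigma}\,\cs\big(r-\arct(\tfrac{2H}{\sin\sigma})\big)-\tfrac{2H}{\kappa}$, solves for $r$ as an explicit function of $\sigma$ (its equation \eqref{radius}), and then maximizes/minimizes that expression over $\sigma$; you instead observe that $J\leq f(r)\coloneqq\tfrac{2H}{\kappa}(\cs(r)-1)+\sn(r)$ with equality iff $\sigma=\tfrac{\pi}{2}$, and identify $[r_-,r_+]$ as the superlevel set $\{f\geq J\}$ of the unimodal function $f$, deriving the explicit endpoints by expanding $f(\arct(2H)\pm v)$ with the addition formulas. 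The underlying trigonometric algebra is the same (your expansion is the paper's phase-shift identity read in reverse), but the logical packaging differs: the paper's formula \eqref{radius} has the advantage of being reused verbatim in the negative-energy case (Lemma \ref{lem:nodoid}, where the choice of sign branch matters), while your superlevel-set argument is more self-contained and additionally supplies the finite-time attainment of $r_\pm$ via the separation-of-variables integral --- a point the paper defers to Proposition \ref{prop:unduloid}. One presentational remark: when you pass from ``at turning points $J=f(r)$'' to ``$\{r:f(r)\geq J\}=[r_-,r_+]$'', you should state explicitly that $J\leq f(r(t))$ holds for \emph{all} $t$ (not just at turning points), which is exactly the inequality from the proof of Lemma \ref{lem:J}; this is what confines the whole orbit, not only its turning points, to $[r_-,r_+]$.
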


\begin{proof}
	(\textit{i}) The definition of the energy $J$ yields
	\begin{equation*}
		\sin\sigma=\frac{\kappa J-2H(\cs(r)-1)}{\kappa\sn(r)}
		>\frac{0-2H(\cs(r)-1)}{\kappa\sn(r)}
		=2H\left(\frac{1-\cs(r)}{\kappa\sn(r)}\right) >0.
	\end{equation*}
	
	Thus, $h'=\sqrt{\termbs}\,\sin\sigma>0$ and without loss of generality $\sigma\in(0,\pi)$.
	
	(\textit{ii}) Rewrite the energy using trigonometric identities as
	\begin{equation*}
		J=\frac{1}{\kappa}\sqrt{4H^2+\kappa\sin^2\sigma}\, \cs\!\left(r-\arct\!\left(\frac{2H}{\sin\sigma}\right)\!\right)-\frac{2H}{\kappa}.
	\end{equation*}
	
	Solving this expression for $r$ gives
	\begin{equation}\label{radius}
		r=\arct\!\left(\frac{2H}{\sin\sigma}\right)\pm\arcs\!\left(\frac{\kappa J+2H}{\sqrt{4H^2+\kappa\sin^2\sigma}}\right).
	\end{equation}
	This expression yields the maximal and minimal radius. On one hand
	\begin{equation*}
		\arct\!\left(\frac{2H}{\sin\sigma}\right)
		+\arcs\!\left(\frac{\kappa J+2H}{\sqrt{4H^2+\kappa\sin^2\sigma}}\right)
		\leq \arct\!\left(2H\right)
		+\arcs\!\left(\frac{\kappa J+2H}{\sqrt{4H^2+\kappa}}\right)\eqqcolon r_+
	\end{equation*}
	and on the other hand
	\begin{equation*}
		\arct\!\left(\frac{2H}{\sin\sigma}\right)
		-\arcs\!\left(\frac{\kappa J+2H}{\sqrt{4H^2+\kappa\sin^2\sigma}}\right)
		\geq \arct\!\left(2H\right)
		-\arcs\!\left(\frac{\kappa J+2H}{\sqrt{4H^2+\kappa}}\right) \eqqcolon r_-
	\end{equation*}
	with equality in both cases for $\sigma=\frac{\pi}{2}$.
\end{proof}

This allows us to choose initial conditions without loss of generality:

\begin{prop}\label{prop:unduloid}
	Let $\gamma=(r,h,\sigma)$ be a solution curve with $J\in(0,\Jmax)$ for initial data $r(0)=r_+, h(0)=0, \sigma(0)=\frac{\pi}{2}$. Then there exist $0<t_1<t_2<\infty$ such that:
	\begin{itemize}
		\item $\sigma'(t)>0$ for $t\in[0,t_1)$, \\[-0.5em]
		\item $\sigma'(t_1)=0$, \\[-0.5em]
		\item $\sigma'(t)<0$ for $t\in(t_1,t_2]$, \\[-0.5em]
		\item $\sigma(t_2)=\frac{\pi}{2}$ and $r(t_2)=r_-$.
	\end{itemize}
	The solution extends to a curve on $\R$ by successive reflections at heights $kh(t_2), k\in\Z$, see Figure \ref{fig:generatingcurve} (left). In particular, $r$ and $\sigma$ are $2t_2$-periodic. The generated surface is of unduloid type.
\end{prop}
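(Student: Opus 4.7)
My plan is to analyze the orbit $\gamma$ in the $(r,\sigma)$-phase plane using conservation of the energy $J$ (Proposition~\ref{prop:J}), and then invoke the reflection extension of Lemma~\ref{lem:properties_of_ode}(iv) to produce the global periodic curve.

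First I carry out a local analysis at $t=0$. From \eqref{ode} and the initial data one reads off $r'(0)=\cos(\pi/2)=0$ and $\sigma'(0)=2H-\ct(r_+)$. By Lemma~\ref{lem:unduloid}(ii), the hypothesis $J\in(0,\Jmax)$ forces $r_+>\arct(2H)$, whence $\ct(r_+)<2H$ and $\sigma'(0)>0$. Differentiating $r'=\cos\sigma$ yields $r''(0)=-\sigma'(0)<0$, so $r$ has a strict local maximum at $t=0$. Consequently, on a short interval $(0,\varepsilon)$ we have $\sigma\in(\pi/2,\pi)$, $r\in(r_-,r_+)$, and $\sigma$ strictly increases.

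Next I study the level set $\mathcal{L}=\{J(\cdot)=J(\gamma)\}\subset I_\kappa\times(0,\pi)$. A direct computation gives $\nabla J=\bigl(-2H\sn(r)+\cs(r)\sin\sigma,\,\sn(r)\cos\sigma\bigr)$, which vanishes only at the vertical cylinder equilibrium $(\arct(2H),\pi/2)$ corresponding to $J=\Jmax$. Hence for $J<\Jmax$ the set $\mathcal{L}$ is a smooth $1$-manifold; it is compact because Lemma~\ref{lem:unduloid}(ii) together with the branch condition from its proof confines $\mathcal{L}$ to a bounded rectangle in which the two branches $r=r(\sigma)$ merge at the extremal values of $\sigma$, so $\mathcal{L}$ is a topological circle. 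The ODE vector field $(r',\sigma')$ is tangent to $\mathcal{L}$ (by conservation of $J$) and has no zeros on it (a zero being equivalent to $\nabla J=0$), so the orbit through $(r_+,\pi/2)$ parameterizes $\mathcal{L}$ periodically in finite time.

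It remains to identify $t_1,t_2$ and extend. Tangency of $\mathcal{L}$ with a horizontal line is exactly $\partial_r J=0$, equivalently $\sigma'=0$; by the symmetry of $\mathcal{L}$ under $\sigma\mapsto\pi-\sigma$ (since $J$ depends on $\sin\sigma$) there is a unique such point with $\sigma>\pi/2$, and the orbit reaches it at some first time $t_1>0$. Using $\ct'=-1/\sns$ one finds
\begin{equation*}
\sigma''(t_1)=\frac{r'(t_1)\sin\sigma(t_1)}{\sns(r(t_1))}=\frac{\cos\sigma(t_1)\sin\sigma(t_1)}{\sns(r(t_1))}<0,
\end{equation*}
since $\sigma(t_1)\in(\pi/2,\pi)$, so $t_1$ is a strict local maximum of $\sigma$ and $\sigma$ strictly decreases afterwards. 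Continuing along $\mathcal{L}$, the orbit next meets $\{\sigma=\pi/2\}$ at some $t_2>t_1$; Lemma~\ref{lem:unduloid}(ii) forces $r(t_2)\in\{r_-,r_+\}$, and since $r$ has strictly decreased on $(0,t_2]$, we must have $r(t_2)=r_-$. Finally $r'(t_2)=0$ lets Lemma~\ref{lem:properties_of_ode}(iv) extend $\gamma$ by reflection across $h=h(t_2)$; iterating produces the global solution with $r$ and $\sigma$ of period $2t_2$, and the unduloid type of $\Sigma_\gamma$ then matches the description in Theorem~\ref{theo:classification}. The main obstacle is the phase-plane step: showing that $\mathcal{L}$ is a smooth topological circle with no flow zeros for $J<\Jmax$; everything else reduces to bookkeeping about the direction of the flow along this circle.
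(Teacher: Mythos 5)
Your proof is correct, but it reaches the conclusion by a genuinely different route than the paper. You treat $(r,\sigma)$ as an autonomous planar system with first integral $J$, show that the level set $\mathcal{L}$ is a compact regular curve avoiding the unique equilibrium $(\arct(2H),\frac{\pi}{2})$, and deduce periodicity and the finite hitting times $t_1,t_2$ from the fact that a nonvanishing flow on a compact level circle is periodic. The paper instead argues directly on the ODE: if $\sigma'$ never vanished then $r'=\cos\sigma$ would be bounded away from zero and $r$ would exit $[r_-,r_+]$, and a short convergence analysis rules out $\sigma\to\frac{\pi}{2}$ asymptotically; this yields $t_1$ and $t_2$ without any discussion of the global topology of $\{J=\mathrm{const}\}$. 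The two arguments share several ingredients — the sign of $\sigma'(0)$, the second-derivative test at critical points of $\sigma$ (your factor $1/\sns(r)$ is exactly the paper's $\kappa+\cts(r)$), the identification $r(t_2)=r_-$ from strict monotonicity of $r$, and the reflection extension via Lemma~\ref{lem:properties_of_ode}(iv). What your route buys is conceptual clarity and the periodicity of the $(r,\sigma)$-orbit essentially for free; what it costs is precisely the step you flag as the main obstacle, which you leave somewhat sketchy: you should verify that $\sin\sigma\geq J/\sn(r_+)>0$ on $\mathcal{L}$ (so $\mathcal{L}$ stays away from $\sigma\in\{0,\pi\}$), that the two branches of \eqref{radius} merge exactly where $\sin^2\sigma=\kappa J^2+4HJ\in(0,1)$, and that uniqueness of the horizontal tangency with $\sigma>\frac{\pi}{2}$ follows not from the symmetry $\sigma\mapsto\pi-\sigma$ alone but from the fact that $\partial_rJ=0$ on $\mathcal{L}$ forces this same relation $\sin^2\sigma=\kappa J^2+4HJ$ (alternatively, your own computation $\sigma''(t_1)<0$ at any critical point with $\sigma\in(\frac{\pi}{2},\pi)$ already suffices to exclude a second zero of $\sigma'$ before $t_2$, as in the paper). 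With these details filled in, your argument is a complete and somewhat more global alternative to the paper's elementary one.
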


\begin{figure}[h]
	\centering
	\scriptsize
	\def\svgwidth{0.7\textwidth}\executeiffilenewer{profilecurve_proof.svg}{profilecurve_proof.pdf}{inkscape -z -D --file=profilecurve_proof.svg 	--export-pdf=profilecurve_proof.pdf --export-latex}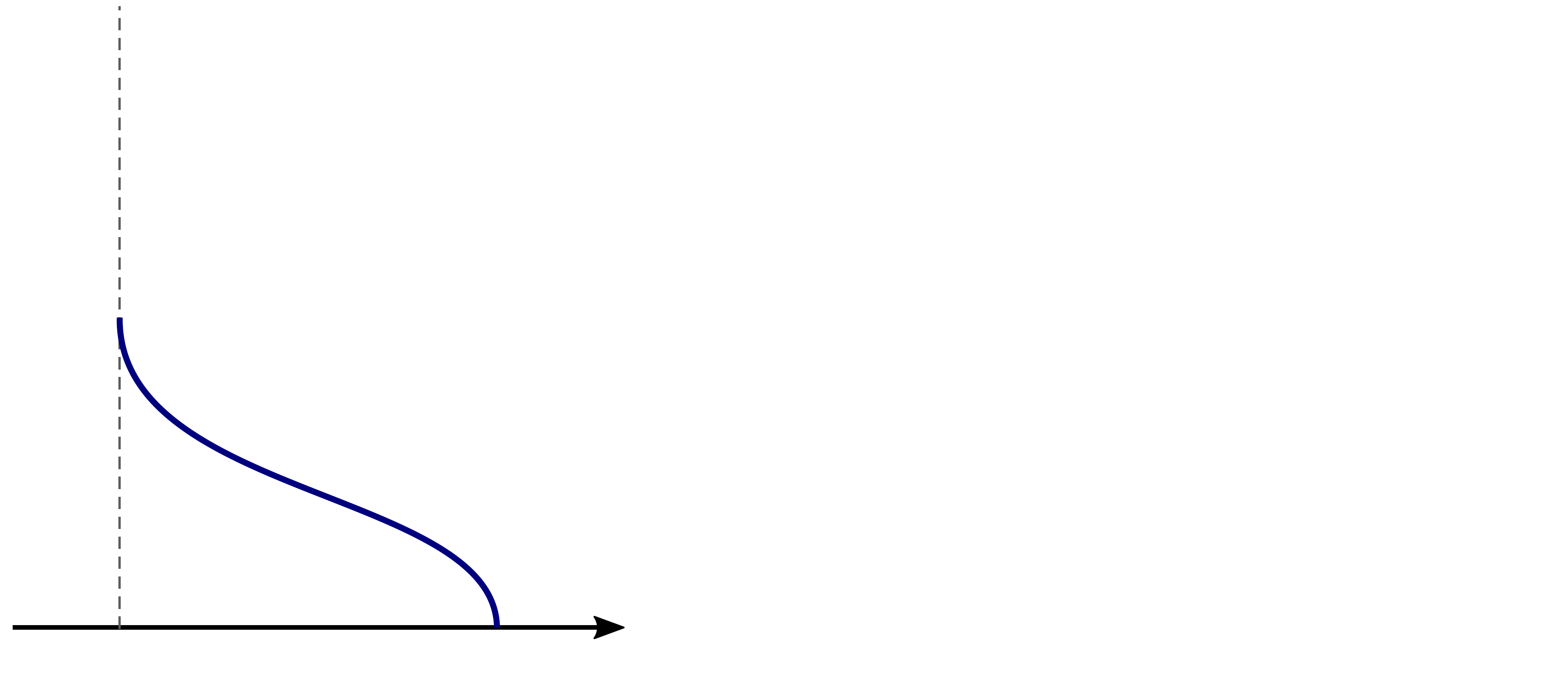
	\caption{Solution curves of \eqref{ode} for initial data $r(0)=r_+, h(0)=0, \sigma(0)=\frac{\pi}{2}$. \textit{Left:} Unduloid type for $0<J<\Jmax$. \textit{Right:} Nodoid type for $J<0$.}
	\label{fig:generatingcurve}
\end{figure}

\begin{proof}
	We divide the proof into several steps:
	
	(\textit{i}) $\sigma'(0)>0$. Note, that $r_+$ as a function of $J$ is strictly decreasing. Therefore,
	\begin{equation*}
		\sigma'(0)=2H-\ct\!\big(r_+(J)\big)
		>2H-\ct\!\big(r_+(\Jmax)\big)=2H-\ct\big(\!\arct(2H)\big)=0.
	\end{equation*}
	
	(\textit{ii}) \textit{There exists $t_1>0$ such that $\sigma'(t_1)=0$.}
	
	On the contrary, assume $\sigma'\neq0$ for all $t>0$. Then $\sigma'>0$ by (\textit{i}) and continuity, and $\sigma>\frac{\pi}{2}$ by initial data. Since on the other hand $\sigma<\pi$, this implies $r'=\cos\sigma$ is everywhere negative and bounded away from $0$ for $t>\epsilon>0$. Thus, at some point $r<r_-$, which is a contradiction. Without loss of generality we choose $t_1>0$ to be the smallest possible with this property.
	
	(\textit{iii}) A critical point of $\sigma$ is a minimum if $\sigma\in(0,\frac{\pi}{2})$ and a maximum if $\sigma\in(\frac{\pi}{2},\pi)$.
	
	The second derivative of $\sigma$ is given by
	\begin{equation*}
		\sigma''=\big(2H-\ct(r)\sin\sigma\big)'
		=\big(\kappa+\cts(r)\big)\cos\sigma\sin\sigma-\sigma'\ct(r)\cos\sigma.
	\end{equation*}
	
	At a critical point $(\sigma'=0)$ this becomes $\sigma''=(\kappa+\cts(r))\cos\sigma\sin\sigma$. Since $(\kappa+\cts(r))>0$, this implies $\sigma''>0$ if and only if $\sigma\in(0,\frac{\pi}{2})$ and $\sigma''<0$ if and only if $\sigma\in(\frac{\pi}{2},\pi)$. In particular, $\sigma(t_1)$ is a maximum.
	
	(\textit{iv}) \textit{There exists $t_2>t_1$ such that $\sigma(t_2)=\frac{\pi}{2}$ and $r(t_2)=r_-$.}
	
	For $t>t_1$ the angle $\sigma$ starts decreasing ($\sigma$ has a maximum at $t_1$). Since there is no minimum for $\sigma>\frac{\pi}{2}$ by (\textit{iii}), it is decreasing as long as it is greater than $\frac{\pi}{2}$. We claim, this implies there exists $t_2>t_1$ such that $\sigma(t_2)=\frac{\pi}{2}$. If not, $\sigma$ must converge to some $\alpha\geq\frac{\pi}{2}$ from above. If $\alpha>\frac{\pi}{2}$, then $r'=\cos\sigma$ is everywhere negative and bounded away from~$0$. Thus, $r<r_-$ at some point, which is a contradiction. If $\alpha=\frac{\pi}{2}$, then $r\to \hat r$ for some $\hat r<r(t_1)< \arct(2H)$. Then \eqref{ode} implies $\sigma'=2H-\ct(r)\sin\sigma\to 2H-\ct(\hat r)<0$, i.e., $\sigma'$ is bounded away from $0$, which would contradict convergence.	Without loss of generality we choose again $t_2>t_1$ to be the smallest possible with this property. Lemma \ref{lem:unduloid} states that $r$ attains at $t_2$ its minimum $r_-$ or its maximum $r_+$. But $\sigma(t)>\frac{\pi}{2}$ for all $t\in(0,t_2)$, and so $r(t_2)<r_+$. Therefore, $r(t_2)=r_-$.
	
	By Lemma \ref{lem:properties_of_ode} we can extend the solution curve from $[0,t_2]$ to $\R$. The full solution fulfills $r(t+2t_2)=r(t)$ and $\sigma(t+2t_2)=\sigma(t)$ for all $t\in\R$. Hence, it is periodic in $r$ and $\sigma$.
\end{proof}

\subsection{Solution curves of negative energy}
\label{sec:negativeenergy}

\begin{lem}\label{lem:nodoid}
	Let $\gamma=(r,h,\sigma)$ be a solution curve with $J\in(-\infty,0)$ if $\kappa\leq0$ resp. $J\in[-\frac{2H}{\kappa},0)$ if $\kappa>0$. Then: \br
	\ia $\sigma$ is everywhere strictly increasing. \br
	\ii $r$ attains values exactly in the interval $[r_-,r_+]$, where
	\begin{equation*}
		\begin{aligned}
			r_\pm&=\pm\arct\!\left(2H\right)+\arcs\!\left(\frac{\kappa J+2H}{\sqrt{4H^2+\kappa}}\right).
		\end{aligned}
	\end{equation*}
	The maximal value is attained if and only if $\sigma(t)=\frac{\pi}{2} (\mod 2\pi)$ and the minimal value is attained if and only if $\sigma(t)=\frac{3\pi}{2} (\mod 2\pi)$. 
\end{lem}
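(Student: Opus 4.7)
My plan is to derive both assertions from the conserved energy of Proposition~\ref{prop:J} by analysing the level set $\{J(r,\sigma)=J\}$ inside the phase cylinder $I_\kappa\times(\R/2\pi\Z)$. I treat $\kappa\neq 0$ explicitly; the case $\kappa=0$ follows by passing to the limit (or, directly, from $\sin\sigma=(J+Hr^2)/r$, which gives $\sigma'=H-J/r^2\geq H>0$ for $J<0$).

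For part~(i), I would eliminate $\sin\sigma$ from the third line of \eqref{ode} using $\sn(r)\sin\sigma = J - \frac{2H}{\kappa}(\cs(r)-1)$. After substitution and the identity $\kappa\sns(r) = 1 - \css(r)$, the expression for $\sigma'$ collapses to
\begin{equation*}
\sigma' = \frac{2H\,(1 - \cs(r)) - \kappa J\,\cs(r)}{\kappa\sns(r)} = \frac{2H - (2H + \kappa J)\,\cs(r)}{\kappa\sns(r)}.
\end{equation*}
Positivity then follows from a short case analysis. For $\kappa>0$ the denominator is positive; the numerator is linear in $\cs(r)\in[-1,1]$ with slope $-(2H+\kappa J)\leq 0$, and its minimum $-\kappa J>0$ is attained at $\cs(r)=1$---it is precisely the bound $J\geq -2H/\kappa$ that makes the slope non-positive. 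For $\kappa<0$ we have $\cs(r)\geq 1$ and $\kappa J>0$, so $(2H+\kappa J)\cs(r)>2H$ renders the numerator strictly negative, matching the sign of the denominator.

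For part~(ii), the plan is first to locate the $r$-extrema algebraically and then to show that the dynamics realizes them. At an extremum $r'=\cos\sigma=0$, hence $\sin\sigma=\varepsilon\in\{\pm 1\}$, so the energy equation becomes $\kappa J + 2H = 2H\cs(r) + \varepsilon\kappa\sn(r)$. Applying the addition formulas for $\cs$ and $\sn$ rewrites the right-hand side as $\sqrt{4H^2+\kappa}\,\cs(r-\varepsilon\arct(2H))$, and inversion produces the four formal candidates
\begin{equation*}
r = \varepsilon\arct(2H) \pm \arcs\bigl((\kappa J + 2H)/\sqrt{4H^2+\kappa}\bigr).
\end{equation*}
Using $\cs(\arct(2H)) = 2H/\sqrt{4H^2+\kappa}$ together with the monotonicity of $\arcs$ (decreasing for $\kappa>0$, increasing for $\kappa<0$), a short check gives $\arcs(\cdot) > \arct(2H)$ for every $J<0$ in the allowed range. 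This rules out the two candidates with negative $r$ and leaves exactly $r_+ = \arct(2H) + \arcs(\cdot)$ at $\varepsilon=1$ (i.e.\ $\sigma\equiv\pi/2\pmod{2\pi}$) and $r_- = -\arct(2H) + \arcs(\cdot)$ at $\varepsilon=-1$ (i.e.\ $\sigma\equiv 3\pi/2\pmod{2\pi}$).

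To finish, I would note that $\sigma'$ is continuous and strictly positive on the compact range of $r$ by part~(i), so it is bounded below by a positive constant along the solution, forcing $\sigma(t)$ to be strictly increasing and unbounded on both sides; in particular it passes through $\pi/2$ and $3\pi/2$ modulo $2\pi$ infinitely often, at which times $r$ equals $r_+$ respectively $r_-$, and the intermediate value theorem fills in the full interval $[r_-,r_+]$. The main obstacle I anticipate is the sign bookkeeping in part~(ii): singling out which of the four formal roots actually lie in $I_\kappa$ requires the inequality $\arcs(\cdot)>\arct(2H)$, and for $\kappa>0$ confirming $r_+<\pi/\sqrt{\kappa}$ uses precisely the hypothesis $J\geq -2H/\kappa$.
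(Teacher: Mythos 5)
Your part (i) is correct, and in fact cleaner than what the paper does: eliminating $\sin\sigma$ via the energy and simplifying with $\kappa\sns(r)=1-\css(r)$ gives the exact identity $\sigma'=\bigl(2H-(2H+\kappa J)\cs(r)\bigr)/\bigl(\kappa\sns(r)\bigr)$, and your sign analysis of the numerator as an affine function of $\cs(r)$ replaces the paper's two separate inequality estimates (one where $\ct(r)>0$, one where $\ct(r)\le 0$ for $\kappa>0$) by a single computation; the endpoint case $J=-\tfrac{2H}{\kappa}$ is handled correctly. The algebraic identification of the critical radii in part (ii) --- addition formula, four formal roots, elimination of two of them via $\arcs\bigl((\kappa J+2H)/\sqrt{4H^2+\kappa}\bigr)>\arct(2H)$ --- also matches the paper's computation and is correct.

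The gap is in the final dynamical step of part (ii). You invoke ``$\sigma'$ is continuous and strictly positive on the compact range of $r$,'' but the compactness of the range of $r$ is precisely what is at stake, and for $\kappa<0$ it is not free: on $I_\kappa=(0,\infty)$ your own identity gives $\sigma'\sim(2H+\kappa J)\cs(r)/\bigl(-\kappa\sns(r)\bigr)\to 0$ as $r\to\infty$, so without an a priori upper bound on $r$ there is no positive lower bound on $\sigma'$, and you cannot yet conclude that $\sigma$ ever reaches $\tfrac{\pi}{2}$ or $\tfrac{3\pi}{2}$ --- a priori the solution could drift to $r=\infty$ with $\sigma$ converging. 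Analysing the level set only at the points where $\cos\sigma=0$ does not exclude this. The missing ingredient is the pointwise containment valid for \emph{every} $\sigma$: the energy relation together with $|\sin\sigma|\le 1$ can be rewritten as $\sqrt{4H^2+\kappa\sin^2\sigma}\,\cs\bigl(r-s(\sigma)\bigr)=\kappa J+2H$ with $\ct(s(\sigma))=2H/\sin\sigma$, and monotonicity estimates in $\sin^2\sigma$ then squeeze $r$ between $r_-$ and $r_+$ before any dynamics is used. This is exactly the route the paper takes, and it is also where supercriticality $4H^2+\kappa>0$ must enter for $\kappa<0$ (subcritically the level sets are unbounded in $r$ and the lemma fails), whereas your $\kappa<0$ argument never uses it for boundedness --- a symptom of the gap. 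Once that containment is in place, your lower bound on $\sigma'$, the passage through $\tfrac{\pi}{2}$ and $\tfrac{3\pi}{2}$, and the intermediate value argument all go through; note that for $\kappa>0$ your argument is essentially fine as written, since there $\sigma'\to+\infty$ at both ends of $I_\kappa$ (using $4H+\kappa J>0$), so $\inf_{I_\kappa}\sigma'>0$ without any prior bound on $r$.
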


\begin{proof}
	(\textit{i}) Assume $\kappa\leq0$. Then $\ct(r)>0$. Solving $J=\frac{2H}{\kappa}(\cs(r)-1) +\sn(r)\sin\sigma<0$ for $\sin\sigma$ and substituting in \eqref{ode} gives
	\begin{equation*}
		\begin{aligned}
			\sigma'&=2H-\ct(r)\sin\sigma
			>2H-\ct(r)\,\frac{2H(1-\cs(r))}{\kappa\sn(r)} 
			=2H\,\left(\frac{1-\cs(r)}{\kappa\sns(r)}\right)>0.
		\end{aligned}
	\end{equation*}
	
	Now assume $\kappa>0$. If $r\in(0,\frac{\pi}{2\sqrt{\kappa}})$, then $\ct(r)>0$ and the above proof carries over. But for $r\in[\frac{\pi}{2\sqrt{\kappa}},\frac{\pi}{\sqrt{\kappa}})$ it holds $\ct(r)\leq0$. Solving $J=\frac{2H}{\kappa}(\cs(r)-1) +\sn(r)\sin\sigma>-\frac{2H}{\kappa}$ for $\sin\sigma$ and substituting in \eqref{ode} gives
	\begin{equation*}
		\begin{aligned}
			\sigma'&=2H-\ct(r)\sin\sigma 
			\geq2H+\ct(r)\,\frac{2H\cs(r)}{\kappa\sn(r)} 
			=2H\,\left(1+\frac{\cts(r)}{\kappa}\right)>0.
		\end{aligned}
	\end{equation*}
	Thus, $\sigma'>0$ everywhere.
	
	(\textit{ii}) Analogously to the proof of Lemma \ref{lem:unduloid} we obtain
	\begin{equation}\label{radius2}
		r=\arct\!\left(\frac{2H}{\sin\sigma}\right)+\arcs\!\left(\frac{\kappa J+2H}{\sqrt{4H^2+\kappa\sin^2\sigma}}\right).
	\end{equation}
	Note that the solution of \eqref{radius} with negative sign corresponds to $r<0$ and thus does not apply here. We obtain the maximal and minimal radius by estimates similar to before. On the one hand 
	\begin{equation*}
		\arct\!\left(\frac{2H}{\sin\sigma}\right)+\arcs\!\left(\frac{\kappa J+2H}{\sqrt{4H^2+\kappa\sin^2\sigma}}\right)
		\leq \arct\!\left(2H\right)+\arcs\!\left(\frac{\kappa J+2H}{\sqrt{4H^2+\kappa}}\right)
		\eqqcolon r_+,
	\end{equation*}
	
	while on the other hand
	\begin{equation*}
		\arct\!\left(\frac{2H}{\sin\sigma}\right)+\arcs\!\left(\frac{\kappa J+2H}{\sqrt{4H^2+\kappa\sin^2\sigma}}\right)
		\geq -\arct\!\left(2H\right)+\arcs\!\left(\frac{\kappa J+2H}{\sqrt{4H^2+\kappa}}\right)
		\eqqcolon r_-.
		\qedhere
	\end{equation*}
\end{proof}

This again allows us to choose initial conditions without loss of generality:

\begin{prop}\label{prop:nodoid}
	Let $\gamma=(r,h,\sigma)$ be a solution curve with $J\in(-\infty,0)$ if $\kappa\leq0$ resp. $J\in[-\frac{2H}{\kappa},0)$ if $\kappa>0$ for initial data $r(0)=r_+$, $h(0)=0$, $\sigma(0)=\frac{\pi}{2}$. Then there exist $0<t_1<t_2<\infty$ such that:
	\begin{itemize}
		\item $h'(t)>0$ for $t\in[0,t_1)$, \\[-0.5em]
		\item $\sigma(t_1)=\pi$ and $h'(t_1)=0$, \\[-0.5em]
		\item $h'(t)<0$ for $t\in(t_1,t_2]$, \\[-0.5em]
		\item $\sigma(t_2)=\frac{3\pi}{2}$ and $r(t_2)=r_-$.
	\end{itemize}
	The solution extends to a curve on $\R$ by successive reflections at heights $kh(t_2), k\in\Z$, see Figure \ref{fig:generatingcurve} (right). In particular, $r$ and $\sigma(\mod 2\pi)$ are $2t_2$-periodic. The generated surface is of nodoid type or a tube.
\end{prop}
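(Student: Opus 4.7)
The plan is to mirror the proof of Proposition \ref{prop:unduloid}, but with $\sigma$ replacing $r$ as the natural monotone parameter. By Lemma \ref{lem:nodoid}(i), $\sigma' > 0$ everywhere, and starting from $\sigma(0) = \pi/2$ the relevant events are the first times $t_1$ and $t_2$ at which $\sigma$ crosses $\pi$ and $3\pi/2$, respectively.

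First I would show that $t_1, t_2 < \infty$. By Lemma \ref{lem:nodoid}(ii), $r(t)$ is confined to the compact subinterval $[r_-, r_+] \subset I_\kappa$. Inspecting the proof of Lemma \ref{lem:nodoid}(i), the lower bounds obtained there for $\sigma'$ are strictly positive continuous functions of $r \in [r_-, r_+]$, hence are uniformly bounded below by some $c > 0$. Then $\sigma(t) \geq \pi/2 + ct$, so $\sigma$ crosses $\pi$ and $3\pi/2$ in finite time; take $t_1 < t_2$ to be the first such times.

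Next I would read the signs off the formula $h' = \sqrt{\termbs}\,\sin\sigma$. On $[0, t_1)$, $\sigma \in [\pi/2, \pi)$ gives $\sin\sigma > 0$ and hence $h' > 0$; at $t_1$, $\sin\pi = 0$, so $h'(t_1) = 0$. On $(t_1, t_2]$, $\sigma \in (\pi, 3\pi/2]$ gives $\sin\sigma < 0$ and $h' < 0$. Lemma \ref{lem:nodoid}(ii) forces $r(t_2) = r_-$ since $\sigma(t_2) \equiv 3\pi/2 \pmod{2\pi}$, and consequently $r'(t_2) = \cos(3\pi/2) = 0$.

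Finally, because $r'(t_2) = 0$, Lemma \ref{lem:properties_of_ode}(iv) extends the solution beyond $t_2$ by reflection at $h = h(t_2)$ combined with linear reparametrization. Iterating this procedure at each successive point where $r' = 0$ yields a solution defined on all of $\R$, with $r(t + 2t_2) = r(t)$ and with $\sigma$ lifted monotonically so that $\sigma(t + 2t_2) = \sigma(t) + 2\pi$; that $\sigma$ increases by exactly $2\pi$ over one period (and not more) follows because $r$ attains $r_+$ only when $\sigma \equiv \pi/2 \pmod{2\pi}$ by Lemma \ref{lem:nodoid}(ii). Thus $r$ and $\sigma \pmod{2\pi}$ are $2t_2$-periodic, and the type of generated surface, nodoid type I or II or tube, is classified by the sign of the vertical period $\Delta = h(2t_2) - h(0)$. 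The main subtlety I anticipate is the uniform positive lower bound on $\sigma'$ near the extremes of the admissible energy range, where $r_-$ or $r_+$ can approach the boundary of $I_\kappa$; however, for each fixed $J$ in the range, $[r_-, r_+]$ is a fixed compact subinterval of $I_\kappa$, so the bound is immediate.
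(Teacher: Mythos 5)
Your proof is correct, and its skeleton coincides with the paper's: confine $r$ to $[r_-,r_+]$ by Lemma \ref{lem:nodoid}, use $\sigma'>0$ to define $t_1,t_2$ as the first crossings of $\pi$ and $\tfrac{3\pi}{2}$, read the sign of $h'$ off the factor $\sin\sigma$, identify $r(t_2)=r_-$, and extend by reflection via Lemma \ref{lem:properties_of_ode}(iv). The one step you handle differently is the finiteness of $t_1$ and $t_2$. The paper argues by contradiction: if $\sigma$ never reaches the target value it must converge to some $\alpha$, and then either $r'=\cos\sigma$ is negative and bounded away from zero (forcing $r<r_-$), or, in the sub-case $\alpha=\tfrac{3\pi}{2}$, one argues separately that $r\to\hat r$ and $\sigma'\to 2H+\ct(\hat r)>0$, contradicting convergence. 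You instead promote the pointwise estimates from the proof of Lemma \ref{lem:nodoid}(i) to a uniform bound $\sigma'\geq c>0$, using that those lower bounds are continuous positive functions of $r$ on the compact interval $[r_-,r_+]\subset I_\kappa$; this gives $\sigma(t)\geq\tfrac{\pi}{2}+ct$ and hence finite crossing times in one stroke, bypassing the paper's two-case limit analysis. Your route is a little more quantitative (it even bounds $t_2$ by $\pi/c$), and your closing observation that $r_->0$ and, for $\kappa>0$, $r_+<\pi/\sqrt{\kappa}$ for each fixed admissible $J$ is exactly the point that makes it work. The only slightly loose spot is your justification that $\sigma$ advances by exactly $2\pi$ per period; the direct reason is that the extension on $[t_2,2t_2]$ is the mirror image of the arc on $[0,t_2]$, so $\sigma(2t_2)-\sigma(t_2)=\sigma(t_2)-\sigma(0)=\pi$. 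This is cosmetic, and the paper glosses over it as well.
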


\begin{proof}
	We divide the proof into several steps:
	
	(\textit{i}) \textit{There exists $t_1>0$ such that $\sigma(t_1)=\pi$.}
	
	Assume $\sigma\neq\pi$ everywhere. Then $\sigma\in(\frac{\pi}{2},\pi)$ for $t>0$ by continuity and initial data, since $\sigma$ is increasing. Then $r'=\cos\sigma$ is everywhere negative and bounded away from $0$ for $t>\epsilon>0$. Thus, at some point $r<r_-$, which is a contradiction. Without loss of generality we choose $t_1>0$ to be the smallest possible with this property.
	
	(\textit{ii}) \textit{There exists $t_2>t_1$ such that $\sigma(t_2)=\frac{3\pi}{2}$.}
	
	Assume $\sigma\neq\frac{3\pi}{2}$ everywhere. Then $\sigma\in(\pi,\frac{3\pi}{2})$ for $t>t_1$ since $\sigma$ is increasing. Thus, $\sigma$ must converge to some $\alpha\in(\pi,\frac{3\pi}{2}]$ from below. If $\alpha<\frac{3\pi}{2}$, then $r'=\cos\sigma$ is everywhere negative and bounded away from~$0$. Thus, $r<r_-$ at some point, which is a contradiction. If $\alpha=\frac{3\pi}{2}$, then $r\to \hat r$ for some $\hat r$. Then \eqref{ode} implies $\sigma'=2H-\ct(r)\sin\sigma \to 2H+\ct(\hat r)>0$, i.e., $\sigma'$ is bounded away from $0$, contradicting convergence. Without loss of generality we choose $t_2>t_1$ to be the smallest possible with this property.
	
	(\textit{iii}) $r(t_2)=r_-$. This follows directly from Lemma \ref{lem:nodoid} because $\sigma(t_2)=\frac{3\pi}{2}$.
	
	(\textit{iv}) \textit{$h'(t)>0$ for $t\in[0,t_1)$ and $h'(t)<0$ for $t\in(t_1,t_2]$.}
	
	From \eqref{ode} we have $h'=\sqrt{\termbs}\,\sin\sigma$. Since $\sigma(t)\in[\frac{\pi}{2},\pi)$ for $t\in[0,t_1)$ and $\sigma(t)\in(\pi,\frac{3\pi}{2}]$ for $t\in(t_1,t_2]$, the statement follows.
	
	By Lemma \ref{lem:properties_of_ode} we can extend the solution curve from $[0,t_2]$ to $\R$. The full solution fulfills $r(t+2t_2)=r(t)$ and $\sigma(t+2t_2)=\sigma(t)(\mod 2\pi)$ for all $t\in\R$. Hence, it is periodic in $r$ and $\sigma(\mod 2\pi)$.
\end{proof}

By the previous proposition, $r$ and $\sigma(\mod 2\pi)$ are $2t_2$-periodic. Thus, let us focus on the arc for $t\in[0,2t_2]$. In order to create a profile curve of a nodoid type surface, this arc must not be closed, meaning that $\Delta\coloneqq h(t+2t_2)-h(t)\neq0$. Due to periodicity it suffices to compare $h_2\coloneqq h(t_2)$ and $h_0\coloneqq h(0)$, i.e., prove $h_2>h_0$ (nodoid type~I) or $h_2<h_0$ (nodoid type~II). If on the other hand $h_2=h_0$, the curve is a simple loop, and the generated surface is a tube.

Now consider the 1-parameter family $\family$ of \eqref{eq:family} for fixed pitch $a\in\R$ and constant mean curvature $H$. If we consider $J\in(-\infty,0)$ for $\kappa\leq0$ resp. $J\in(-\frac{4H}{\kappa},0)$ for $\kappa>0$, then $\family$ consists only of nodoid type surfaces and tubes by Proposition \ref{prop:nodoid}. The following theorem states conditions for $\family$ to contain a tube:

\begin{theo}[Existence of tubes]\label{theo:tube_existence}
	Suppose $\kappa-4\tau^2\neq0$ and define $\epsilon\coloneqq\sgn(\kappa-4\tau^2)$. The family $\family$ contains a tube if\vspace{-1mm}
	\begin{enumerate}[leftmargin=12mm]\setlength{\itemsep}{2mm}
		\item $\kappa\leq0$, $a\tau\epsilon\in\left(-\infty,\dfrac{\epsilon}{2}\right)$ and $H^2>\dfrac{2\tau^2-a\tau\kappa}{4a\tau-2}$, or
		\item $\kappa>0$, $a\tau\epsilon\in\left[\dfrac{2\tau^2\epsilon}{\kappa},\dfrac{\epsilon}{2}\right)$ and $H^2>\dfrac{2\tau^2-a\tau\kappa}{4a\tau-2}$, or
		\item $\kappa>0$, $a\tau\epsilon\in\left(\dfrac{4\tau^2\epsilon}{\kappa}-\dfrac{\epsilon}{2},\dfrac{2\tau^2\epsilon}{\kappa}\right]$ and $H^2>\dfrac{2\tau^2-a\tau\kappa}{4a\tau+2-\frac{16\tau^2}{\kappa}}$.
	\end{enumerate}
	If the product $a\tau\epsilon$ is not contained in any of the above intervals, the family $\family$ does not contain a tube regardless of the value of the mean curvature $H$.
\end{theo}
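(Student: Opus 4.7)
The plan is to treat the half-period $h_2(J) \coloneqq h(t_2)$ as a function of the energy $J$ and show that, under the stated hypotheses, it must vanish for some $J$ in the nodoid range via the intermediate value theorem. By the reflection construction of Lemma \ref{lem:properties_of_ode}(iv) applied at $t_2$, the full vertical period of the extended profile curve is $\Delta = 2 h_2$, so a tube corresponds precisely to a zero of $h_2$. Since $\sigma$ is strictly increasing on $[0, t_2]$ by Lemma \ref{lem:nodoid}, I reparameterize by $\sigma$ and write
\begin{equation*}
h_2(J) = \int_{\pi/2}^{3\pi/2} \frac{\sqrt{\termbs}\,\sin\sigma}{2H - \ct(r)\sin\sigma}\,d\sigma,
\end{equation*}
where $r = r(\sigma, J)$ is given implicitly by \eqref{radius2}. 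Dominated convergence gives continuity of $h_2$ on the interior of the nodoid range, so a sign change of $h_2$ will yield the existence of a tube.

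The core of the proof is the sign analysis at the endpoints of the nodoid range. Splitting $h_2 = I_1 + I_2$ along $\sigma = \pi$, in the limit $J \to 0^-$ the profile converges to the sphere-type profile of Proposition \ref{prop:sphere} on $[\pi/2, \pi]$, so $I_1$ tends to the strictly positive height $h_{\mathrm{sphere}}$ of that profile. On $[\pi, 3\pi/2]$ the profile collapses into a shrinking neighborhood of the axis $r = 0$; using the asymptotics $\sqrt{\termbs} \sim |a|/\sn(r)$ and $\ct(r) \sim 1/\sn(r)$ as $r \to 0$, the integrand tends pointwise to $-|a|$ with an integrable dominating function, giving $I_2 \to -|a|\pi/2$. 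Hence $\lim_{J \to 0^-} h_2(J) = h_{\mathrm{sphere}} - |a|\pi/2$. For the other endpoint, when $\kappa > 0$ I invoke the isometry $\Psi_a$ of Lemma \ref{lem:symmetry} to convert the limit $J \to -4H/\kappa$ into an analogous sphere limit at the opposite axis, with pitch $\tilde a = 4\tau/\kappa - a$; when $\kappa \leq 0$, a direct asymptotic analysis as $J \to -\infty$ yields the corresponding limiting value.

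The conditions on $a\tau\epsilon$ and $H^2$ are the algebraic inequalities that guarantee these two limits have opposite signs, so $h_2$ must vanish somewhere on the nodoid interval by the intermediate value theorem. The three cases correspond to the possible positions of the horizontal Killing orbit at $r_*$, defined by $\sn^2(r_*/2) = a/(4\tau)$: case (i) covers $\kappa \leq 0$, where this orbit always exists for $a\tau > 0$; cases (ii) and (iii) cover $\kappa > 0$ and distinguish whether $r_*$ lies below or above the equator $r = \pi/(2\sqrt{\kappa})$ of the underlying sphere, which is why the denominator of the $H^2$-threshold changes from $4a\tau - 2$ to $4a\tau + 2 - 16\tau^2/\kappa$. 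Conversely, when $a\tau\epsilon$ lies outside all three intervals, the parallel analysis shows that the two endpoint limits of $h_2$ share a common sign, and combined with a monotonicity or direct-bound argument on $h_2(J)$ this excludes any zero. The main obstacle is the rigorous asymptotic analysis near the axes---producing the uniform dominating functions that justify passage to the limit---and the careful bookkeeping that matches the explicit limiting values to the precise algebraic thresholds stated in the theorem.
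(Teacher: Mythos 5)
Your overall skeleton (continuity of $h_2$ in $J$ plus the intermediate value theorem) matches the paper, but the way you try to produce the sign change is genuinely different and contains real gaps. The paper does not analyze the endpoints of the nodoid range at all. Instead, Lemma \ref{lem:height_inequalities} pairs each $\hat\sigma\in[\frac{\pi}{2},\pi)$ with the unique $\tilde\sigma\in(\pi,\frac{3\pi}{2}]$ having $\sin\hat\sigma=-\sin\tilde\sigma$, rewrites $\frac{dh}{d\sigma}$ so that the paired contributions differ only through a term $\psi_2(\sigma)\psi_3(\sigma)$ with $\psi_3$ odd in $\sin\sigma$, and reads off that the sign of $\psi_2$ forces $h_2\gtrless h_0$ pointwise. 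This yields two explicit interior thresholds $\J_1,\J_2$ with $h_2>h_0$ on one side and $h_2<h_0$ on the other; the curvature bound $H^2>\frac{2\tau^2-a\tau\kappa}{4a\tau-2}$ (resp.\ the other denominator) is precisely the condition that both thresholds lie inside the admissible energy interval, after which the IVT applies. Your conditions are thus not ``the algebraic inequalities that guarantee the two endpoint limits have opposite signs'' --- they are the inequalities $\epsilon\J_2<0$ (resp.\ $\epsilon\J_2>\epsilon\Jmin$), and you give no computation connecting your limiting values $h_{\mathrm{sphere}}-|a|\pi/2$ and the $J\to-\infty$ (or $J\to-\frac{4H}{\kappa}$) asymptotics to these specific thresholds. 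Establishing, say, that $h_{\mathrm{sphere}}<|a|\pi/2$ exactly under the stated hypotheses is a nontrivial claim you assert rather than prove, and the $\kappa\le0$, $J\to-\infty$ limit is left entirely unevaluated.

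The more serious failure is the non-existence direction. Two endpoint limits of the same sign do not exclude an interior zero of $h_2$; you acknowledge this by invoking ``a monotonicity or direct-bound argument on $h_2(J)$,'' but monotonicity of $h_2$ in $J$ is exactly what the paper states it \emph{cannot} prove (see the remark after Corollary \ref{cor:energy_range}, where the lack of such monotonicity is why uniqueness of the tube is left open). The paper's non-existence argument instead works for \emph{every} admissible $J$ at once: when $a\tau\epsilon$ lies outside the stated intervals, the relevant threshold ($\J_1$ or $\J_2$) is positive, so every $J$ in the nodoid range satisfies the pointwise criterion forcing $h_2>h_0$, and no tube can occur. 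Without a substitute for this uniform interior estimate, your endpoint-based argument cannot deliver either the precise sufficient conditions or the converse statement. Finally, your heuristic attributing the three cases to the position of the orbit with $\sns(r_*/2)=a/(4\tau)$ does not reflect the actual source of the case split, which is the sign of $2\tau^2-a\tau\kappa$ (determining which of $\J_1,\J_2$ is smaller) together with the axis-swapping symmetry of Lemma \ref{lem:symmetry} for $\kappa>0$.
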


This existence result is summarized in Table \ref{tab:tube_existence}. An example of a tube in $\Nildrei$ is shown in Figure \ref{fig:nodoidfamily}. Only few examples of tubes in $\EKT$ have been known so far: Rotational tubes in $\SR$ were described by Pedrosa and Ritoré \cite{pedrosa_ritore,pedrosa}. Screw motion tubes in $\SR$ were described by Vr\v{z}ina \cite{vrzina}. And recently Manzano described tubes with horizontal pitch $a=\frac{2\tau}{\kappa}$ in $\Berger$ \cite{manzano}. Moreover, tubes are expected to exist in $\Soldrei$ because of numerical experiments done by López \cite{lopez}.\vspace{4mm}

\begin{table}[H]
	\centering
	\caption{Existence of screw motion CMC tubes by Theorem \ref{theo:tube_existence}: Does the family $\family$ contain a tube?}
	\renewcommand{\arraystretch}{1.0}
	\begin{tabular}{C{4mm}|C{32mm}|C{32mm}|C{52mm}|}
		& \centering $\kappa<0$ & \centering $\kappa=0$ & \multicolumn{1}{c|}{$\kappa>0$} \\
		\hline
		\multirow{4}{*}{\tabrotate{$\tau=0$}} 
		& \rule{0pt}{20pt}\large$\HR$ & \large$\R^3$ & \large$\SR$ \\[3mm]
		& no
		& no
		& yes for $a\in\R$ \\[2mm]
		\hline
		\multirow{4}{*}{\tabrotate{$\tau\neq0$}} & \rule{0pt}{20pt}\large$\PSLzwei$ & \large$\Nildrei$ & \large$\Berger$ \\[3mm]
		& yes for $a>\frac{1}{2\tau}$
		& yes for $a>\frac{1}{2\tau}$
		& yes for $a\epsilon\in(\frac{4\tau\epsilon}{\kappa}-\frac{\epsilon}{2\tau}, \frac{\epsilon}{2\tau})$ \\[2mm]
		\hline
	\end{tabular}
	\label{tab:tube_existence}
\end{table}\vspace{2mm}

The proof of Theorem \ref{theo:tube_existence} is based on the intermediate value theorem for $J\mapsto(h_2-h_0)$. The following lemma establishes conditions for the sign of $h_2-h_0$:

\begin{lem}\label{lem:height_inequalities}
	Suppose $\kappa-4\tau^2\neq0$ and $J\in(-\infty,0)$ for $\kappa\leq0$ resp. $J\in(-\frac{4H}{\kappa},0)$ for $\kappa>0$. Define
	\begin{equation*}
		\J_1\coloneqq\frac{2H}{\kappa-4\tau^2}\,(2a\tau-1)
		\quad\text{and}\quad
		\J_2\coloneqq\J_1-\frac{2\tau^2-a\tau\kappa}{H(\kappa-4\tau^2)}.
	\end{equation*}
	
	The following inequalities hold, where $\epsilon\coloneqq\sgn(\kappa-4\tau^2)$.\vspace{-1mm}
	\begin{enumerate}[leftmargin=12mm]\setlength{\itemsep}{2mm}
		\item If $2\tau^2-a\tau\kappa>0$, then $\epsilon\J_1>\epsilon\J_2$. Furthermore, for $\epsilon J\geq\epsilon\J_1$ it holds $h_2>h_0$ and for $\epsilon J\leq\epsilon\J_2$ it holds $h_2<h_0$.
		\item If $2\tau^2-a\tau\kappa<0$, then $\epsilon\J_1<\epsilon\J_2$. Furthermore, for $\epsilon J\geq\epsilon\J_2$ it holds $h_2>h_0$ and for $\epsilon J\leq\epsilon\J_1$ it holds $h_2<h_0$.
		\item If $2\tau^2-a\tau\kappa=0$, then $\J_1=\J_2$. Furthermore, for $\epsilon J>\epsilon\J_1$ it holds $h_2>h_0$ and for $\epsilon J<\epsilon\J_1$ it holds $h_2<h_0$.
	\end{enumerate}
\end{lem}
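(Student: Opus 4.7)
My plan is to express $h_2-h_0$ as a definite integral whose integrand has a single sign change on the interval of integration, fold the interval about this sign change, and then bound the resulting integrand by two complementary linear-in-$J$ expressions whose zeros are precisely $\J_1$ and $\J_2$.

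To set up, I would parameterize the curve on $[0,t_2]$ by $\sigma$, which by Proposition \ref{prop:nodoid} is strictly increasing from $\pi/2$ to $3\pi/2$ and along which $r$ is determined by the conservation law (Proposition \ref{prop:J}) via \eqref{radius2}. For the extended range $J\in(-\tfrac{4H}{\kappa},-\tfrac{2H}{\kappa})$ allowed when $\kappa>0$, Lemma \ref{lem:symmetry} is first invoked to map the solution to a companion one with $J\in[-\tfrac{2H}{\kappa},0)$, to which the proposition directly applies. The change of variable $dt=d\sigma/\sigma'$ gives
\begin{equation*}
    h_2-h_0 = \int_{\pi/2}^{3\pi/2}\frac{\norm{\xi_a}_g(r)\,\sin\sigma}{\sn(r)\,\bigl(2H-\ct(r)\sin\sigma\bigr)}\,d\sigma,
\end{equation*}
where $\norm{\xi_a}_g^2 = \sns(r)+(4\tau\sns(r/2)-a)^2$ is the squared length of the Killing field generating $G_a$. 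The integrand vanishes only at $\sigma=\pi$, where its sign flips, and I would fold by the substitution $\sigma\mapsto 2\pi-\sigma$ on $[\pi,3\pi/2]$ to obtain
\begin{equation*}
    h_2-h_0 = \int_{\pi/2}^{\pi}\sin\sigma\left[\frac{\norm{\xi_a}_g(r_1)/\sn(r_1)}{2H-\ct(r_1)\sin\sigma}-\frac{\norm{\xi_a}_g(r_2)/\sn(r_2)}{2H+\ct(r_2)\sin\sigma}\right]d\sigma,
\end{equation*}
with $r_1=r(\sigma,J)$ and $r_2=r(2\pi-\sigma,J)$ the two paired radii supplied by \eqref{radius2}. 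Since $\sin\sigma\geq 0$, the sign of $h_2-h_0$ coincides with that of the bracketed difference.

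The central algebraic step is to polynomialize the bracket by substituting $u=\cs(r)$, using $4\tau\sns(r/2)=2\tau(1-u)/\kappa$ and the identity
\begin{equation*}
    \kappa^2\,\norm{\xi_a}_g^2 = (1-u)\bigl[(\kappa+4\tau^2-4a\tau\kappa)+(\kappa-4\tau^2)u\bigr]+a^2\kappa^2.
\end{equation*}
The coefficient $\kappa-4\tau^2$ is the sole source of asymmetry of $\norm{\xi_a}_g^2$ in $u$ (which explains the hypothesis $\kappa-4\tau^2\neq 0$), while $2\tau^2-a\tau\kappa$ appears as the cross-coupling between the pitch $a$ and the bundle curvature $\tau$. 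Expanding the bracket in the asymmetry $r_1-r_2$ and eliminating $\sin\sigma$ via the conservation law, I expect the bracket to admit two complementary estimates: one showing that for $\epsilon J\geq\epsilon\J_1$ it is non-negative, and another showing that for $\epsilon J\leq\epsilon\J_2$ it is non-positive, with the roles of $\J_1$ and $\J_2$ swapped according to the sign of $2\tau^2-a\tau\kappa$. The degenerate case $2\tau^2-a\tau\kappa=0$ merges the two bounds into a single sharp threshold, yielding item (iii).

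The main obstacle I expect is the sign bookkeeping: the factor $\kappa-4\tau^2$ in the denominators of $\J_1,\J_2$ flips inequalities according to $\epsilon$; the sign of $2\tau^2-a\tau\kappa$ determines which of $\J_1,\J_2$ is the tighter upper bound and which the lower, exchanging their roles between items (i) and (ii); and in the $\kappa>0$ case compatibility with the admissible range $J\in(-\tfrac{4H}{\kappa},0)$ must be verified. Once these sign configurations are systematically listed, each item of the lemma follows from the corresponding estimate.
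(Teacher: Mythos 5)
Your setup --- reparameterizing by $\sigma$, folding the integral about $\sigma=\pi$ so that $h_2-h_0$ becomes an integral over $[\pi/2,\pi]$ of a difference between the two branches $r_1=r(\sigma)$ and $r_2=r(2\pi-\sigma)$ --- is exactly the strategy of the paper's proof (phrased there as the pointwise sufficient condition $-\frac{dh}{d\sigma}(\tilde\sigma)\lessgtr\frac{dh}{d\sigma}(\hat\sigma)$ for paired angles with $\sin\hat\sigma=-\sin\tilde\sigma$), and both your integral formula and your identity for $\kappa^2\norm{\xi_a}_g^2$ check out. But the argument stops exactly where the content of the lemma begins: you write that you ``expect the bracket to admit two complementary estimates'' with thresholds $\J_1$ and $\J_2$. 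That expectation \emph{is} the lemma. Nothing in the proposal derives the specific values of $\J_1$ and $\J_2$, explains why there are two distinct thresholds rather than one, or shows why their roles swap with the sign of $2\tau^2-a\tau\kappa$.

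The missing idea is structural, not merely computational. After eliminating $r$ via the conservation law, one finds
\begin{equation*}
	\frac{dh}{d\sigma}=\frac{\sqrt{f(\sigma)}}{(4H^2+\kappa\sin^2\!\sigma)\sqrt{\sin^2\!\sigma-\kappa J^2-4HJ}}\,\sin\sigma,
	\qquad
	f=\psi_1+\psi_2\,\psi_3,
\end{equation*}
where $\psi_1$ and $\psi_2=C_4\sin^2\!\sigma+C_5$ are even in $\sin\sigma$ while $\psi_3=\sqrt{\sin^2\!\sigma-4HJ}\,\sin\sigma$ is odd, with $C_4=8\tau^2-4a\tau\kappa$ and $C_5=-16\tau^2HJ-16a\tau H^2+4\kappa HJ+8H^2$. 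For a paired couple $(\hat\sigma,\tilde\sigma)$ every term agrees except the sign of $\psi_3$, so the comparison collapses to the sign of the single even factor $\psi_2$. The two thresholds then come from the crude bounds $C_5$ and $C_4+C_5$ on $\psi_2$ (which of the two is the relevant bound depends on $\sgn(C_4)=\sgn(2\tau^2-a\tau\kappa)$, which is exactly why items (i) and (ii) exchange $\J_1$ and $\J_2$), and the conditions $C_5\gtrless0$ and $C_4+C_5\gtrless0$ are equivalent to $\epsilon J\gtrless\epsilon\J_1$ and $\epsilon J\gtrless\epsilon\J_2$. Until you exhibit this even/odd decomposition (or an equivalent computation) and verify these equivalences, the stated values of $\J_1,\J_2$ remain unproved. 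A secondary caveat: your reduction of the range $J\in(-\frac{4H}{\kappa},-\frac{2H}{\kappa})$ via Lemma \ref{lem:symmetry} changes the pitch to $\tilde a=\frac{4\tau}{\kappa}-a$ and, for $\tau\neq0$, does not transform $h$ by a simple sign flip, so transferring the sign of $h_2-h_0$ back to the original curve requires additional justification.
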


\begin{proof}
	Recall from Proposition \ref{prop:nodoid} that $h'(t)>0$ and $\sigma(t)\in[\frac{\pi}{2},\pi)$ for $t\in[0,t_1)$ as well as $h'(t)<0$ and $\sigma(t)\in(\pi,\frac{3\pi}{2}]$ for $t\in(t_1,t_2]$. Together with $\sigma'>0$, there exists for every $\hat t\in[0,t_1)$ exactly one $\tilde t\in(t_1,t_2]$ such that $\sin\sigma(\hat t)=-\sin\sigma(\tilde t)\geq0$. For short notation we write $\hat\sigma$ for $\sigma(\hat t)$ and $\tilde\sigma$ for $\sigma(\tilde t)$.
	
	The pointwise condition $-\frac{dh}{d\sigma}(\tilde\sigma)<\frac{dh}{d\sigma}(\hat\sigma)$ for all pairs $(\tilde\sigma,\hat\sigma)$ is sufficient for $h_2>h_0$. In the same way $-\frac{dh}{d\sigma}(\tilde\sigma)>\frac{dh}{d\sigma}(\hat\sigma)$ is sufficient for $h_2<h_0$. Since $\sigma$ is strictly increasing, both are still sufficient if we exclude $(\tilde\sigma,\hat\sigma)=(\frac{3\pi}{2},\frac{\pi}{2})$. The advantages are strict inequalities in the following as $|\sin\sigma|\in(0,1)$. From \eqref{ode} we obtain	
	\begin{equation*}
		\frac{dh}{d\sigma}
		=\frac{\sqrt{\sns(r)+\left(4\tau\sns\!\left(\frac{r}{2}\right)-a\right)^2}}{2H\sn(r)-\cs(r)\sin\sigma}\,\sin\sigma.
	\end{equation*}
	
	After extracting $r$ from the energy $J=\frac{2H}{\kappa}\left(\cs(r)-1\right) + \sn(r)\sin\sigma$, substituting in the above expression and a long, but straightforward computation we can rewrite this as
	\begin{equation*}
		\frac{dh}{d\sigma}
		=\frac{\sqrt{f(\sigma)}}{(4H^2+\kappa\sin^2\!\sigma)\sqrt{\sin^2\!\sigma-\kappa J^2-4HJ}}\,\sin\sigma
	\end{equation*}
	with
	\begin{equation*}
		f(\sigma)\coloneqq
		\underbrace{C_1\sin^4\!\sigma+C_2\sin^2\sigma+C_3}_{\eqqcolon\psi_1(\sigma)}
		+\underbrace{\left(C_4\sin^2\!\sigma+C_5\right)}_{\eqqcolon\psi_2(\sigma)}
		\underbrace{\sqrt{\sin^2\!\sigma-4HJ}\sin\sigma}_{\eqqcolon\psi_3(\sigma)},
	\end{equation*}
	where $C_i=C_i(\kappa,\tau,a,H,J)$ are coefficients depending on $\kappa,\tau,a,H,J$. In particular,
	\begin{equation*}
		C_4=8\tau^2-4a\tau\kappa
		\qquad\text{and}\qquad
		C_5=-16\tau^2HJ-16a\tau H^2+4\kappa HJ+8H^2.
	\end{equation*}
	
	Note that $-\frac{dh}{d\sigma}(\tilde\sigma)<\frac{dh}{d\sigma}(\hat\sigma)$ is equivalent to $f(\tilde\sigma)<f(\hat\sigma)$ and  $-\frac{dh}{d\sigma}(\tilde\sigma)>\frac{dh}{d\sigma}(\hat\sigma)$ is equivalent to $f(\tilde\sigma)>f(\hat\sigma)$. Since even powers of $\sin\sigma$ are the same for both $\tilde\sigma$ and $\hat\sigma$, but odd powers differ by sign, it holds $\psi_1(\tilde\sigma)=\psi_1(\hat\sigma)$ and $\psi_2(\tilde\sigma)=\psi_2(\hat\sigma)$, but $\psi_3(\tilde\sigma)=-\psi_3(\hat\sigma)<0$. Thus, $f(\tilde\sigma)<f(\hat\sigma)$ is equivalent to $\psi_2(\tilde\sigma)>0$, and $f(\tilde\sigma)>f(\hat\sigma)$ is equivalent to $\psi_2(\tilde\sigma)<0$. All together:
	\begin{equation*}
		\psi_2(\tilde\sigma)\gtrless0 \text{ for all } \tilde\sigma\in\left(\pi,\tfrac{3\pi}{2}\right)
		\quad\implies\quad
		h_2\gtrless h_0.
	\end{equation*}
	
	It is more convenient to derive a condition independent of~$\sigma$. On this account, we consider the following rather rough estimates: For (\textit{i}), i.e., $2\tau^2-a\tau\kappa>0$, it holds $C_4>0$ and $C_5<C_4\sin^2\sigma+C_5<C_4+C_5$. Thus, $C_5\geq0$ implies $\psi_2>0$ and $C_4+C_5\leq0$ implies $\psi_2<0$.
	For (\textit{ii}), i.e., $2\tau^2-a\tau\kappa>0$, it holds $C_4<0$ and $C_5>C_4\sin^2\sigma+C_5>C_4+C_5$. Thus, $C_4+C_5\geq0$ implies $\psi_2>0$ and $C_5\leq0$ implies $\psi_2<0$.
	For (\textit{iii}), i.e., $2\tau^2-a\tau\kappa=0$, it holds $C_4\sin^2\sigma+C_5=C_5$. Thus, $C_5\gtrless0$ is equivalent to $\psi_2\gtrless0$. Therefore, we are going to study the (in)equalities $C_5\gtreqless0$ and $C_4+C_5\gtreqless0$.
	
	The (in)equality $C_5\gtreqless0$ is equivalent to
	\begin{equation*}
		J(\kappa-4\tau^2)-2H(2a\tau-1)\gtreqless0
		\quad\Leftrightarrow\quad
		\epsilon J\gtreqless\epsilon \J_1,
	\end{equation*}
	and $C_4+C_5\gtreqless0$ is equivalent to
	\begin{equation*}
		J(\kappa-4\tau^2)-2H(2a\tau-1)+\frac{2\tau^2-a\tau\kappa}{H}\gtreqless0
		\quad\Leftrightarrow\quad
		\epsilon J\gtreqless\epsilon \J_2. \qedhere
	\end{equation*}
\end{proof}

We can now use Lemma \ref{lem:height_inequalities} to prove Theorem \ref{theo:tube_existence}:

\begin{proof}[Proof of Thm. \ref{theo:tube_existence}]
	Suppose $\kappa<0$.	If $a\tau>\frac{1}{2}$, then $2\tau^2-a\tau\kappa>0$ and $\J_1<\J_2$. If $H$ fulfills the curvature bound, then $\J_2<0$. Therefore, there exist $J_1, J_2\in(-\infty,0)$ such that $J_1\leq\J_1<\J_2\leq J_2$ and by Lemma \ref{lem:height_inequalities} this corresponds to $h_2>h_0$ resp. $h_2<h_0$. By the intermediate value theorem there exists a $\Jtube\in(\J_1,\J_2)$ such that $h_2=h_0$, i.e., the family $\family$ contains a tube.
	
	If $a\tau<\frac{1}{2}$, but $a>\frac{2\tau}{\kappa}$, then still $2\tau^2-a\tau\kappa>0$ and $\J_1<\J_2$, but $\J_1>0$. Thus, $J<\J_1$ for all $J\in(-\infty,0)$ and it always holds $h_2>h_0$. If instead $a\leq\frac{2\tau}{\kappa}$, then $2\tau^2-a\tau\kappa\leq0$ and $\J_1\geq\J_2$, but $\J_2>0$. Thus, $J<\J_2$ for all $J\in(-\infty,0)$ and it always holds $h_2>h_0$. Thus, for $a\tau<\frac{1}{2}$ the family $\family$ does not contain a tube.
	
	The remaining cases $\kappa=0$ and $\kappa>0$ are proven by using similar arguments.
\end{proof}

As a direct consequence of the proof we obtain a range for the tube energy:

\begin{cor}\label{cor:energy_range}
	Suppose $\family$ contains a tube and denote by $\Jtube$ its energy.
	\vspace{-1mm}
	\begin{enumerate}[leftmargin=12mm]\setlength{\itemsep}{2mm}
		\item If $2\tau^2-a\tau\kappa\neq0$, then $\Jtube$ lies in the open interval between $\J_1$ and $\J_2$.
		\item If $2\tau^2-a\tau\kappa=0$, then $\Jtube=-\frac{2H}{\kappa}$.
	\end{enumerate}
\end{cor}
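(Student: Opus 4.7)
The plan is to extract the statement directly from Lemma \ref{lem:height_inequalities}, since a tube is by construction a profile curve that closes up in $h$, i.e. satisfies $h_2 = h_0$, and that lemma already partitions the admissible range of $J$ into regions where $h_2 - h_0$ has a definite sign. The entire argument therefore consists of reading off which energy levels are compatible with the equality $h_2 = h_0$.

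For part (i), I would split on the sign of $2\tau^2 - a\tau\kappa$. Assume first that $2\tau^2 - a\tau\kappa > 0$. Lemma \ref{lem:height_inequalities}(i) yields $h_2 > h_0$ whenever $\epsilon J \geq \epsilon \J_1$ and $h_2 < h_0$ whenever $\epsilon J \leq \epsilon \J_2$. Since both conclusions are strict, the tube condition $h_2 = h_0$ excludes the two closed end-intervals and forces $\epsilon \J_2 < \epsilon J < \epsilon \J_1$, which is exactly the open interval between $\J_1$ and $\J_2$. The complementary case $2\tau^2 - a\tau\kappa < 0$ is handled identically using Lemma \ref{lem:height_inequalities}(ii), with the roles of $\J_1$ and $\J_2$ swapped.

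For part (ii), when $2\tau^2 - a\tau\kappa = 0$, Lemma \ref{lem:height_inequalities}(iii) collapses the two thresholds to $\J_1 = \J_2$, still with strict inequalities on either side. Hence $h_2 = h_0$ pins the tube energy down to $\Jtube = \J_1$. I would then rewrite this explicit value: if $\tau = 0$, then $2a\tau - 1 = -1$ and $\kappa - 4\tau^2 = \kappa$, giving $\J_1 = -\tfrac{2H}{\kappa}$ immediately; if $\tau \neq 0$, the hypothesis $2\tau^2 = a\tau\kappa$ yields $a\kappa = 2\tau$, so $2a\tau - 1 = (4\tau^2 - \kappa)/\kappa$, which cancels against $\kappa - 4\tau^2$ in the denominator to give the same value $-\tfrac{2H}{\kappa}$.

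No serious obstacle arises; this corollary is essentially a bookkeeping consequence of Lemma \ref{lem:height_inequalities} combined with the definition of a tube. The one subtle point to keep in mind is the asymmetry between strict and non-strict inequalities in the lemma: the hypotheses are non-strict ($\epsilon J \geq \epsilon \J_1$ and $\epsilon J \leq \epsilon \J_2$) while the conclusions are strict ($h_2 > h_0$ and $h_2 < h_0$). It is precisely this strictness that excludes the endpoints and justifies the \emph{open} interval claimed in (i).
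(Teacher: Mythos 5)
Your proposal is correct and coincides with the paper's own (largely implicit) argument: the corollary is read off from Lemma \ref{lem:height_inequalities} exactly as in the proof of Theorem \ref{theo:tube_existence}, with the strict conclusions $h_2>h_0$ resp. $h_2<h_0$ at the non-strict thresholds forcing the open interval in (i), and the algebraic simplification $\J_1=-\tfrac{2H}{\kappa}$ under $2\tau^2-a\tau\kappa=0$ (which you verify in both subcases $\tau=0$ and $a\kappa=2\tau$) giving (ii).
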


\begin{rem}
	Theorem \ref{theo:tube_existence} only states the existence of tubes and says nothing about uniqueness. For the two special cases of $\SKR$ with arbitrary pitch $a\in\R$ an $\Berger$ with horizontal pitch $a=\frac{2\tau}{\kappa}$ it holds $2\tau^2-a\tau\kappa=0$ and Corollary \ref{cor:energy_range} implies uniqueness. In general, the tools used here do not allow us to prove uniqueness (we would need monotonicity of $h_2$ as a function of $J$). Nevertheless, numerical computations indicate that the tubes are indeed unique.
\end{rem}

For completeness let us also write down the existence result for tubes in $\R^3$ and $\s^3(\kappa)$:

\begin{theo}\label{theo:tube_existence_spaceform}
	Suppose $\kappa-4\tau^2=0$ and $J\in(-\infty,0)$ for $\kappa=0$ resp. $J\in(-\frac{4H}{\kappa},0)$ for $\kappa>0$. \vspace{-1mm}
	\begin{enumerate}[leftmargin=12mm]\setlength{\itemsep}{2mm}
		\item If $a\tau<\frac{1}{2}$, all surfaces are of nodoid type I. 
		\item If $a\tau=\frac{1}{2}$, all surfaces are tubes.
		\item If $a\tau>\frac{1}{2}$, all surfaces are of nodoid type II.
	\end{enumerate}
\end{theo}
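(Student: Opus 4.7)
The approach is to specialize the sign analysis from the proof of Lemma \ref{lem:height_inequalities} to the degenerate situation $\kappa - 4\tau^2 = 0$, in which the quantities $\J_1$ and $\J_2$ cease to be defined but the auxiliary polynomial $\psi_2(\sigma) = C_4\sin^2\sigma + C_5$ remains well-defined. Recall that the pairing argument from that proof concludes $h_2 > h_0$ (nodoid type I), $h_2 = h_0$ (tube), or $h_2 < h_0$ (nodoid type II) according to whether $\psi_2$ is strictly positive, identically zero, or strictly negative on $(\pi, 3\pi/2)$. Since the energy-dependent bookkeeping in $\J_1, \J_2$ is bypassed entirely, the type of the surface will turn out to be determined solely by $a\tau$, uniformly in $J$.

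Substituting $\kappa = 4\tau^2$ into the explicit formulas for $C_4$ and $C_5$ computed in the proof of Lemma \ref{lem:height_inequalities}, a direct calculation gives
\begin{equation*}
	C_4 = 8\tau^2(1 - 2a\tau), \qquad C_5 = 8H^2(1 - 2a\tau),
\end{equation*}
so that
\begin{equation*}
	\psi_2(\sigma) = 8(1 - 2a\tau)\bigl(\tau^2\sin^2\sigma + H^2\bigr).
\end{equation*}
Because $H > 0$, the factor $\tau^2\sin^2\sigma + H^2$ is strictly positive for every $\sigma$, so the sign of $\psi_2$ is determined entirely by $\sgn(1 - 2a\tau)$, independent of $J$ and $\sigma$. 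The trichotomy from Lemma \ref{lem:height_inequalities} then applies verbatim: for $a\tau < 1/2$ one has $\psi_2 > 0$ on $(\pi,3\pi/2)$ and every surface in $\family$ is of nodoid type I; for $a\tau = 1/2$ one has $\psi_2 \equiv 0$, so $h_2 = h_0$ identically and every surface in $\family$ is a tube; for $a\tau > 1/2$ one has $\psi_2 < 0$ on $(\pi,3\pi/2)$ and every surface in $\family$ is of nodoid type II.

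The main obstacle is administrative rather than conceptual: the denominator $\kappa - 4\tau^2$ in the definitions of $\J_1$ and $\J_2$ prevents the space-form case from being read off directly from Lemma \ref{lem:height_inequalities}, so one must rerun the pairing argument on $\psi_2$ in the collapsed form. What rescues the argument is that $(1 - 2a\tau)$ is a common factor of both $C_4$ and $C_5$ in this regime, reducing $\psi_2$ to a product of a sign and a strictly positive quantity; in contrast to the generic situation $\kappa - 4\tau^2 \neq 0$ treated in Theorem \ref{theo:tube_existence}, no intermediate value theorem in $J$ is required here and no tube energy has to be singled out, because either all admissible energies produce tubes or none do.
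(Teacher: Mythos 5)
Your proposal is correct and takes essentially the same route as the paper: the paper's proof likewise notes that the argument of Lemma~\ref{lem:height_inequalities} carries over up to its final paragraph and that for $\kappa=4\tau^2$ both $C_4$ and $C_5$ carry the sign of $1-2a\tau$, hence so does $\psi_2$, giving the trichotomy uniformly in $J$. Your explicit factorization $\psi_2(\sigma)=8(1-2a\tau)\bigl(\tau^2\sin^2\sigma+H^2\bigr)$ simply makes the paper's one-line sign claim $C_{4,5}\gtreqless0$ concrete.
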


As expected, there are no tubes in $\R^3$ as $\tau=0$. For $\s^3(\kappa)$ recall, that the nodoid type~I and nodoid type~II surfaces coincide by Lemma \ref{lem:isometry} and \ref{lem:symmetry}. The tubes in $\s^3(\kappa)$ are the well-known distance tori.

\begin{proof}
	The proof of Lemma \ref{lem:height_inequalities} carries over except for the last paragraph. If $a\tau\lesseqgtr\frac{1}{2}$, then $C_{4,5}\gtreqless0$ and therefore $\psi_2\gtreqless$ and $h_2\gtreqless0$.
\end{proof}

\subsection{Solution curves of zero energy}
\label{sec:zeroenergy}

At last we consider vanishing energy. Instead of directly analyzing the solutions of $\eqref{ode}$ for $J=0$, we take the limit $J\searrow0$ of the unduloid type solutions. Alternatively, one can also consider the limit $J\nearrow0$ of the nodoid type solutions.

We first turn our attention to the minimal and maximal radius. From Lemma \ref{lem:unduloid} we obtain $r_-=0$ and $r_+=2\arct(2H)$. Note that $r=0$ does not lie in the regular orbit space but on its boundary. However, the remaining part of Lemma \ref{lem:unduloid} continues to hold whenever $r>0$: $h$ is everywhere strictly increasing and $\sigma(t)$ can be assumed to be in $(0,\pi)$ for all $t$ with $r(t)>0$. But as $r\to0$ we only have $h'\to0$ for $a=0$. Thus, only in the rotational case the tangent vector $\gamma'$ becomes perpendicular to the axis of screw motion. Based on these considerations, we can state the following analogue to Proposition \ref{prop:unduloid}. Most of the proof carries over.

\begin{prop}\label{prop:sphere}
	Let $\gamma=(r,h,\sigma)$ be a solution curve with $J=0$ for initial data $r(0)=r_+, h(0)=0, \sigma(0)=\frac{\pi}{2}$. Then there exists $0<t_1<\infty$ such that $r(t)\to0$ for $t\to t_1$ and
	\begin{itemize}
		\item $\sigma'(t)>0$ for $t\in[0,t_1)$, \\[-0.5em]
		\item $h'(t)>0$ for $t\in[0,t_1)$, \\[-0.5em]
		\item $h'(t)\to |a|H$ as $t\to t_1$.
	\end{itemize}
	A maximal solution curve is obtained by extension to the axis and successive reflections at the heights $h=kh(t_1), k\in\Z$. In particular, the solution curve can be extended to all of $\,\R$, and $r$ as well as $\sigma$ are $2t_1$-periodic, but $\sigma$ is not continuous.
\end{prop}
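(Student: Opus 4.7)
The plan is to adapt the proof of Proposition \ref{prop:unduloid} to the degenerate case where $r_-=0$, lying on the boundary of the regular orbit space. The key simplification is that for $J=0$ the conservation law reduces, via the half-angle identities $1-\cs(r)=2\kappa\sns(r/2)$ and $\sn(r)=2\sn(r/2)\cs(r/2)$, to the explicit relation
\begin{equation*}
	\sin\sigma = 2H\,\tn(r/2),
\end{equation*}
which yields $\sin\sigma = 1$ precisely at $r = r_+ = 2\arct(2H)$ and $\sin\sigma \to 0$ as $r \to 0$.

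First, I would verify $\sigma'(0) > 0$ exactly as in step (i) of Proposition \ref{prop:unduloid}: a direct calculation using $\ct(r_+/2) = 2H$ together with the double-angle formula gives $\ct(r_+) = (4H^2-\kappa)/(4H)$, so $\sigma'(0) = 2H - \ct(r_+) = (4H^2+\kappa)/(4H) > 0$ by supercriticality. The argument of step (ii) of Proposition \ref{prop:unduloid} then shows $\sigma(t) \in (\pi/2,\pi)$ for every $t > 0$ with $r(t) > 0$: any return of $\sigma$ to $\pi/2$ would, by the $J=0$ relation, force $r = r_+$, contradicting the strict decrease $r' = \cos\sigma < 0$ on $(\pi/2,\pi)$.

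Consequently $r' = -\sqrt{1 - 4H^2\tns(r/2)}$ on the maximal forward interval of existence. Separation of variables gives
\begin{equation*}
	t_1 = \int_0^{r_+} \frac{dr}{\sqrt{1 - 4H^2\tns(r/2)}},
\end{equation*}
and the integrand has an integrable square-root singularity at $r = r_+$ (the radicand vanishes to first order) and equals $1$ at $r = 0$, so $t_1 < \infty$ and $r(t) \to 0$ as $t \nearrow t_1$. Monotonicity $h'(t) > 0$ is immediate from Lemma \ref{lem:unduloid}(i). Substituting $\sin\sigma = 2H\tn(r/2)$ and $\sn(r) = 2\sn(r/2)\cs(r/2)$ into the $h$-equation of \eqref{ode} yields the closed form
\begin{equation*}
	h'(t) = \frac{H}{\cs^2(r(t)/2)}\,\sqrt{\sns(r(t)) + \bigl(4\tau\sns(r(t)/2) - a\bigr)^2},
\end{equation*}
from which $h'(t) \to H\sqrt{a^2} = |a|H$ as $t \nearrow t_1$.

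The final point is the extension to $\R$. Lemma \ref{lem:properties_of_ode}(iv) applies at the initial time, where $r'(0) = \cos(\pi/2) = 0$: reflecting across $h = 0$ extends the solution to $[-t_1, t_1]$ with $r(\pm t_1) = 0$. At the endpoints $t = \pm t_1$ the hypothesis $r'=0$ of that lemma fails, but the extension through the fiber-axis is forced geometrically: in $\EKT$ the screw-motion orbit of the profile curve meets the axis tangentially (as $\sigma \to \pi$), and the surface continues smoothly across it, which in the quotient corresponds to reflection across $h = h(\pm t_1)$ together with the jump of $\sigma$ from $\pi$ to $0$. Iterating yields the claimed $2t_1$-periodicity of $r$ and $\sigma$ and the discontinuity of $\sigma$ at the reflection times. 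The main obstacle I expect is precisely this justification of the extension through the axis, where the quotient metric of Lemma \ref{lem:quotient} degenerates; the analytic ingredients (finiteness of $t_1$ and the limiting value $|a|H$) are essentially mechanical consequences of the explicit $J=0$ relation.
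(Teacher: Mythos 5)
Your proof is correct and takes essentially the same route as the paper: specialize the conserved quantity to $J=0$ (the paper writes it as $\sin\sigma=\tfrac{2H}{\kappa}\tfrac{1-\cs(r)}{\sn(r)}$, equivalent to your half-angle form $\sin\sigma=2H\tn(r/2)$), inherit the qualitative behaviour from Lemma~\ref{lem:unduloid} and Proposition~\ref{prop:unduloid}, and read off the limit $h'\to|a|H$ as $r\to0$. Your explicit quadrature formula for $t_1$ and your discussion of the extension through the axis supply more detail than the paper, which only computes the $h'$ limit and states that the remaining arguments carry over.
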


\begin{proof}
	We focus on the arguments that do not carry over from Proposition \ref{prop:unduloid}. Vanishing energy $J=0$ implies
	\begin{equation*}
		\sin\sigma=\frac{2H}{\kappa}\frac{1-\cs(r)}{\sn(r)}.
	\end{equation*}
	Therefore,
	\begin{equation*}
		h'=\sqrt{\termb}\,\sin\sigma
		=\frac{2H}{\kappa}\sqrt{\sns(r)+\left(4\tau\sns\!\left(\frac{r}{2}\right)-a\right)^2}\,\cdot\frac{1-\cs(r)}{\sns(r)},
	\end{equation*}
	and
	\begin{equation*}
		\lim\limits_{r\to0}h'
		=\frac{2H}{\kappa}\sqrt{a^2}\,\cdot\lim\limits_{r\to0}\left(\frac{1-\cs(r)}{\sns(r)}\right)
		=\frac{2H}{\kappa}\sqrt{a^2}\cdot\frac{\kappa}{2}=|a|H.
		\qedhere
	\end{equation*}
\end{proof}

\newpage


\begin{figure}[H]
	\centering
	\begin{minipage}[t]{0.16\textwidth}
		\includegraphics[width=\textwidth]{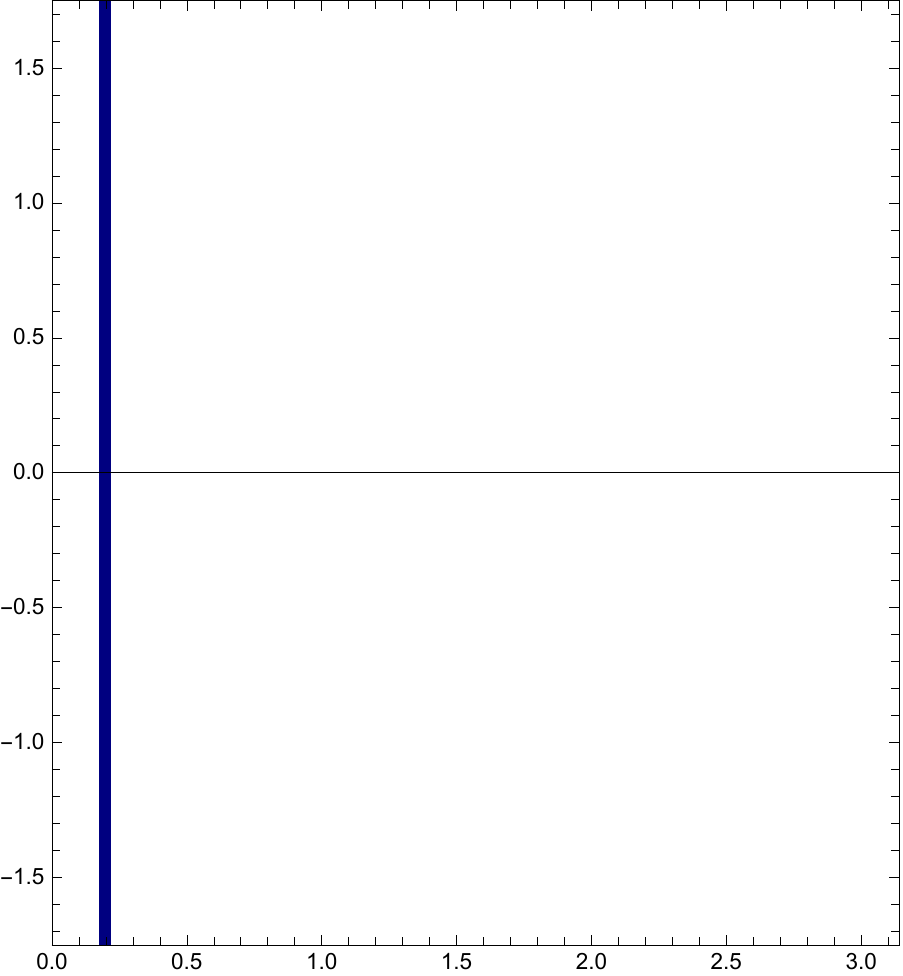} \\
		\includegraphics[width=\textwidth]{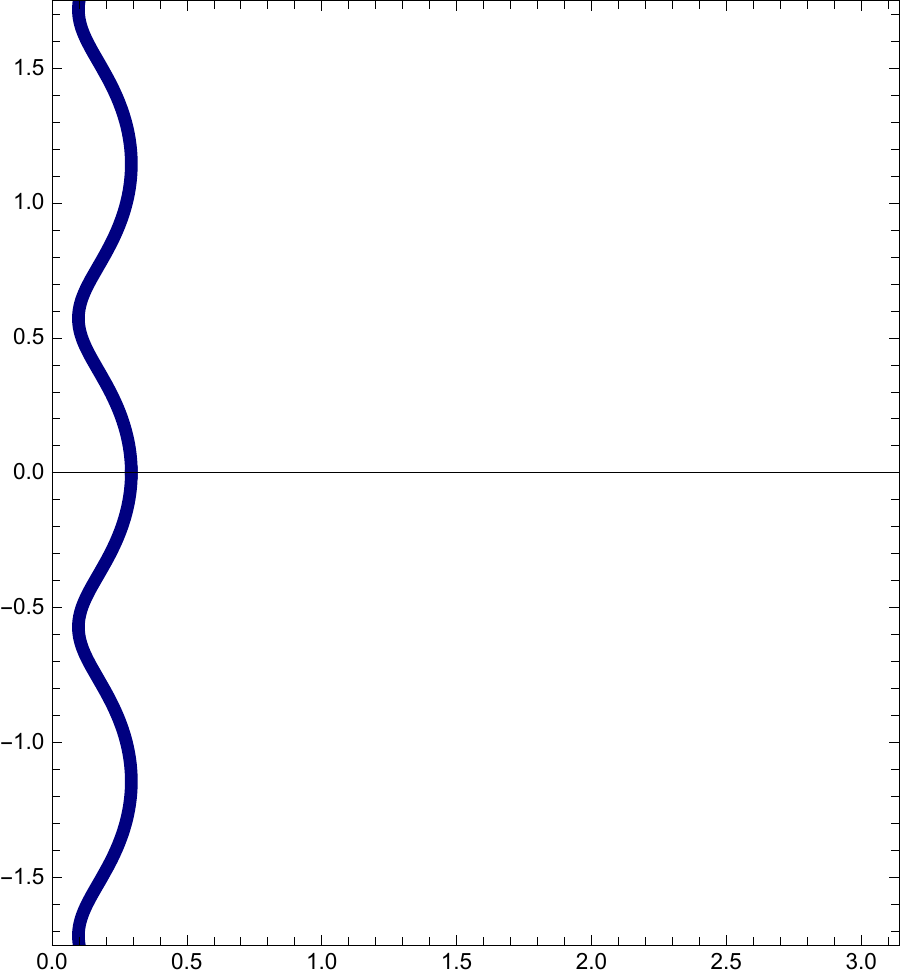} \\
		\includegraphics[width=\textwidth]{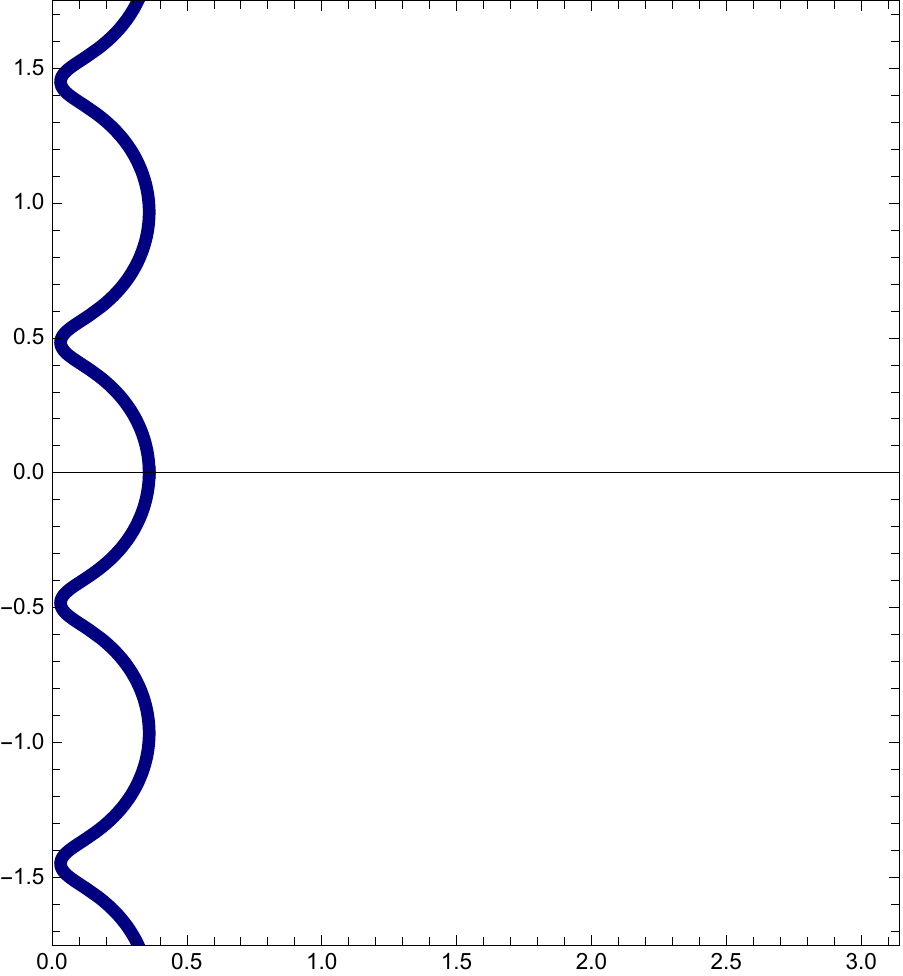} \\
		\includegraphics[width=\textwidth]{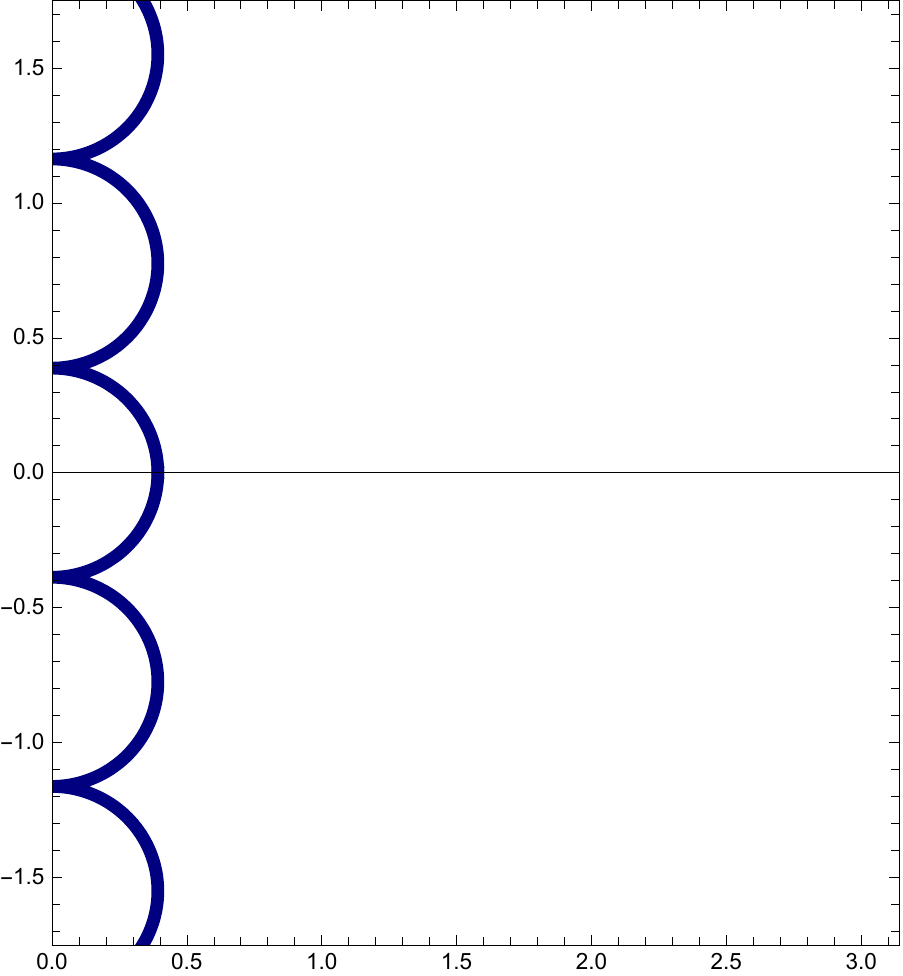} \\
		\includegraphics[width=\textwidth]{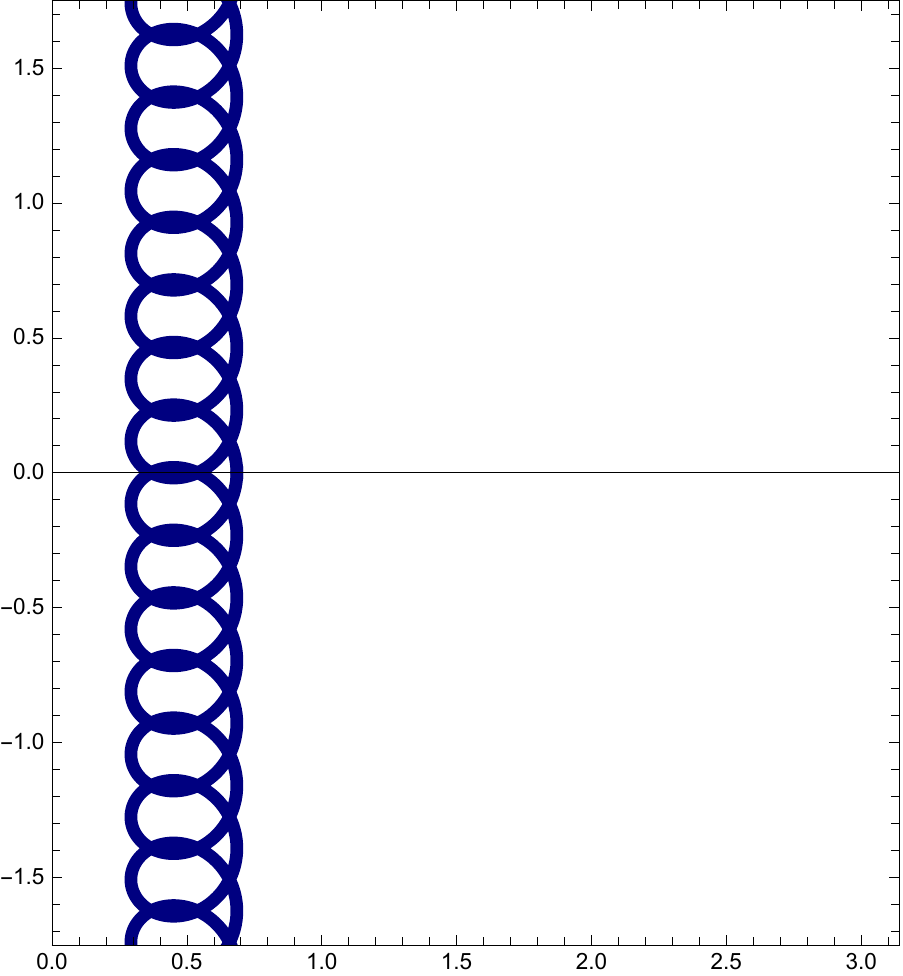} \\
		\includegraphics[width=\textwidth]{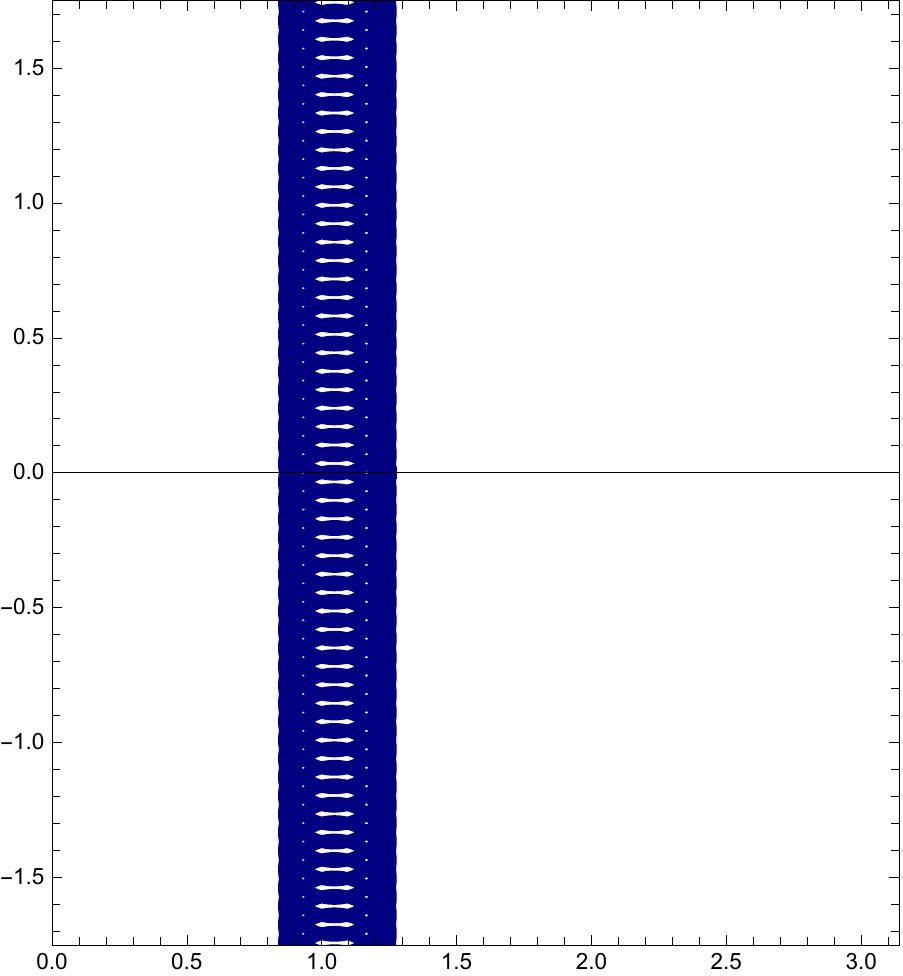} \\
		\includegraphics[width=\textwidth]{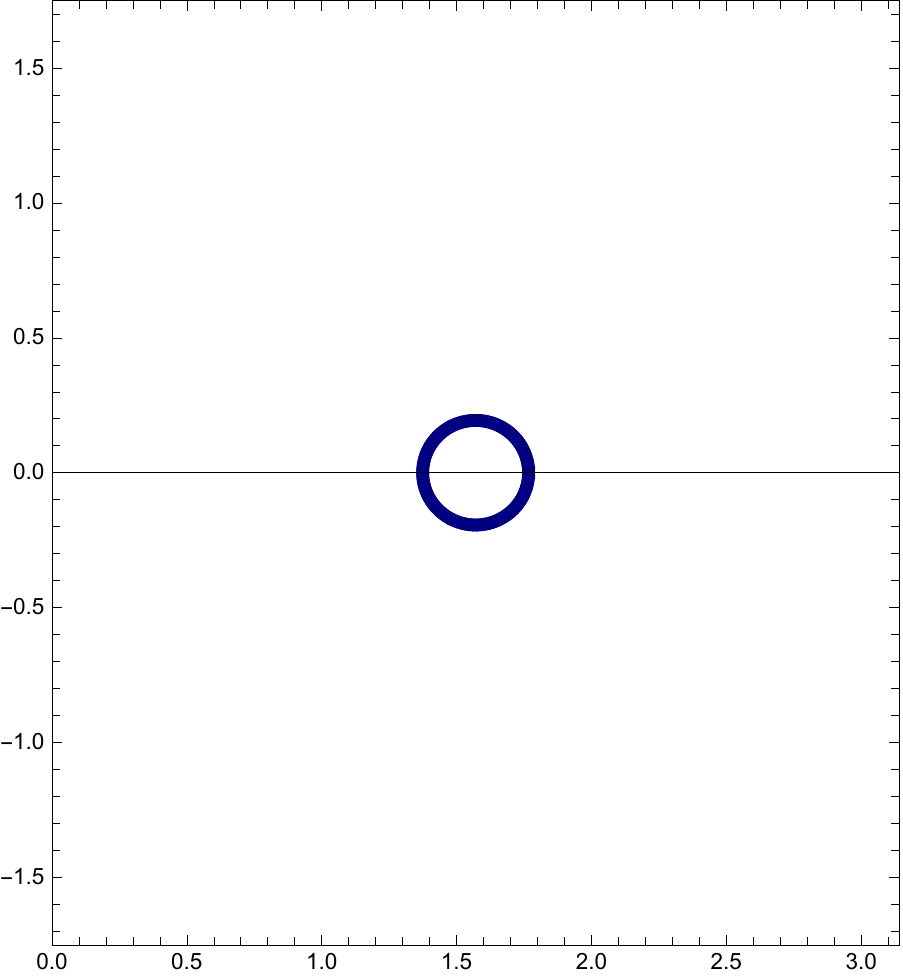}
	\end{minipage}
	\begin{minipage}[t]{0.16\textwidth}
		\includegraphics[width=\textwidth]{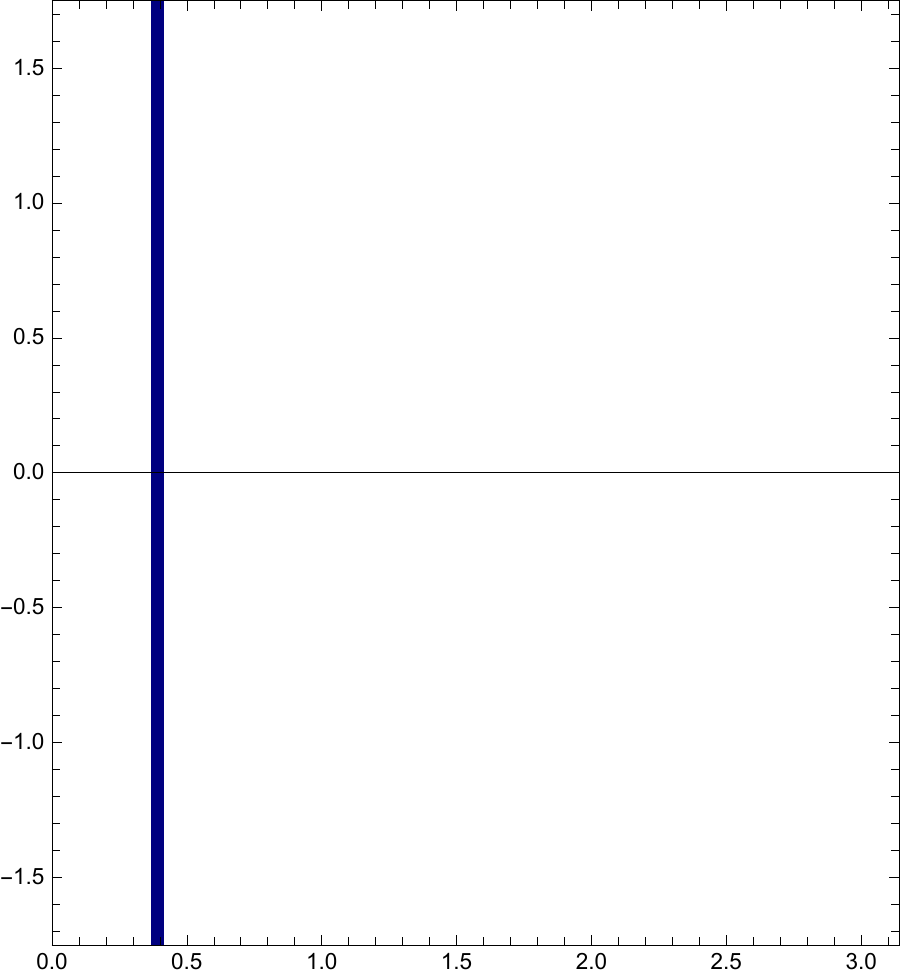} \\
		\includegraphics[width=\textwidth]{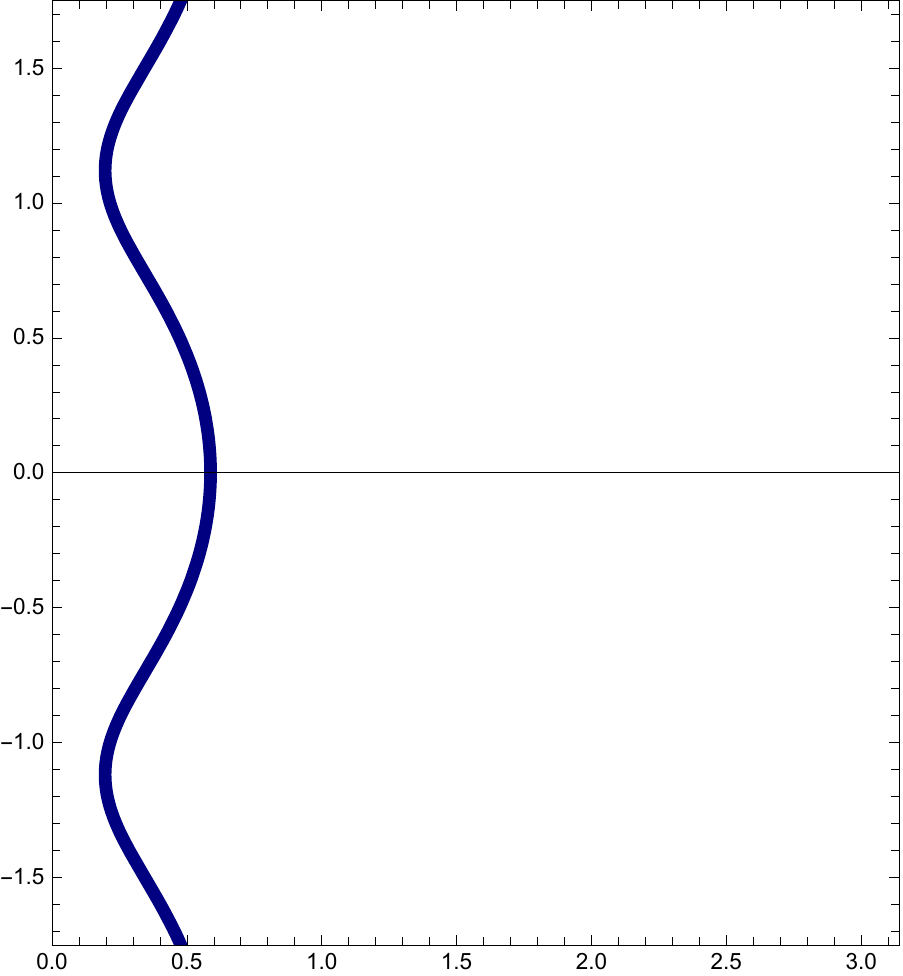} \\
		\includegraphics[width=\textwidth]{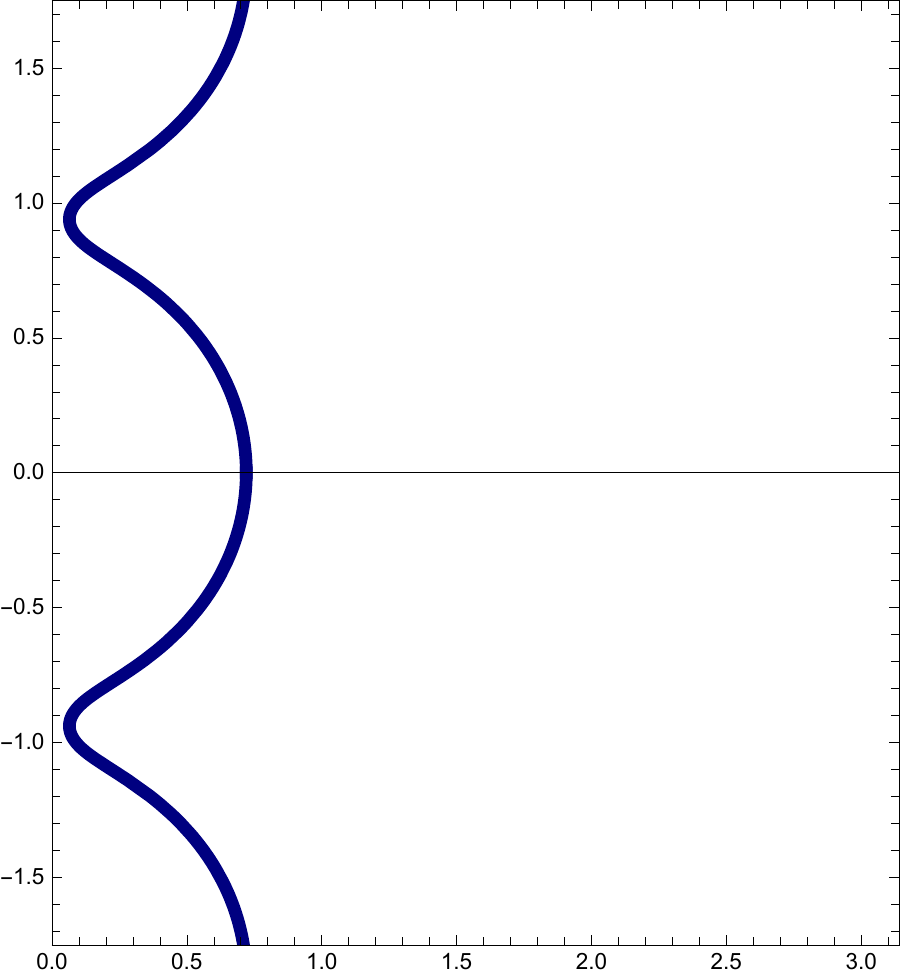} \\
		\includegraphics[width=\textwidth]{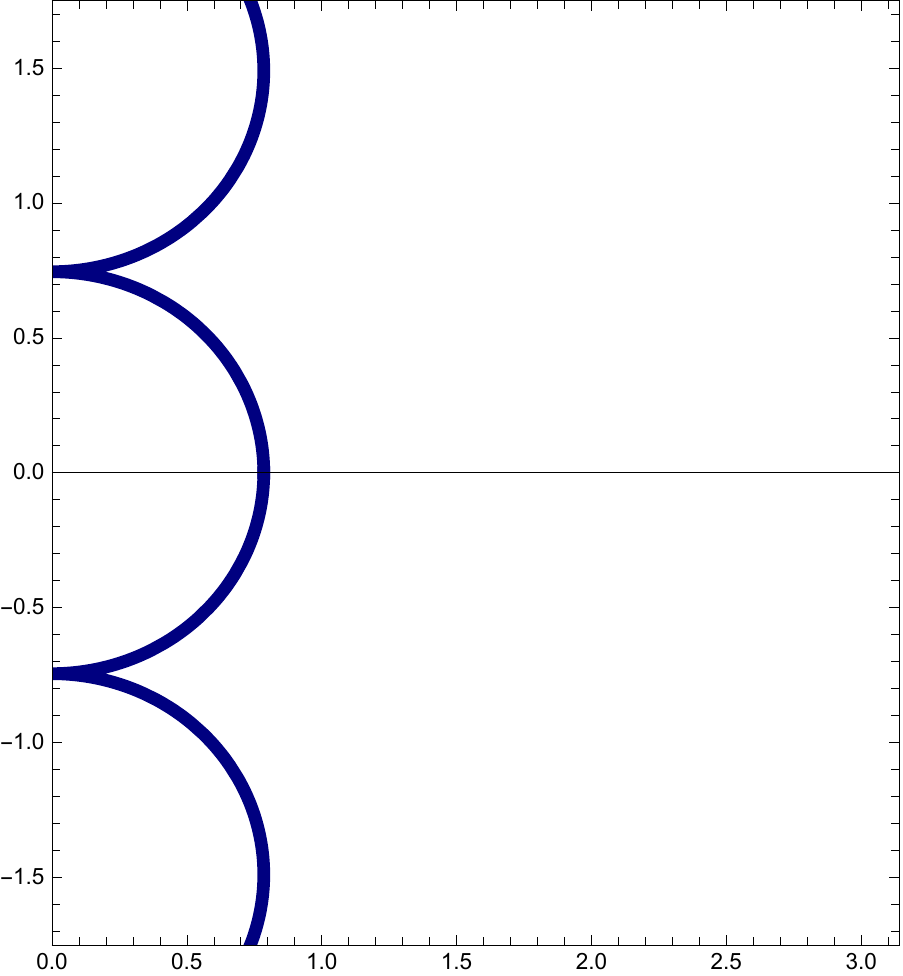} \\
		\includegraphics[width=\textwidth]{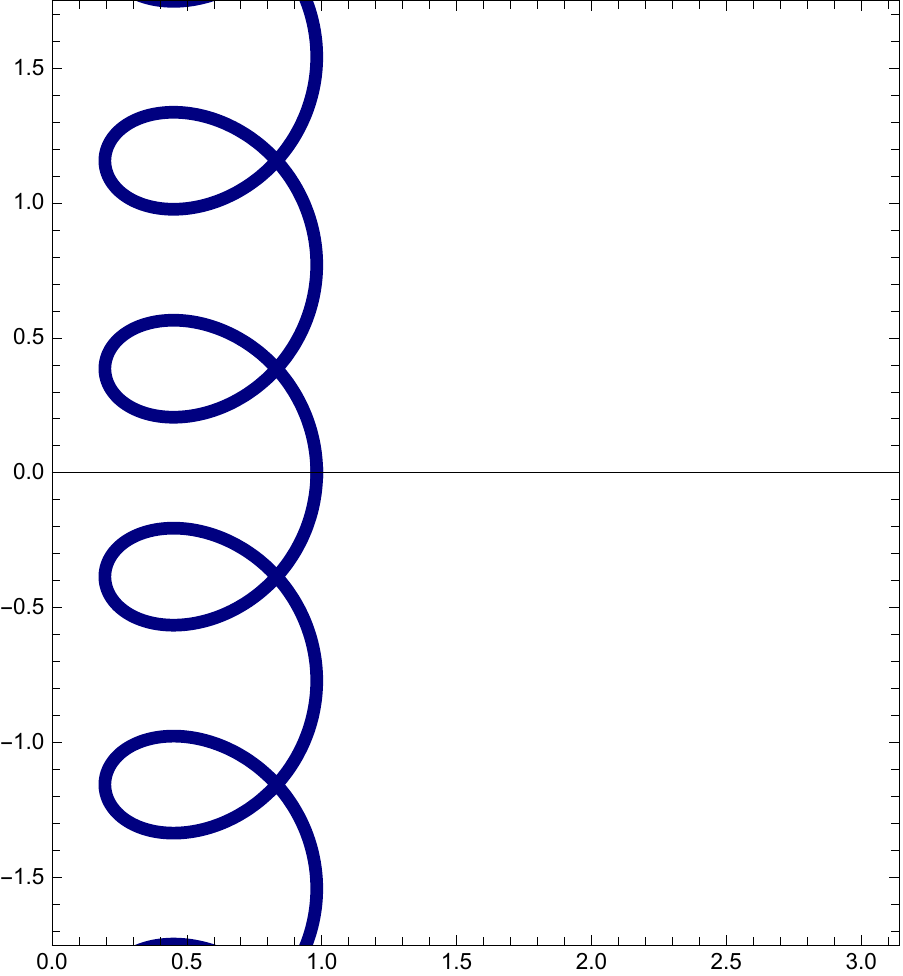} \\
		\includegraphics[width=\textwidth]{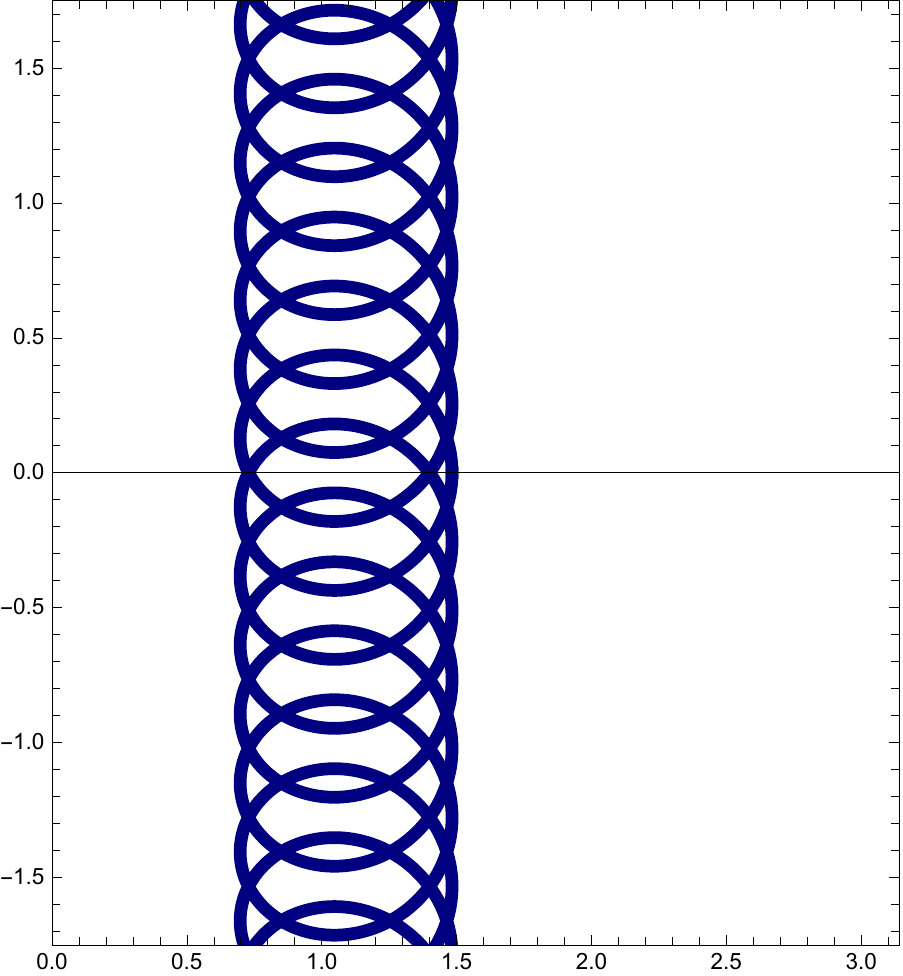} \\
		\includegraphics[width=\textwidth]{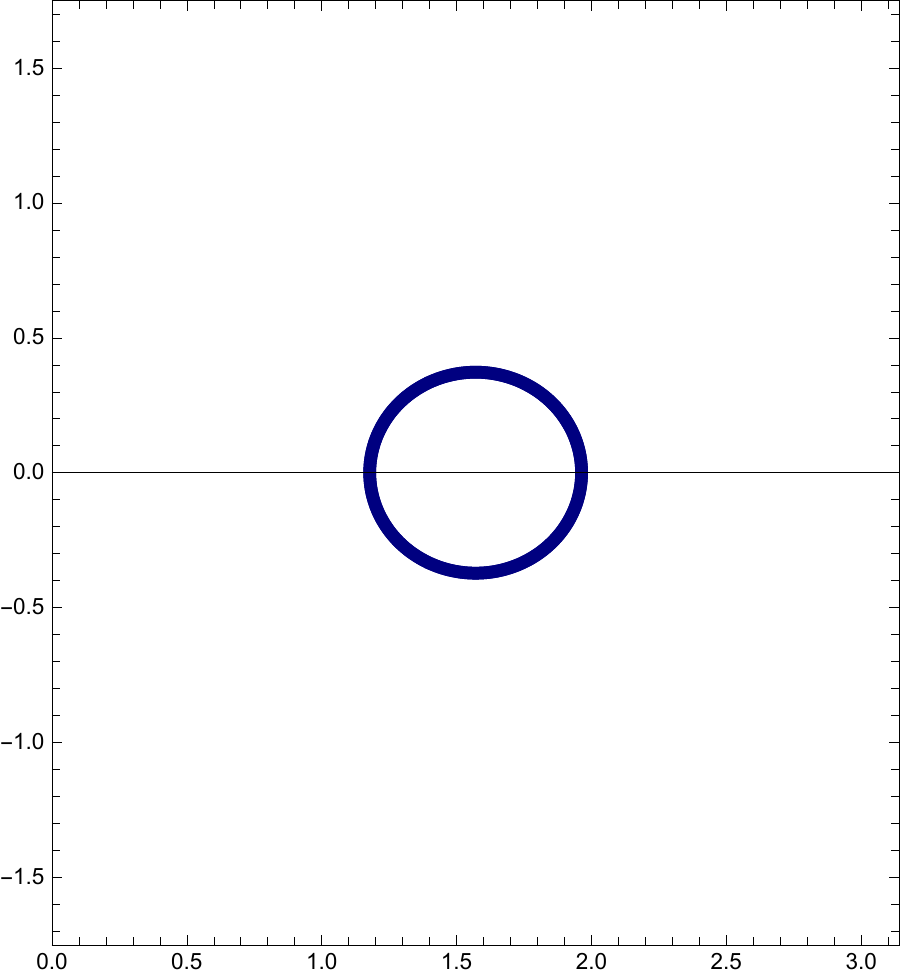}
	\end{minipage}
	\begin{minipage}[t]{0.16\textwidth}
		\includegraphics[width=\textwidth]{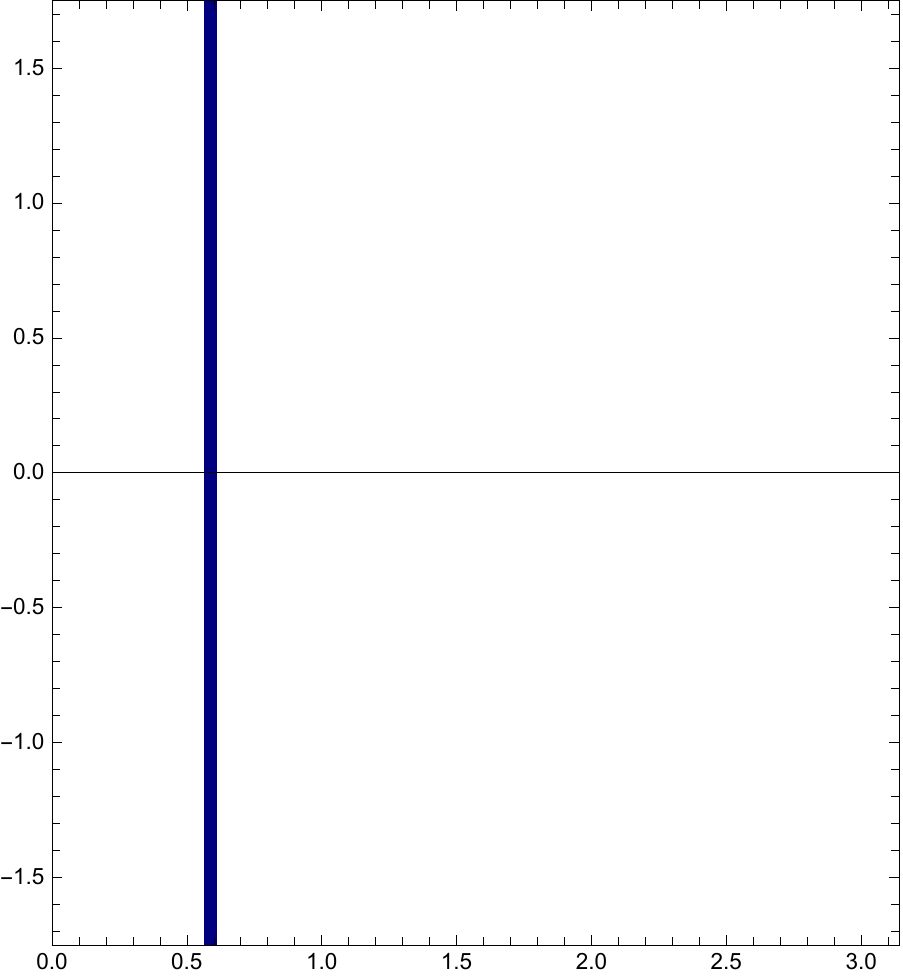} \\
		\includegraphics[width=\textwidth]{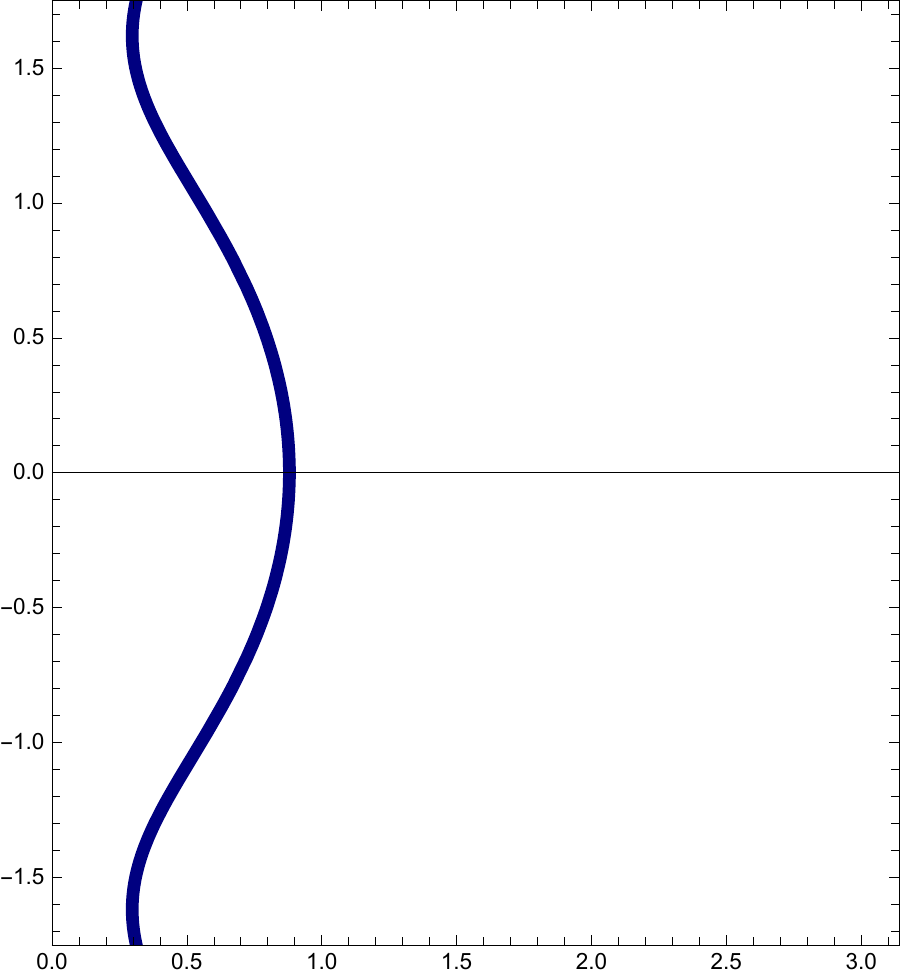}
		\includegraphics[width=\textwidth]{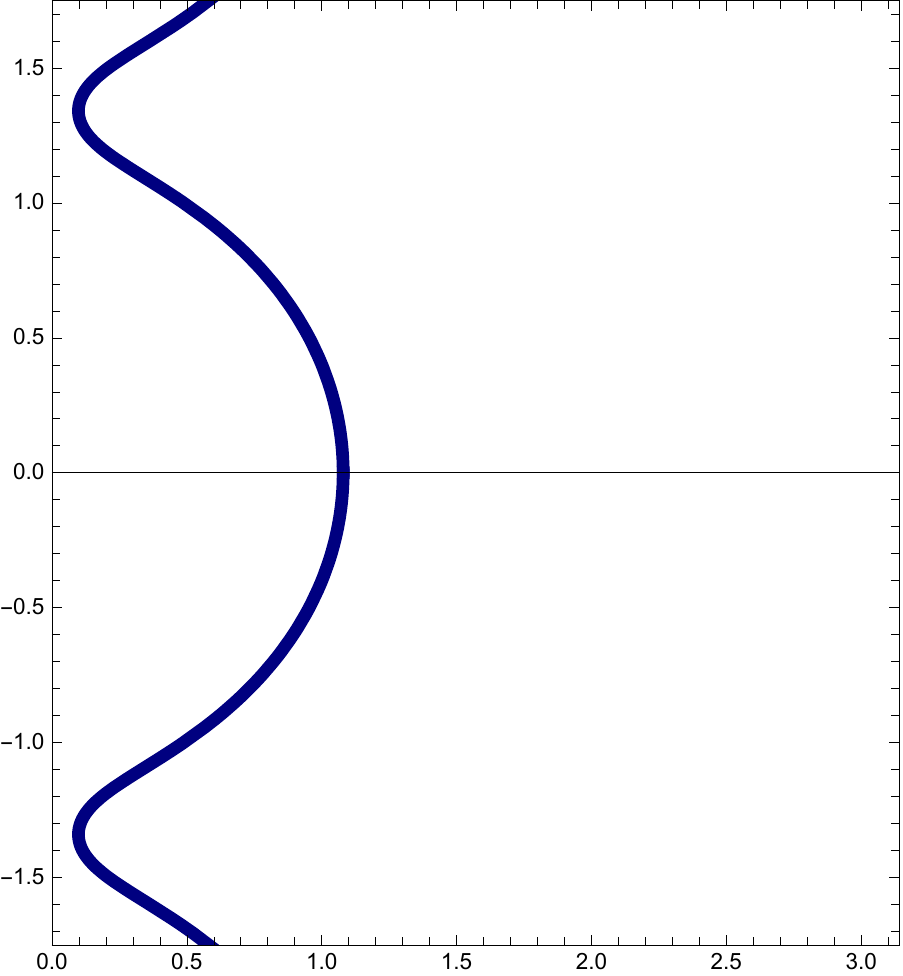} \\
		\includegraphics[width=\textwidth]{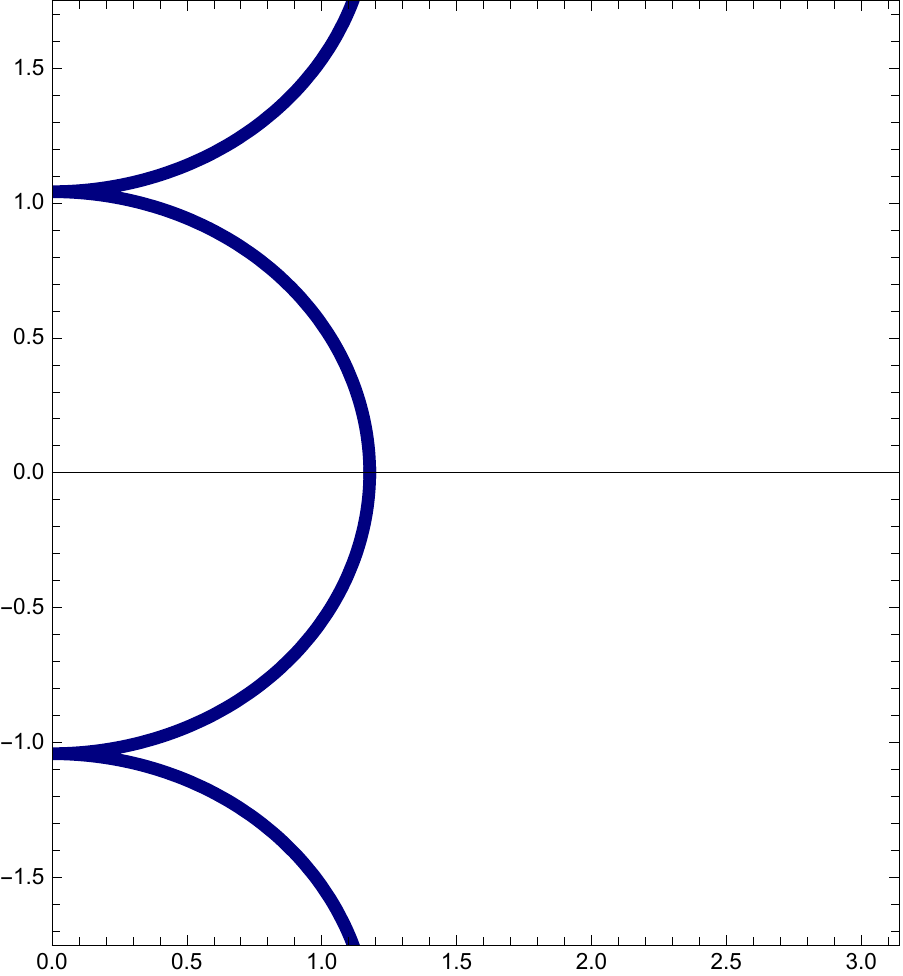} \\
		\includegraphics[width=\textwidth]{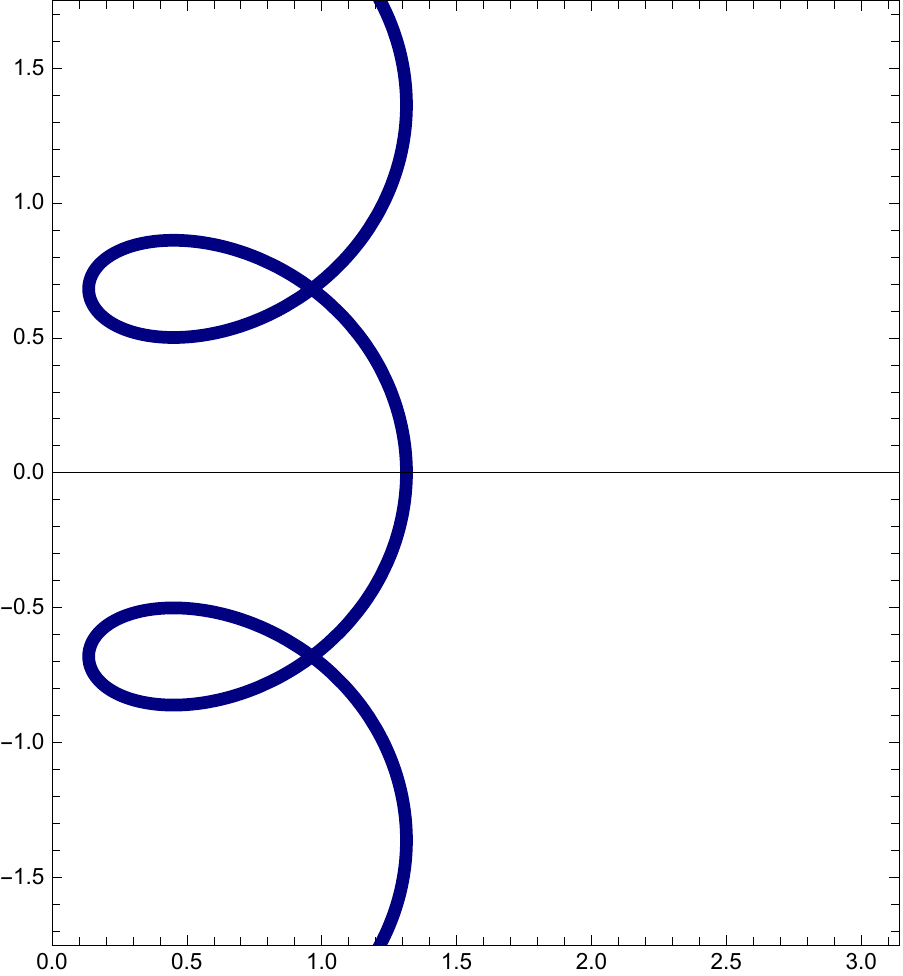} \\
		\includegraphics[width=\textwidth]{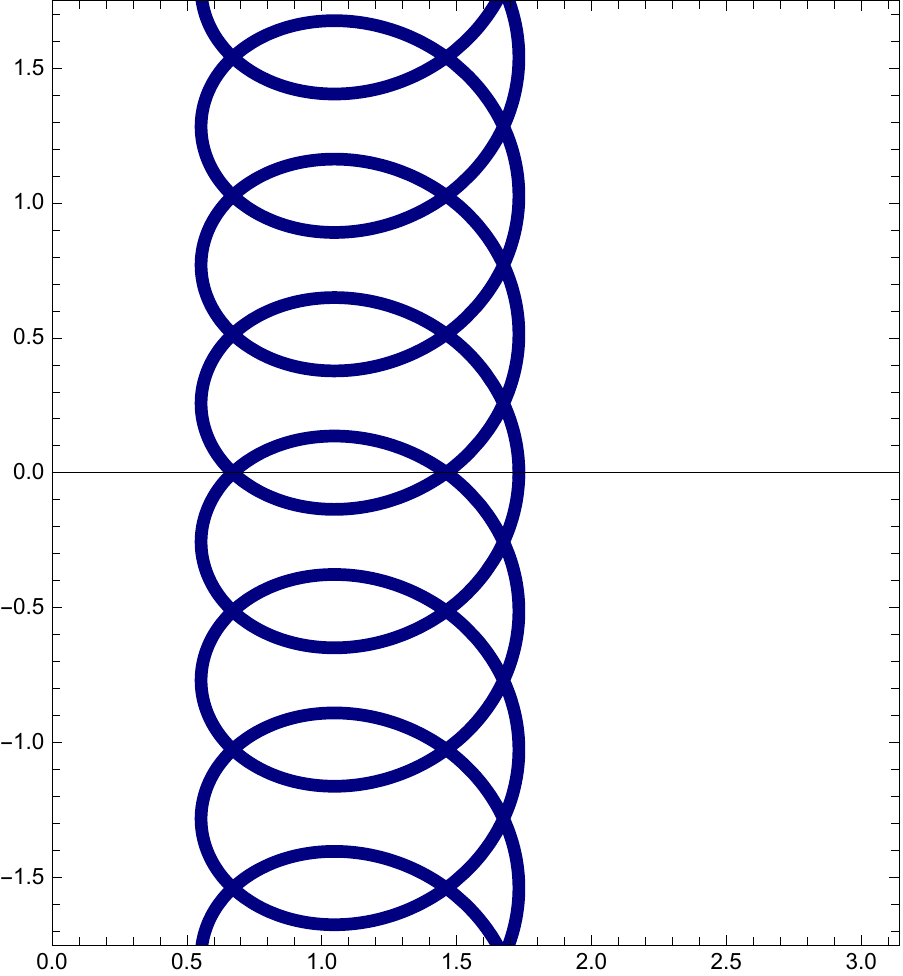} \\
		\includegraphics[width=\textwidth]{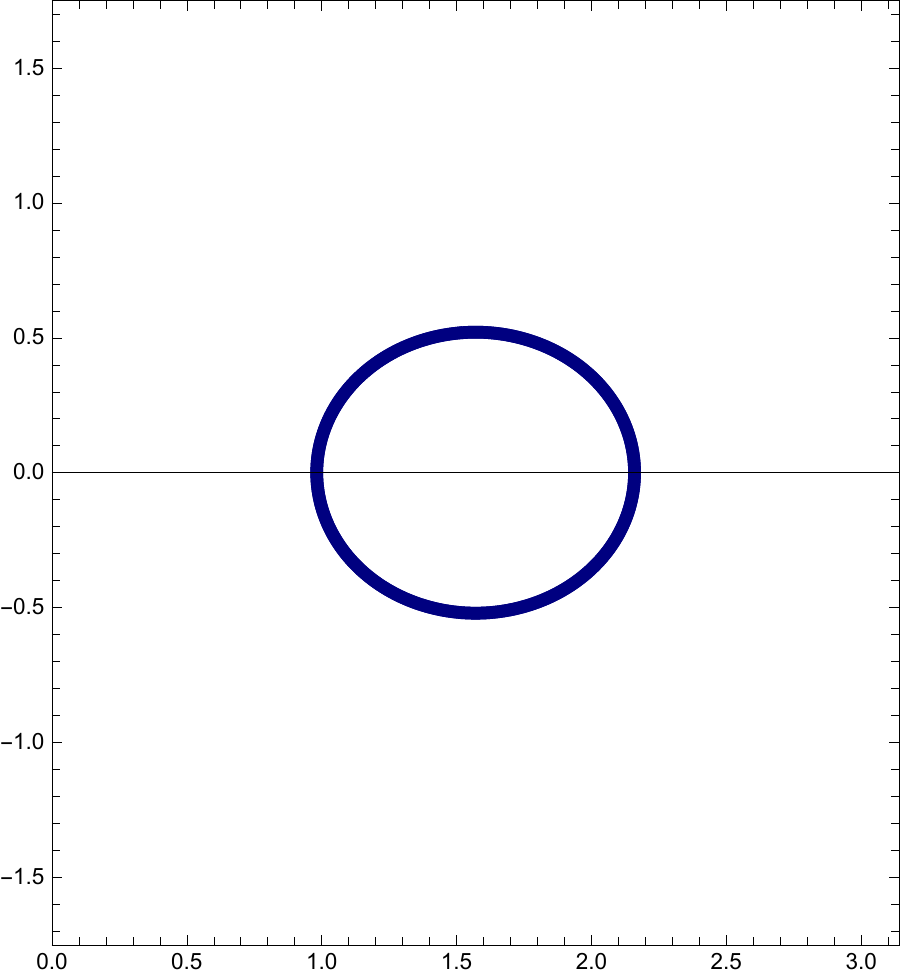}
	\end{minipage}
	\begin{minipage}[t]{0.16\textwidth}
		\includegraphics[width=\textwidth]{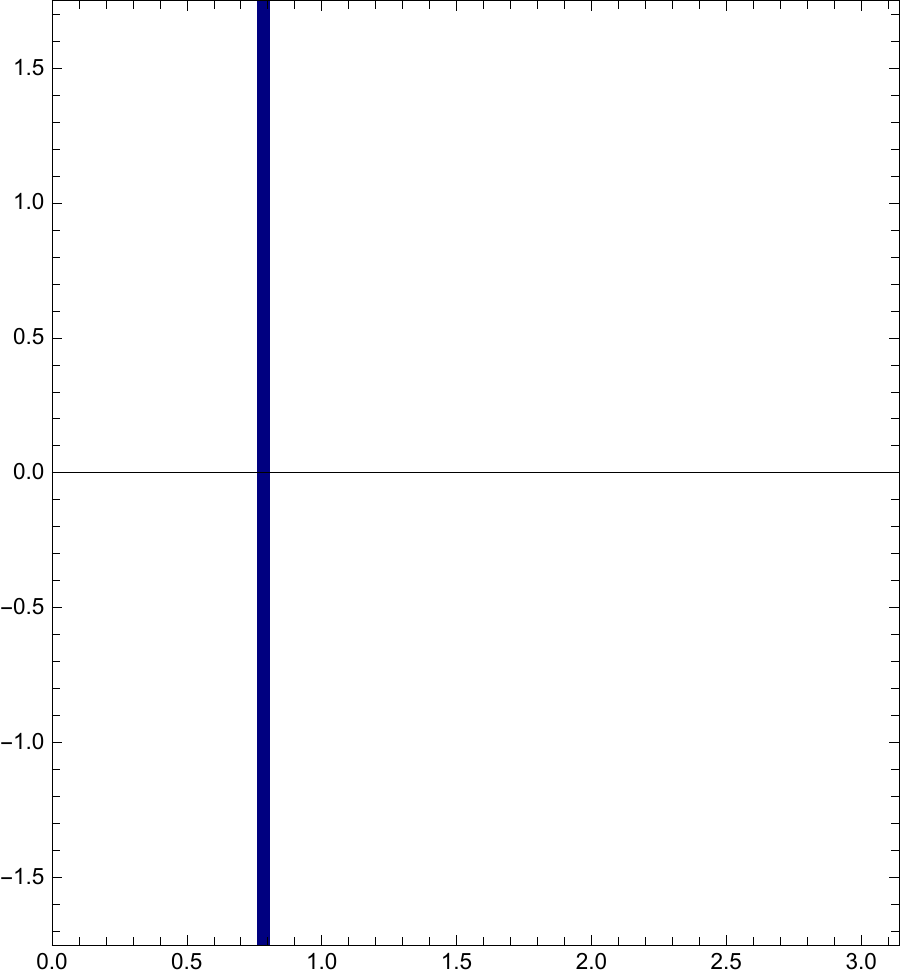} \\
		\includegraphics[width=\textwidth]{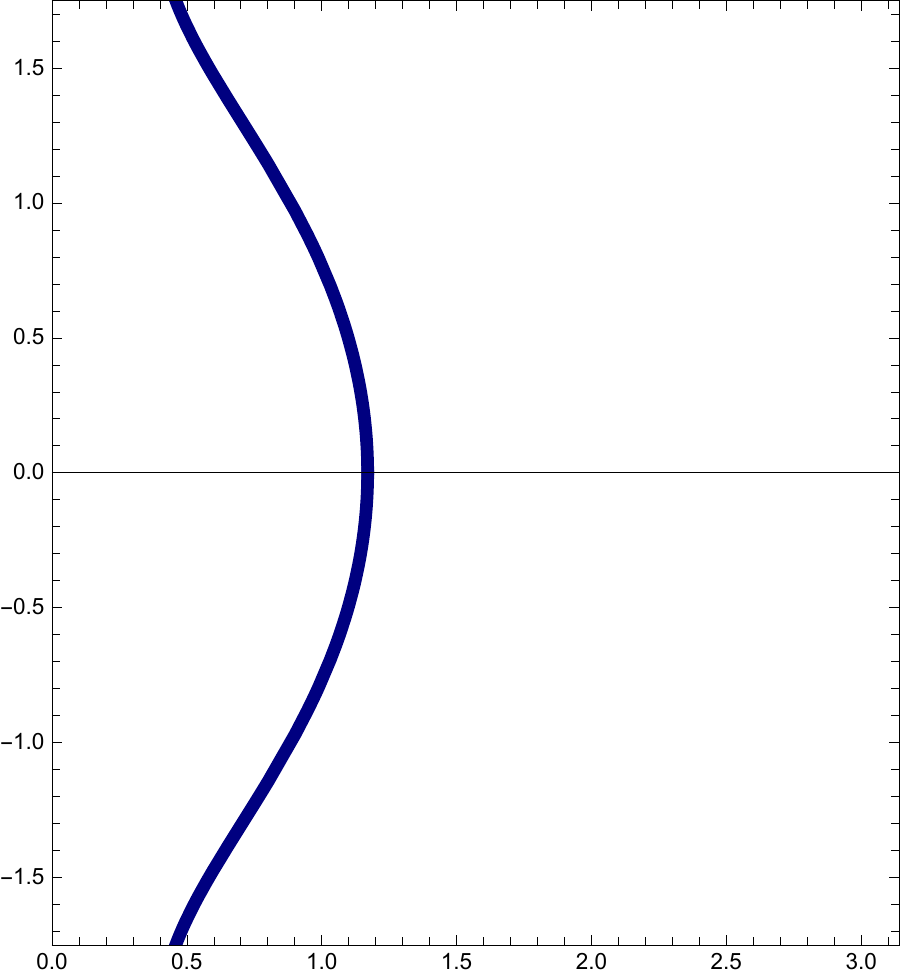} \\		\includegraphics[width=\textwidth]{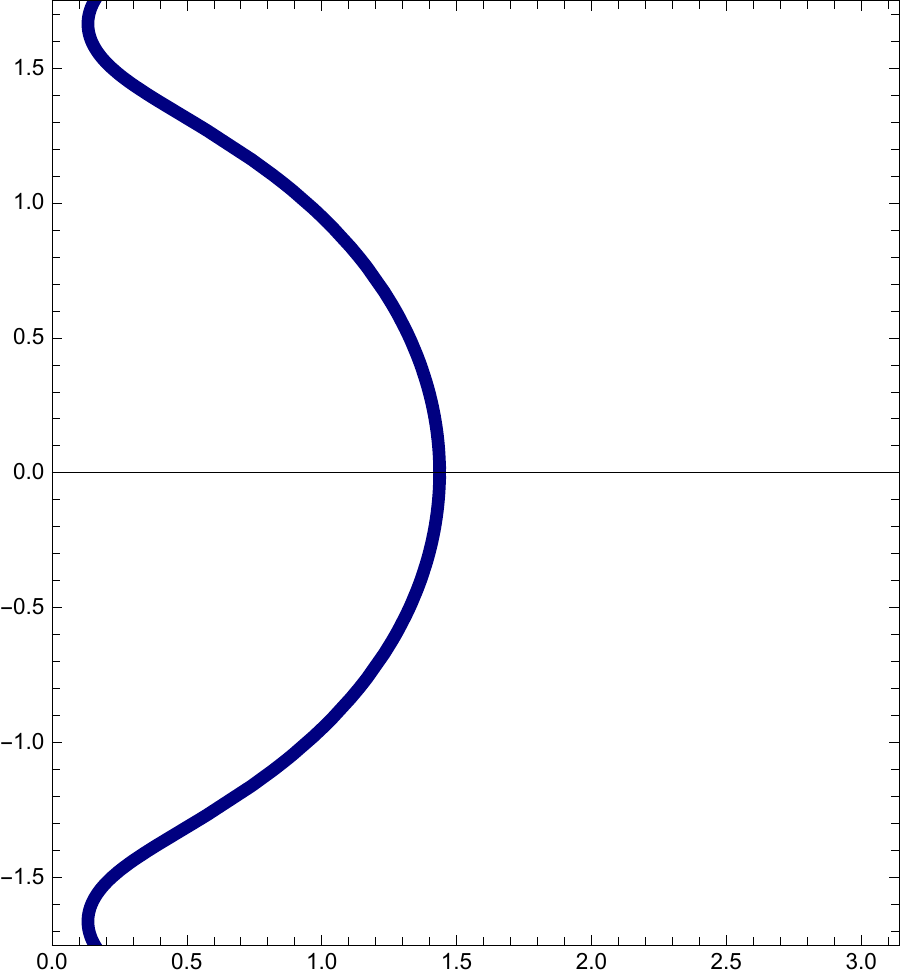} \\
		\includegraphics[width=\textwidth]{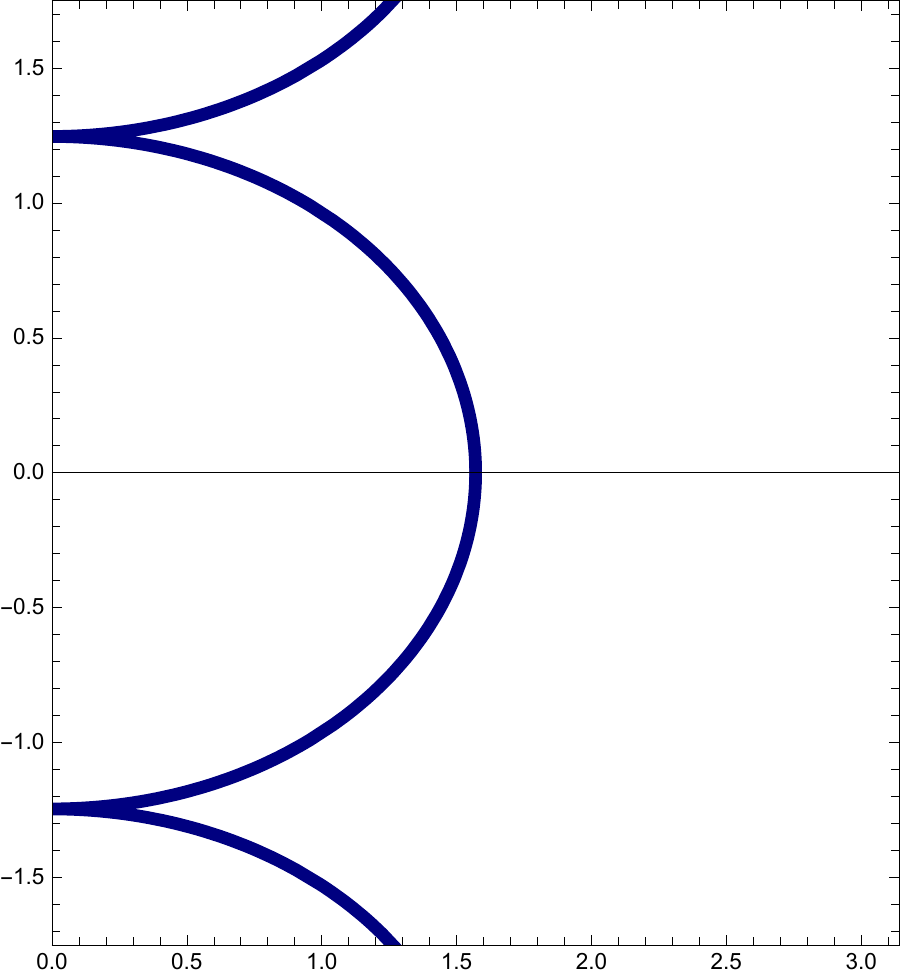} \\
		\includegraphics[width=\textwidth]{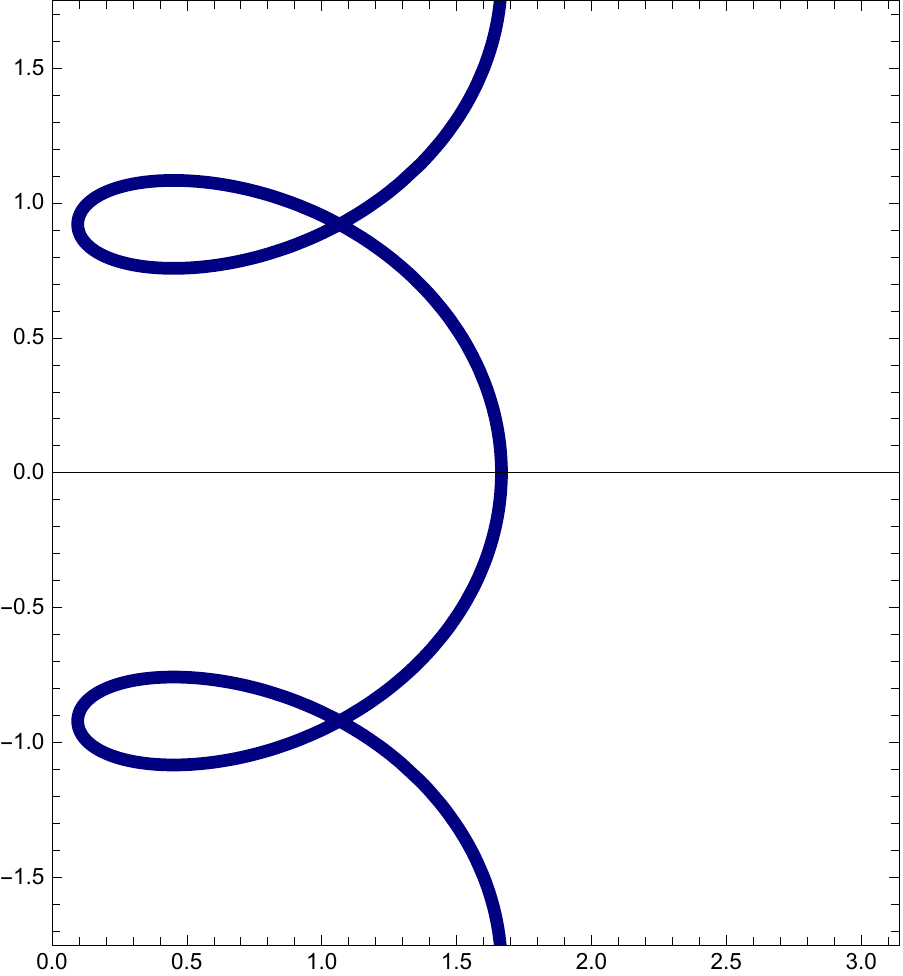} \\
		\includegraphics[width=\textwidth]{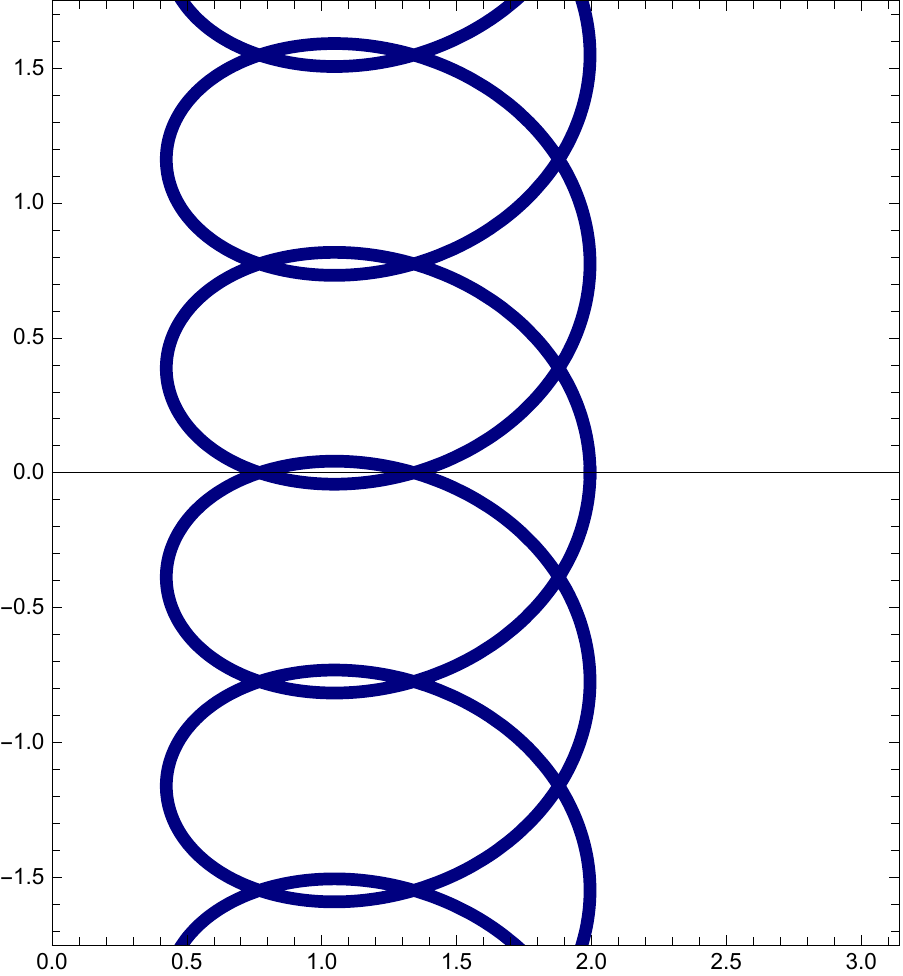} \\
		\includegraphics[width=\textwidth]{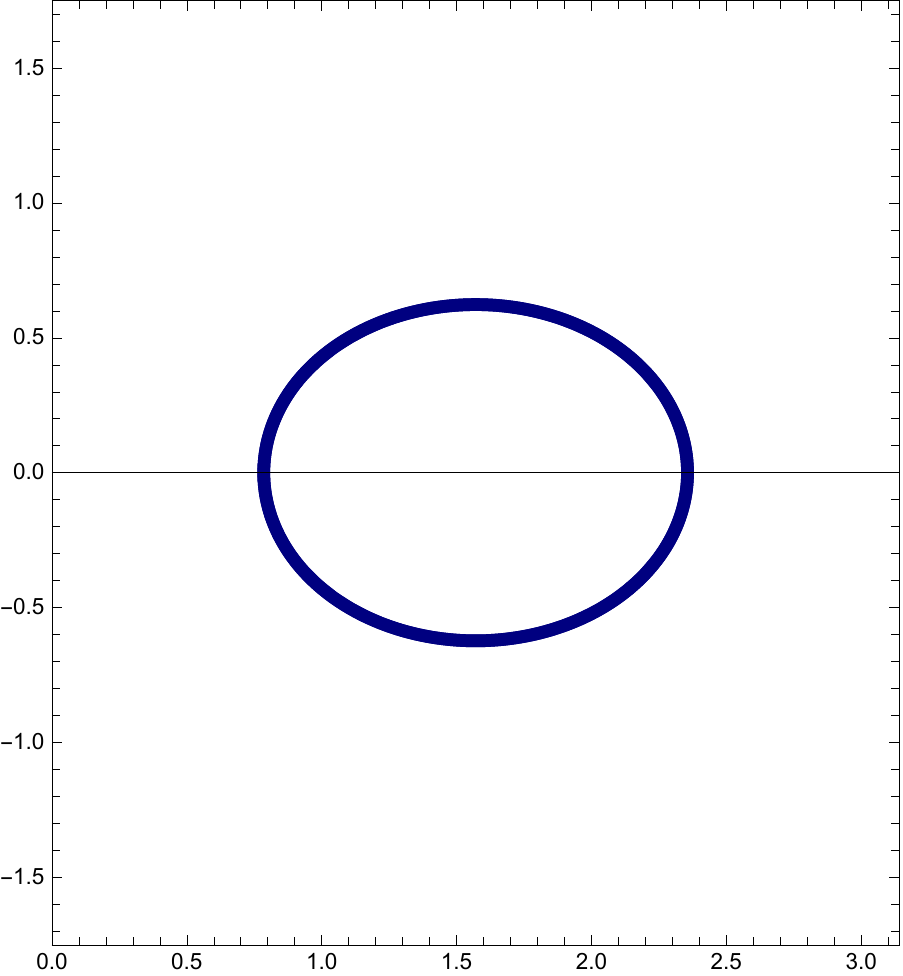}
	\end{minipage}
	\begin{minipage}[t]{0.16\textwidth}
		\includegraphics[width=\textwidth]{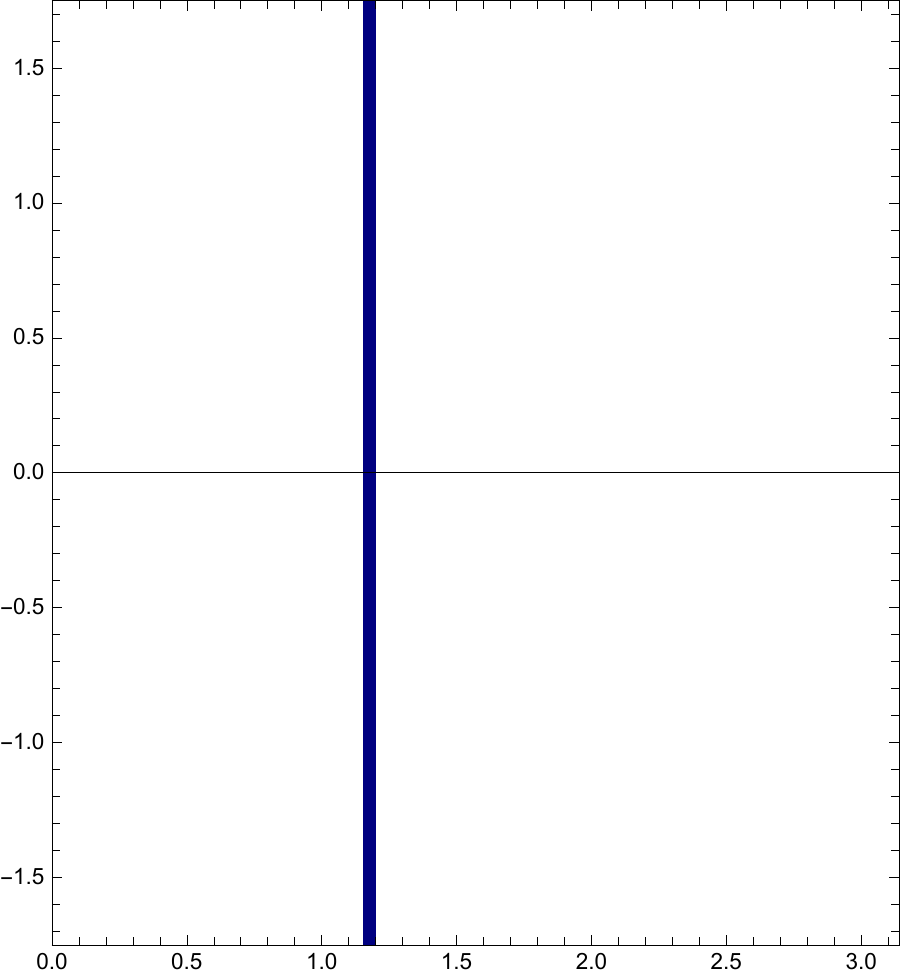} \\
		\includegraphics[width=\textwidth]{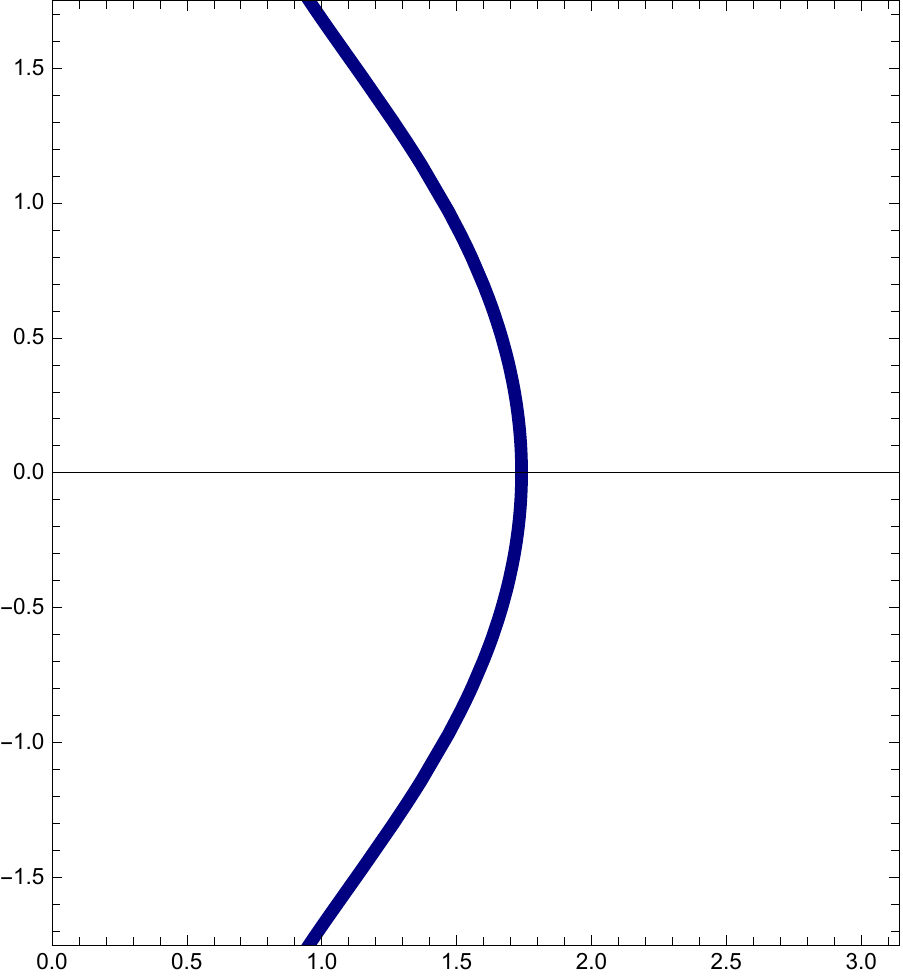} \\
		\includegraphics[width=\textwidth]{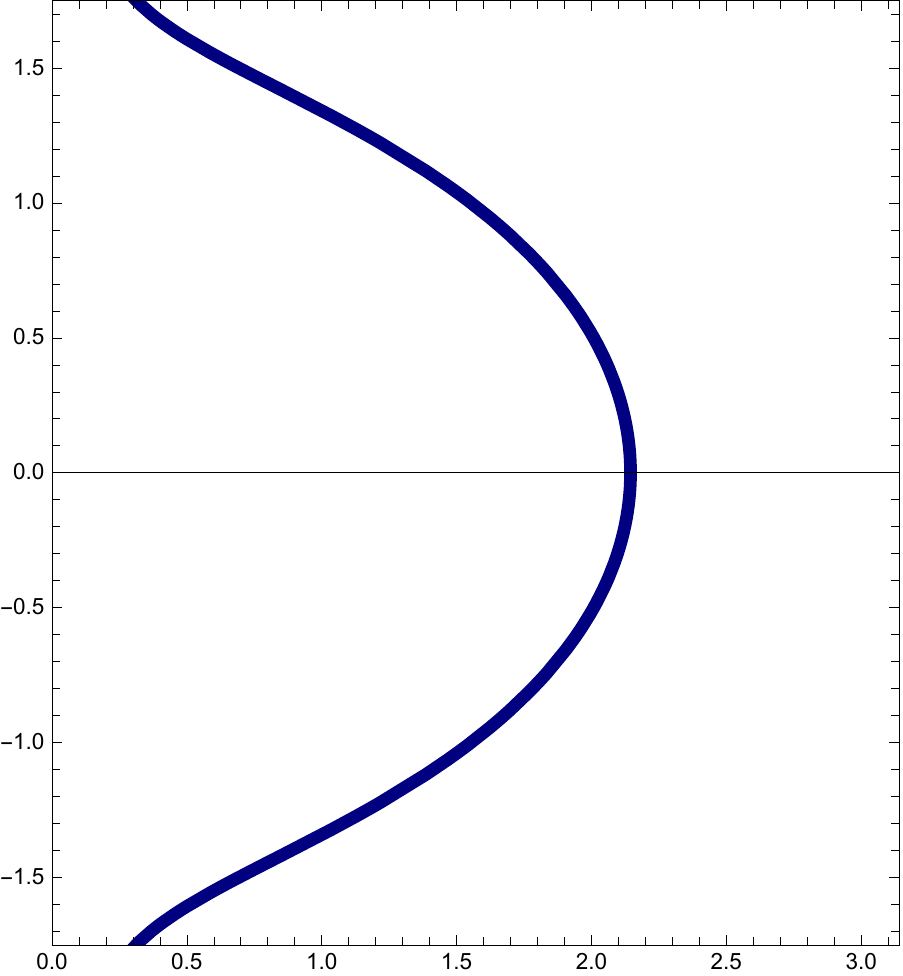} \\
		\includegraphics[width=\textwidth]{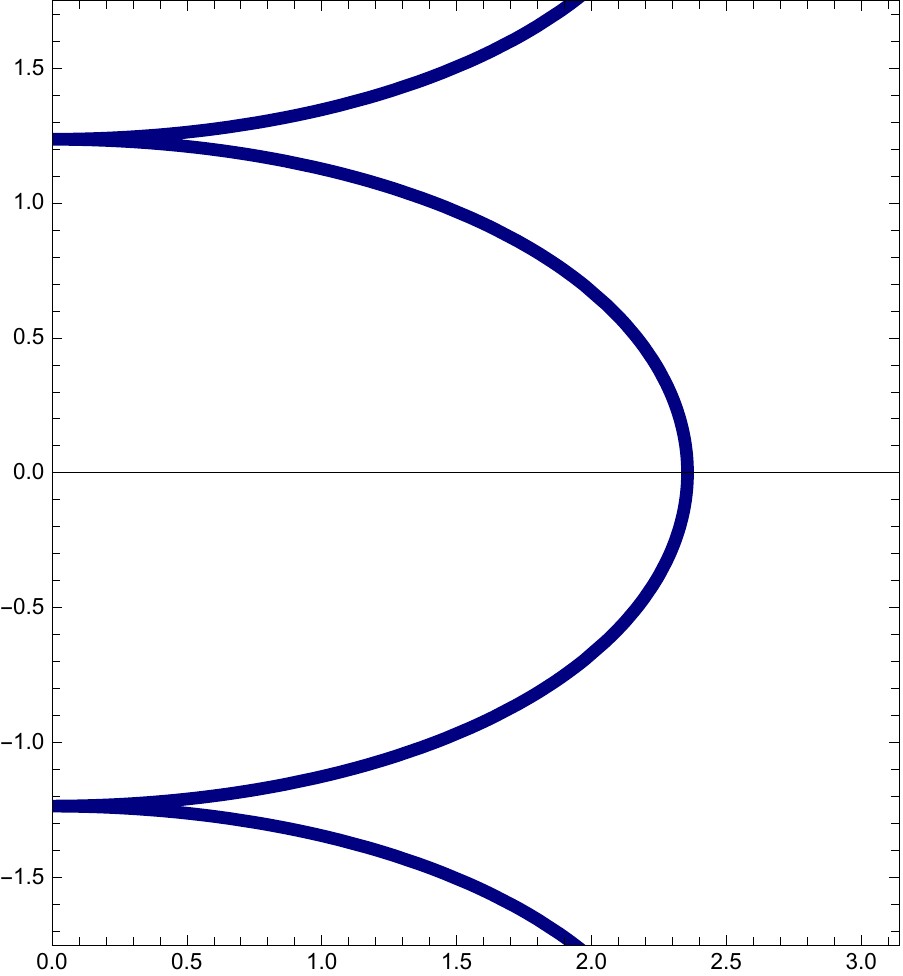} \\
		\includegraphics[width=\textwidth]{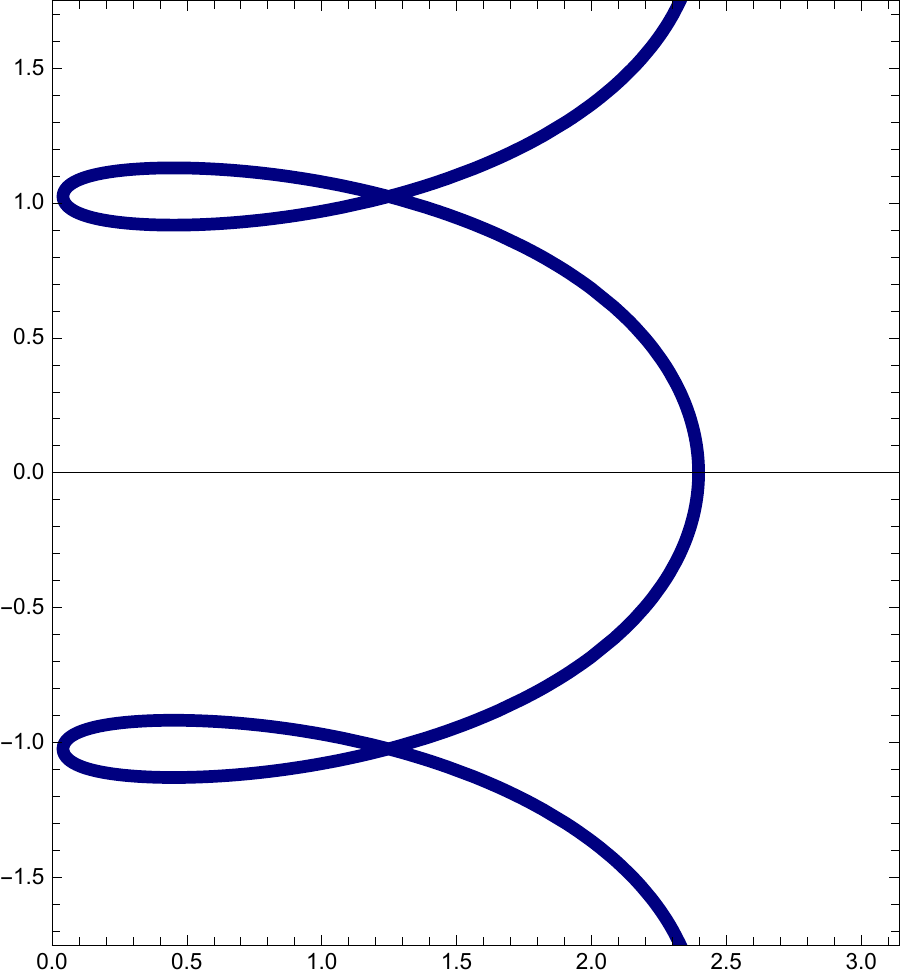} \\
		\includegraphics[width=\textwidth]{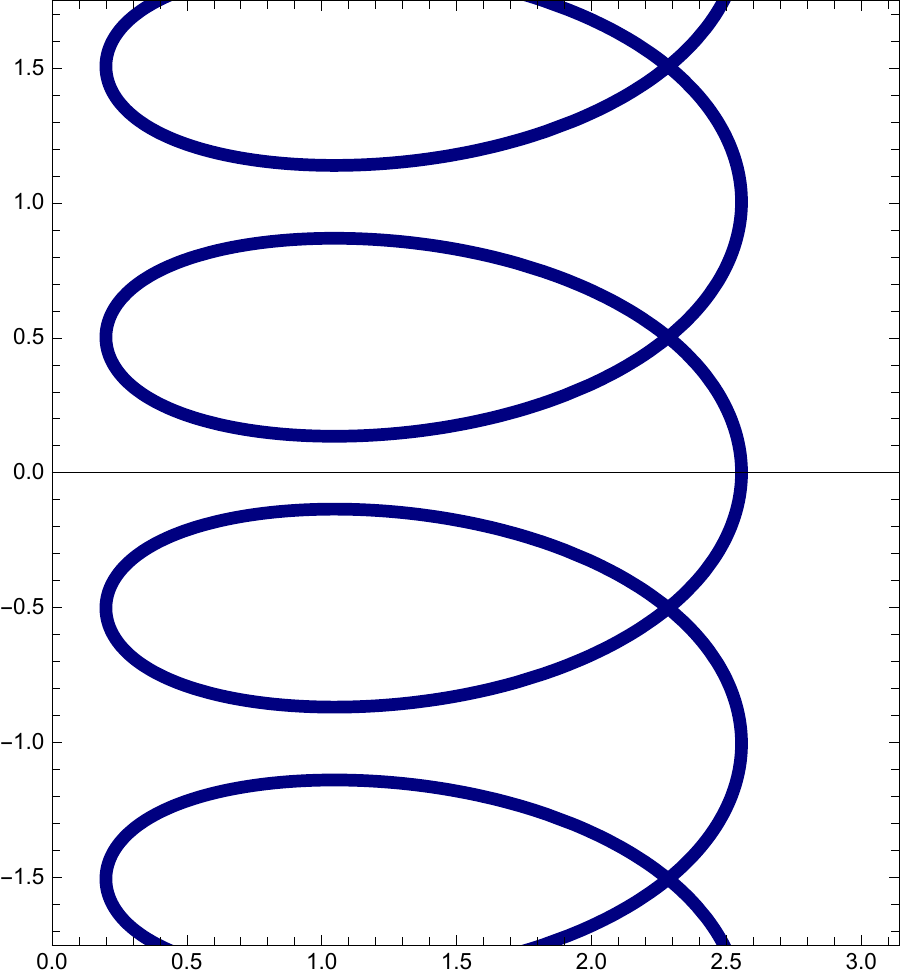} \\
		\includegraphics[width=\textwidth]{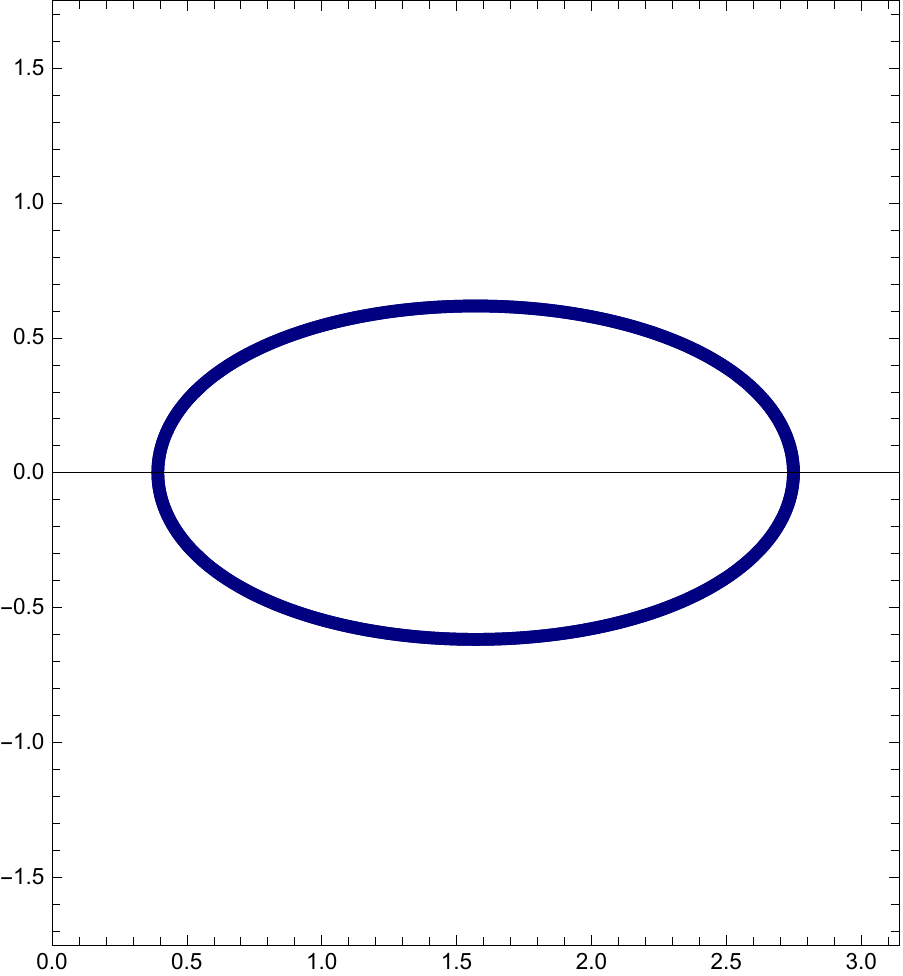}
	\end{minipage}
	\begin{minipage}[t]{0.16\textwidth}
		\includegraphics[width=\textwidth]{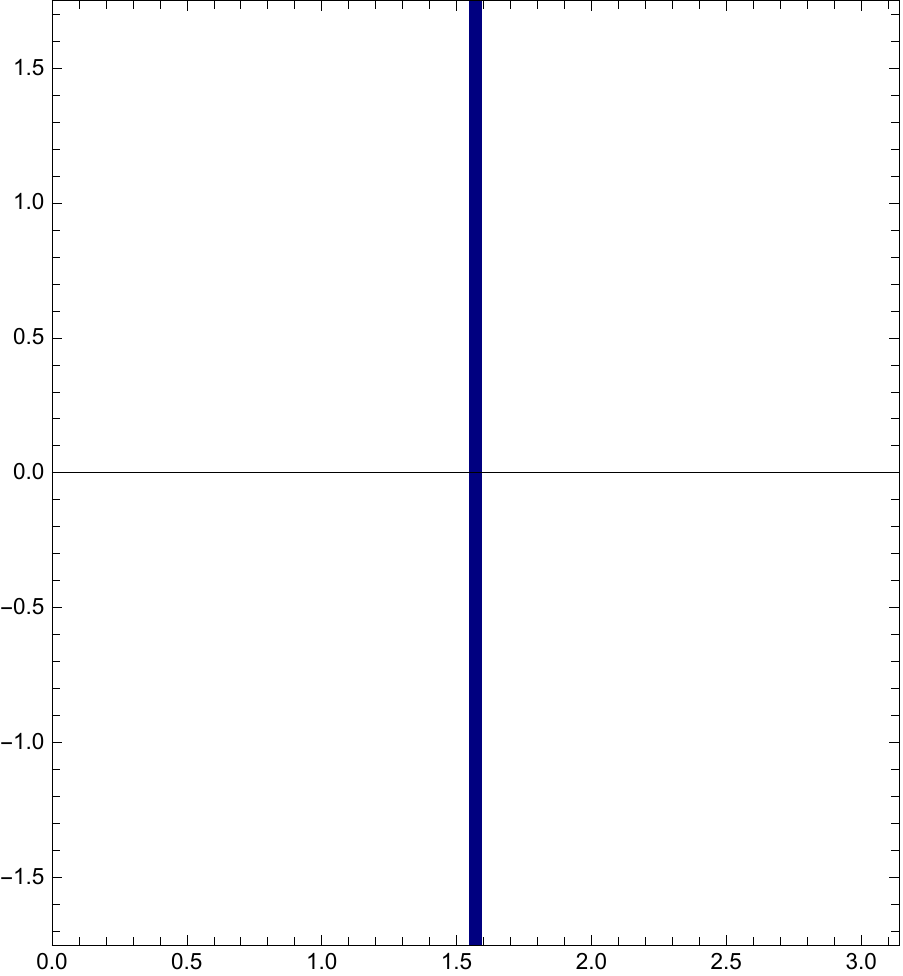} \\
		\includegraphics[width=\textwidth]{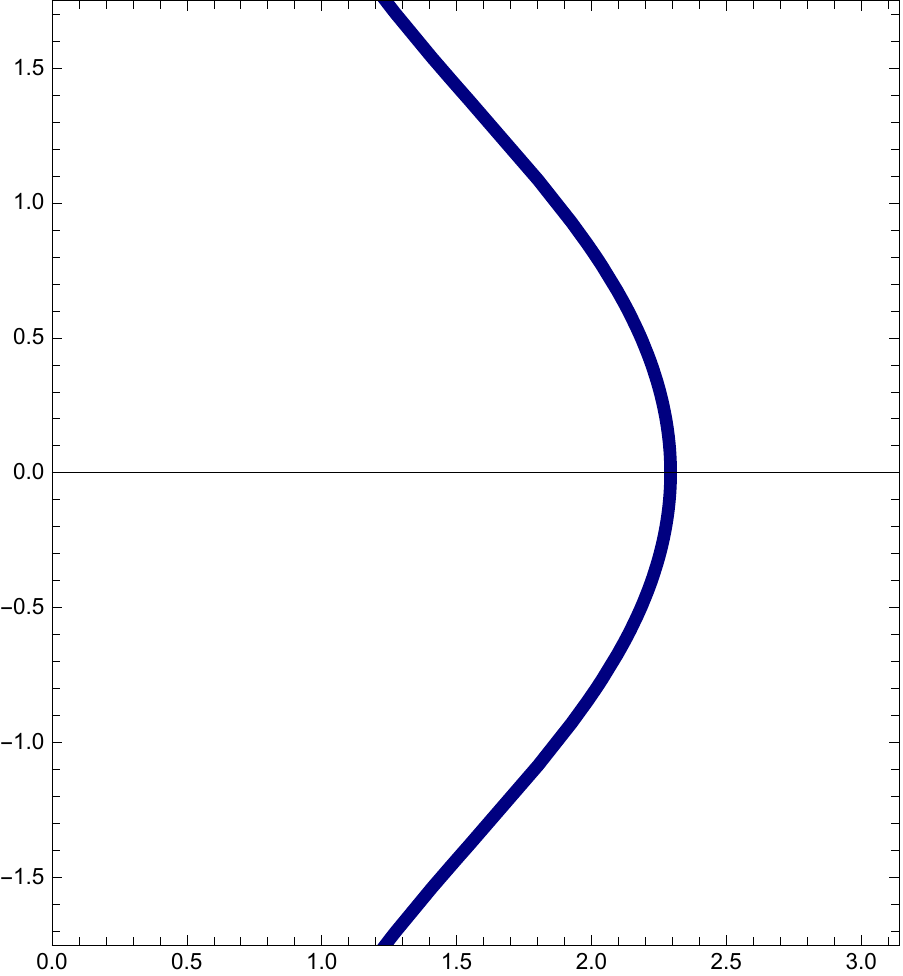} \\
		\includegraphics[width=\textwidth]{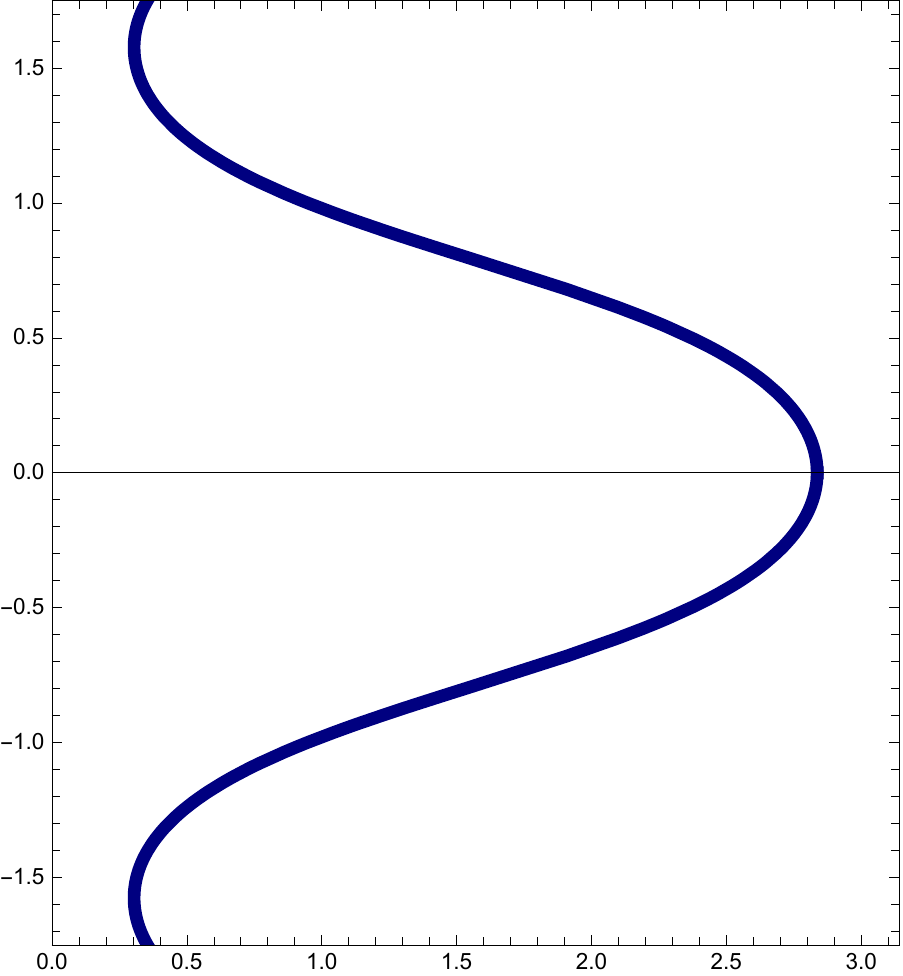} \\
		\includegraphics[width=\textwidth]{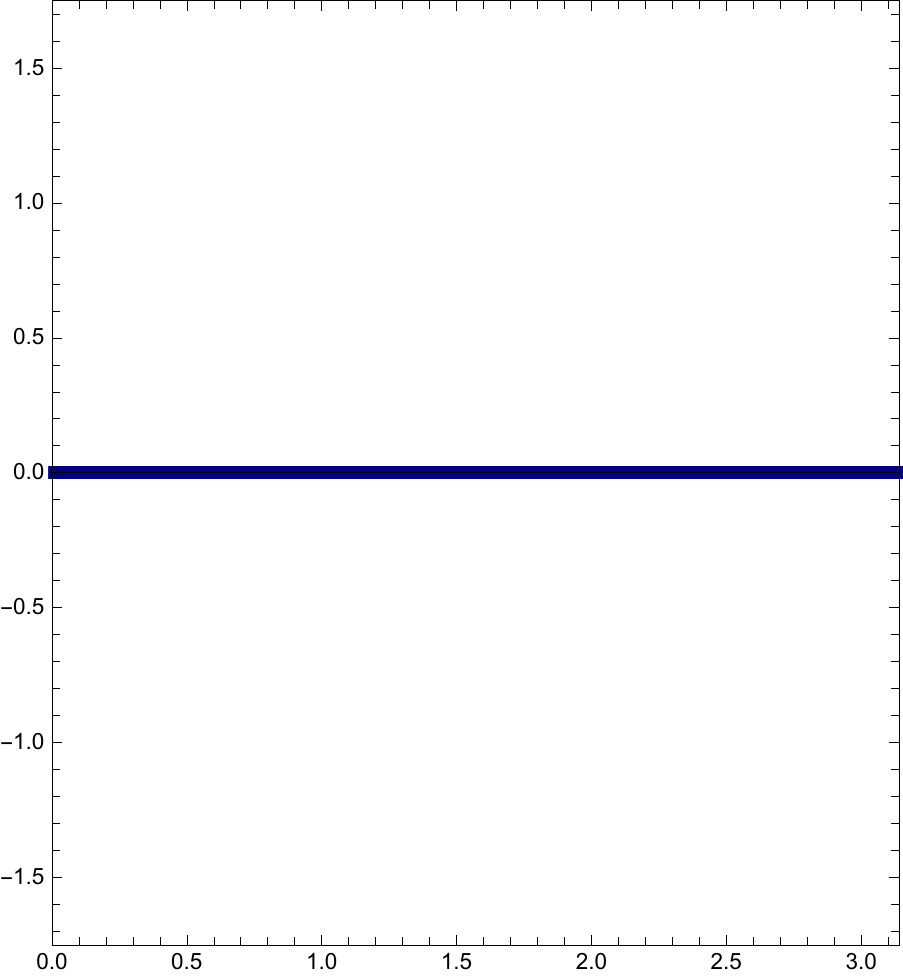} \\
		\includegraphics[width=\textwidth]{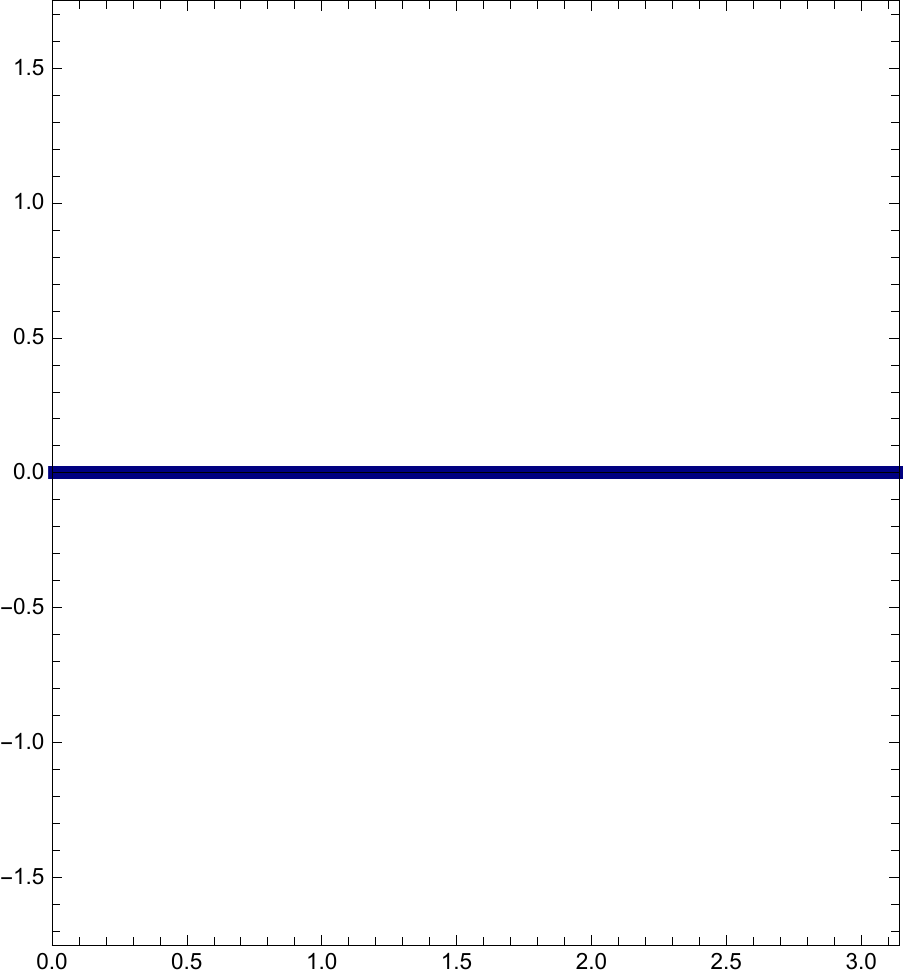} \\
		\includegraphics[width=\textwidth]{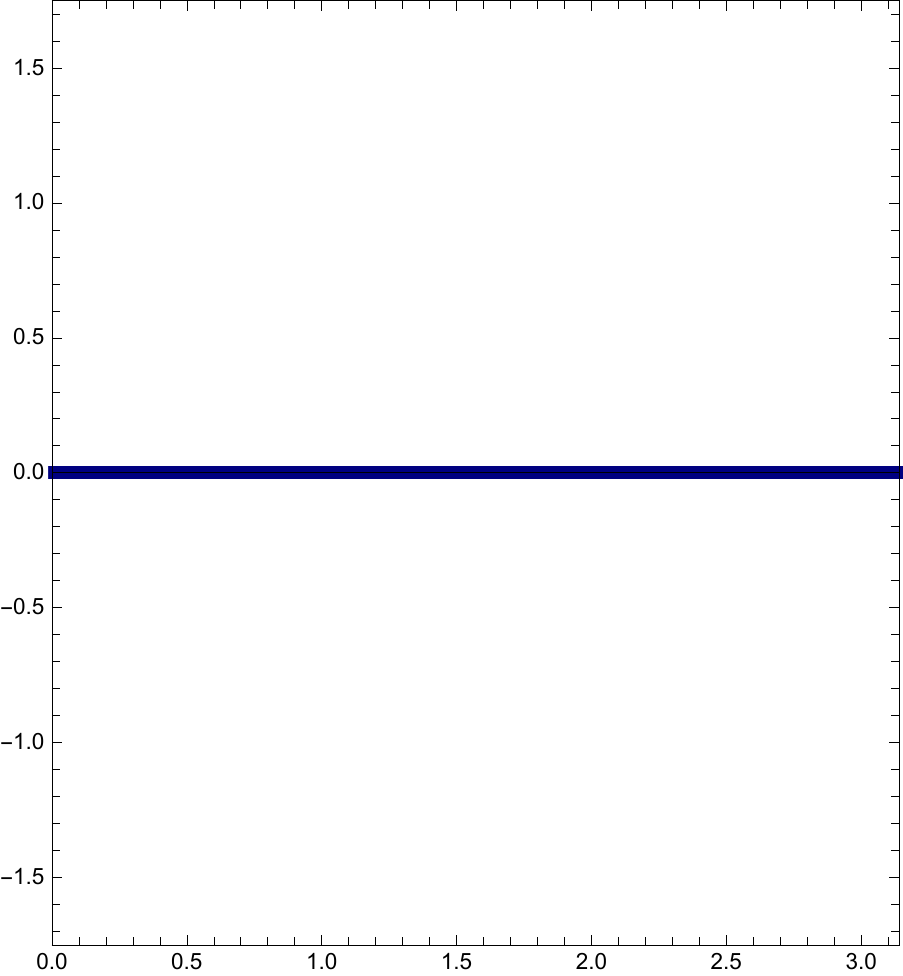} \\
		\includegraphics[width=\textwidth]{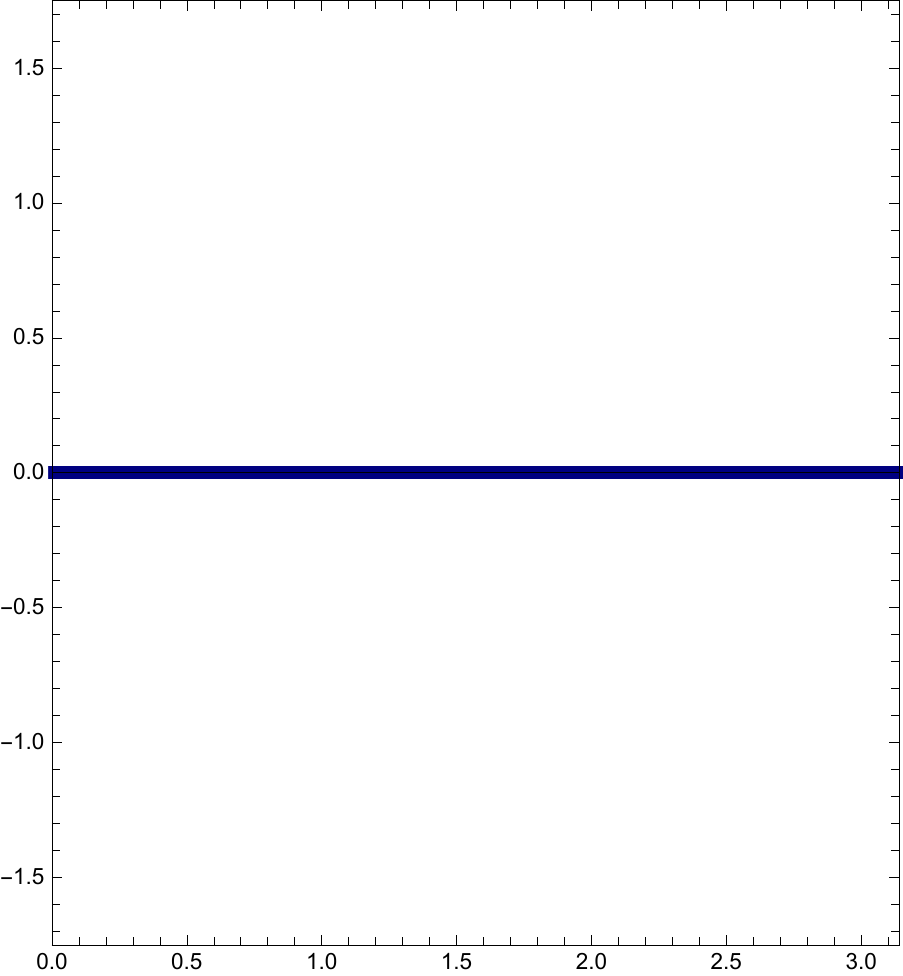}
	\end{minipage}
	\caption{Numerically computed profile curves of the family $\family$ in $\SR$ for the rotational case $a=0$. The energy decreases from $\Jmax$ in the first row (vertical cylinder) to $J=-\frac{2H}{\kappa}$ in the last row (tube). The intermediate rows display surfaces of unduloid type, sphere type, and nodoid type. Each column represents a fixed value of the mean curvature $H$, from large $H$ in the left column to the minimal case in the right column, where all curves of non-positive energy coincide with constant height. The energy here is degenerated.}
	\label{fig:curves_skr}
\end{figure}

\newpage

\begin{figure}[H]
	\centering
	\begin{minipage}[t]{0.16\textwidth}
		\includegraphics[width=\textwidth]{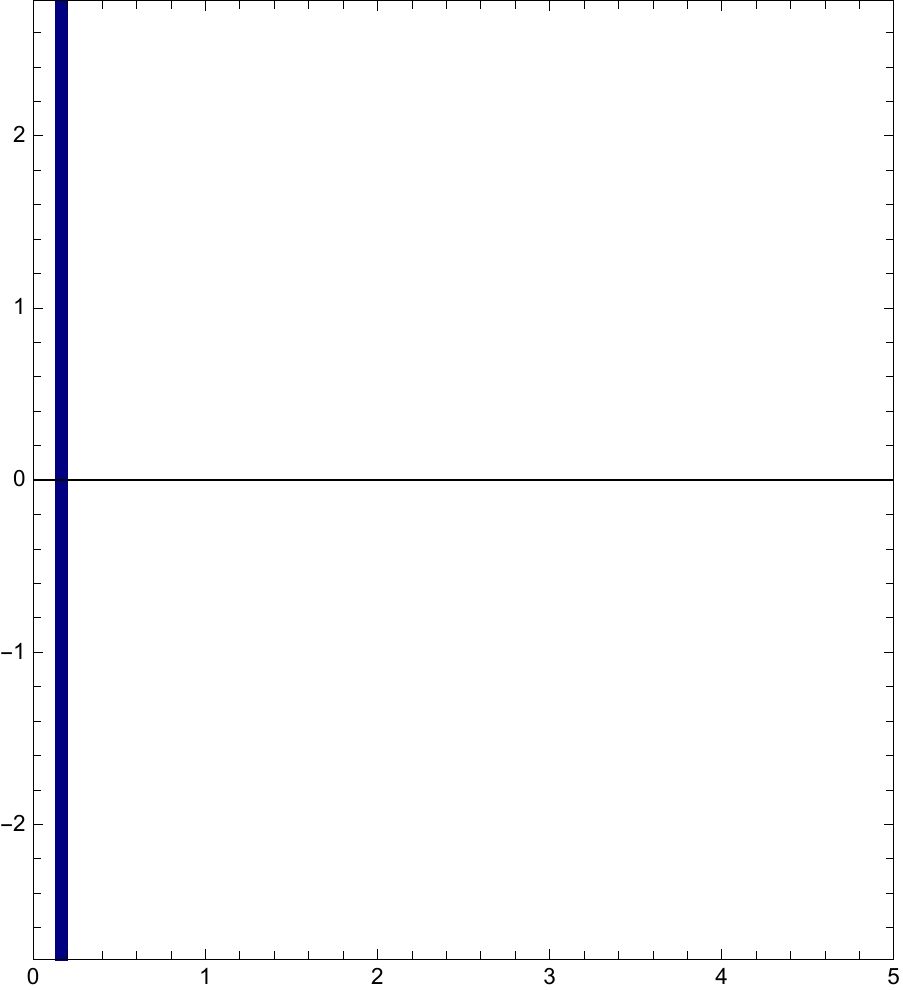} \\
		\includegraphics[width=\textwidth]{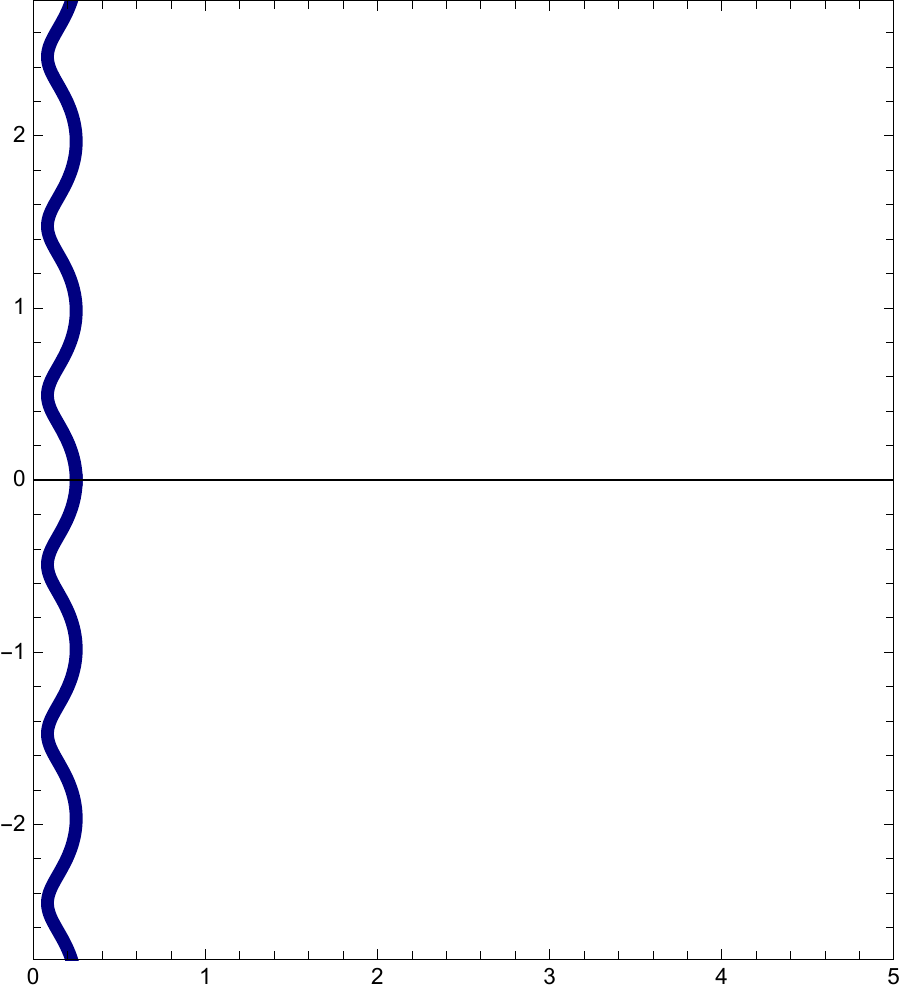} \\
		\includegraphics[width=\textwidth]{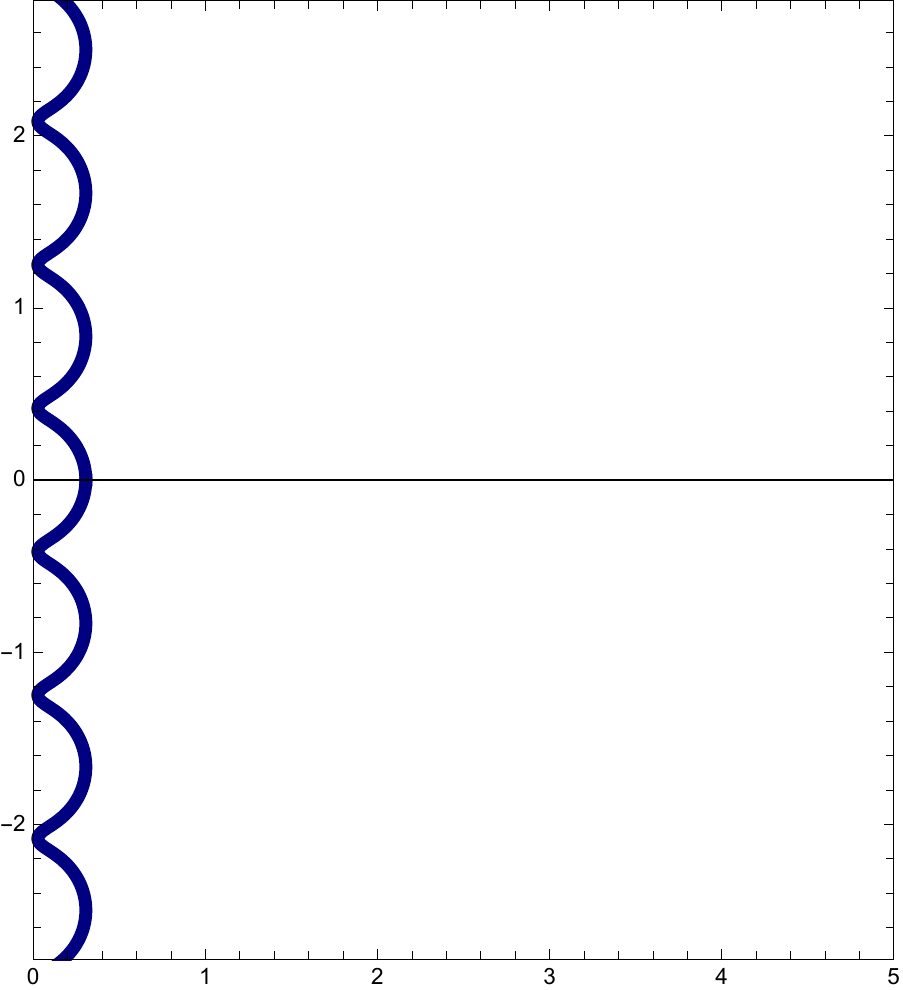} \\
		\includegraphics[width=\textwidth]{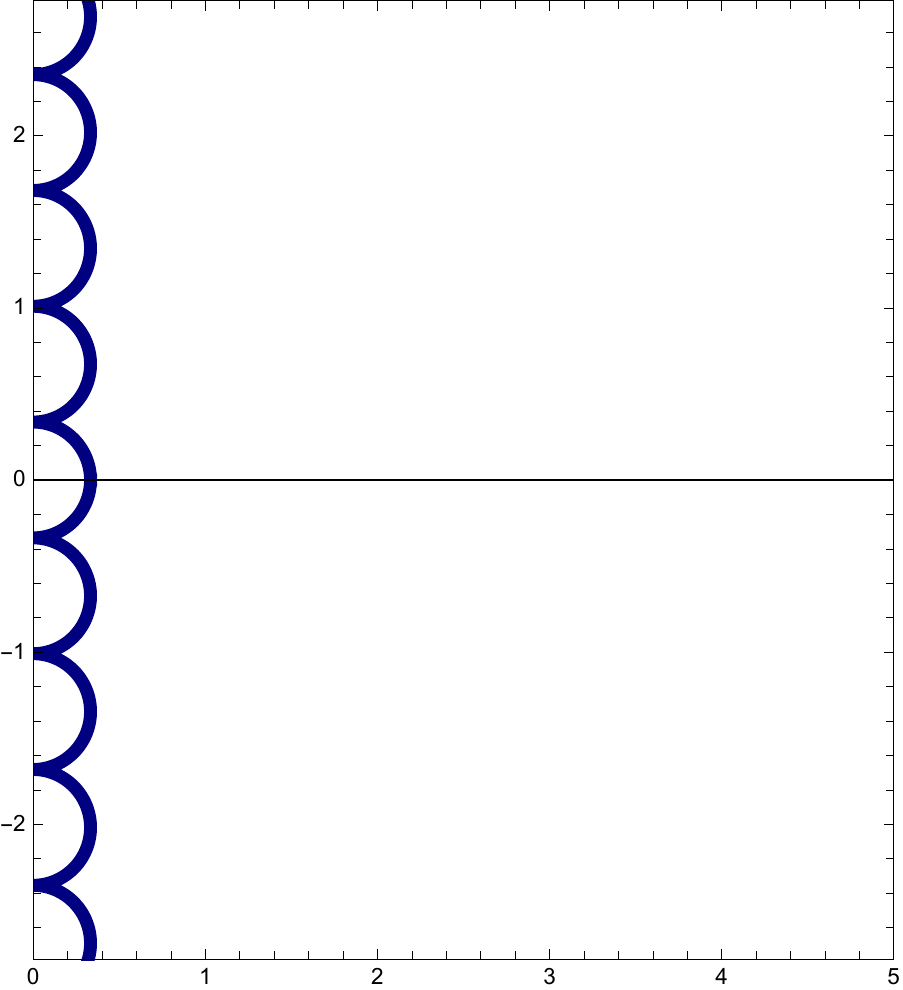} \\
		\includegraphics[width=\textwidth]{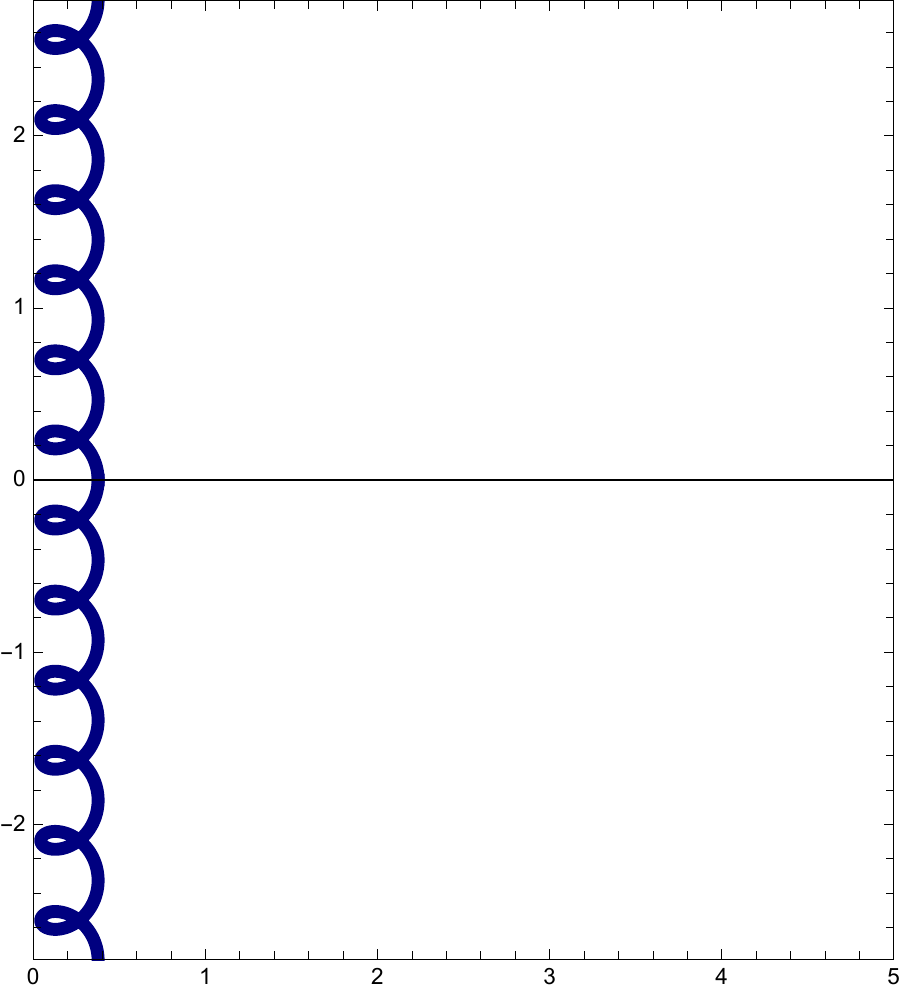} \\
		\includegraphics[width=\textwidth]{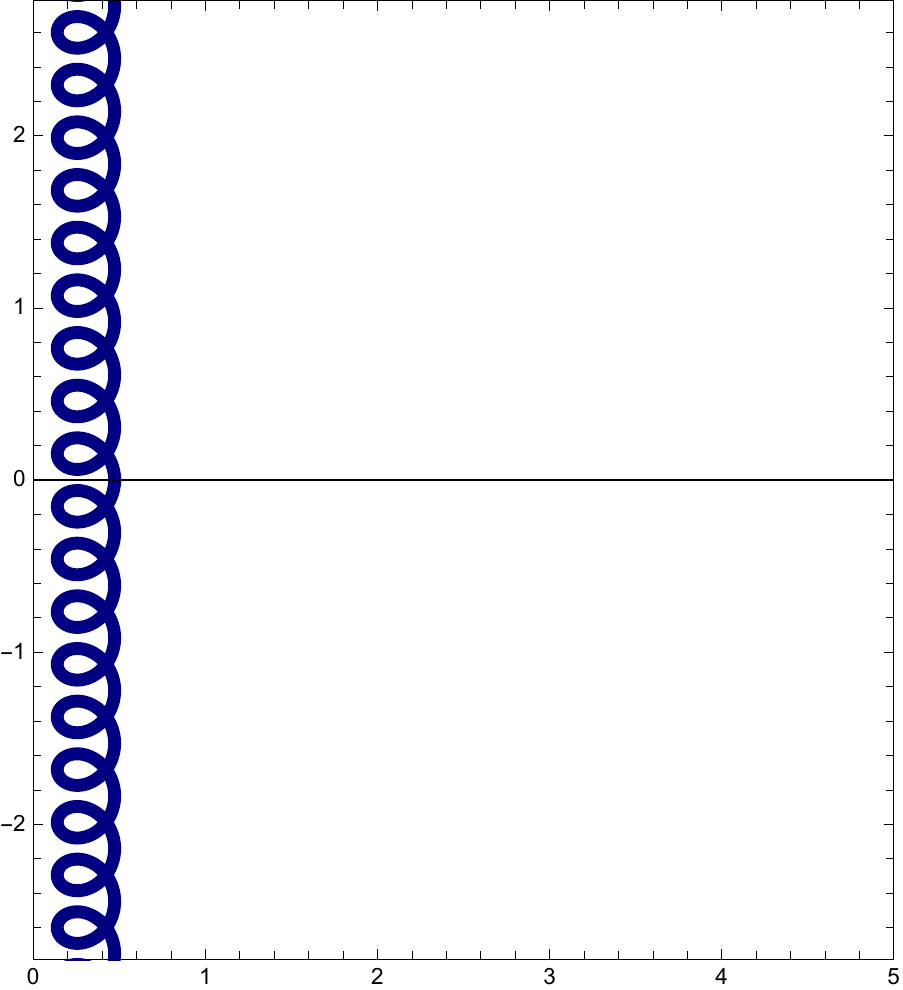} \\
		\includegraphics[width=\textwidth]{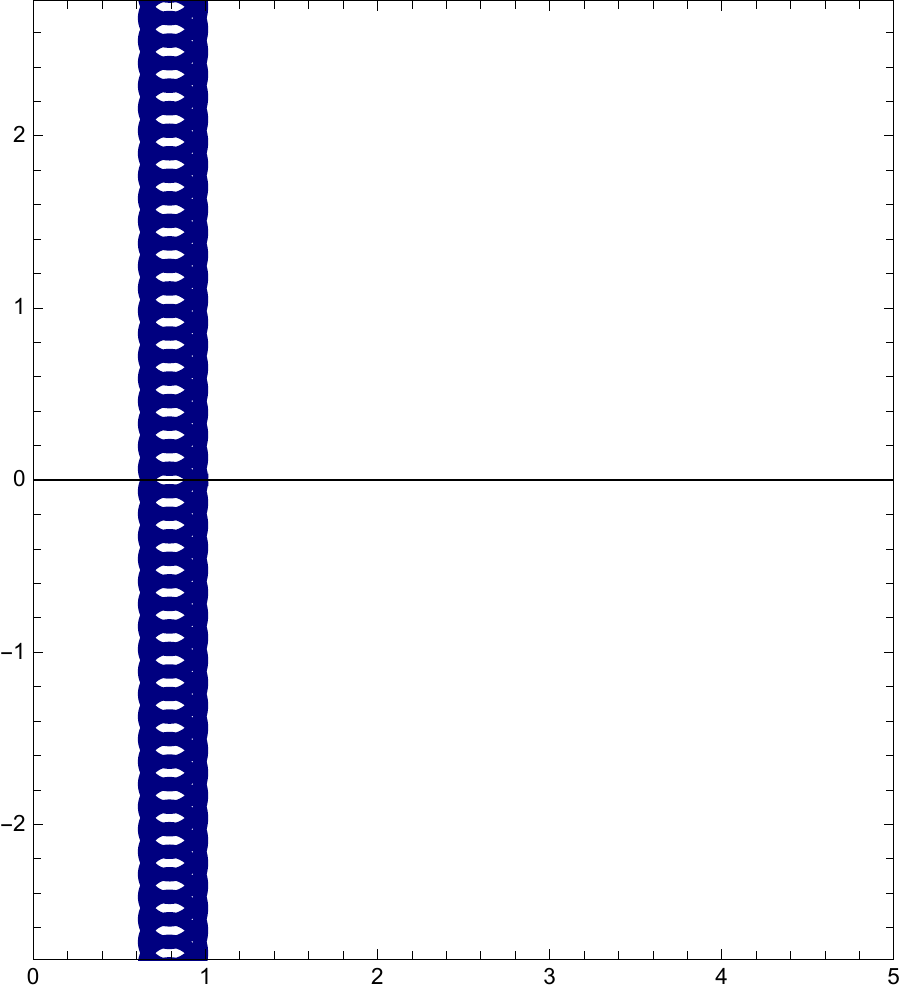}
	\end{minipage}
	\begin{minipage}[t]{0.16\textwidth}
		\includegraphics[width=\textwidth]{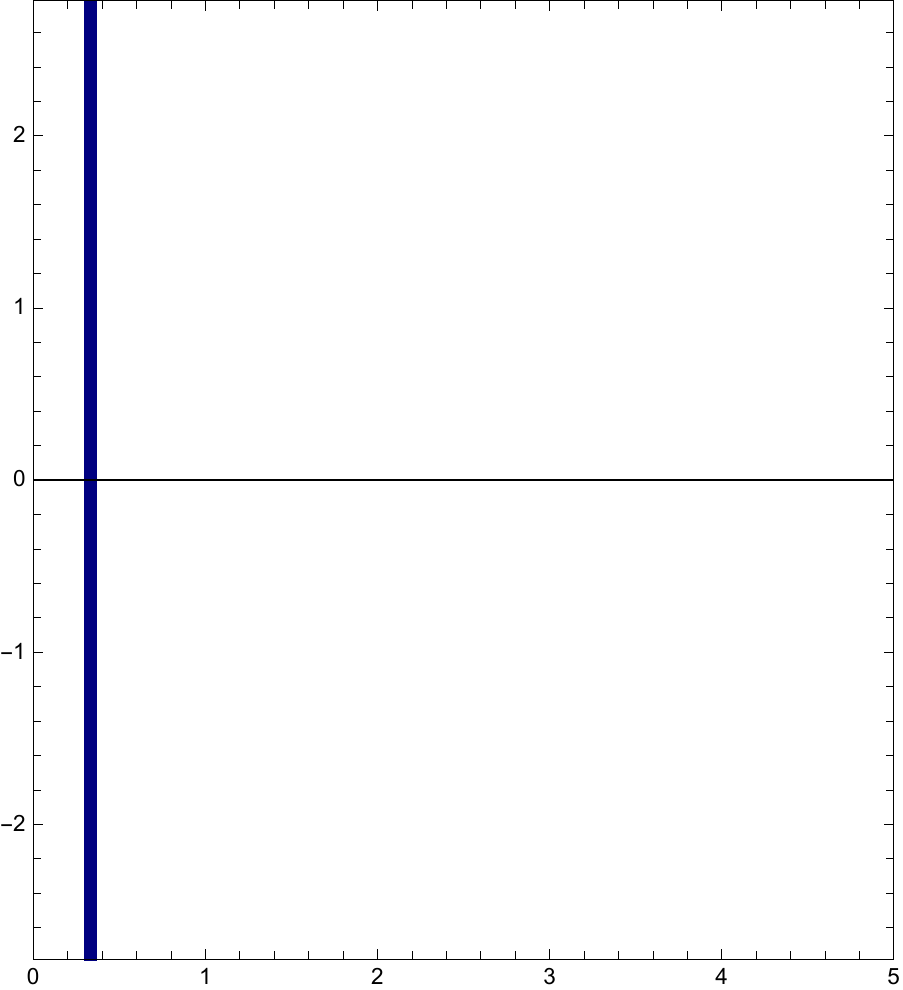} \\
		\includegraphics[width=\textwidth]{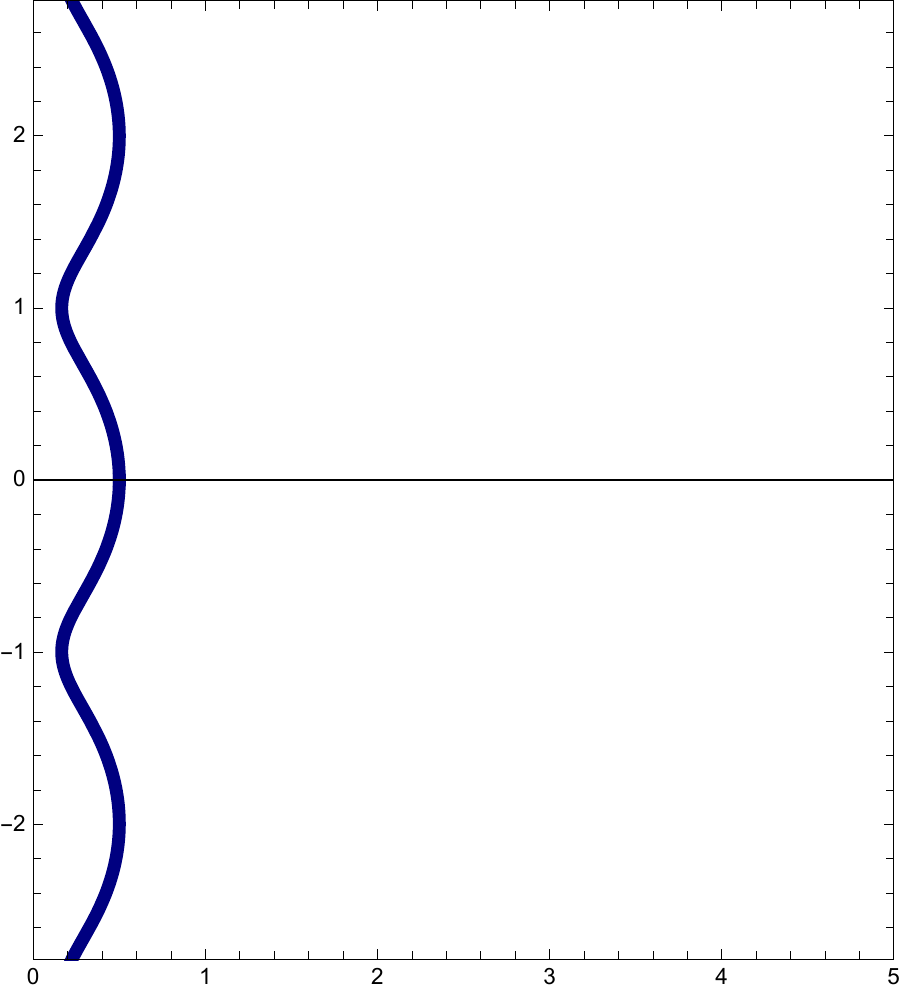} \\
		\includegraphics[width=\textwidth]{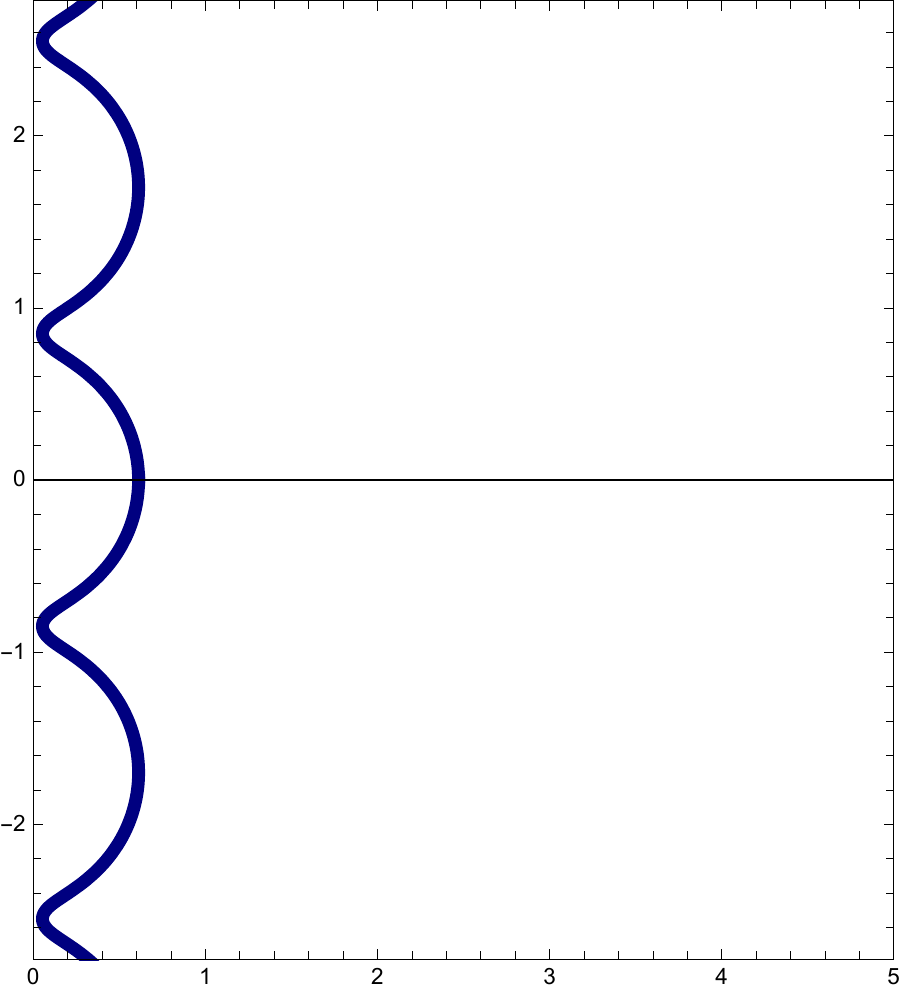} \\
		\includegraphics[width=\textwidth]{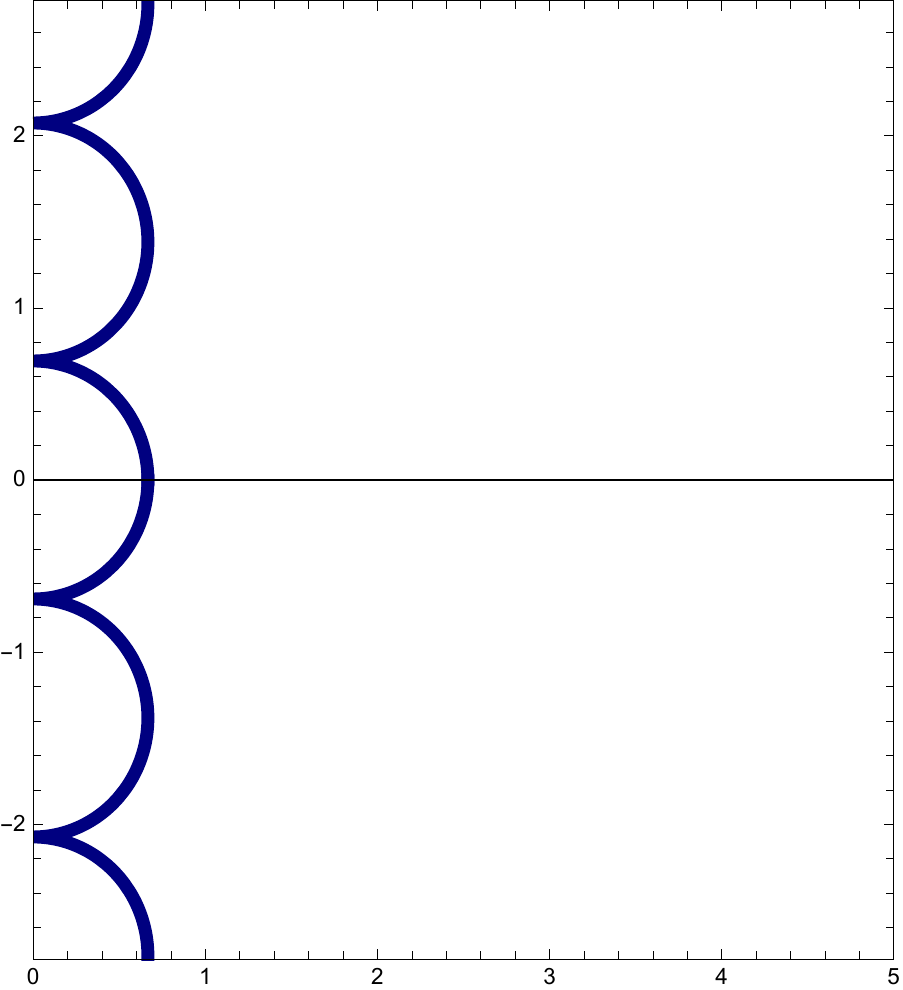} \\
		\includegraphics[width=\textwidth]{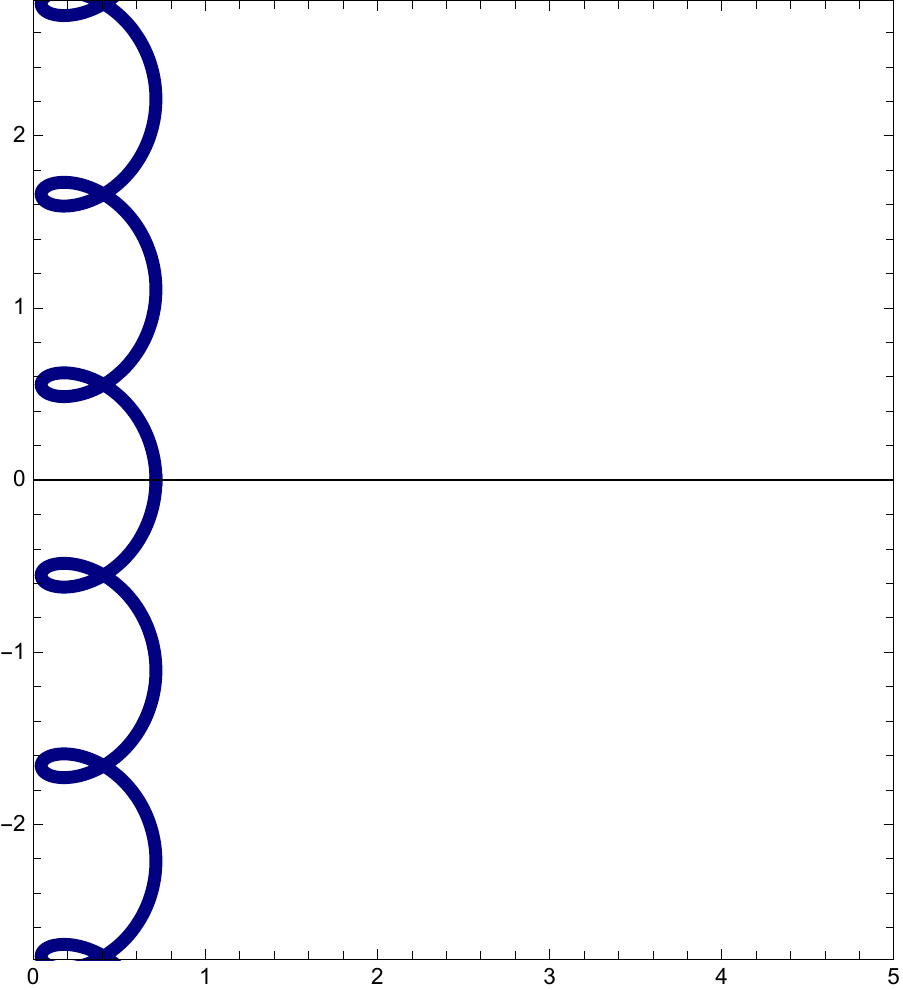} \\
		\includegraphics[width=\textwidth]{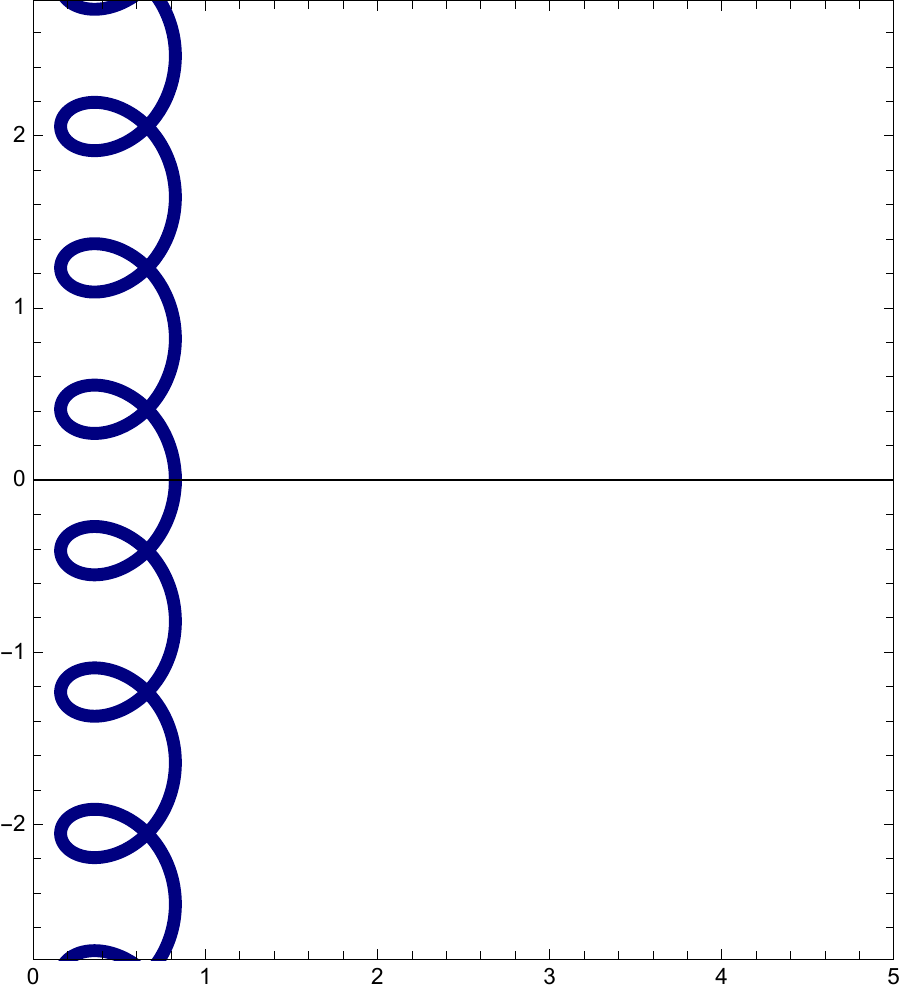} \\
		\includegraphics[width=\textwidth]{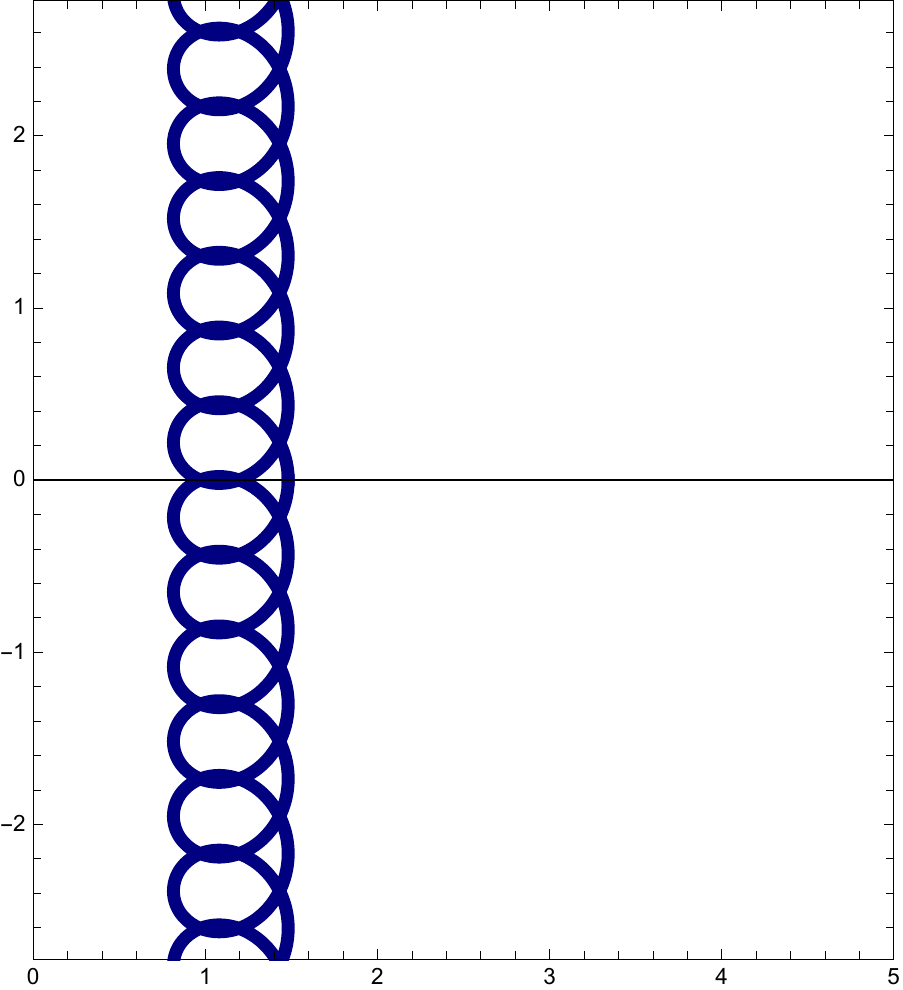}
	\end{minipage}
	\begin{minipage}[t]{0.16\textwidth}
		\includegraphics[width=\textwidth]{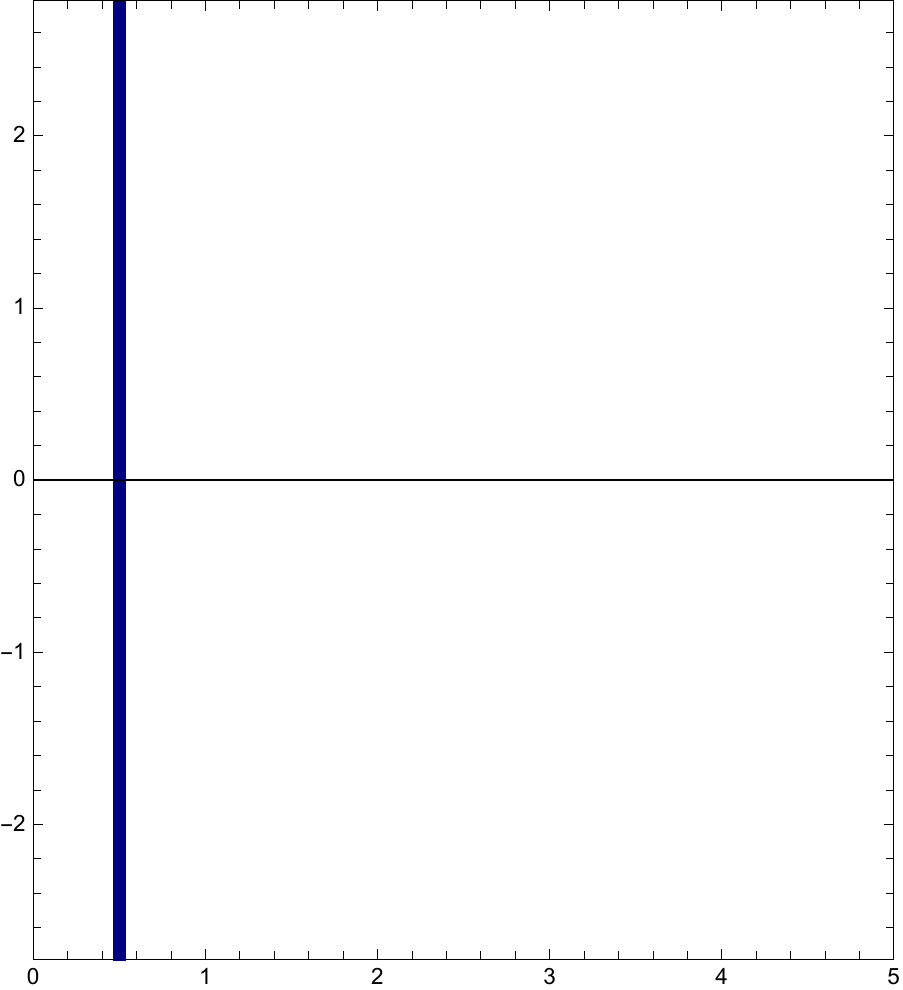} \\
		\includegraphics[width=\textwidth]{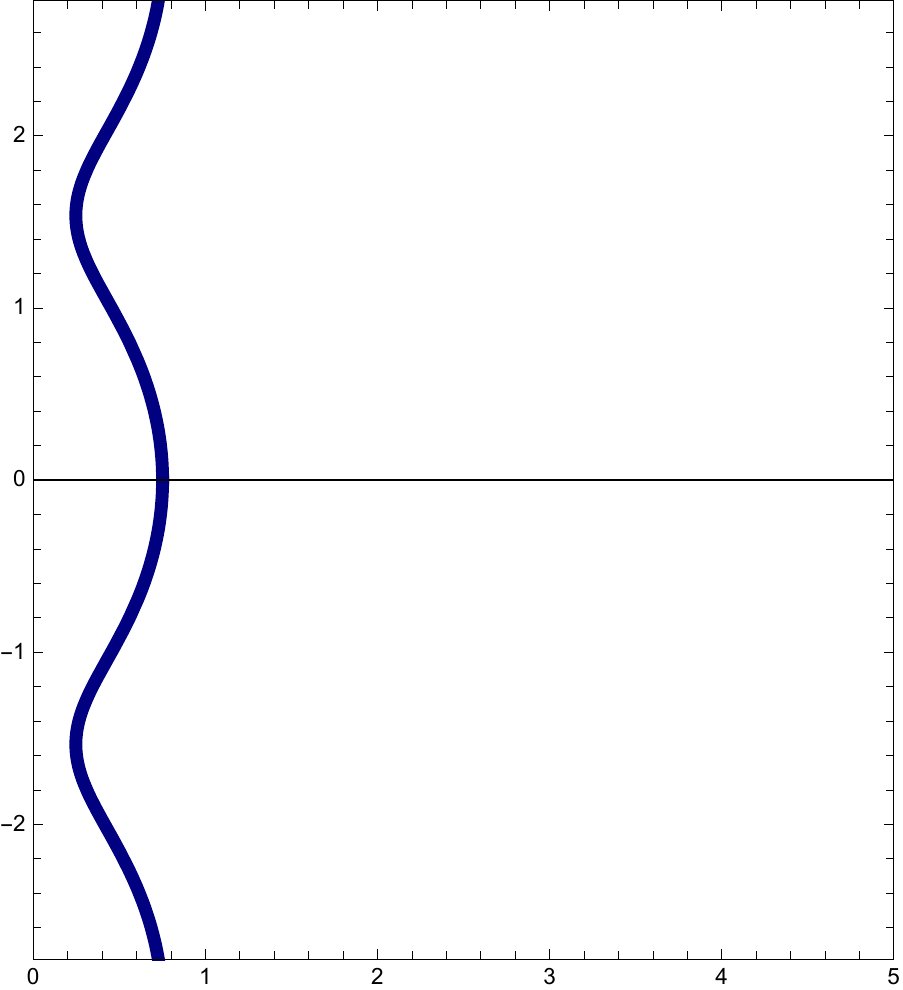}
		\includegraphics[width=\textwidth]{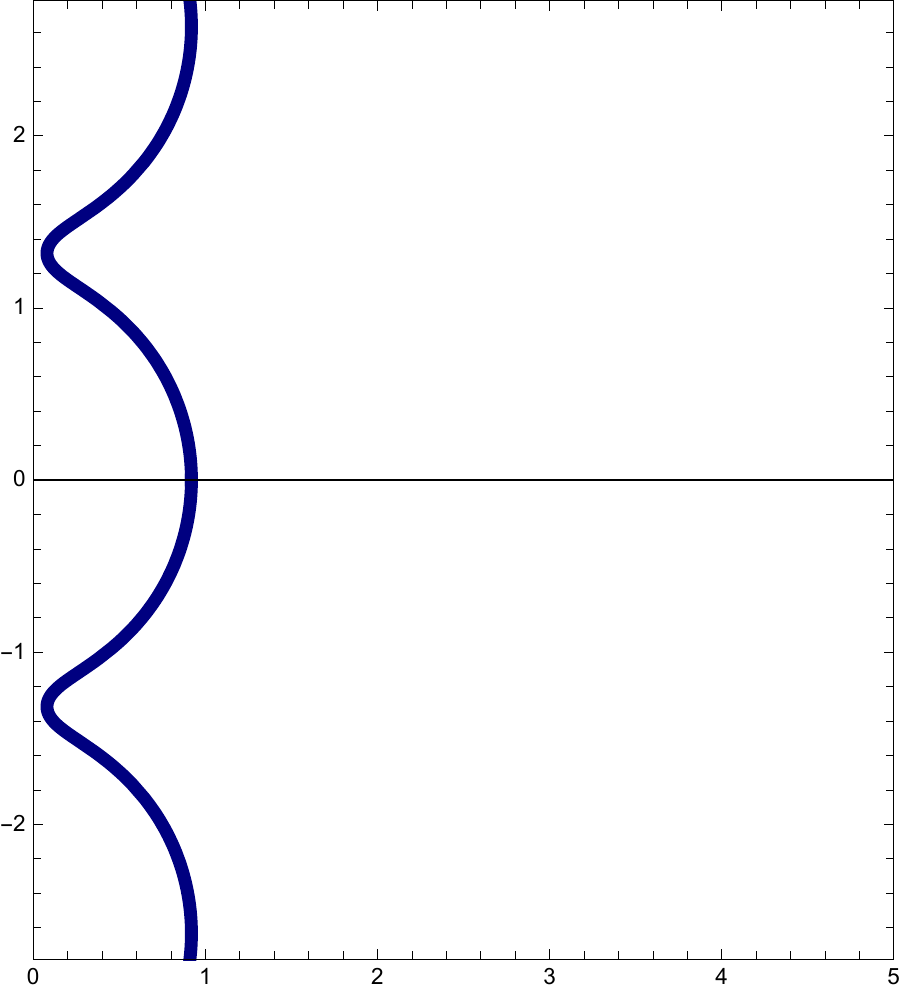} \\
		\includegraphics[width=\textwidth]{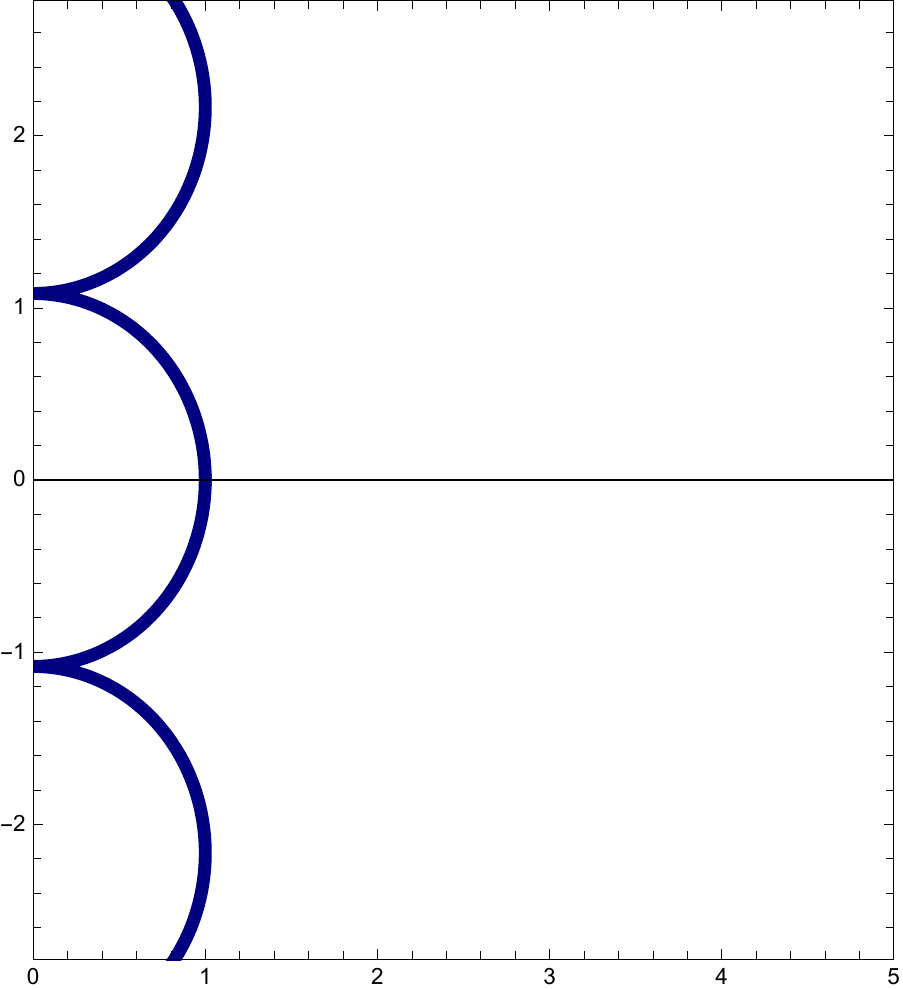} \\
		\includegraphics[width=\textwidth]{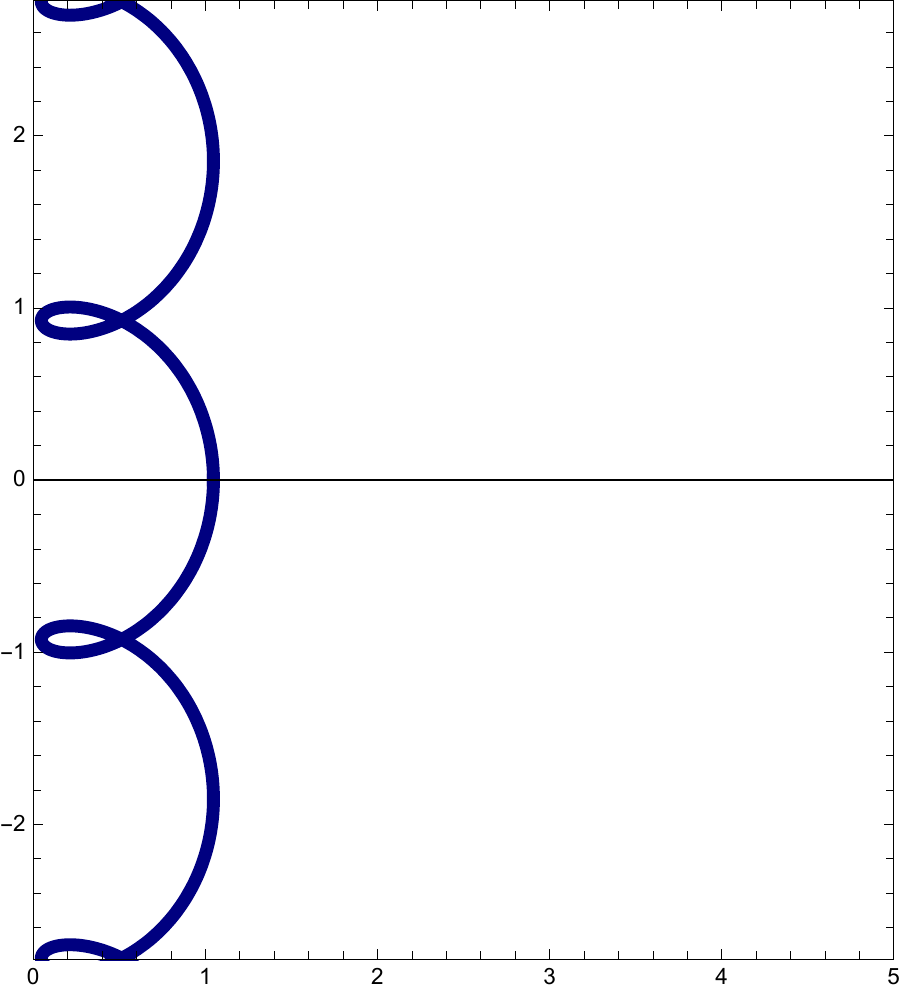} \\
		\includegraphics[width=\textwidth]{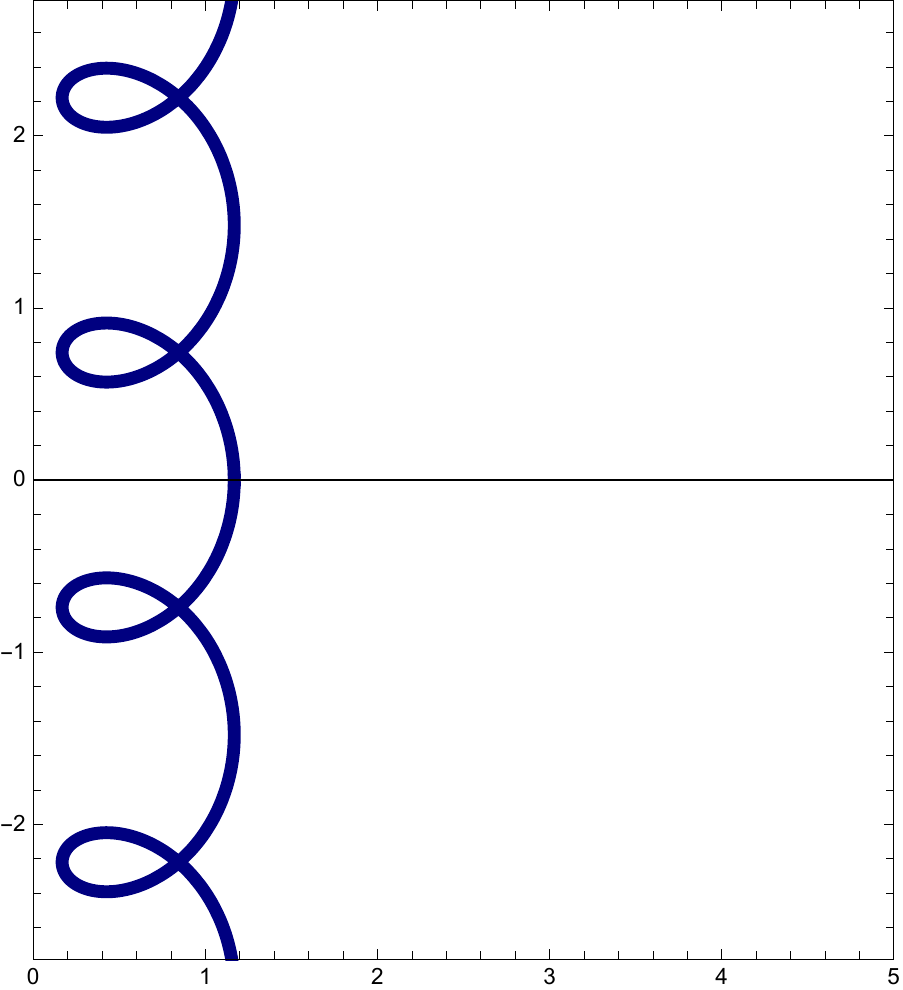} \\
		\includegraphics[width=\textwidth]{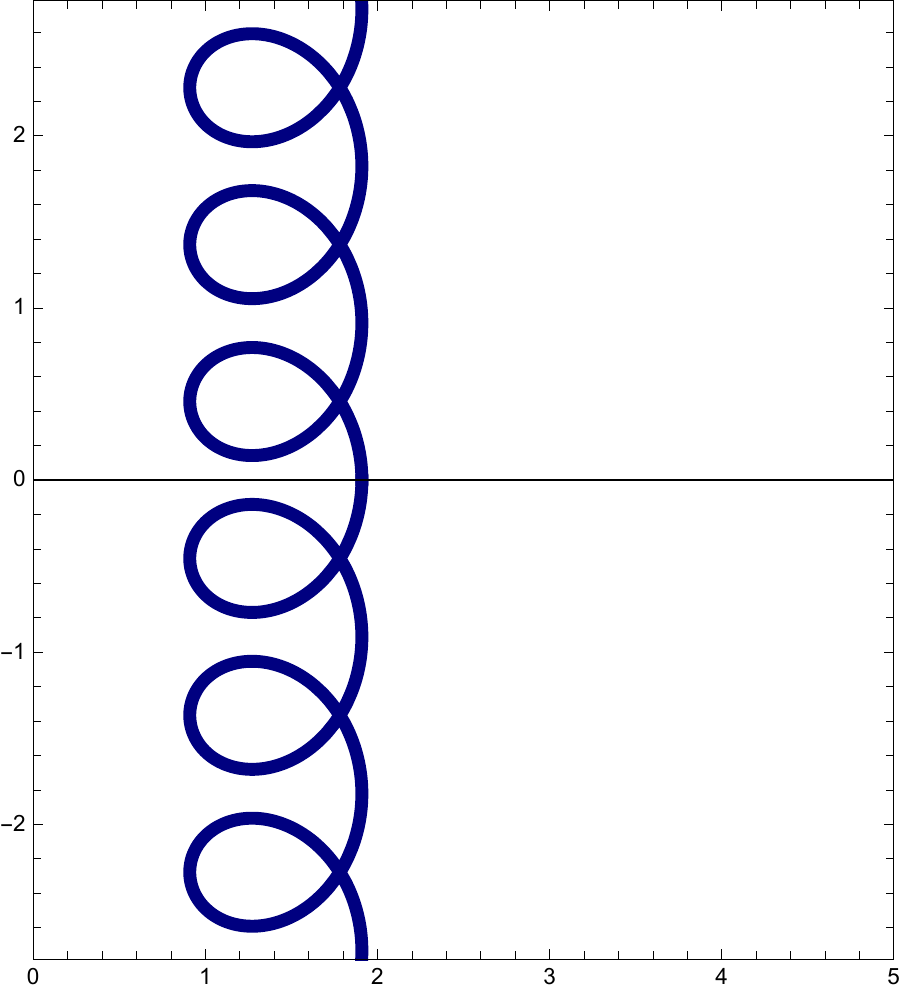}
	\end{minipage}
	\begin{minipage}[t]{0.16\textwidth}
		\includegraphics[width=\textwidth]{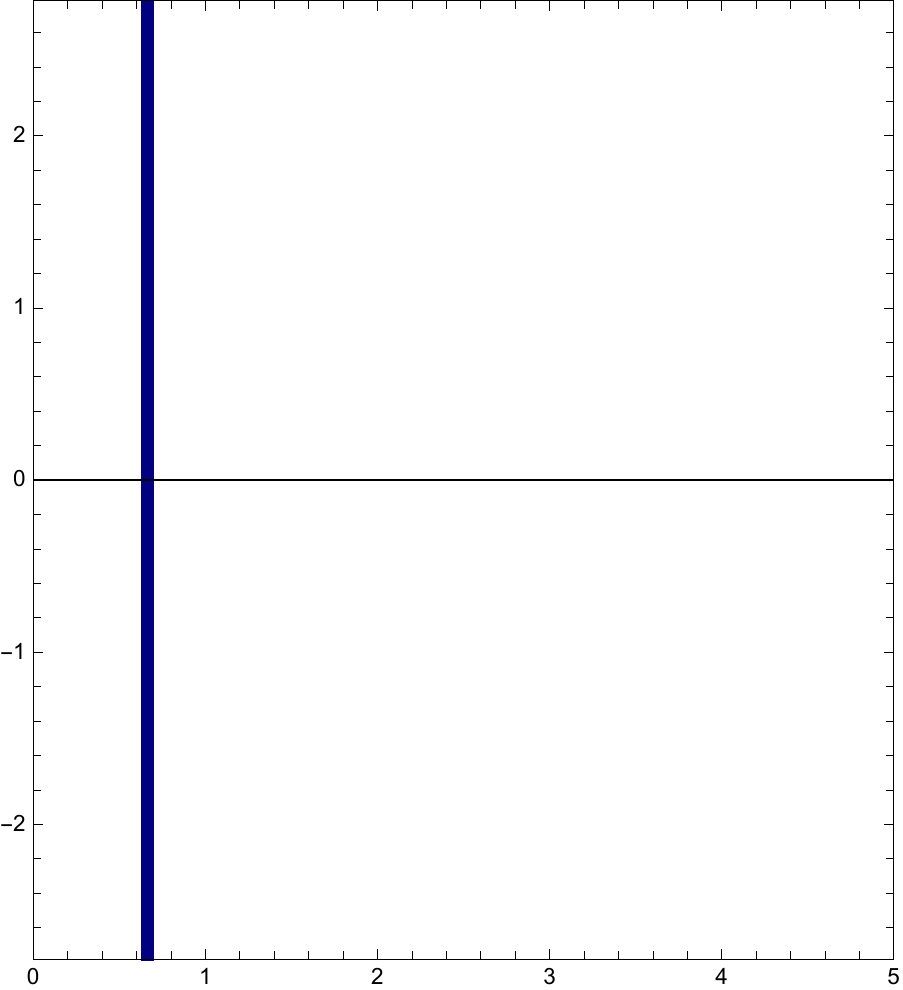} \\
		\includegraphics[width=\textwidth]{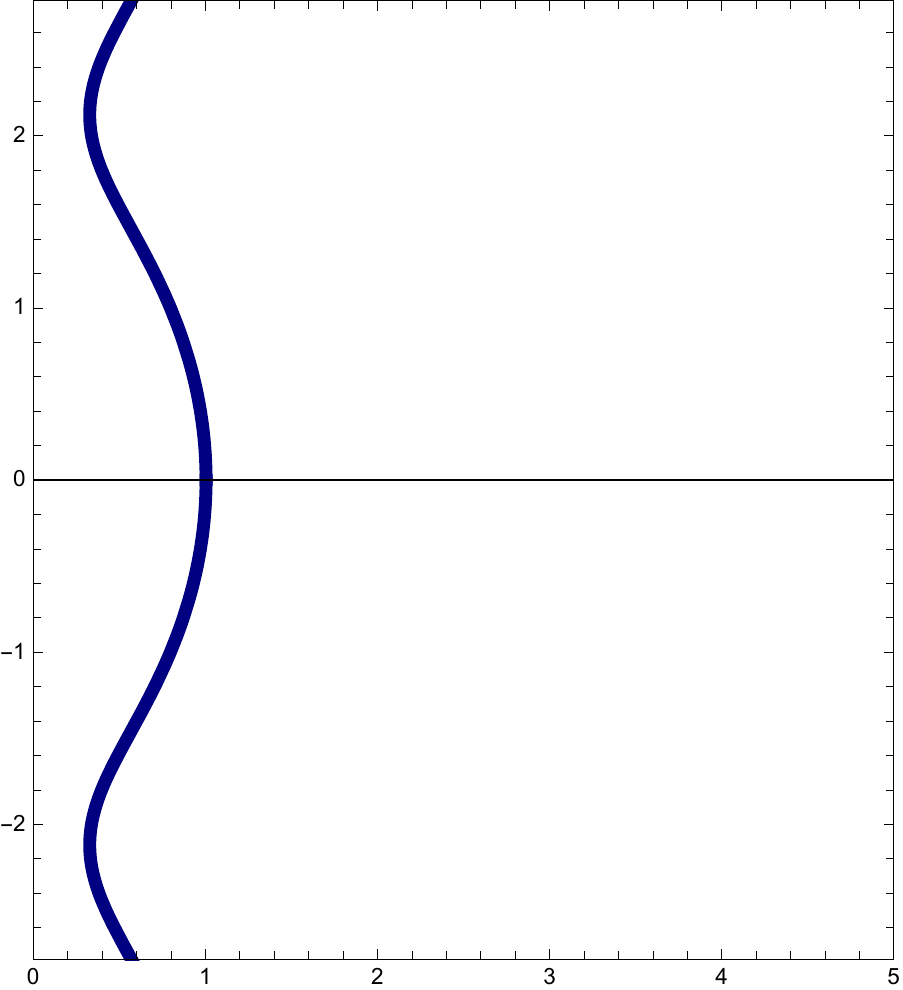} \\		\includegraphics[width=\textwidth]{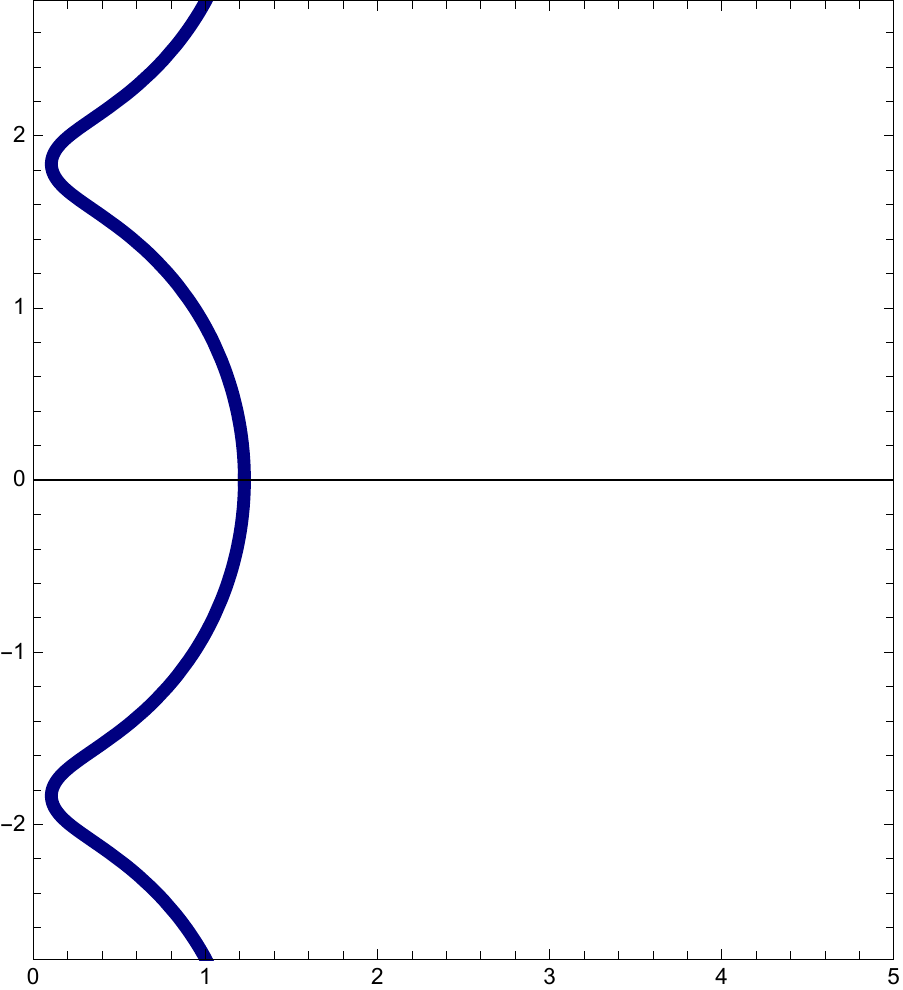} \\
		\includegraphics[width=\textwidth]{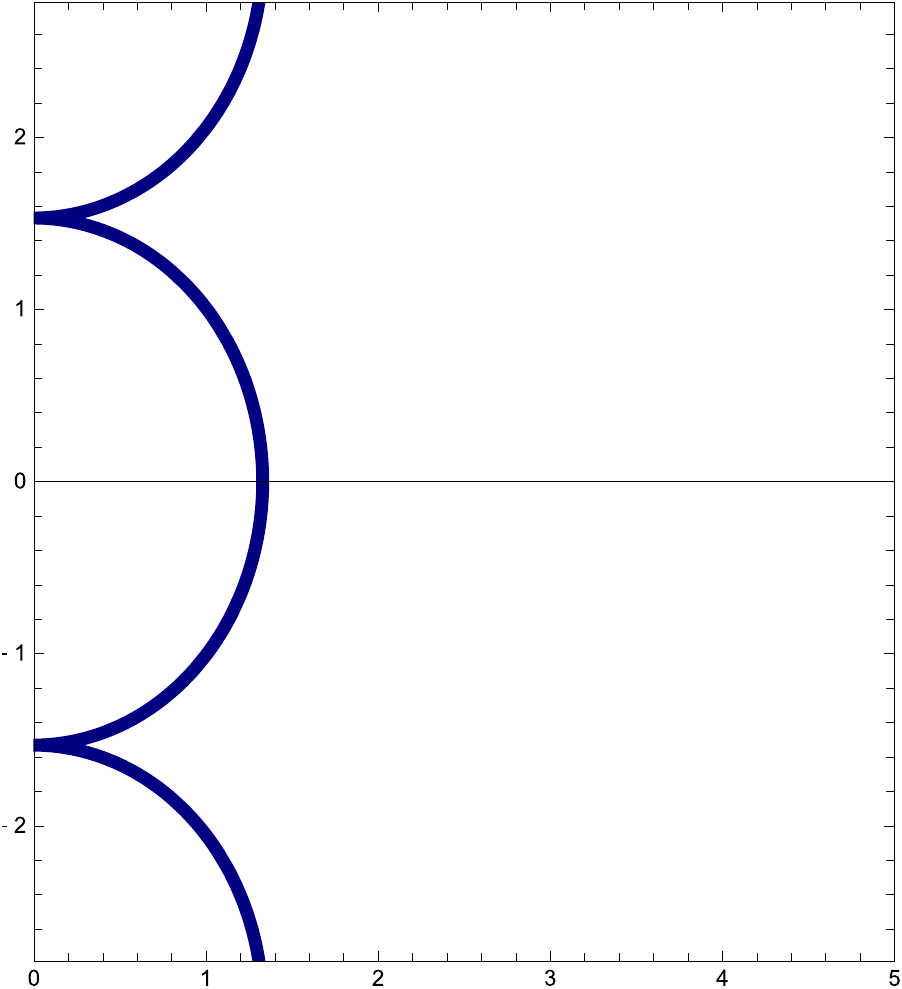} \\
		\includegraphics[width=\textwidth]{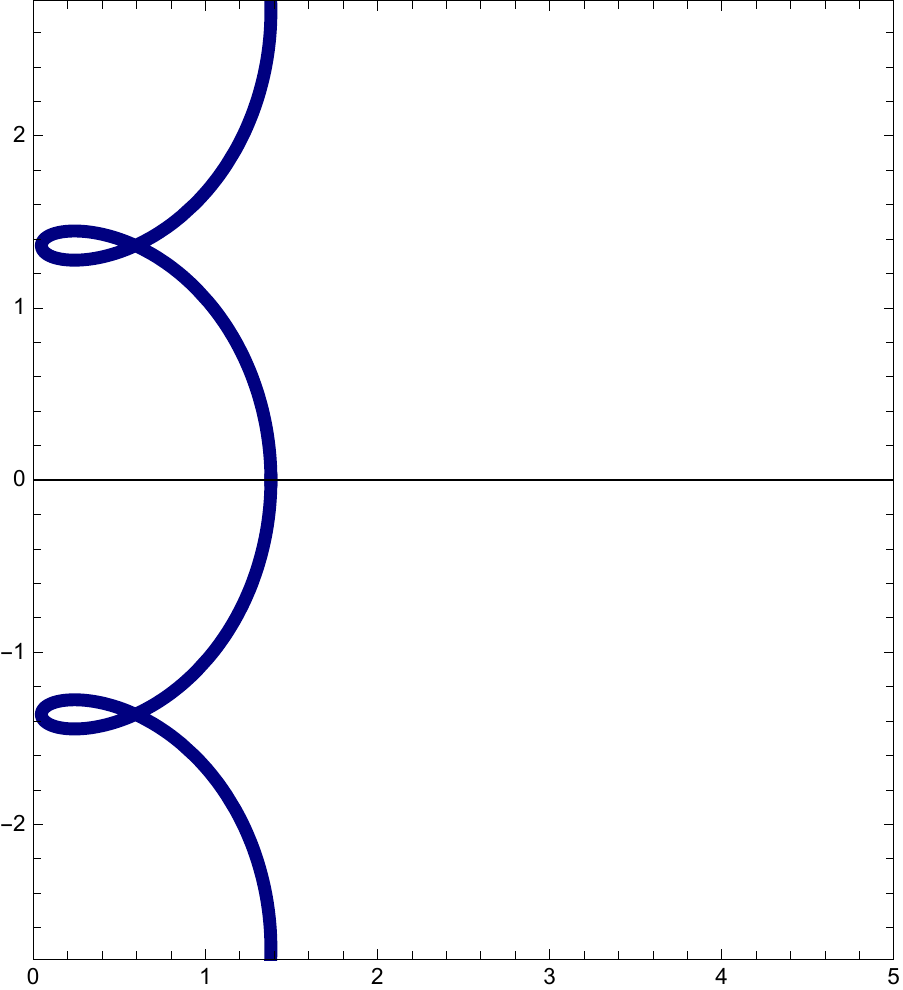} \\
		\includegraphics[width=\textwidth]{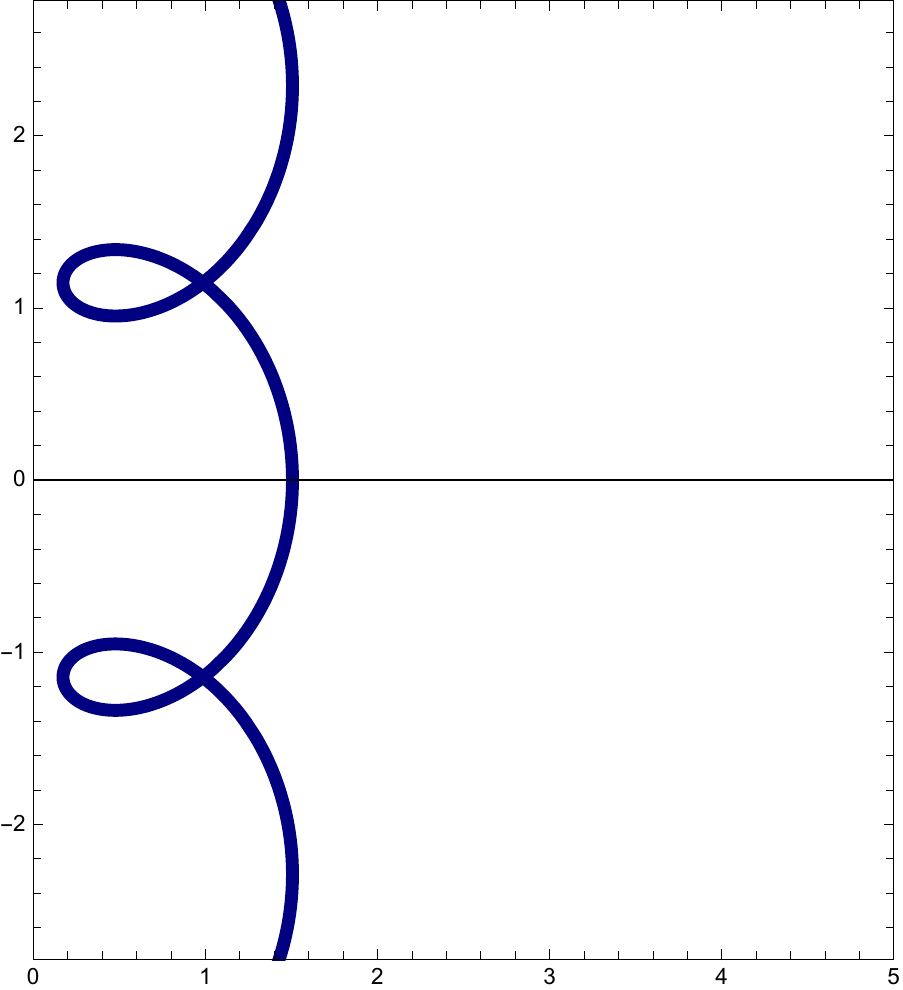} \\
		\includegraphics[width=\textwidth]{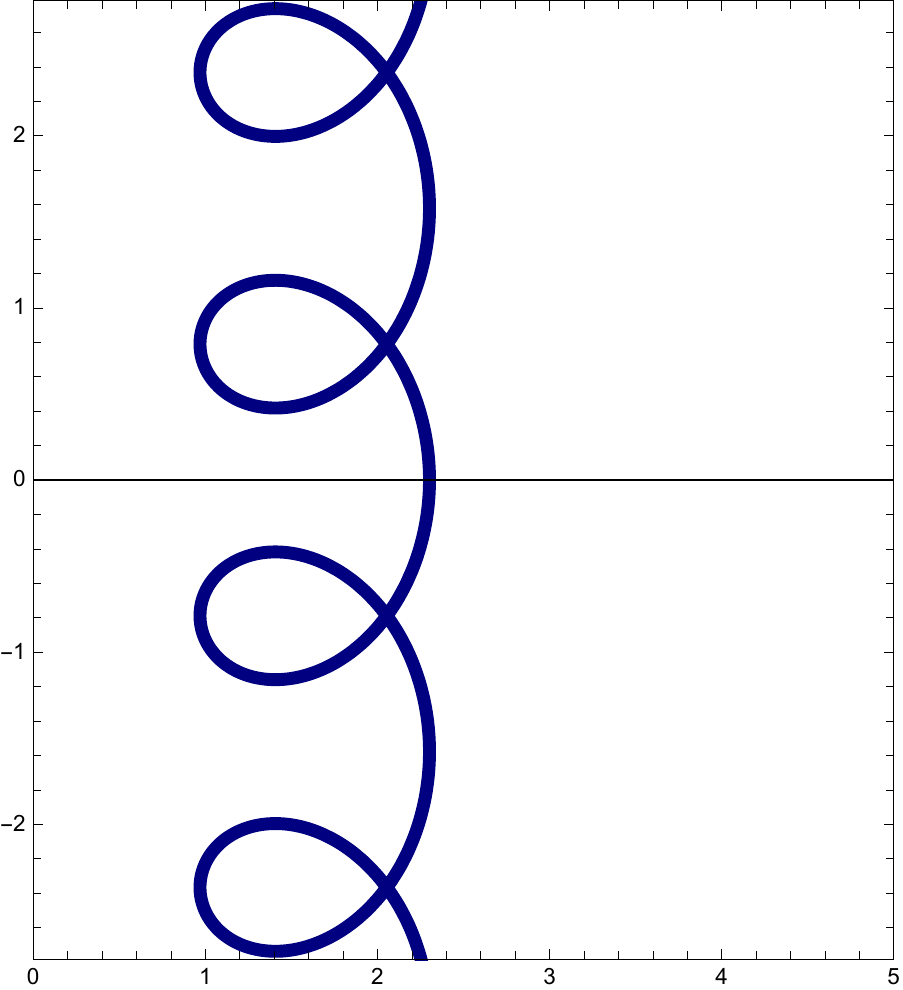}
	\end{minipage}
	\begin{minipage}[t]{0.16\textwidth}
		\includegraphics[width=\textwidth]{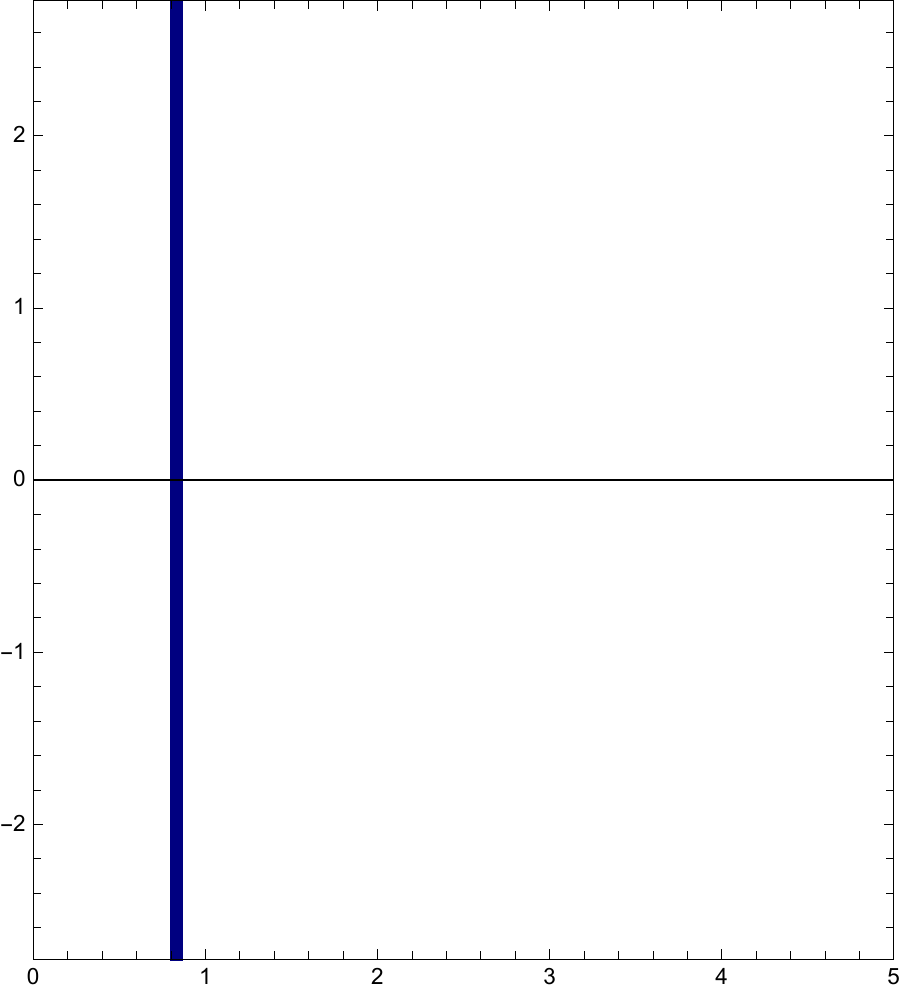} \\
		\includegraphics[width=\textwidth]{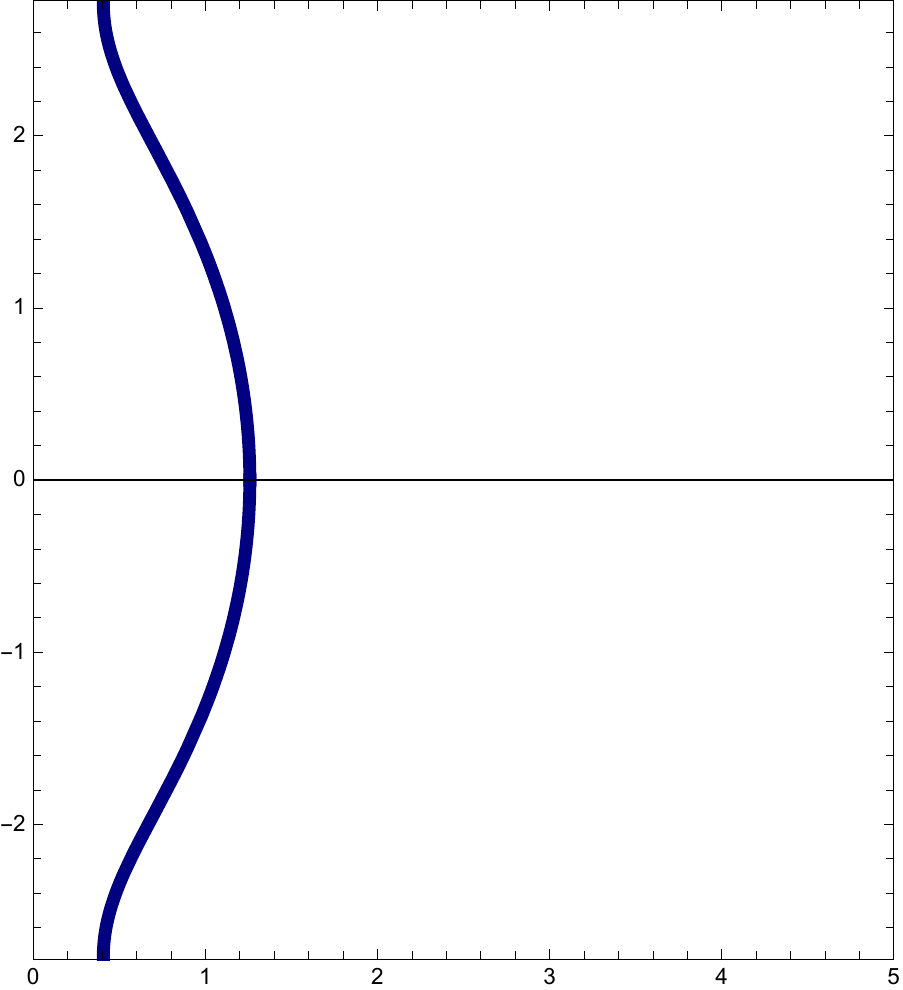} \\
		\includegraphics[width=\textwidth]{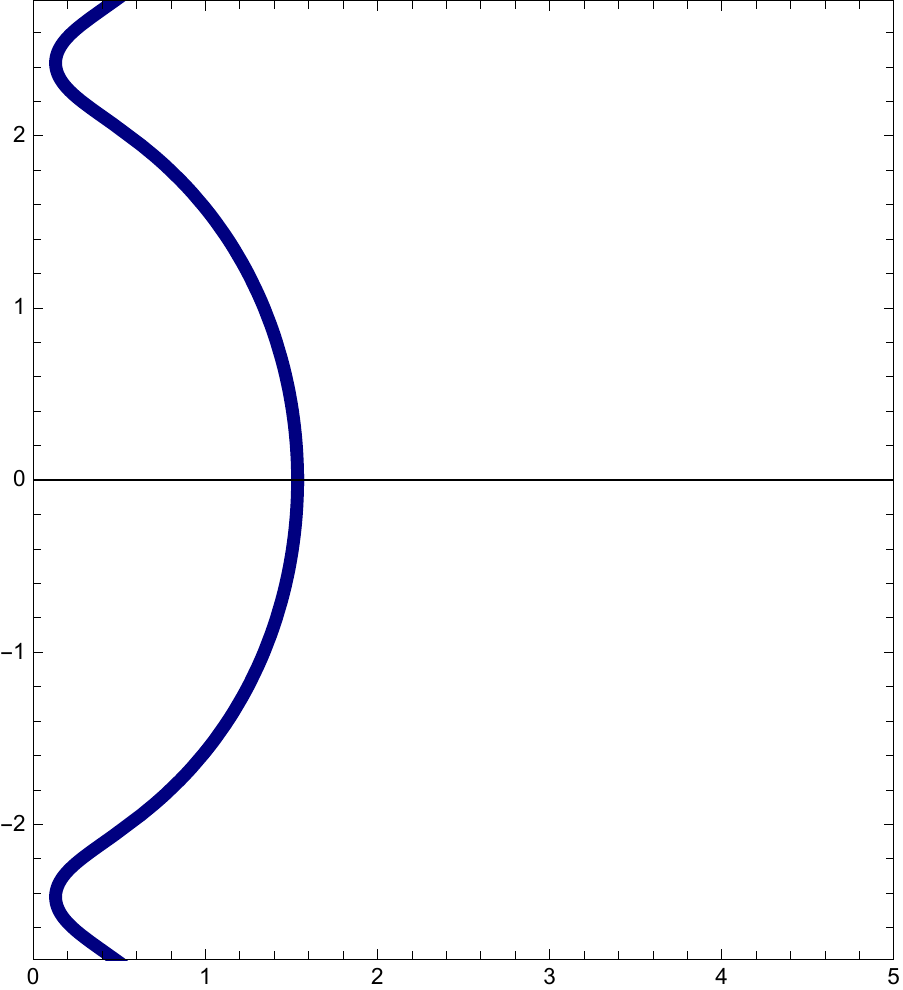} \\
		\includegraphics[width=\textwidth]{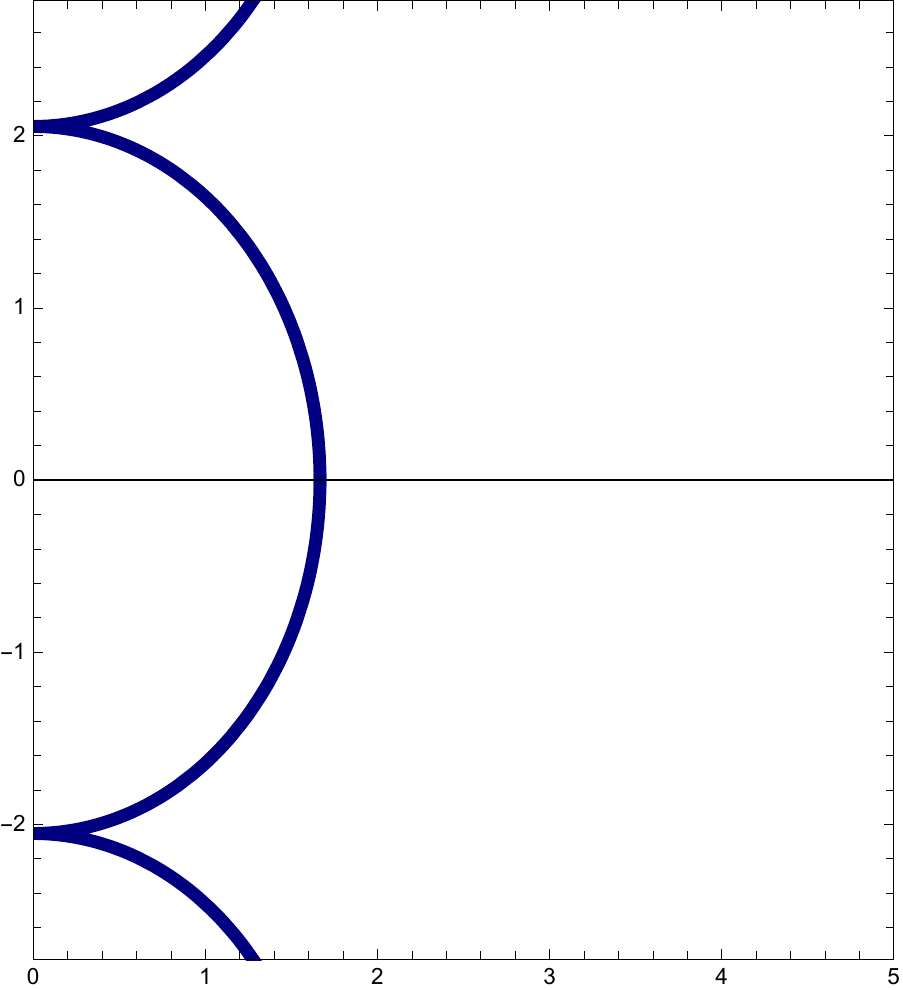} \\
		\includegraphics[width=\textwidth]{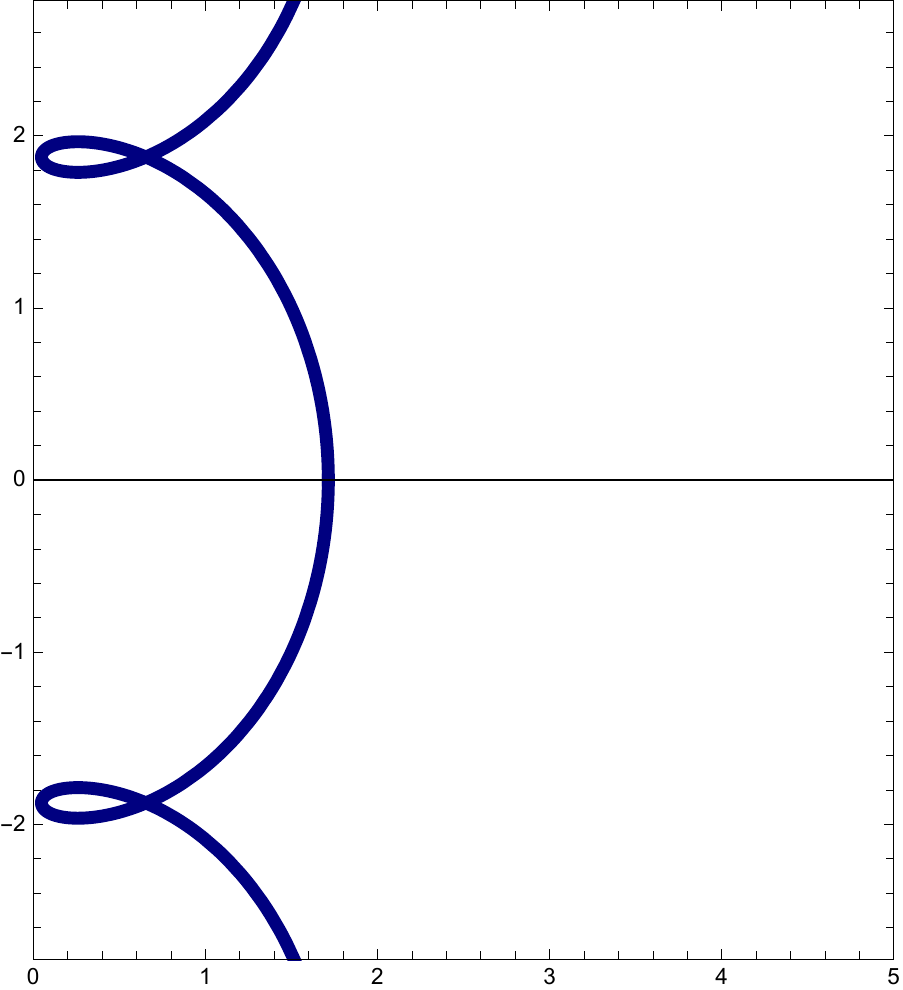} \\
		\includegraphics[width=\textwidth]{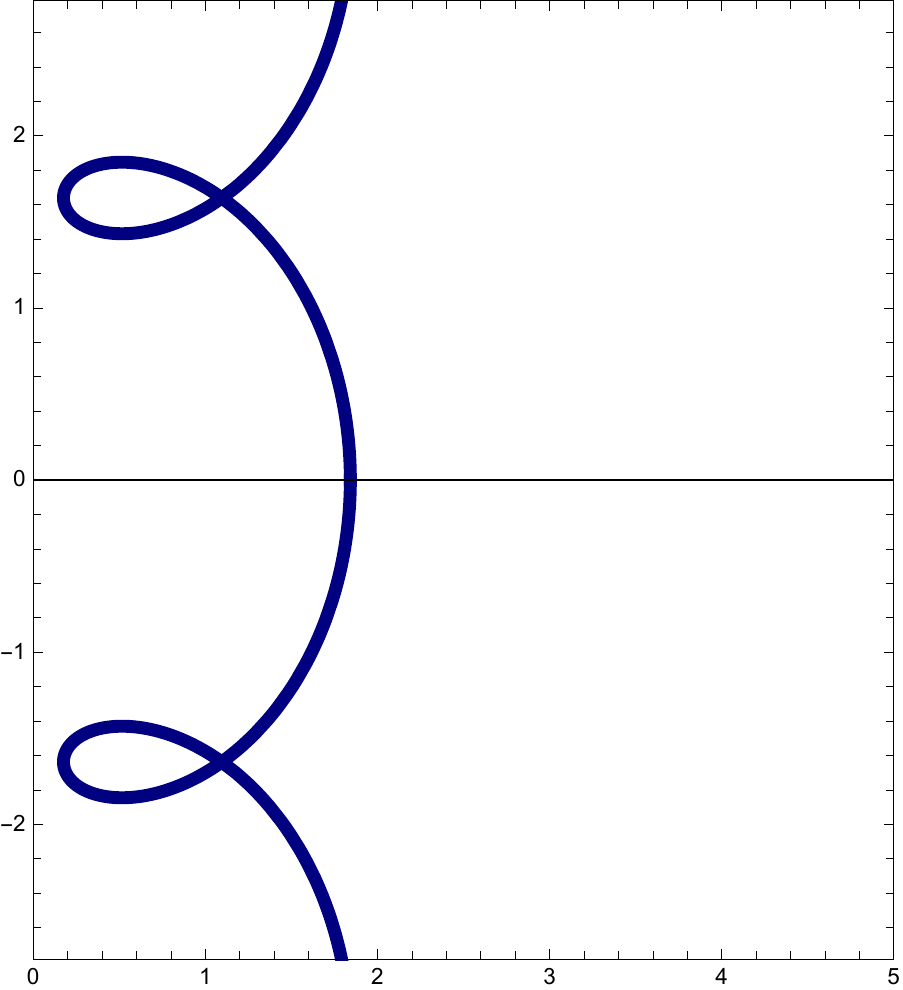} \\
		\includegraphics[width=\textwidth]{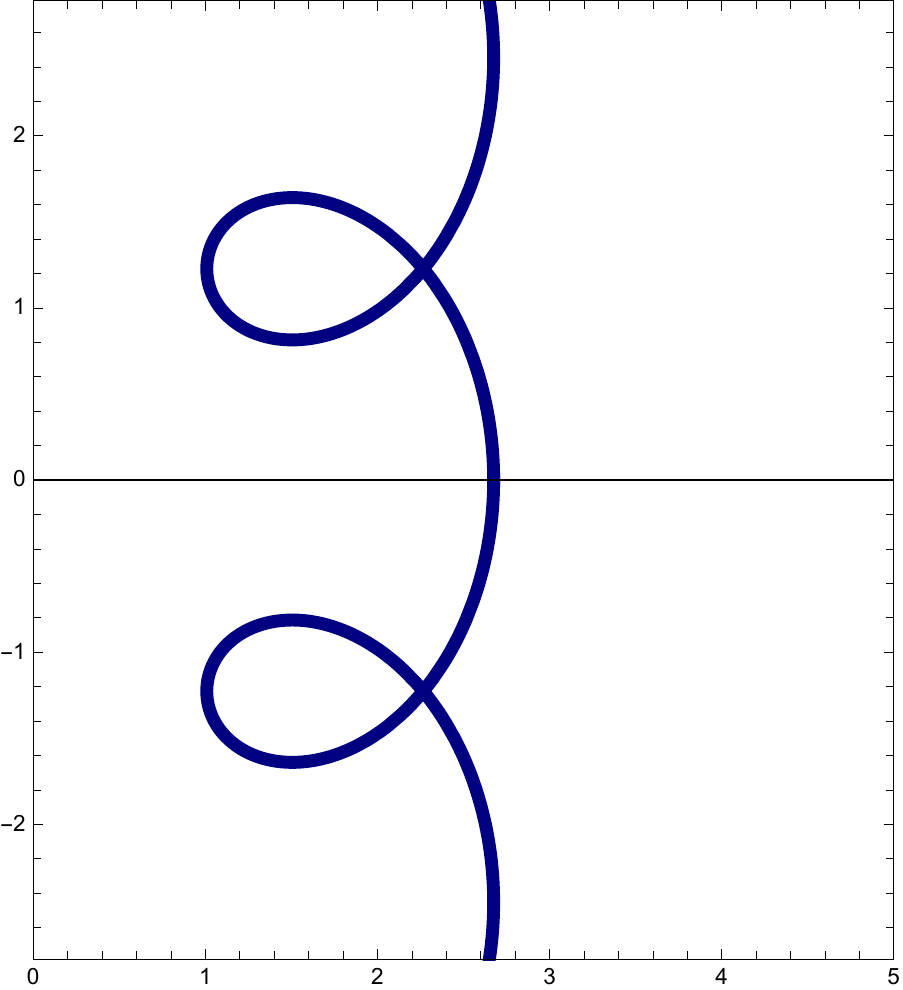}
	\end{minipage}
	\begin{minipage}[t]{0.16\textwidth}
		\includegraphics[width=\textwidth]{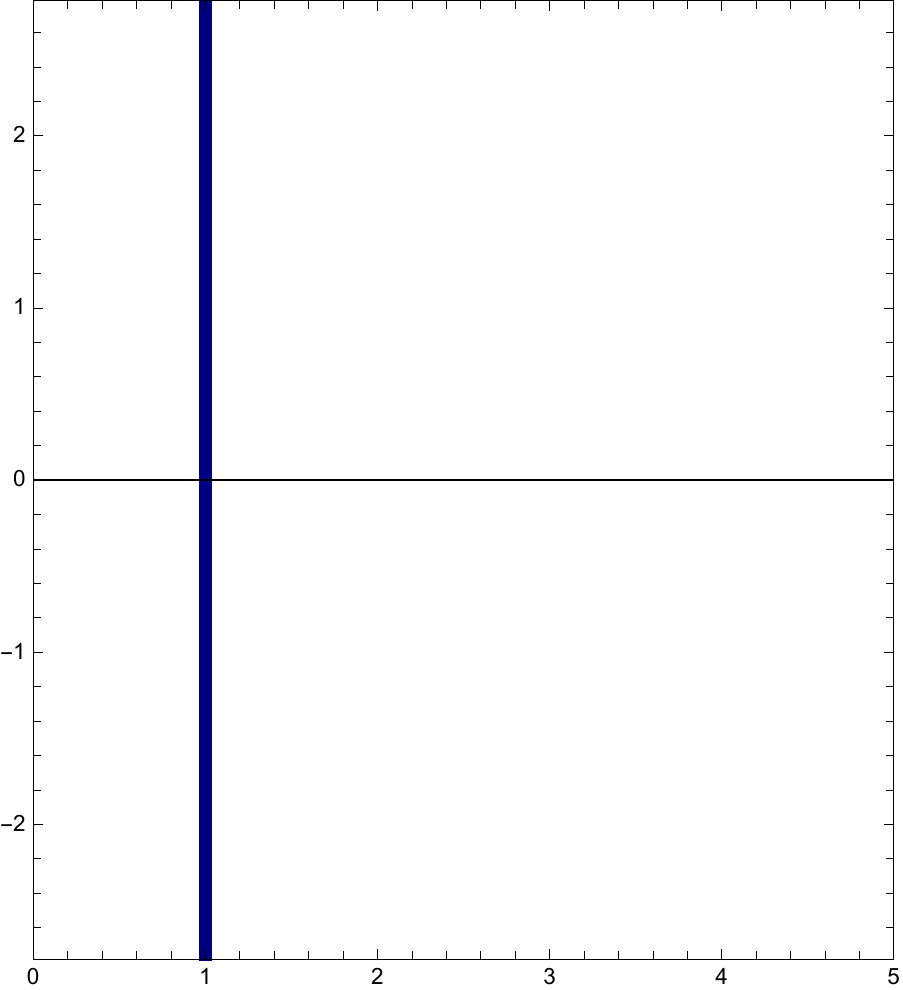} \\
		\includegraphics[width=\textwidth]{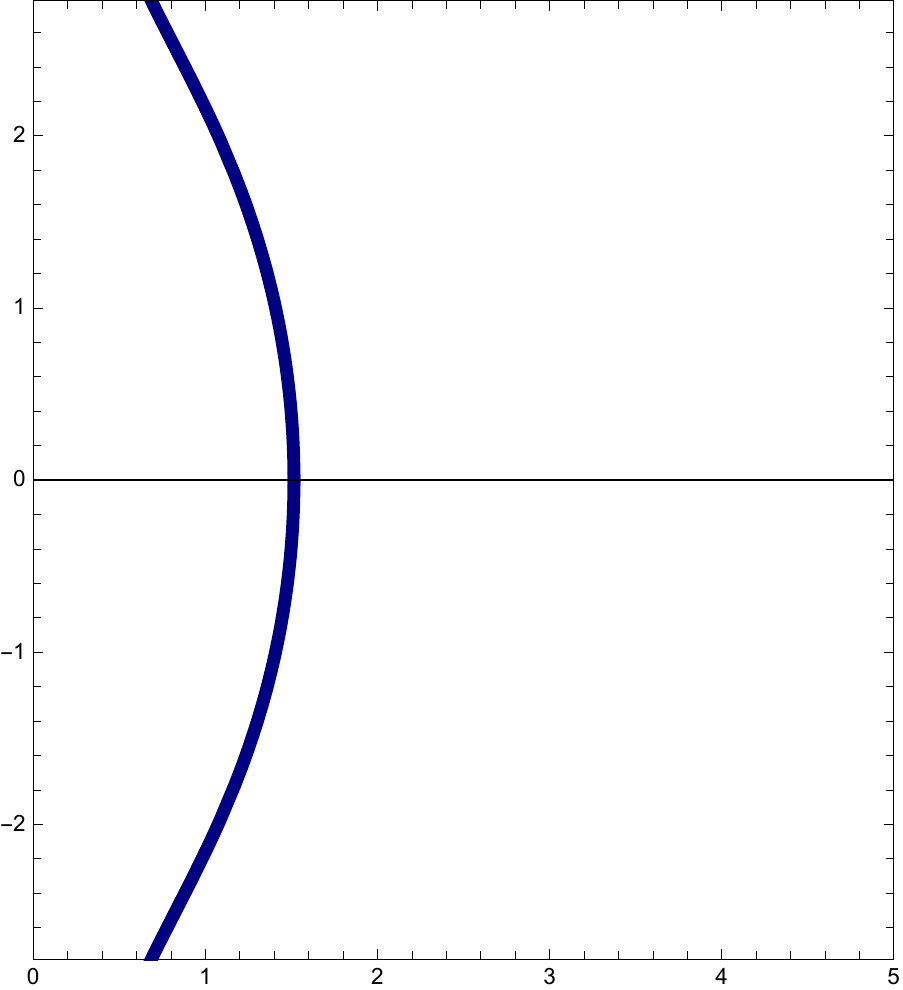} \\
		\includegraphics[width=\textwidth]{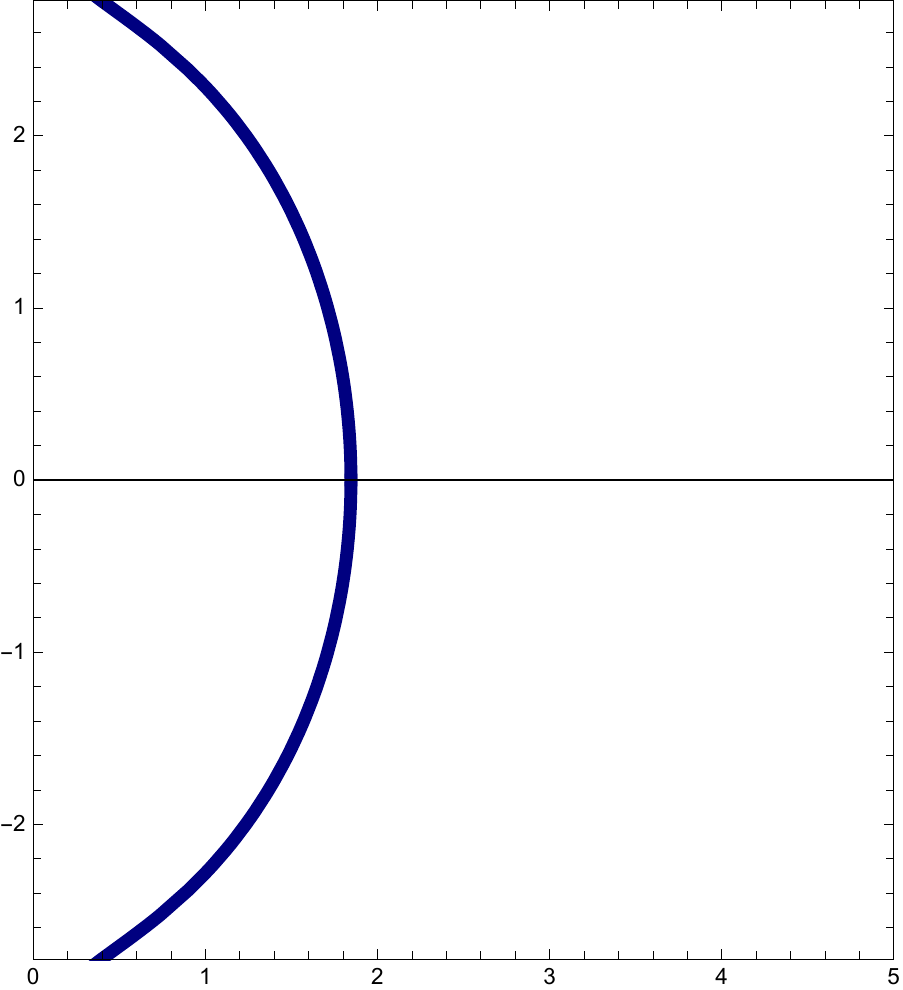} \\
		\includegraphics[width=\textwidth]{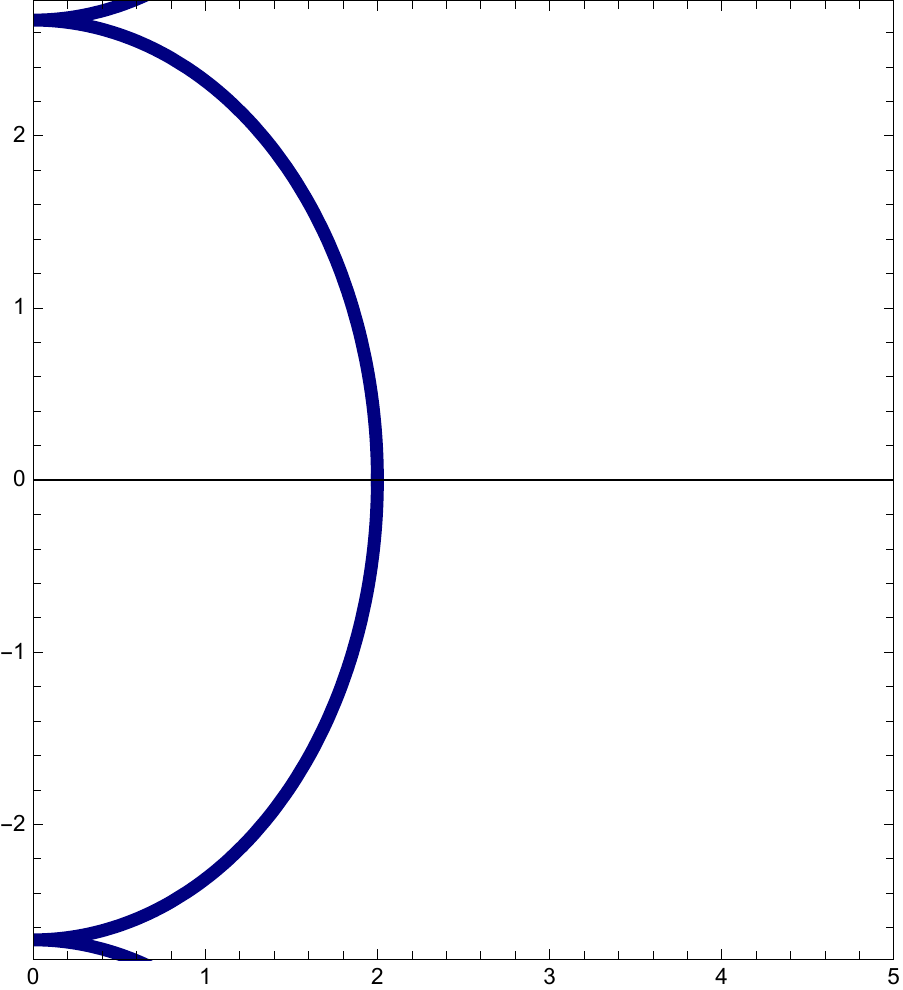} \\
		\includegraphics[width=\textwidth]{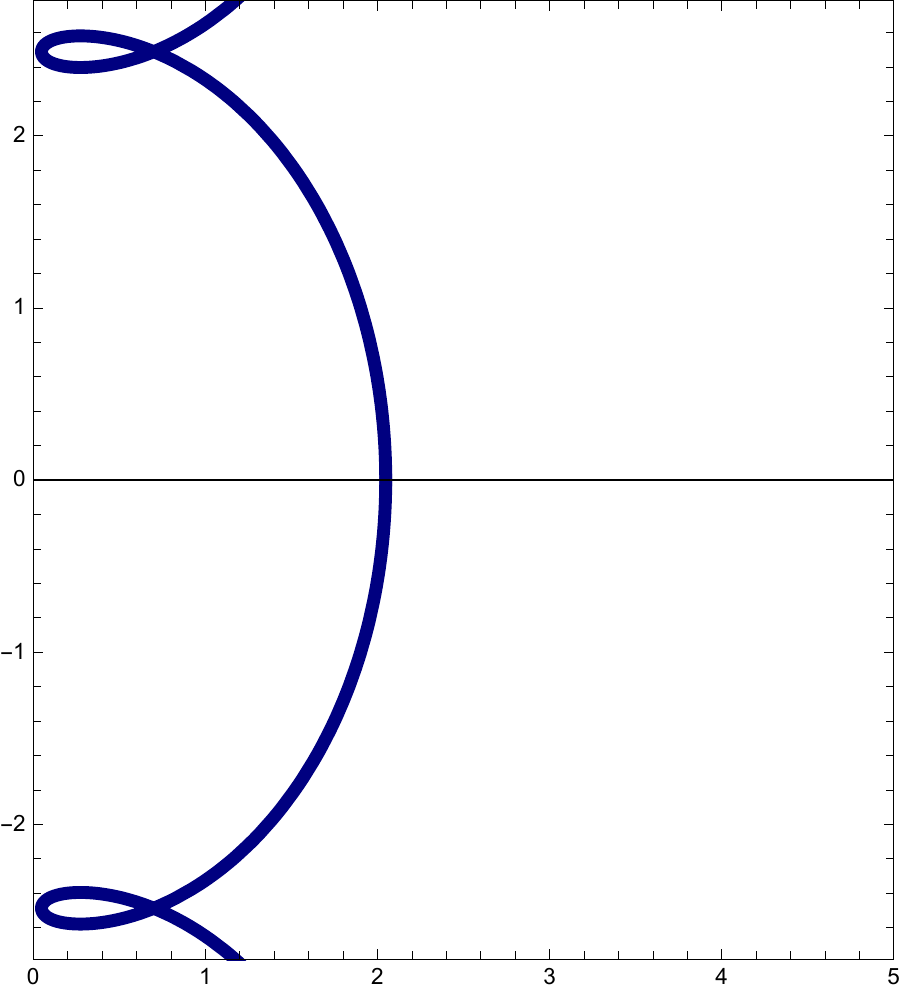} \\
		\includegraphics[width=\textwidth]{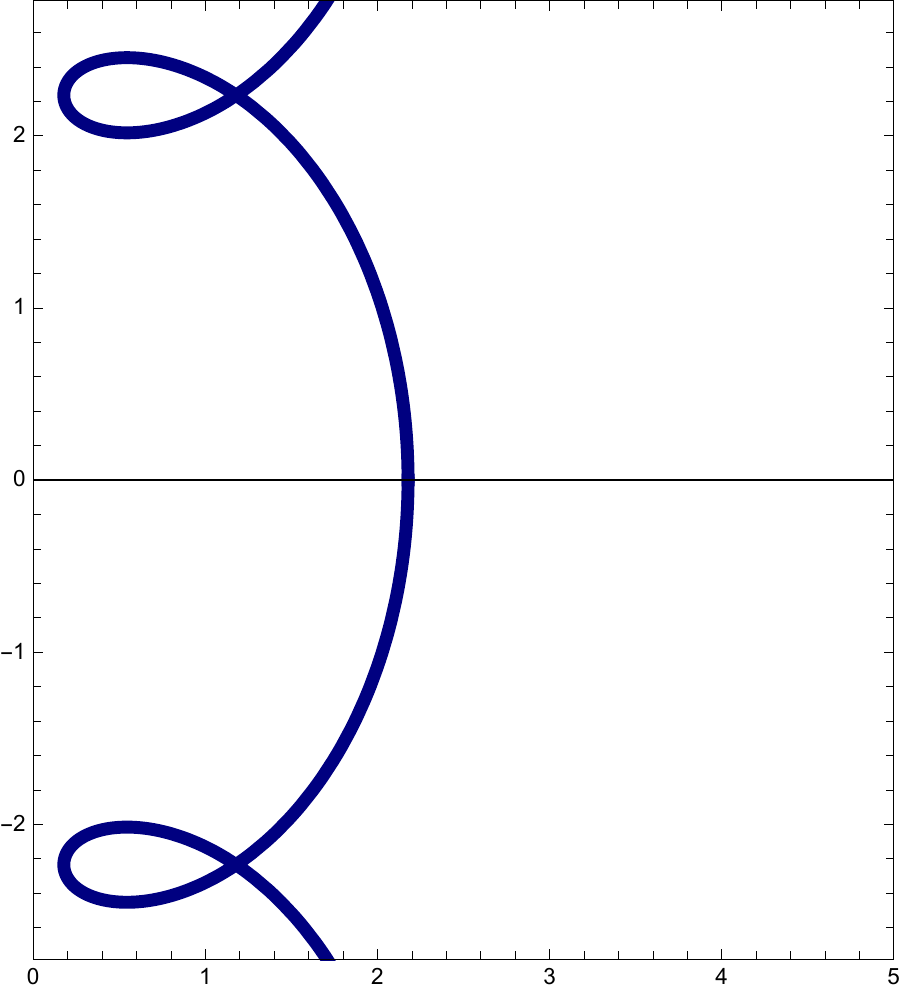} \\
		\includegraphics[width=\textwidth]{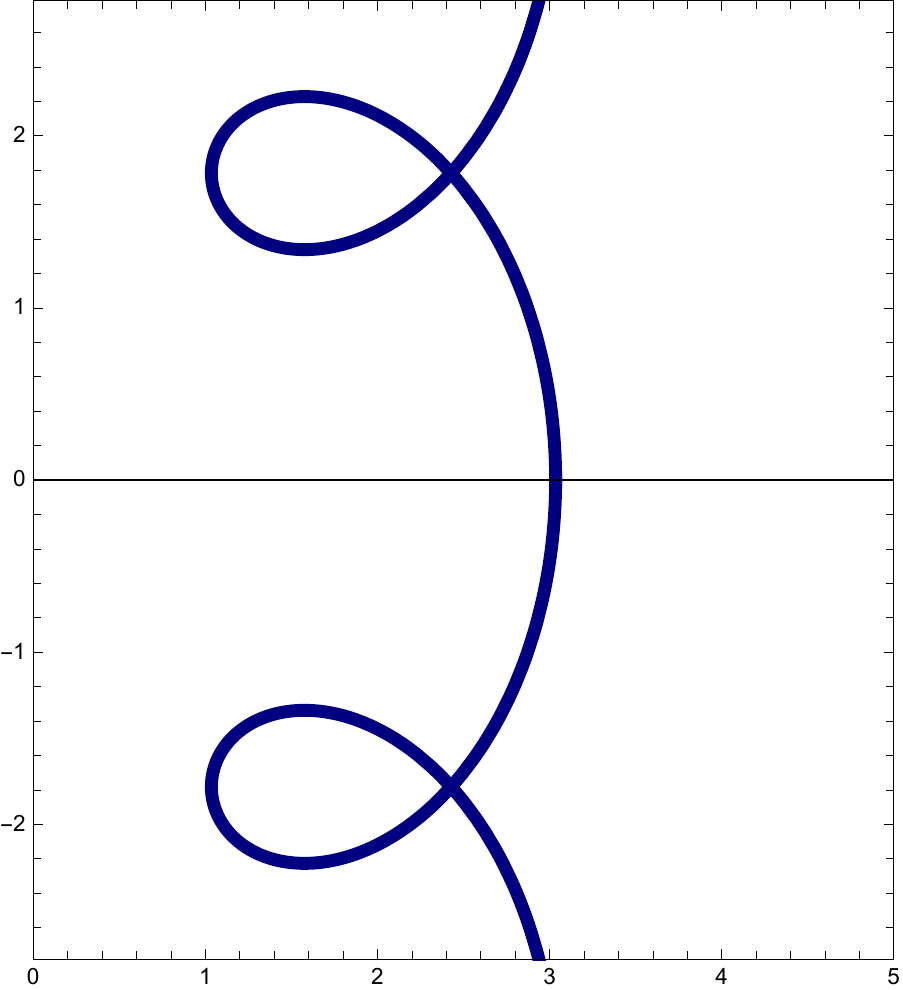}
	\end{minipage}
	\caption{Numerically computed profile curves of the family $\family$ in $\HR$ for the rotational case $a=0$. The energy decreases from $\Jmax$ in the first row (vertical cylinder) to $J<0$ in the lower rows. The intermediate rows display surfaces of unduloid type, sphere type, and nodoid type. Tubes do not appear in this family. Each column represents a fixed value of the mean curvature $H$, from large $H$ in the left column to small $H>\frac{1}{2}$ in the right column.}
	\label{fig:curves_hkr}
\end{figure}

\newpage

\begin{figure}[H]
	\centering
	\begin{minipage}[t]{0.16\textwidth}
		\includegraphics[width=\textwidth]{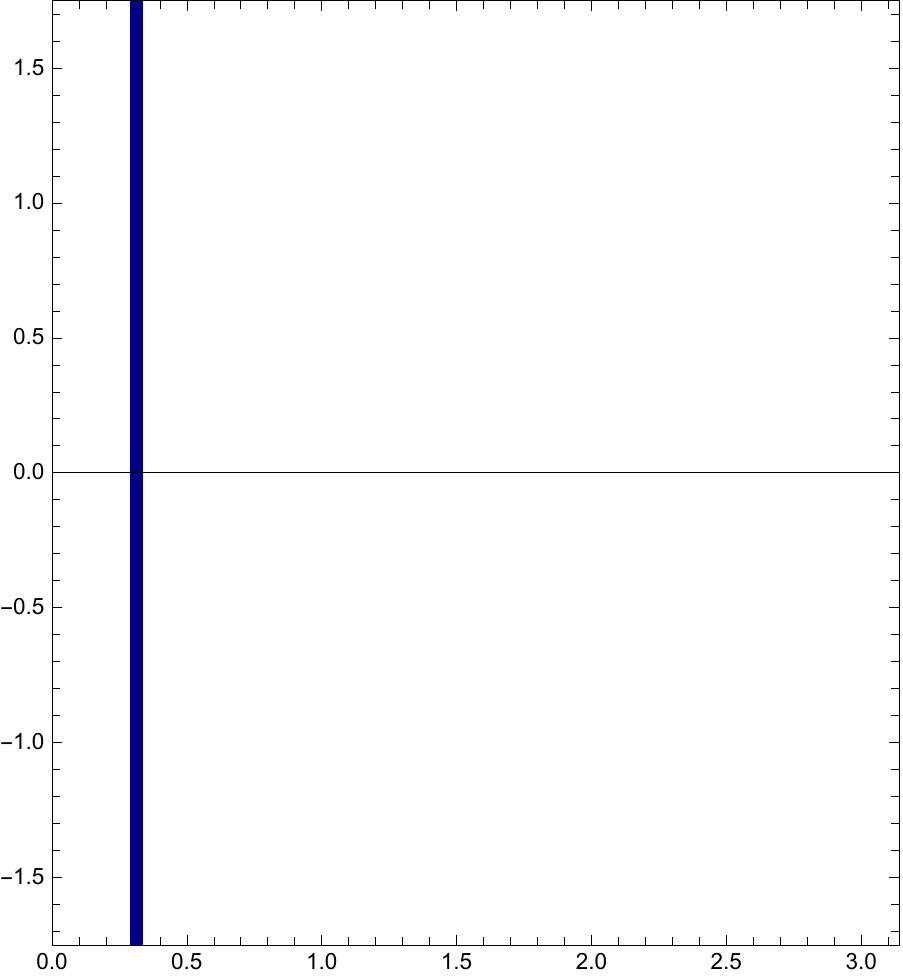} \\
		\includegraphics[width=\textwidth]{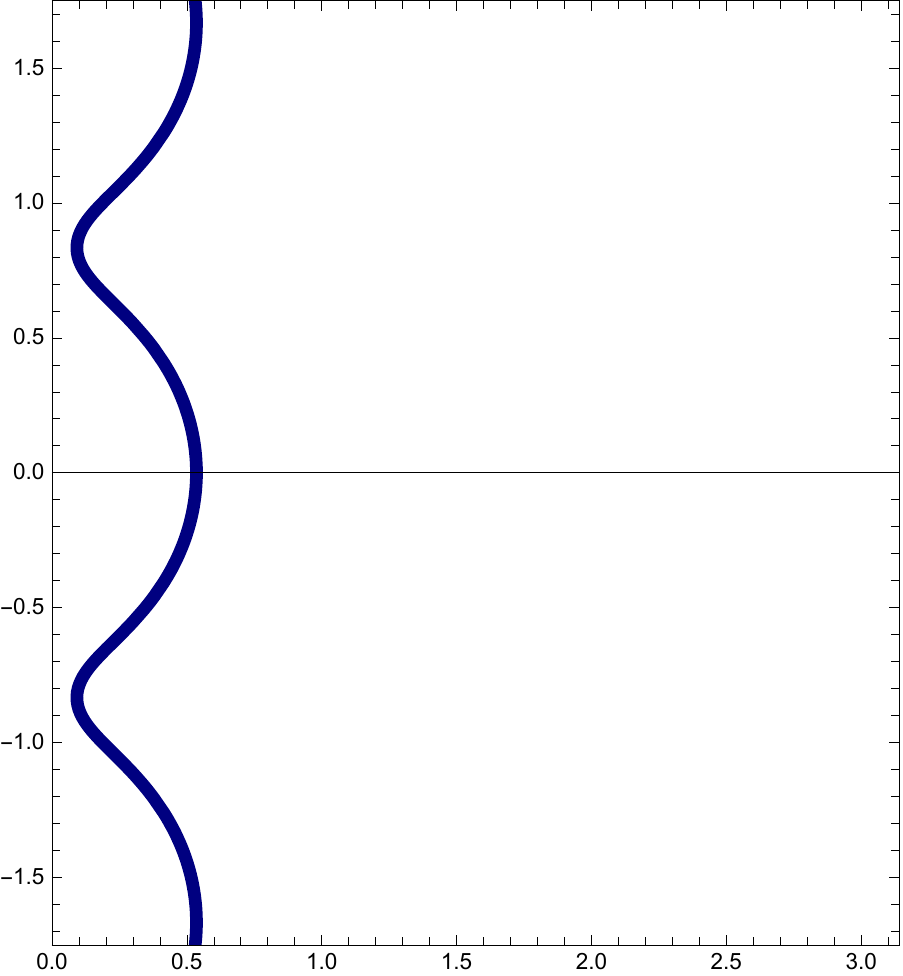} \\
		\includegraphics[width=\textwidth]{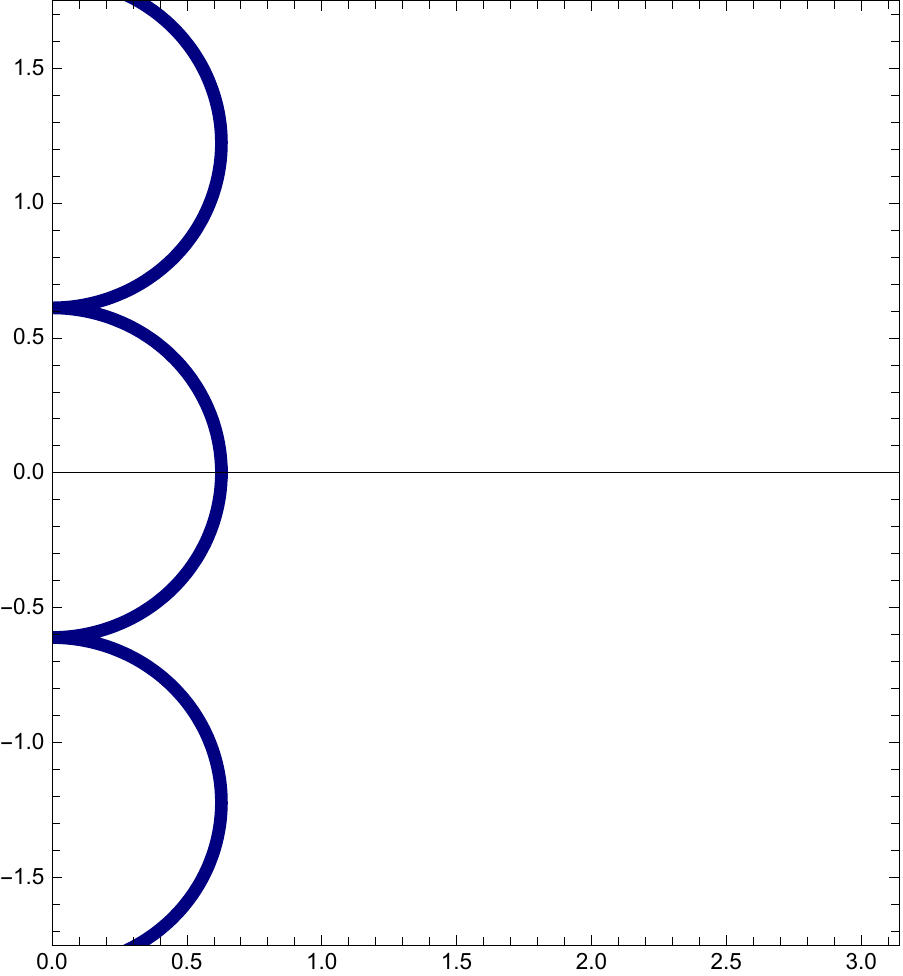} \\
		\includegraphics[width=\textwidth]{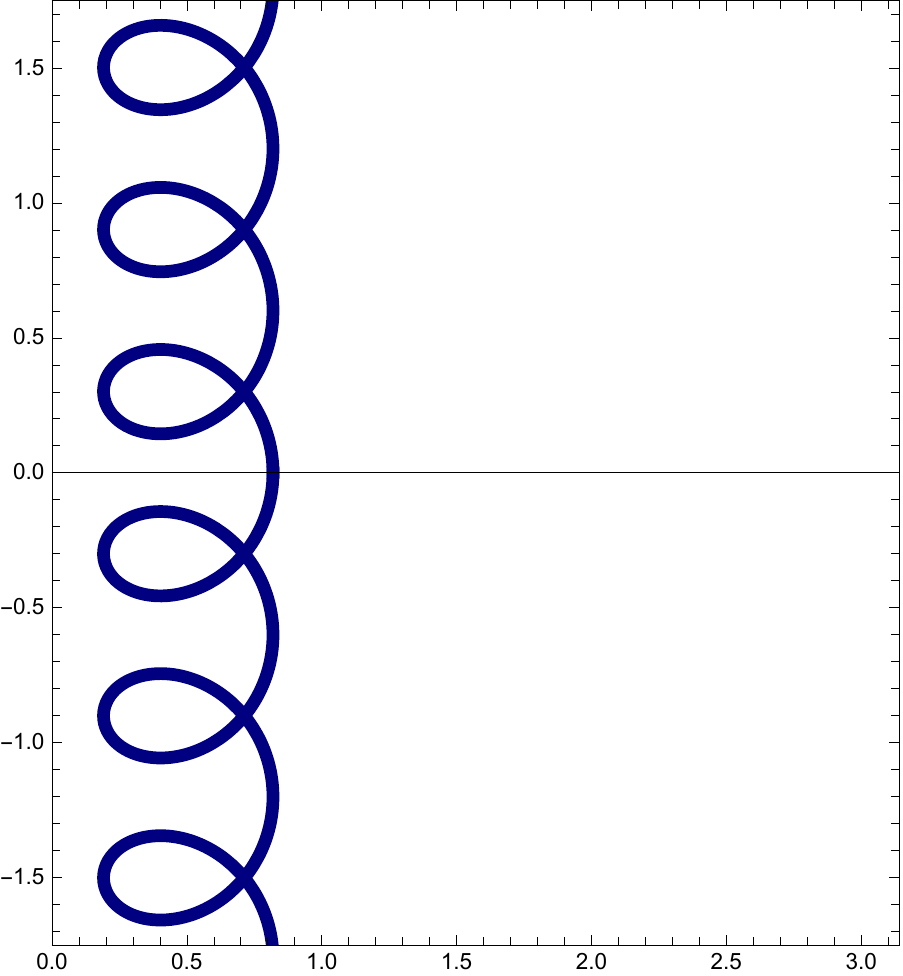} \\
		\includegraphics[width=\textwidth]{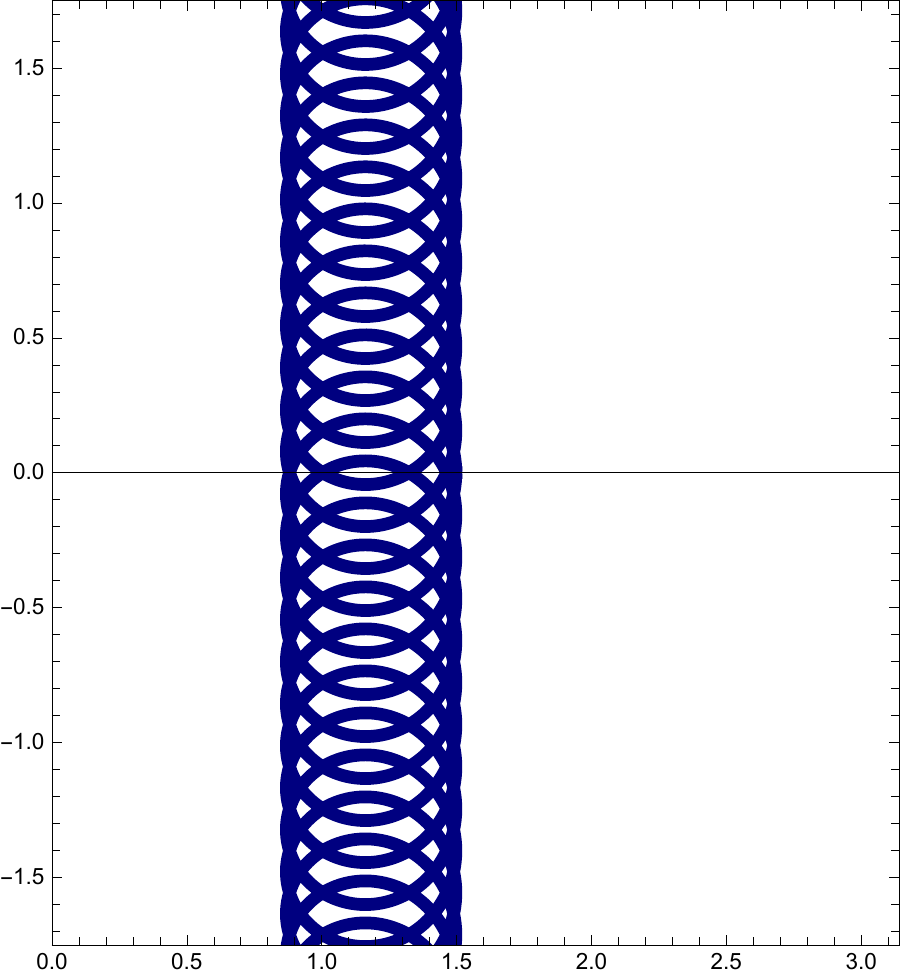} \\
		\includegraphics[width=\textwidth]{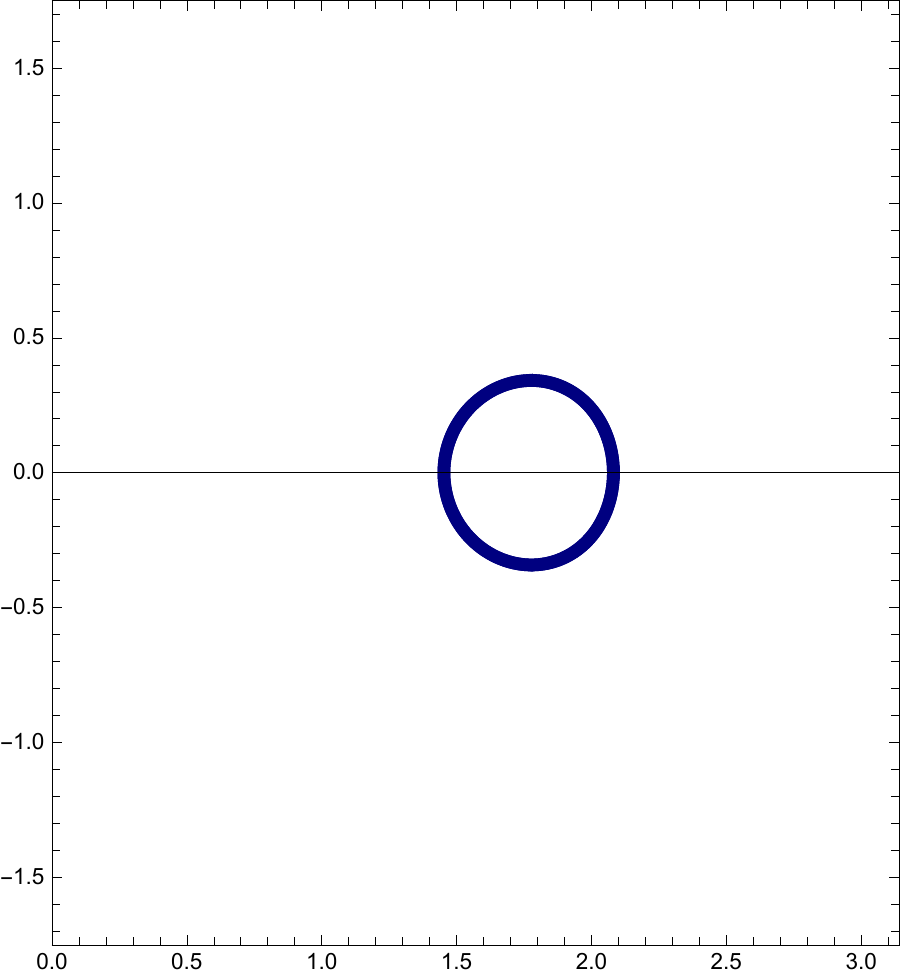} \\
		\includegraphics[width=\textwidth]{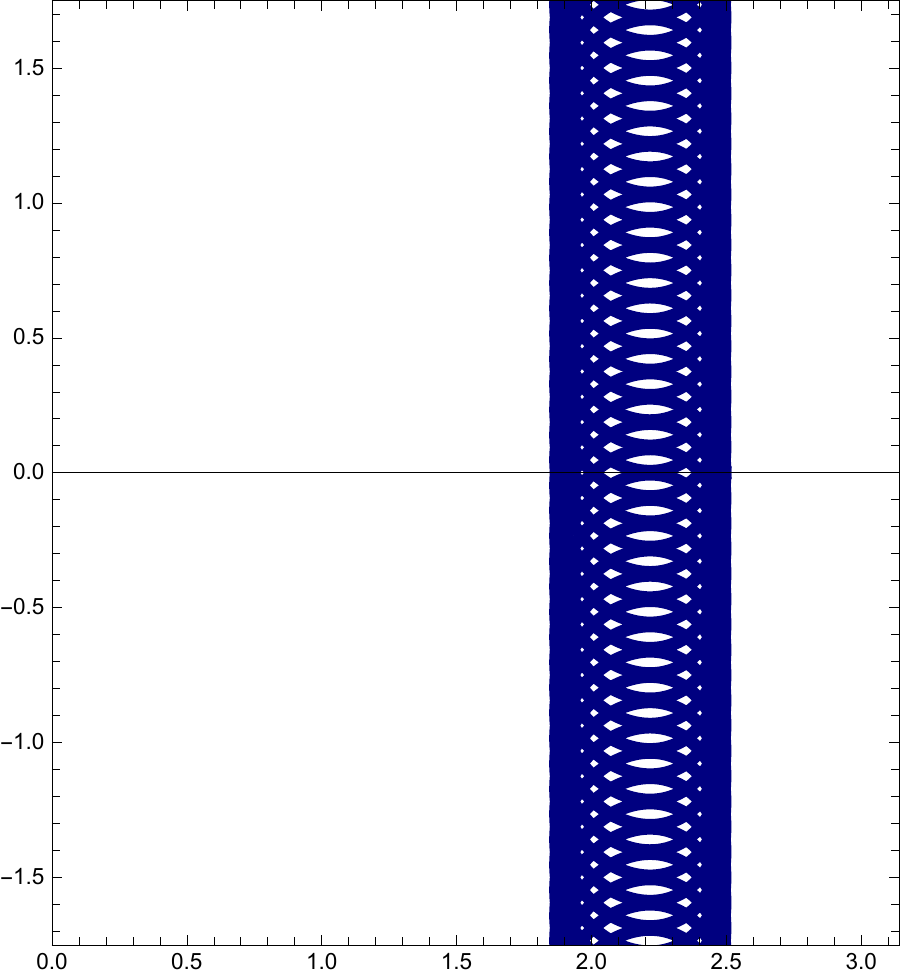}
	\end{minipage}
	\begin{minipage}[t]{0.16\textwidth}
		\includegraphics[width=\textwidth]{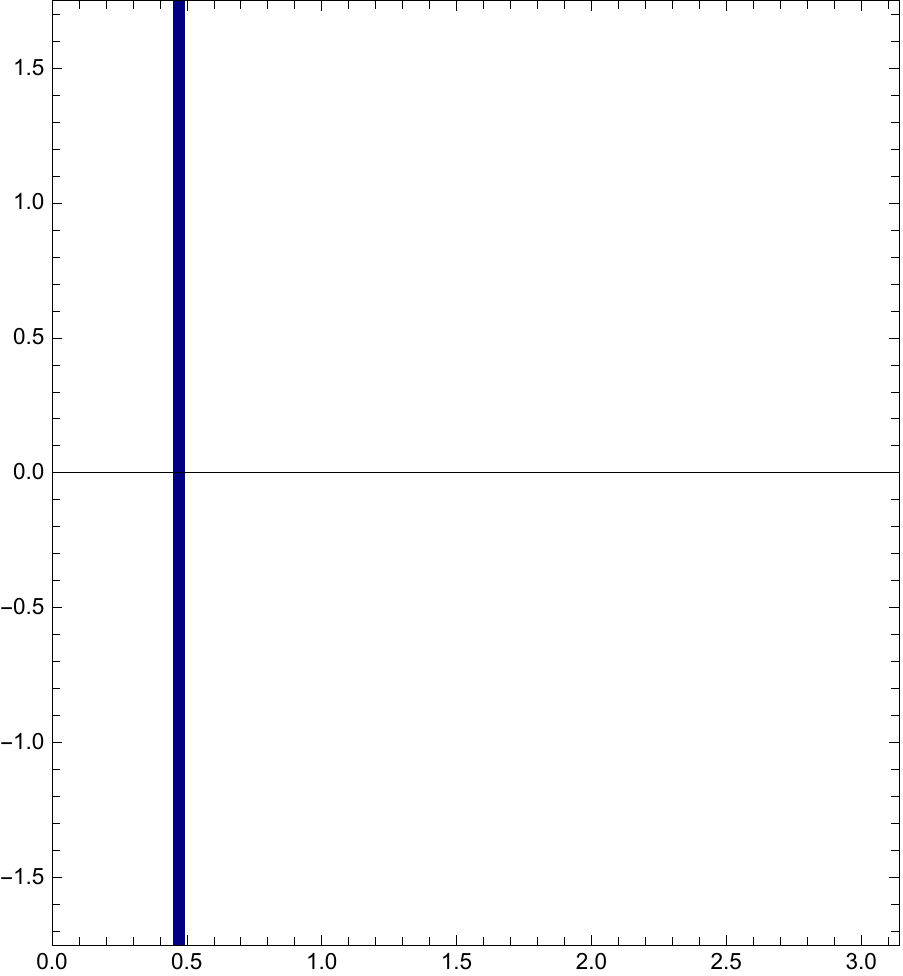} \\
		\includegraphics[width=\textwidth]{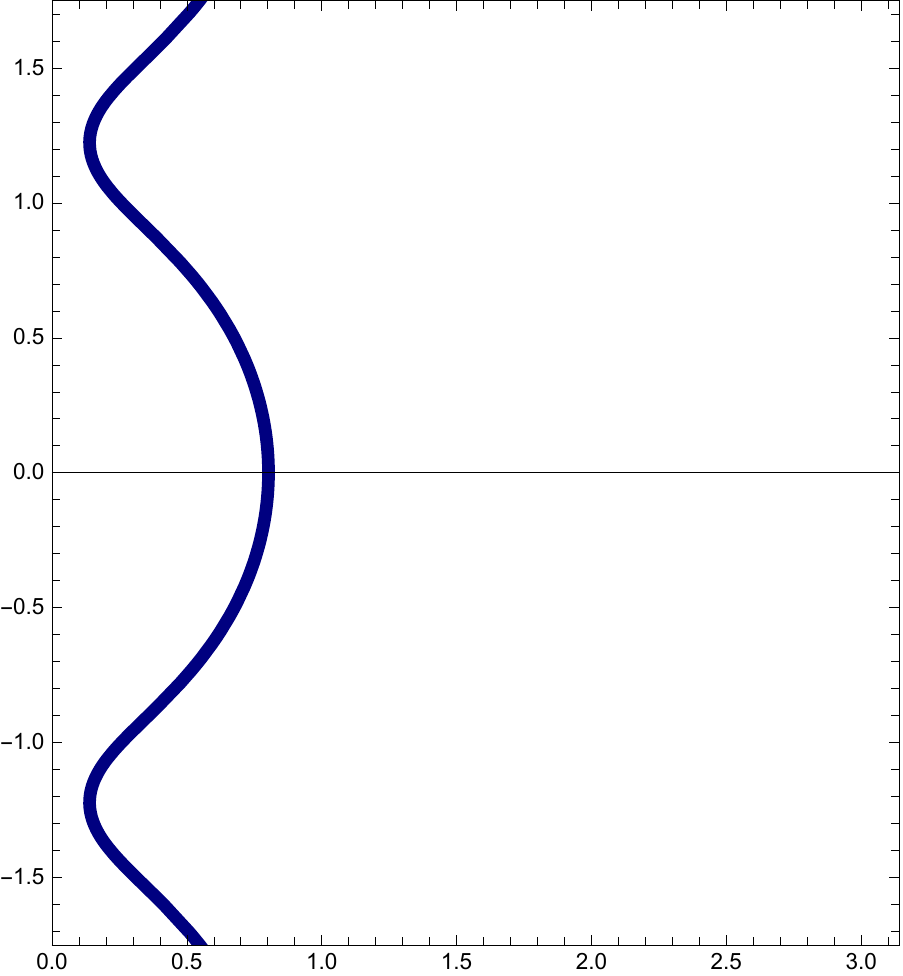} \\
		\includegraphics[width=\textwidth]{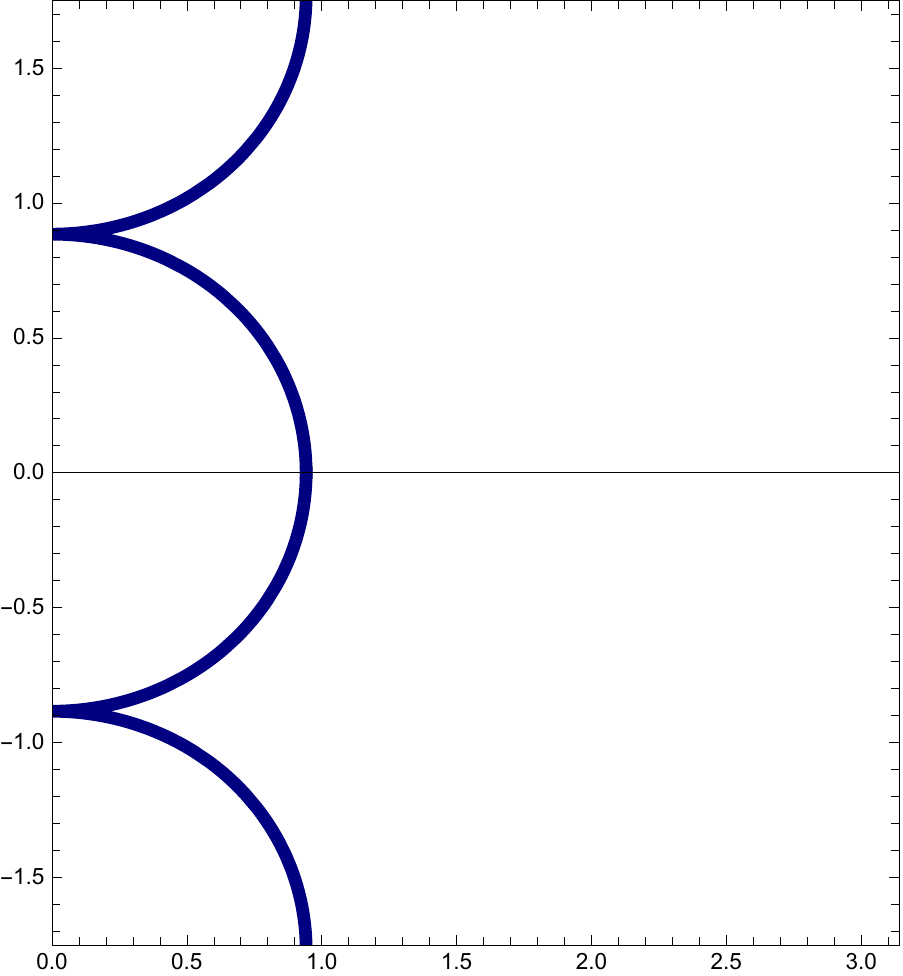} \\
		\includegraphics[width=\textwidth]{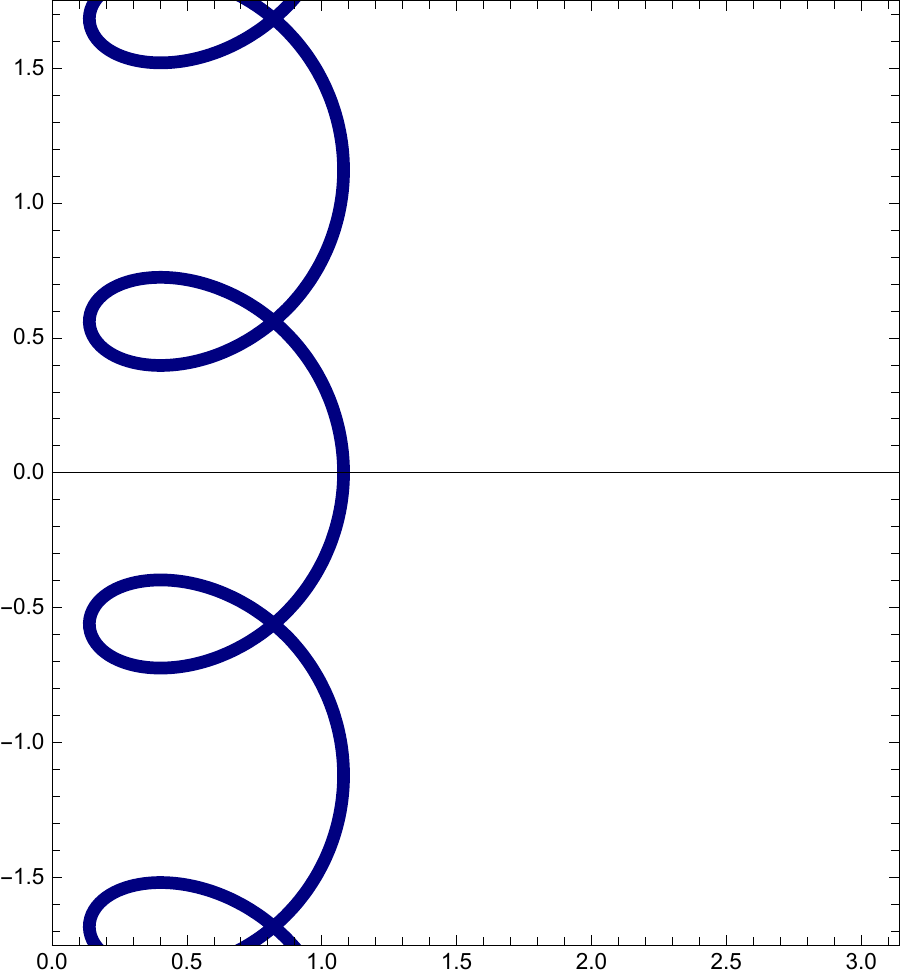} \\
		\includegraphics[width=\textwidth]{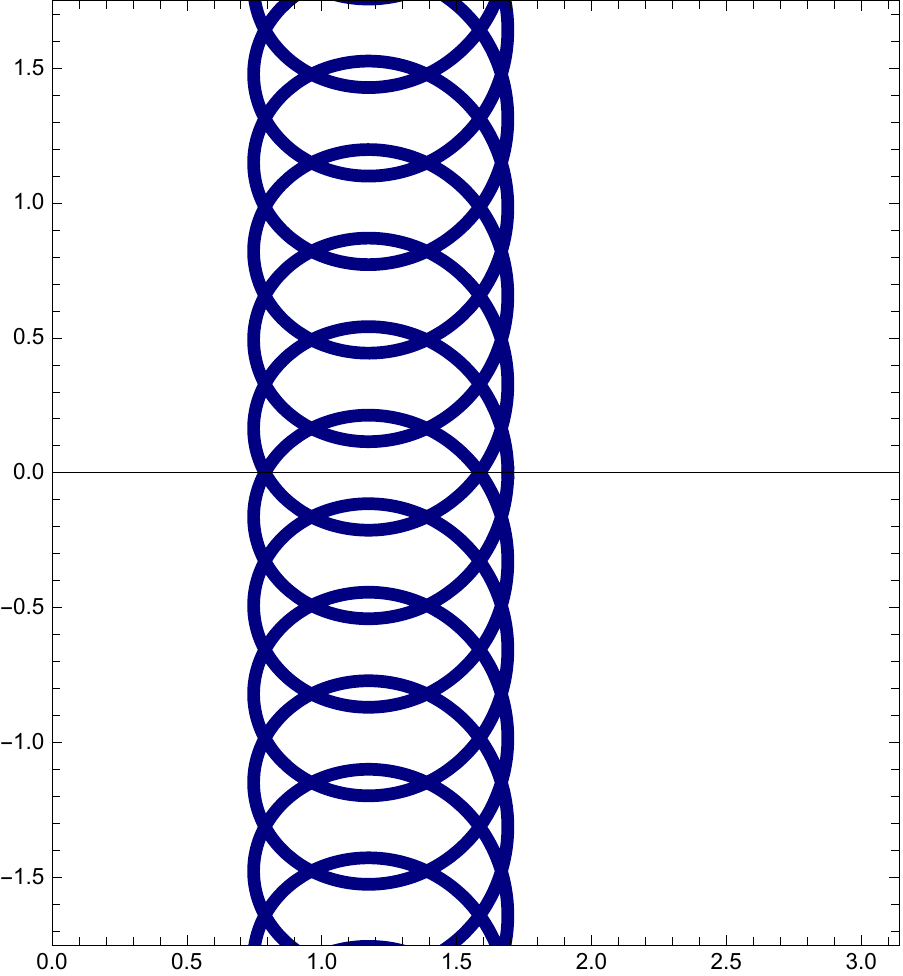} \\
		\includegraphics[width=\textwidth]{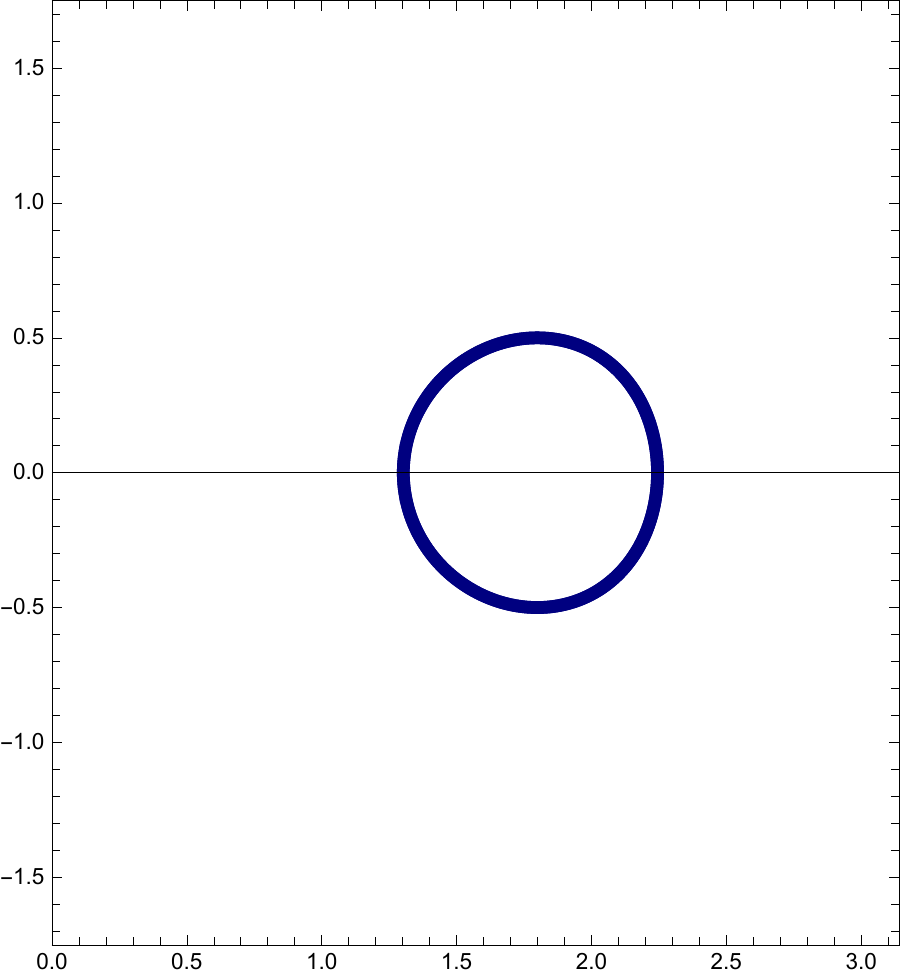} \\
		\includegraphics[width=\textwidth]{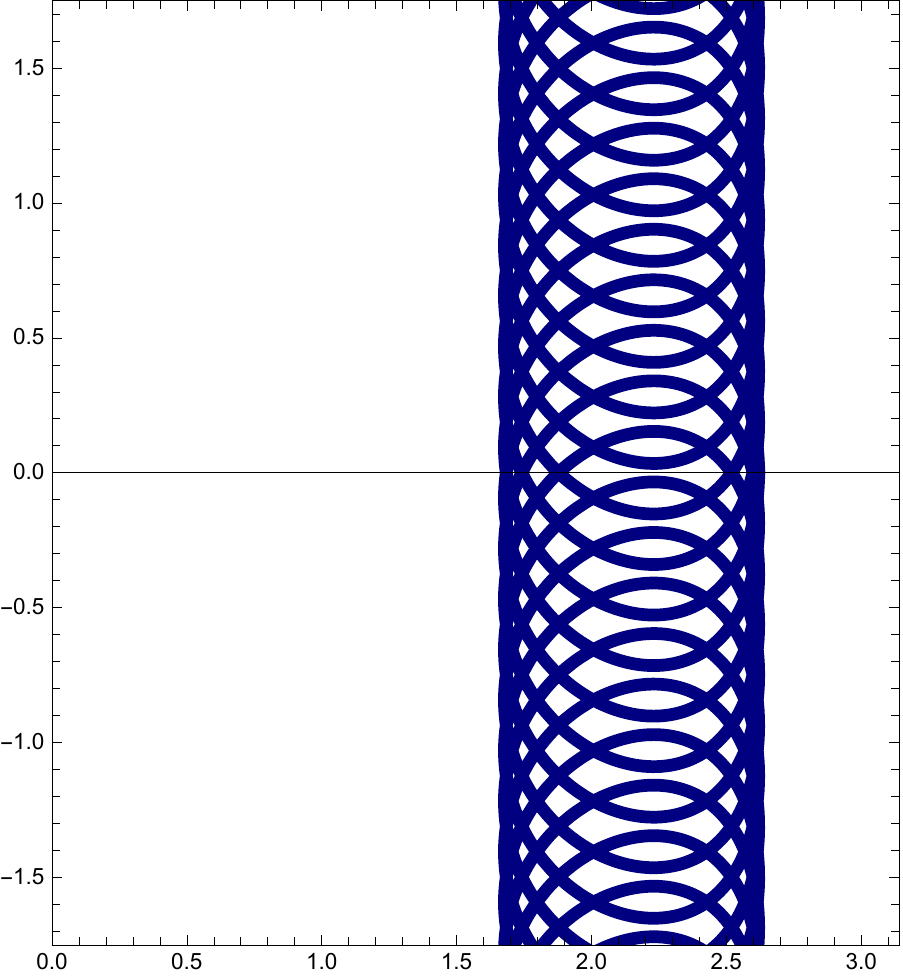}
	\end{minipage}
	\begin{minipage}[t]{0.16\textwidth}
		\includegraphics[width=\textwidth]{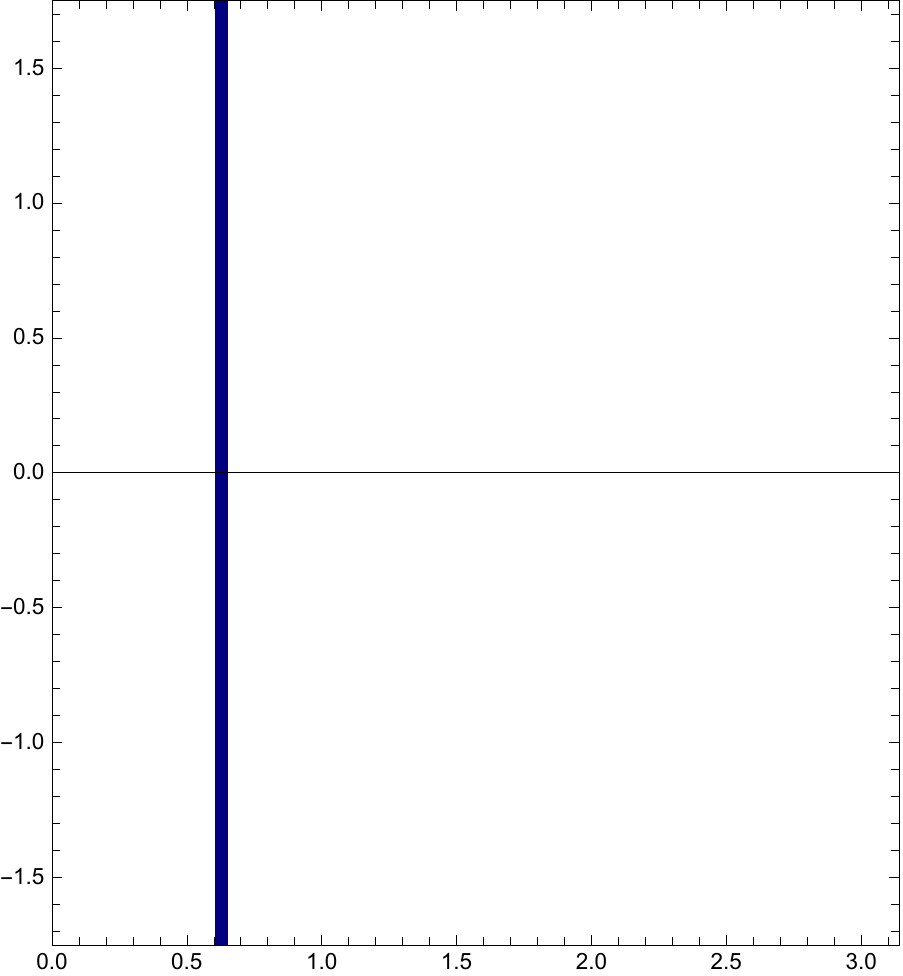} \\
		\includegraphics[width=\textwidth]{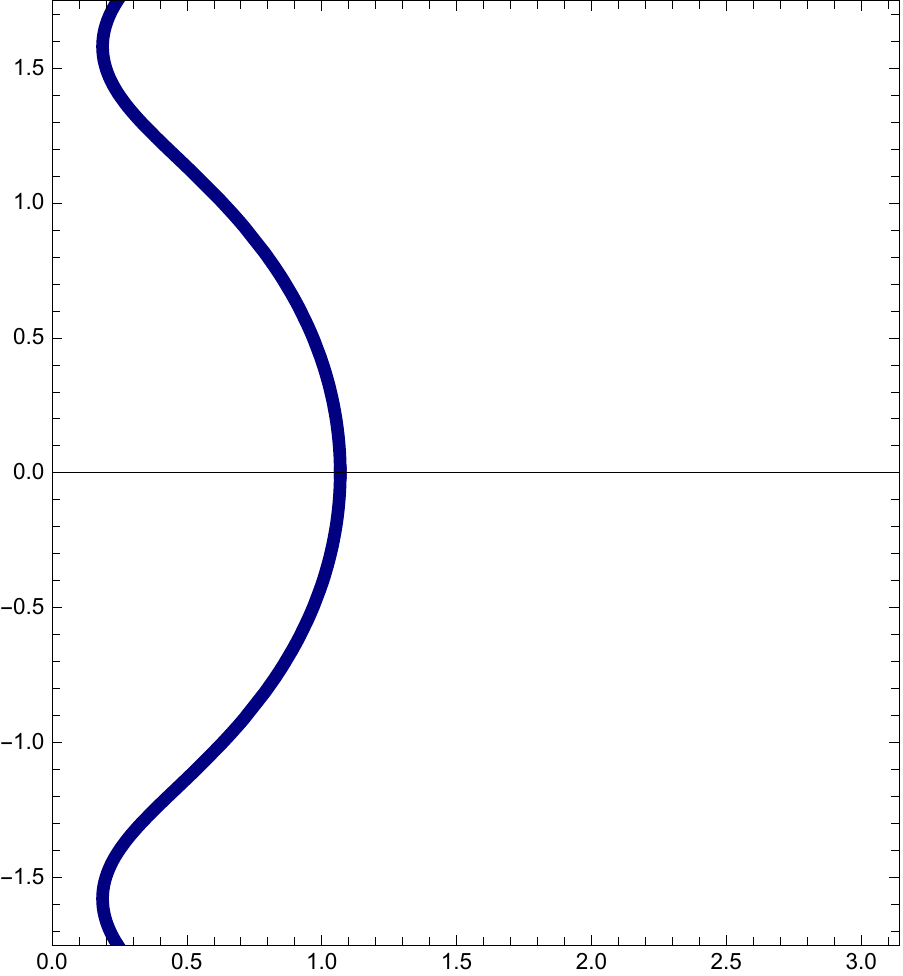}
		\includegraphics[width=\textwidth]{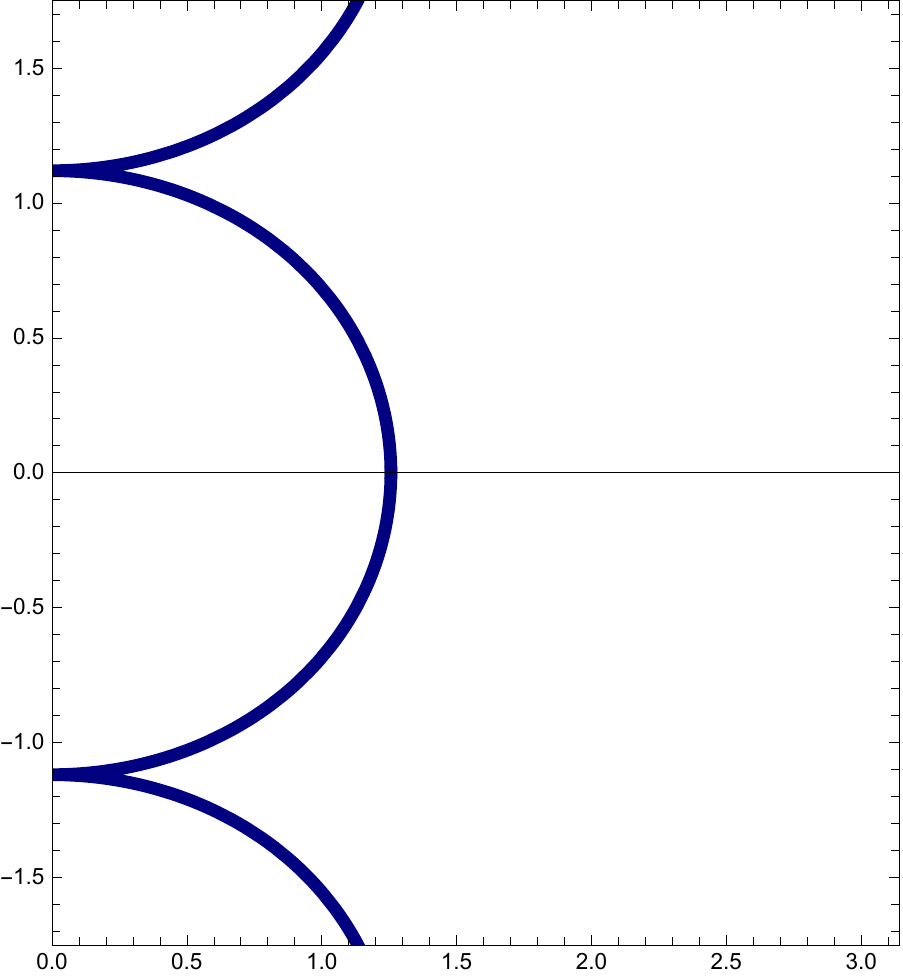} \\
		\includegraphics[width=\textwidth]{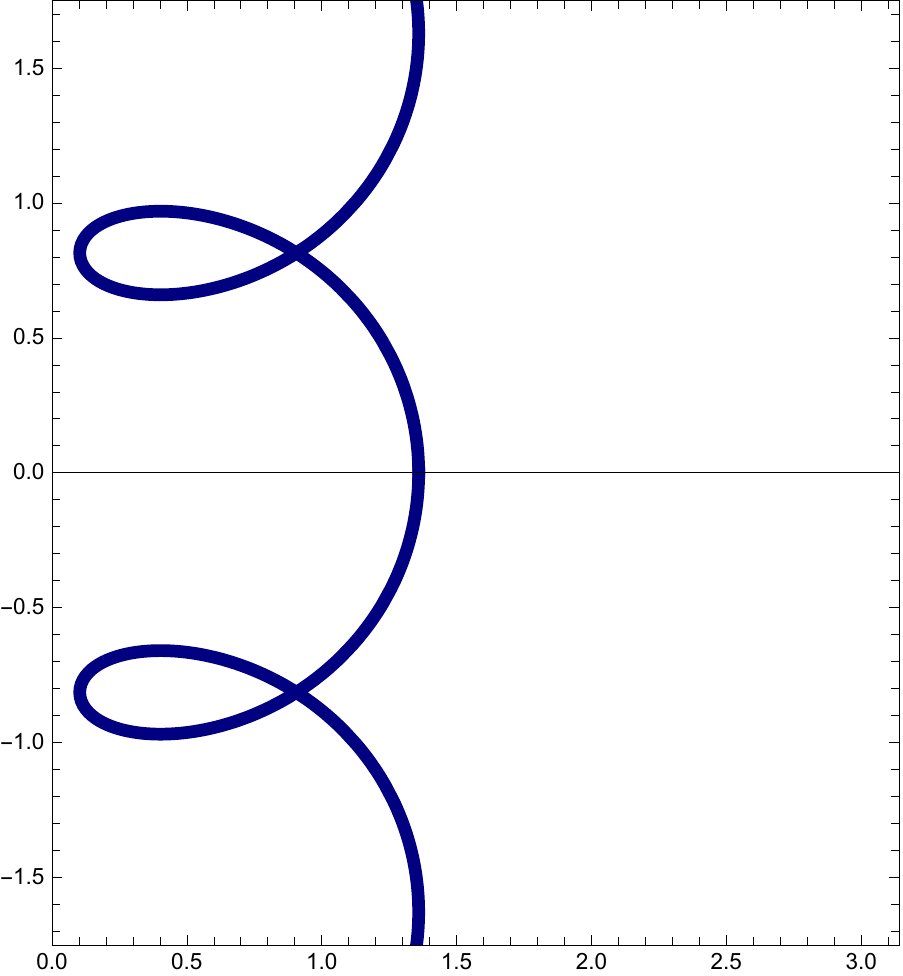} \\
		\includegraphics[width=\textwidth]{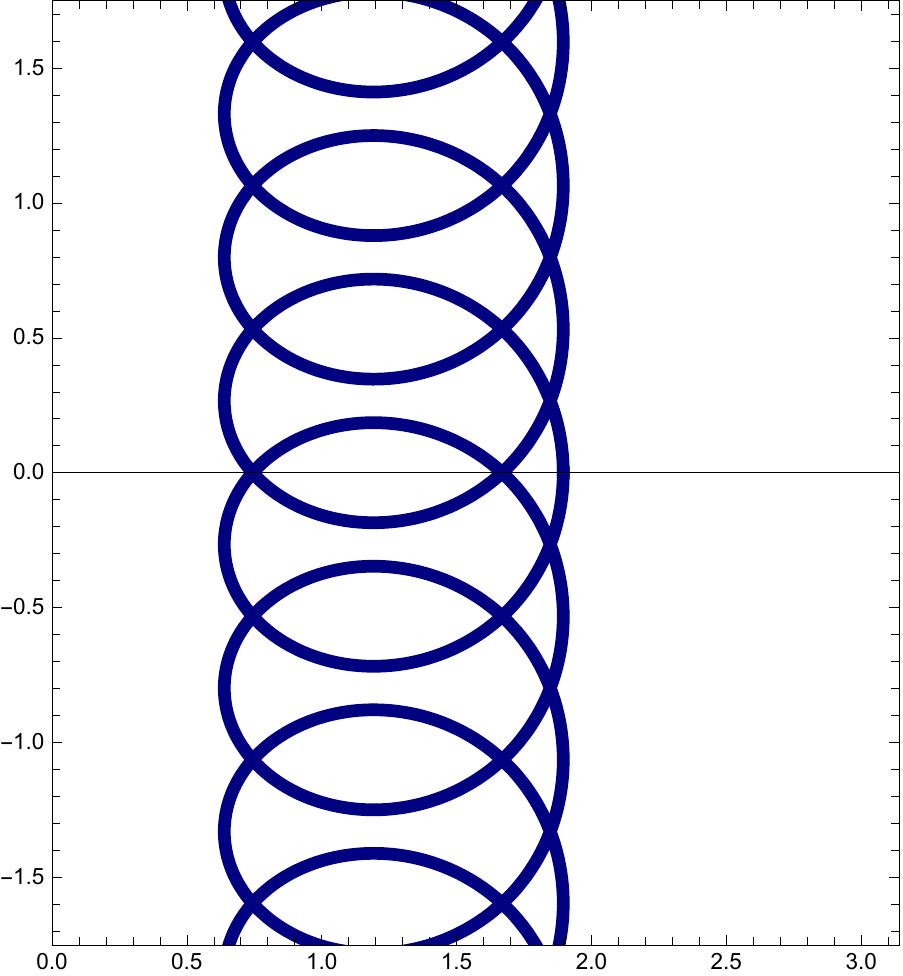} \\
		\includegraphics[width=\textwidth]{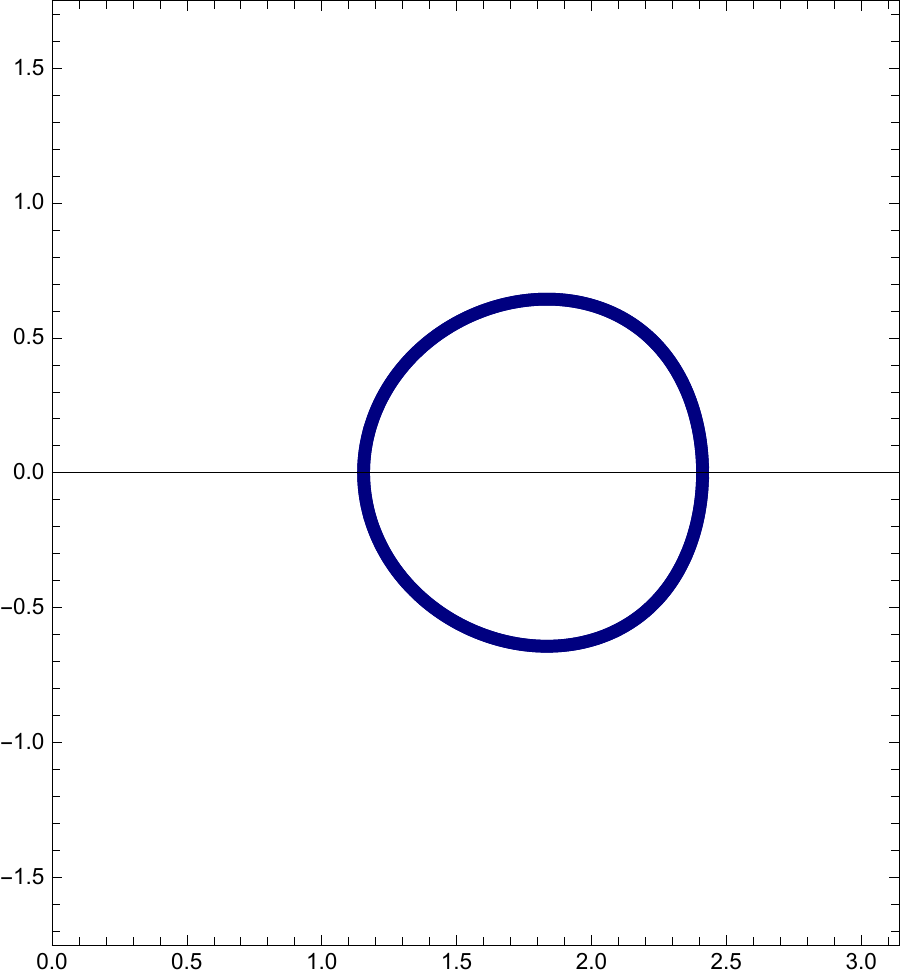} \\
		\includegraphics[width=\textwidth]{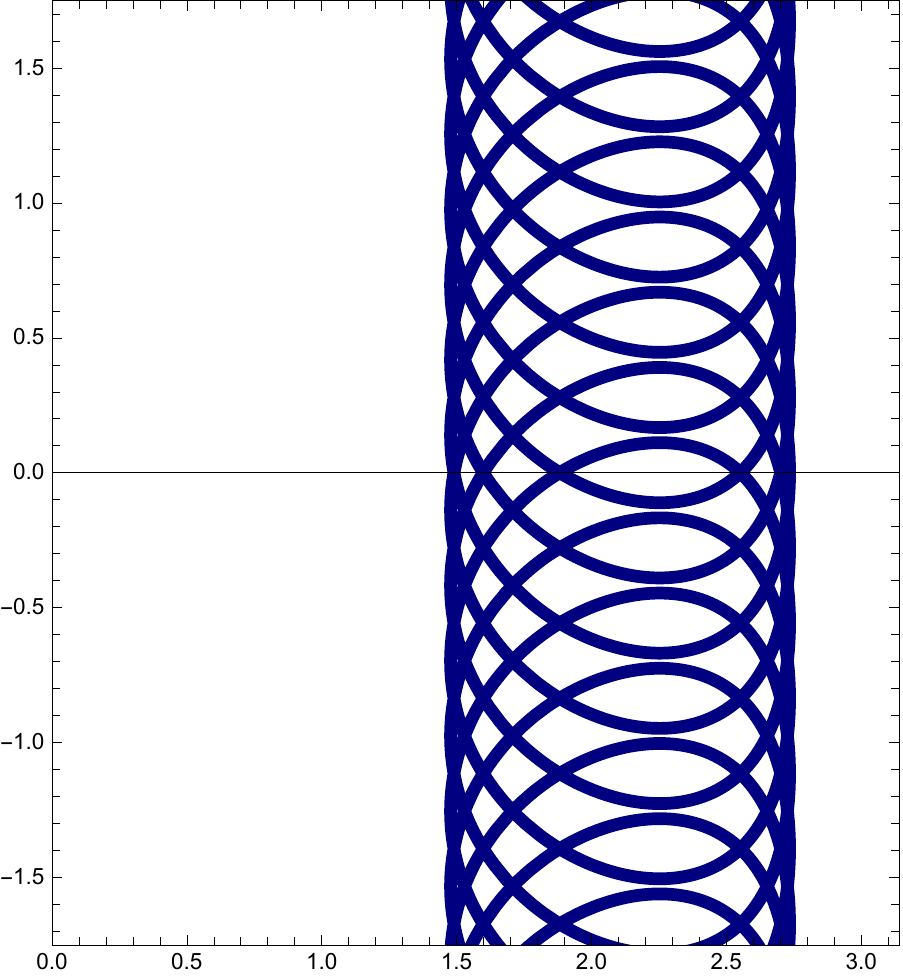}
	\end{minipage}
	\begin{minipage}[t]{0.16\textwidth}
		\includegraphics[width=\textwidth]{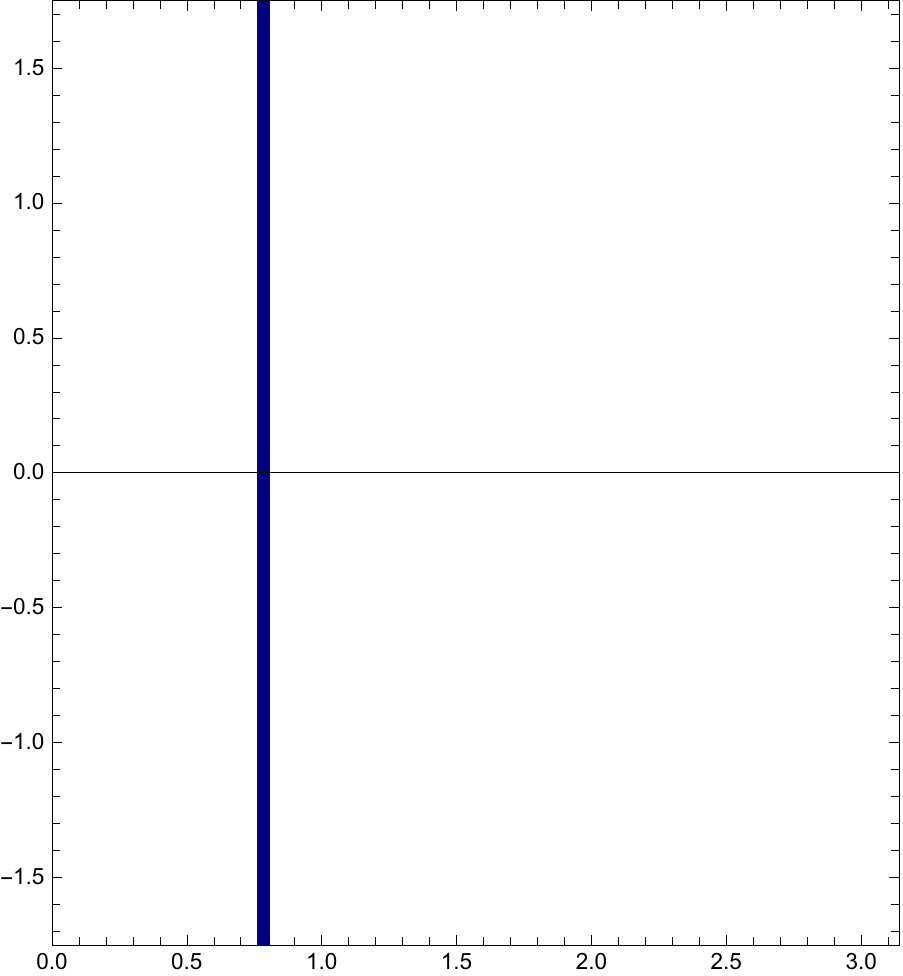} \\
		\includegraphics[width=\textwidth]{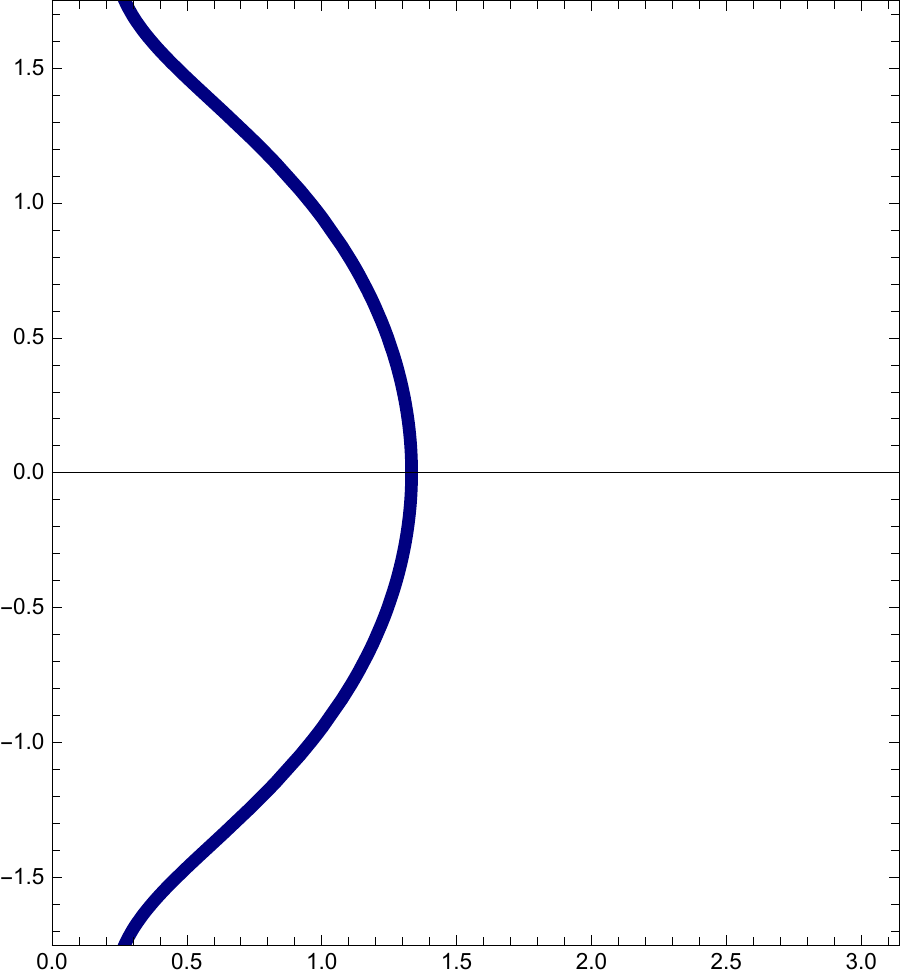} \\		\includegraphics[width=\textwidth]{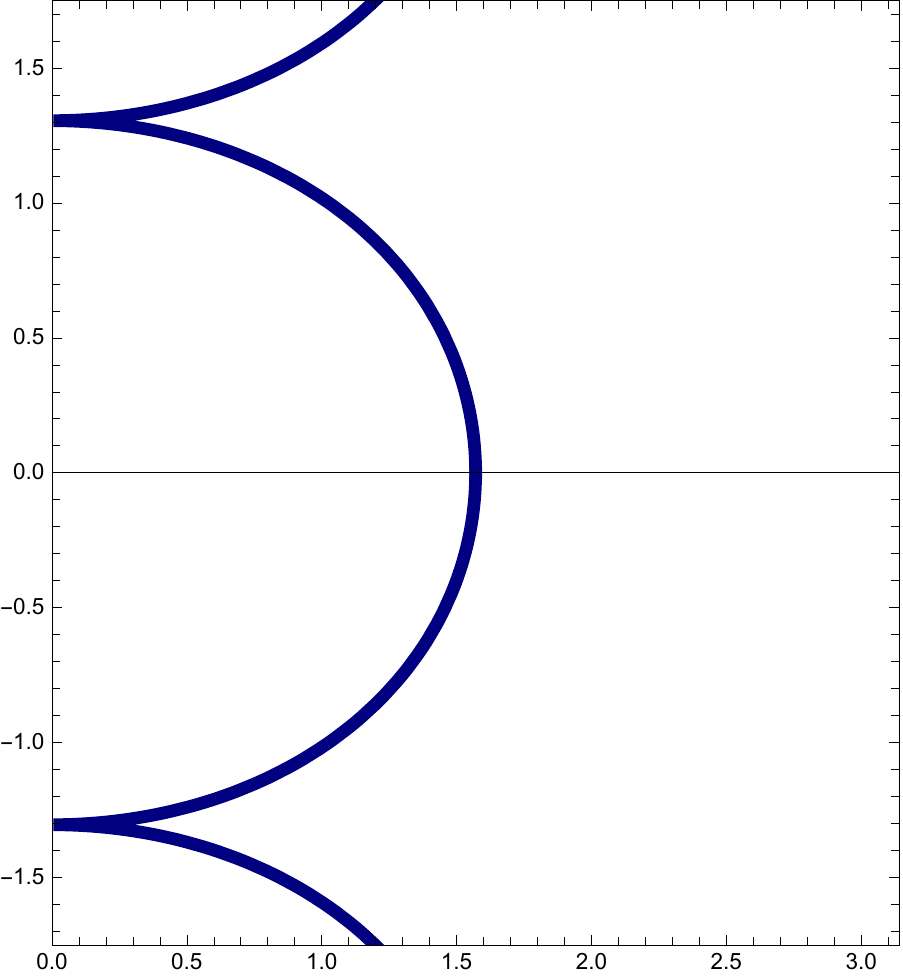} \\
		\includegraphics[width=\textwidth]{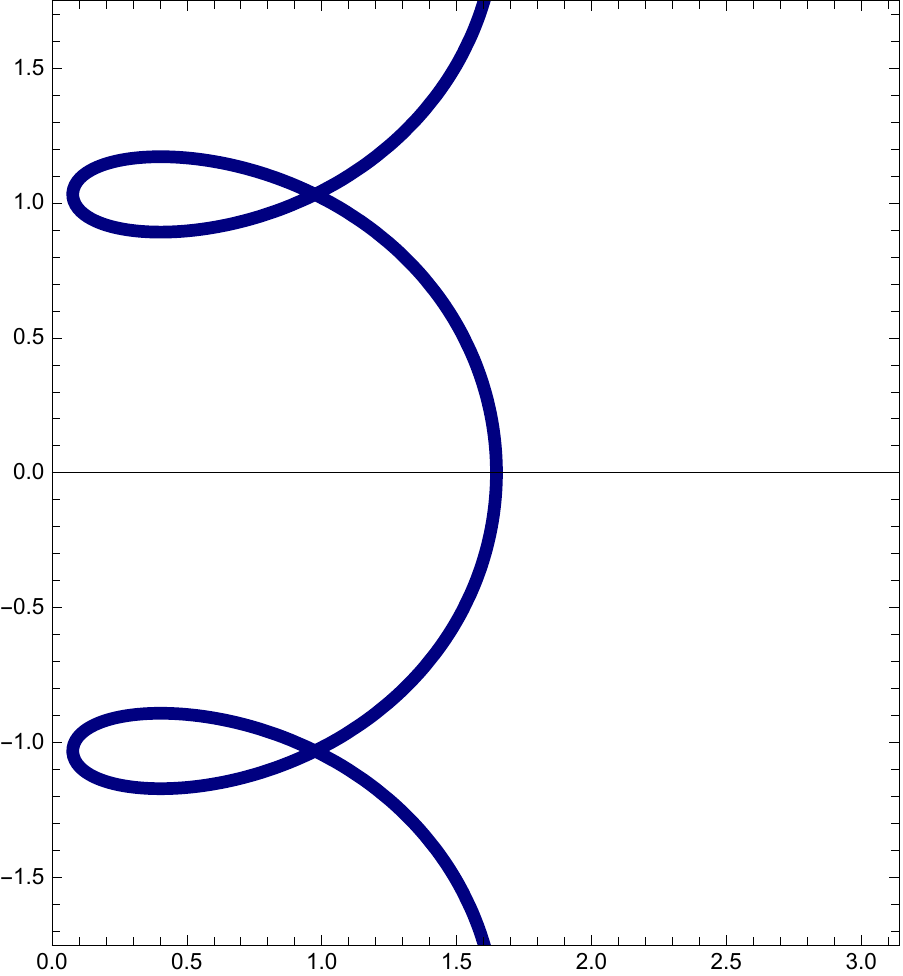} \\
		\includegraphics[width=\textwidth]{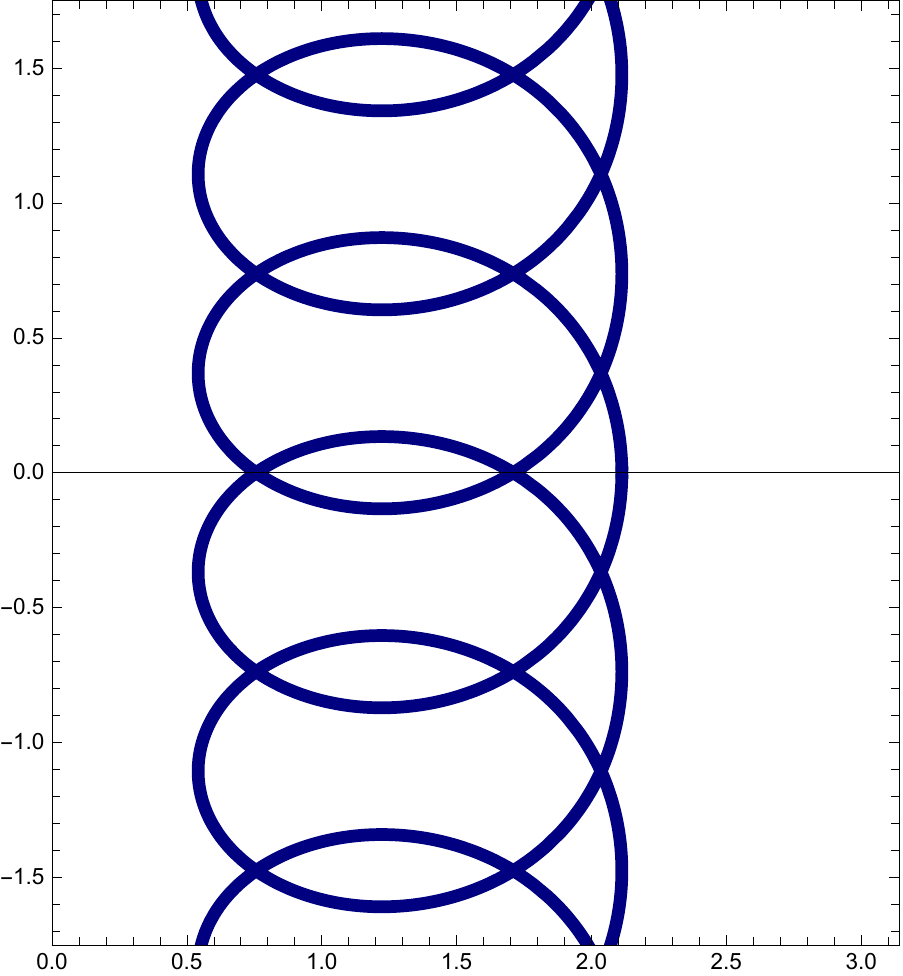} \\
		\includegraphics[width=\textwidth]{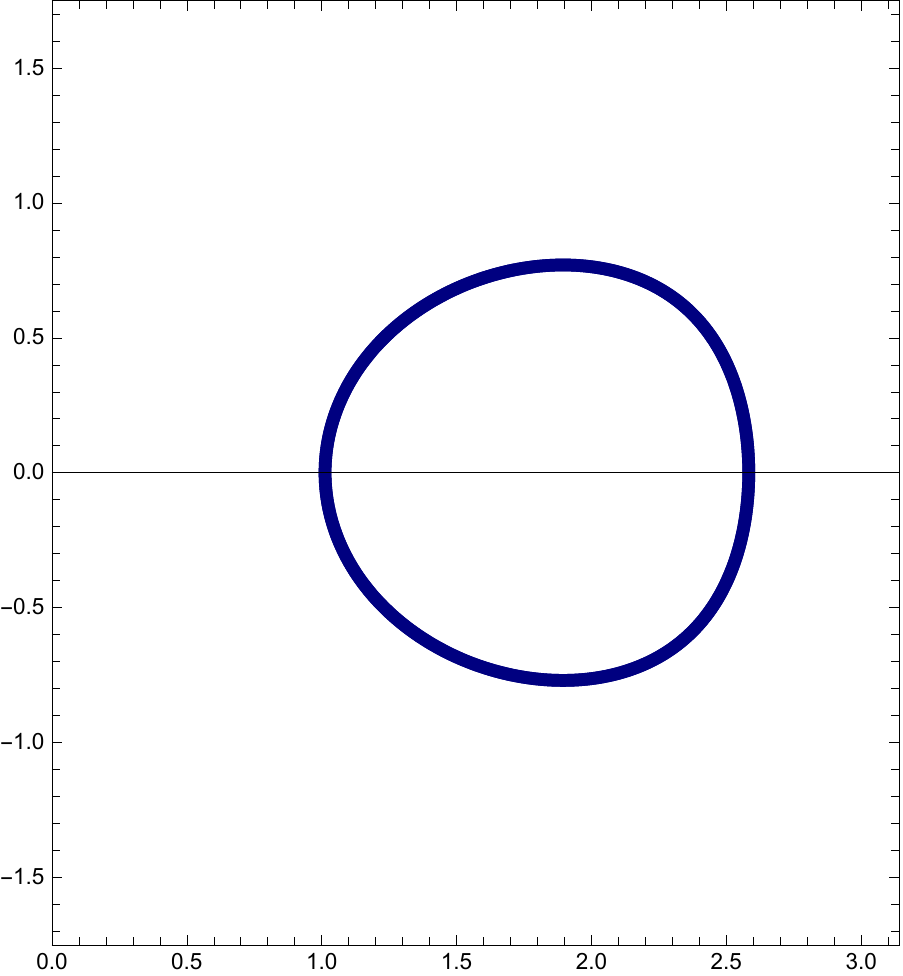} \\
		\includegraphics[width=\textwidth]{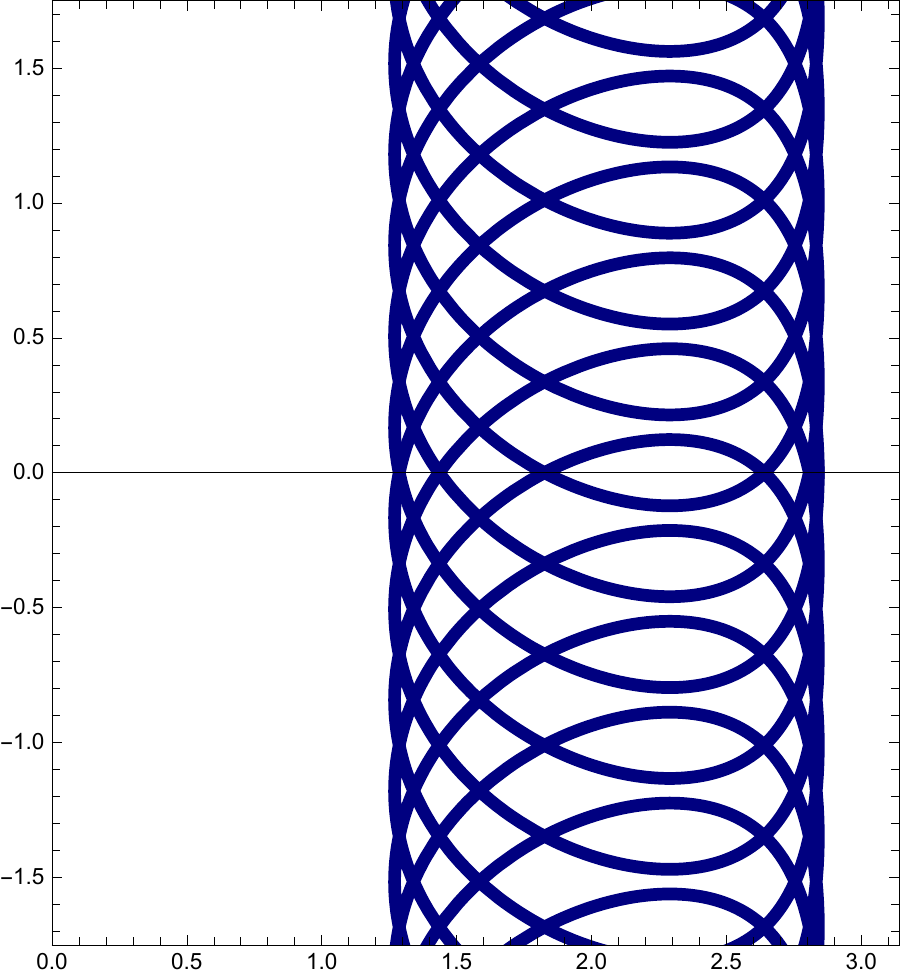}
	\end{minipage}
	\begin{minipage}[t]{0.16\textwidth}
		\includegraphics[width=\textwidth]{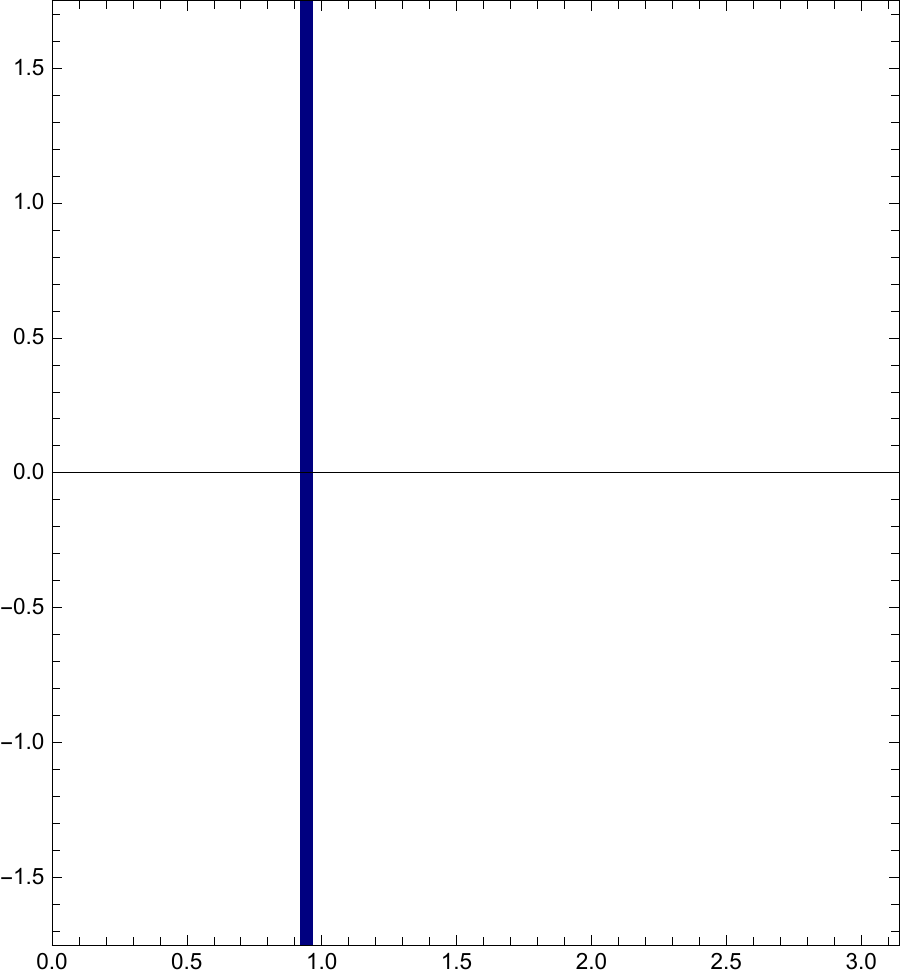} \\
		\includegraphics[width=\textwidth]{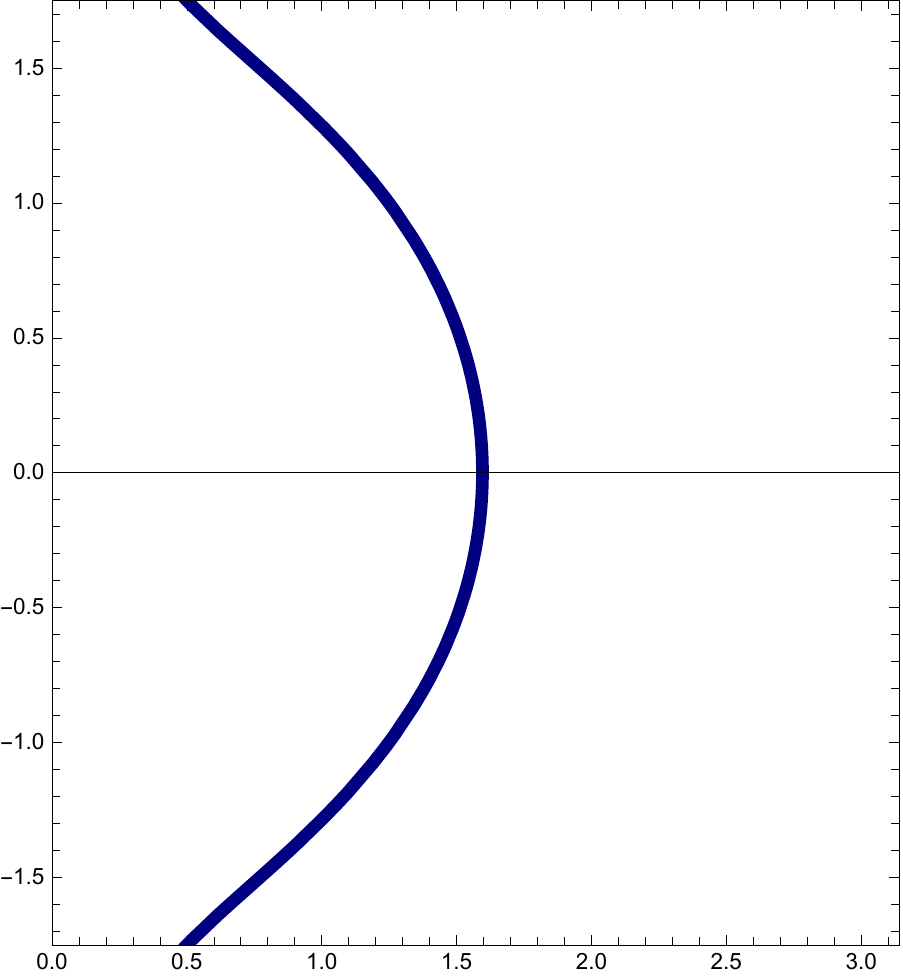} \\
		\includegraphics[width=\textwidth]{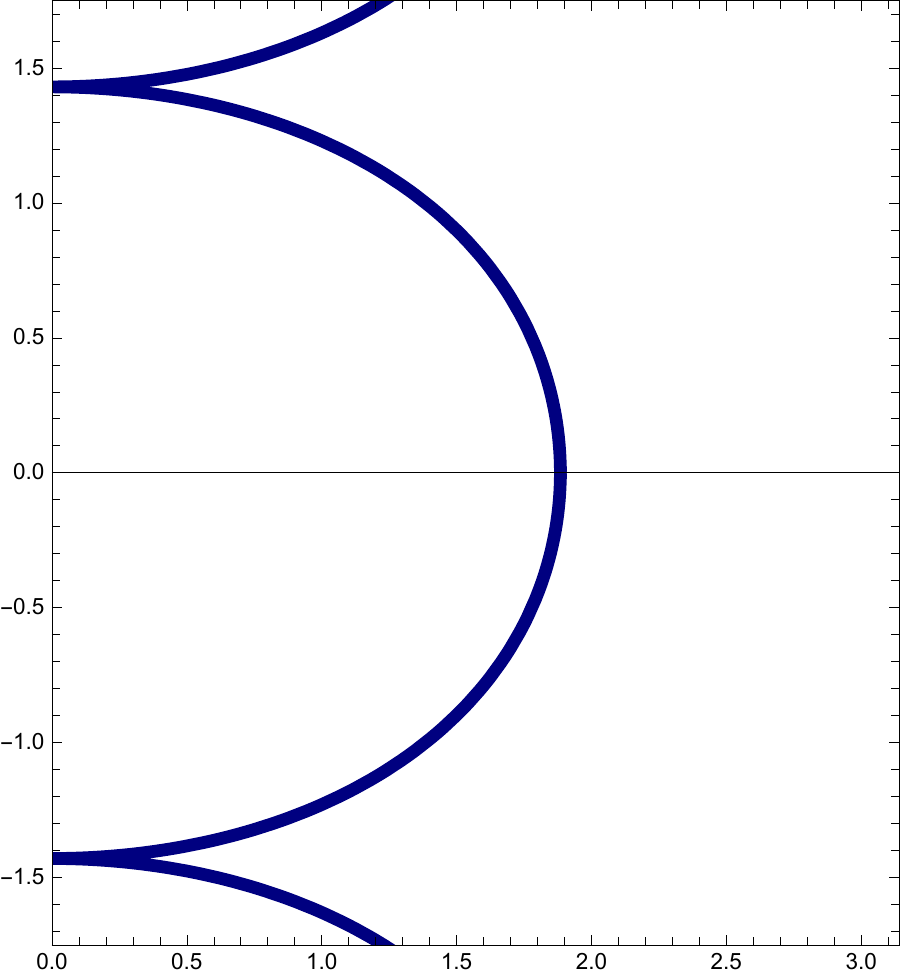} \\
		\includegraphics[width=\textwidth]{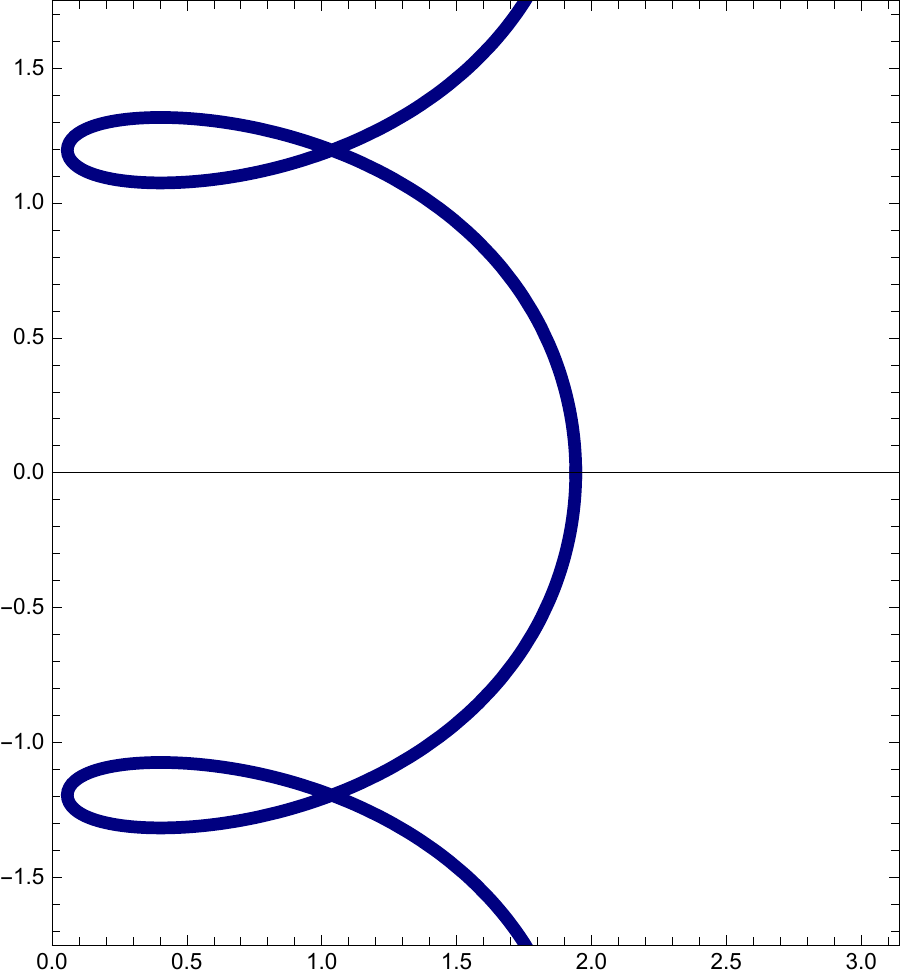} \\
		\includegraphics[width=\textwidth]{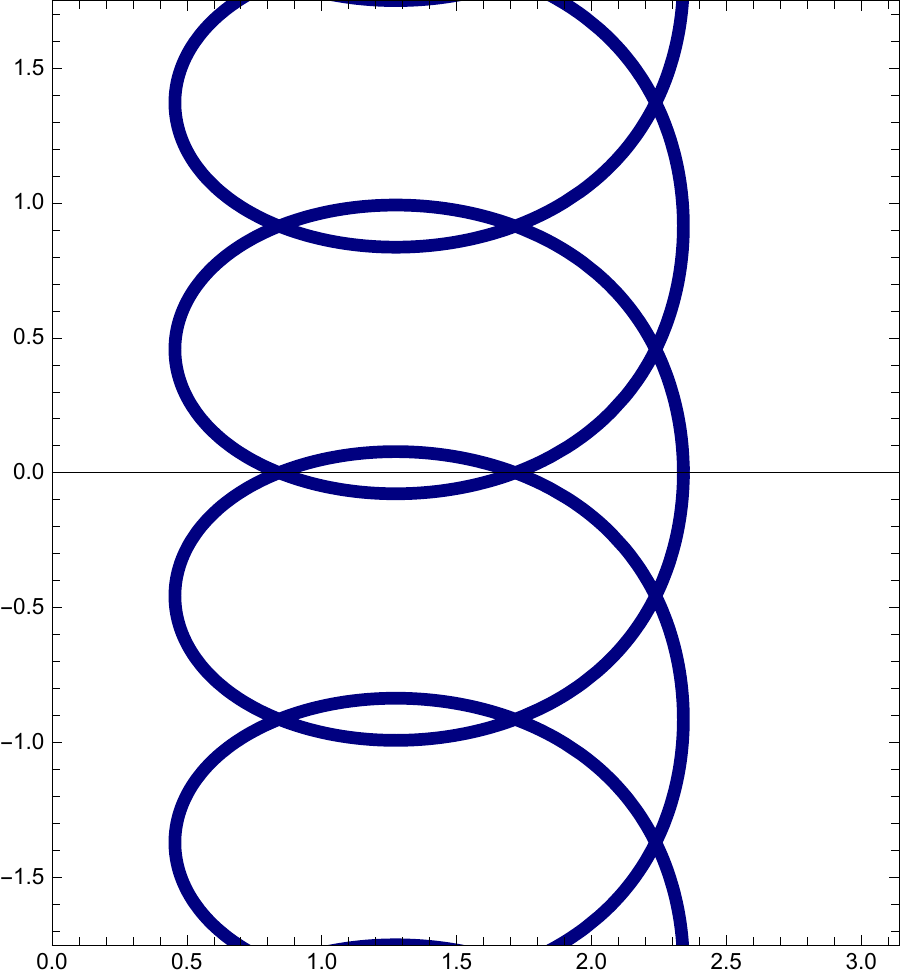} \\
		\includegraphics[width=\textwidth]{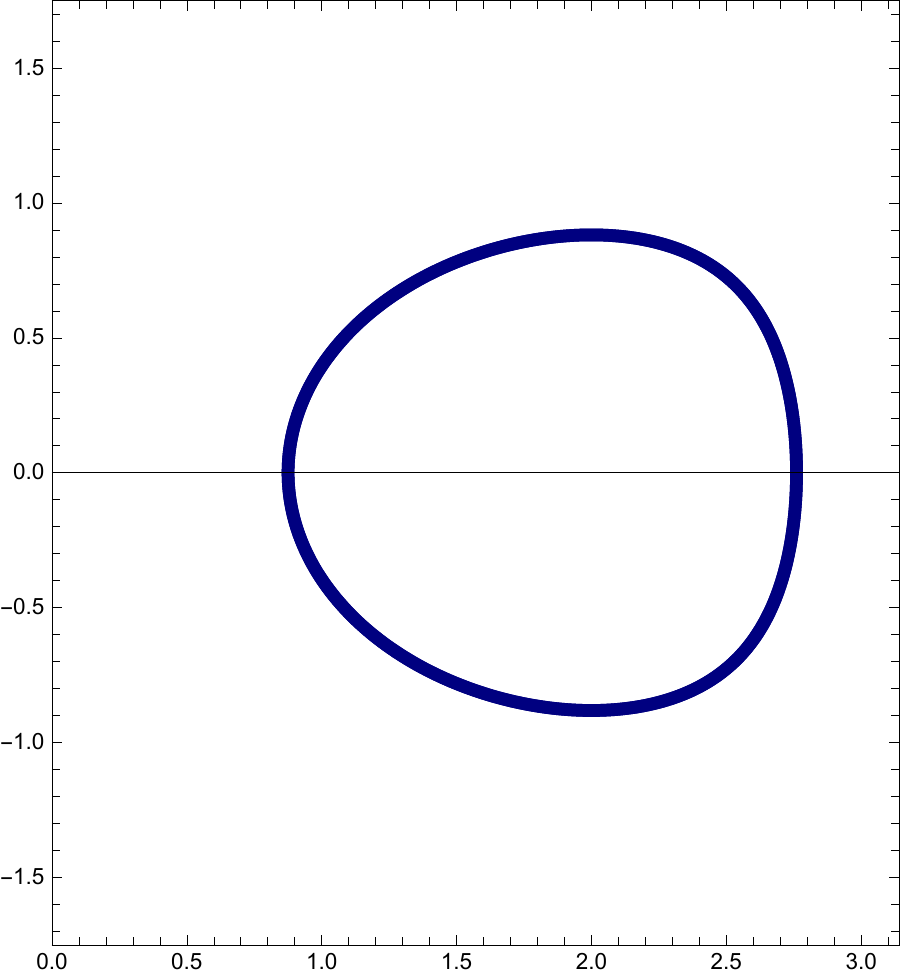} \\
		\includegraphics[width=\textwidth]{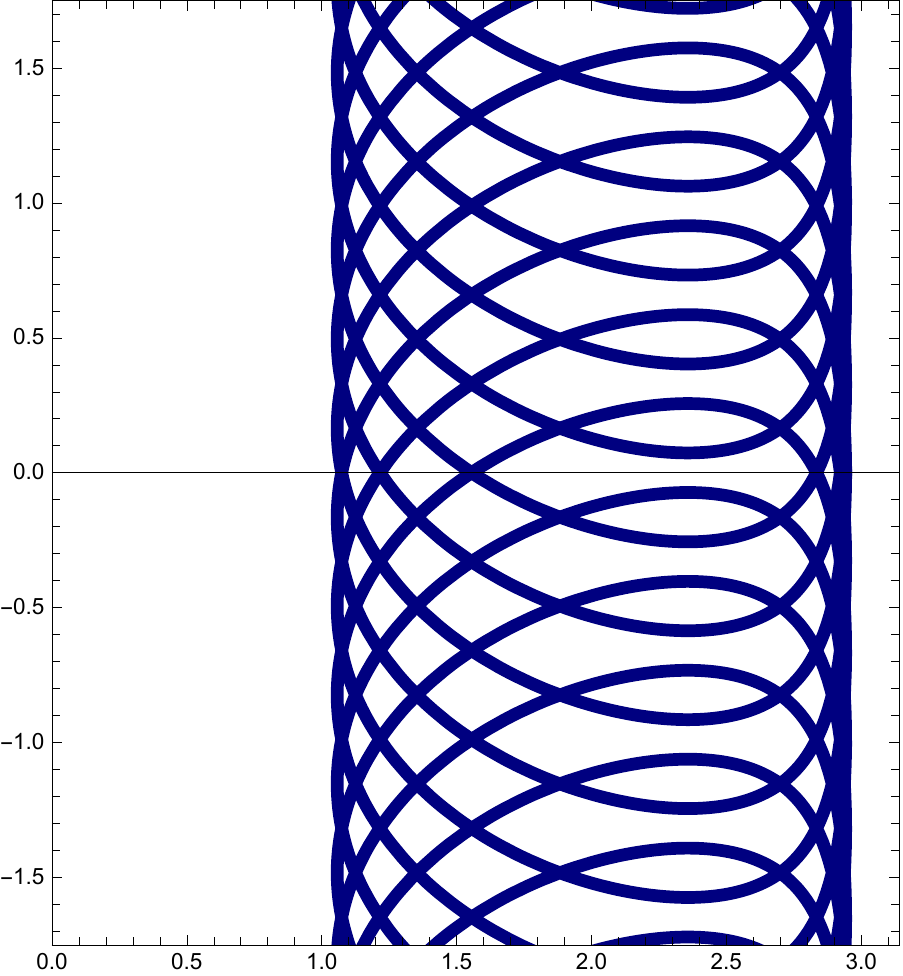}
	\end{minipage}
	\begin{minipage}[t]{0.16\textwidth}
		\includegraphics[width=\textwidth]{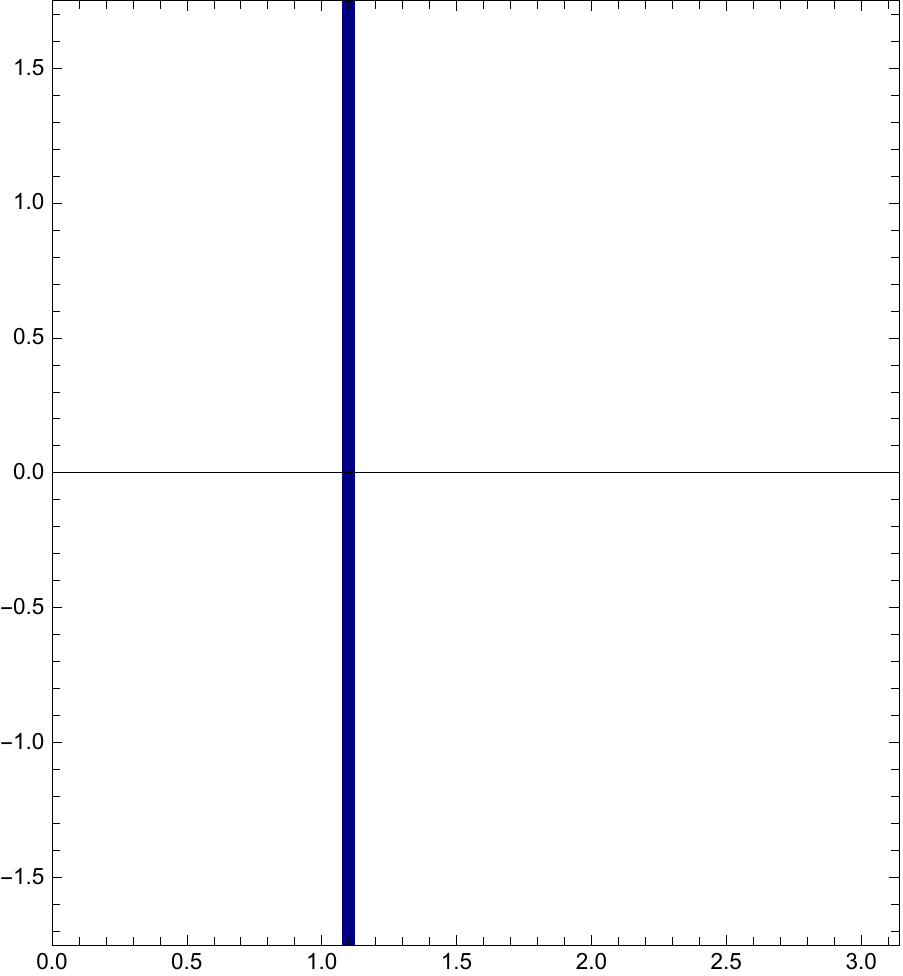} \\
		\includegraphics[width=\textwidth]{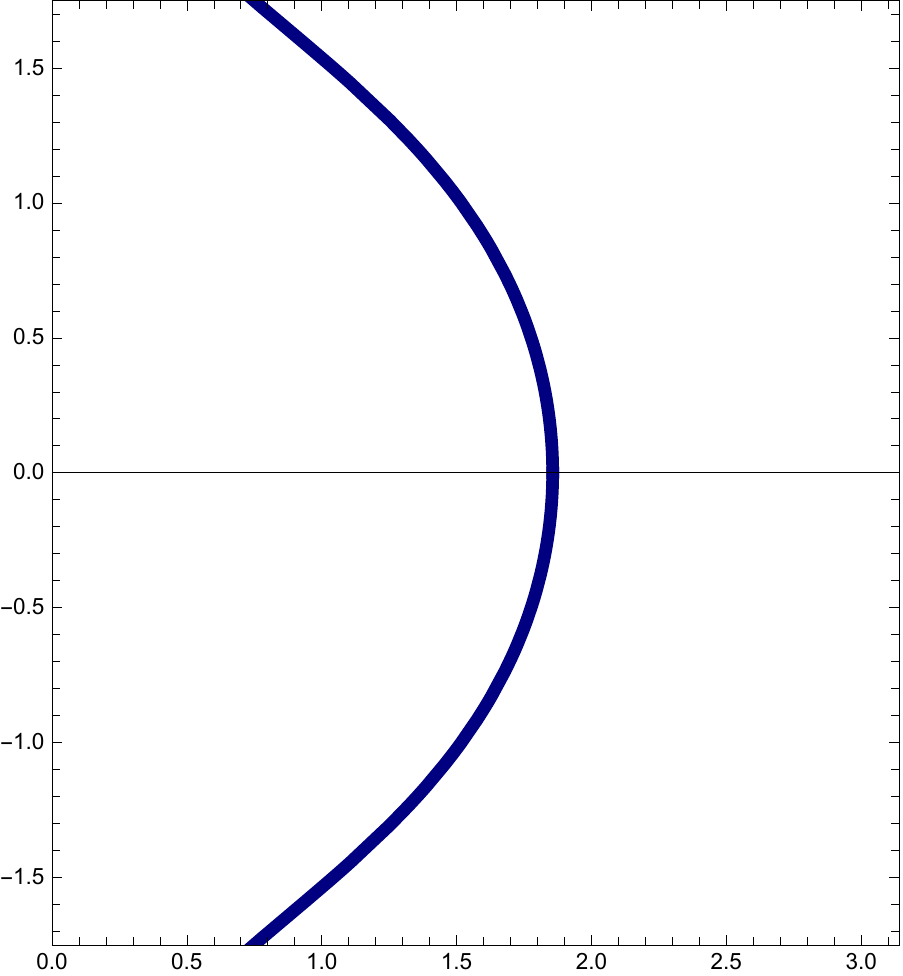} \\
		\includegraphics[width=\textwidth]{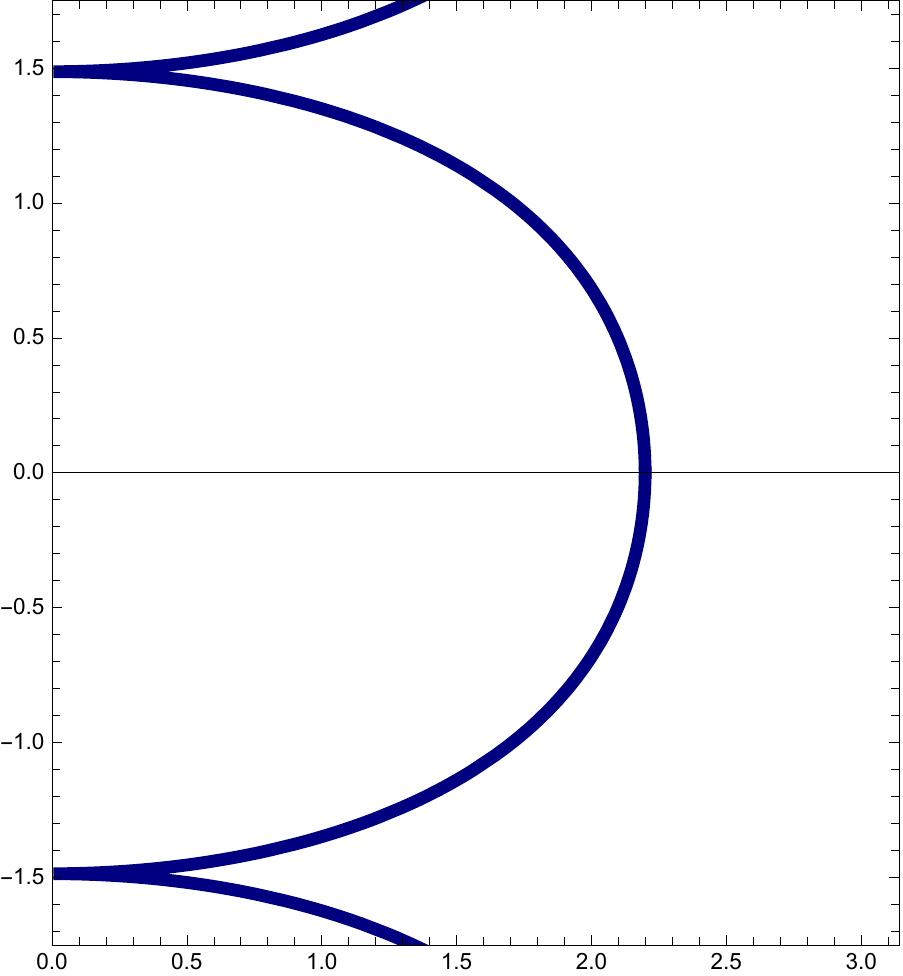} \\
		\includegraphics[width=\textwidth]{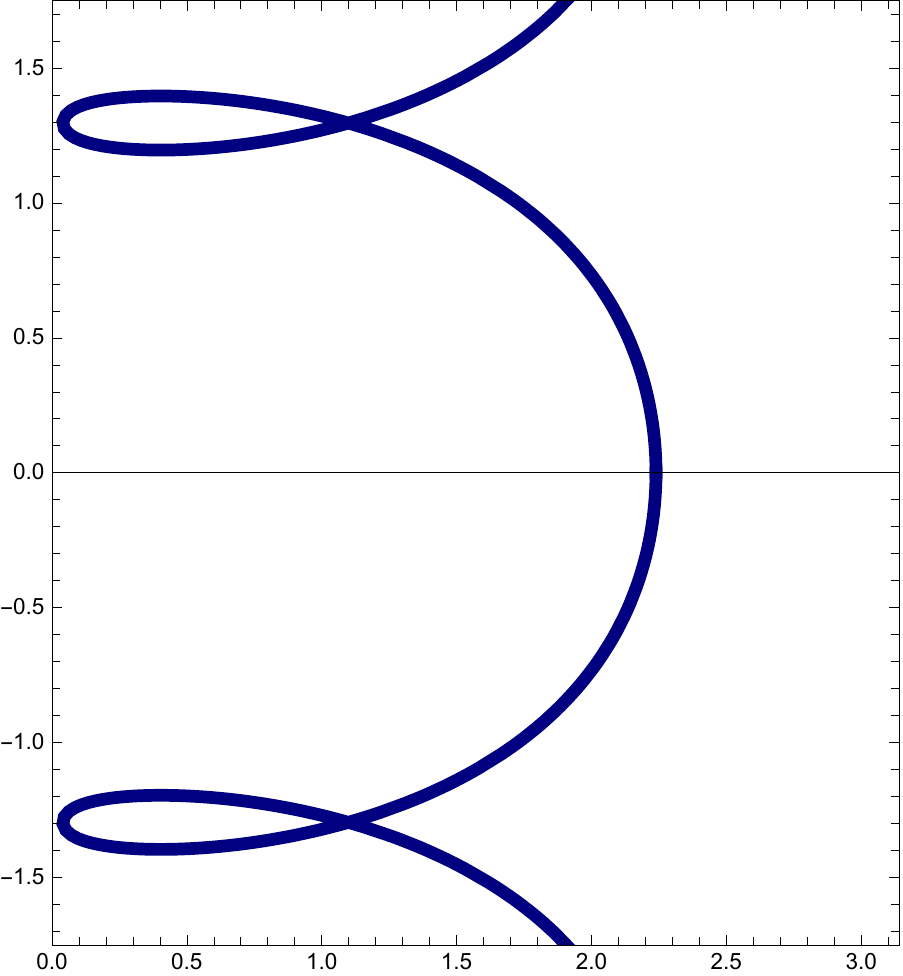} \\
		\includegraphics[width=\textwidth]{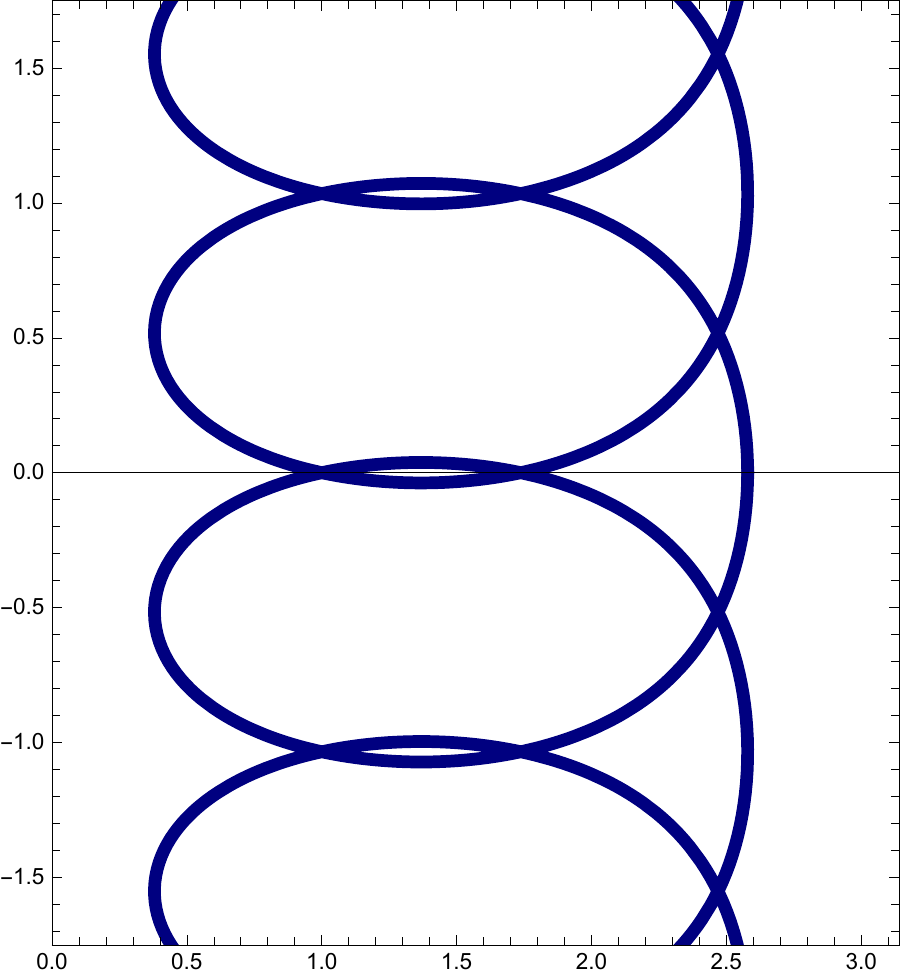} \\
		\includegraphics[width=\textwidth]{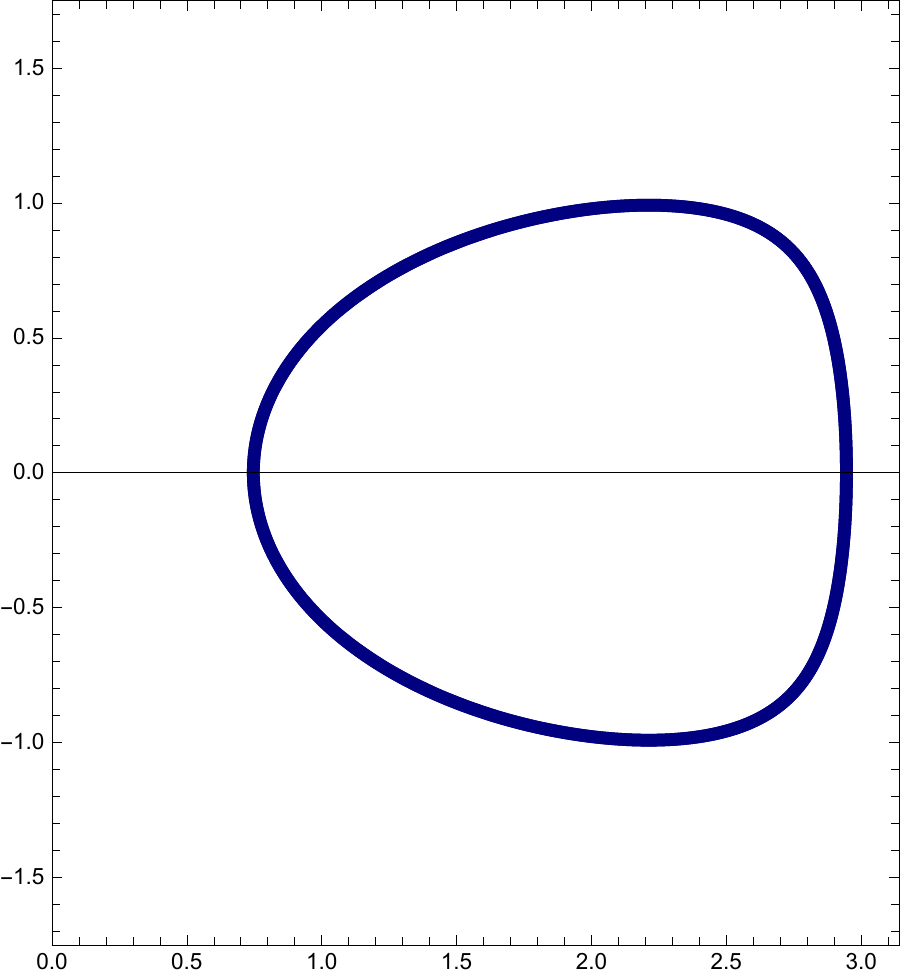} \\
		\includegraphics[width=\textwidth]{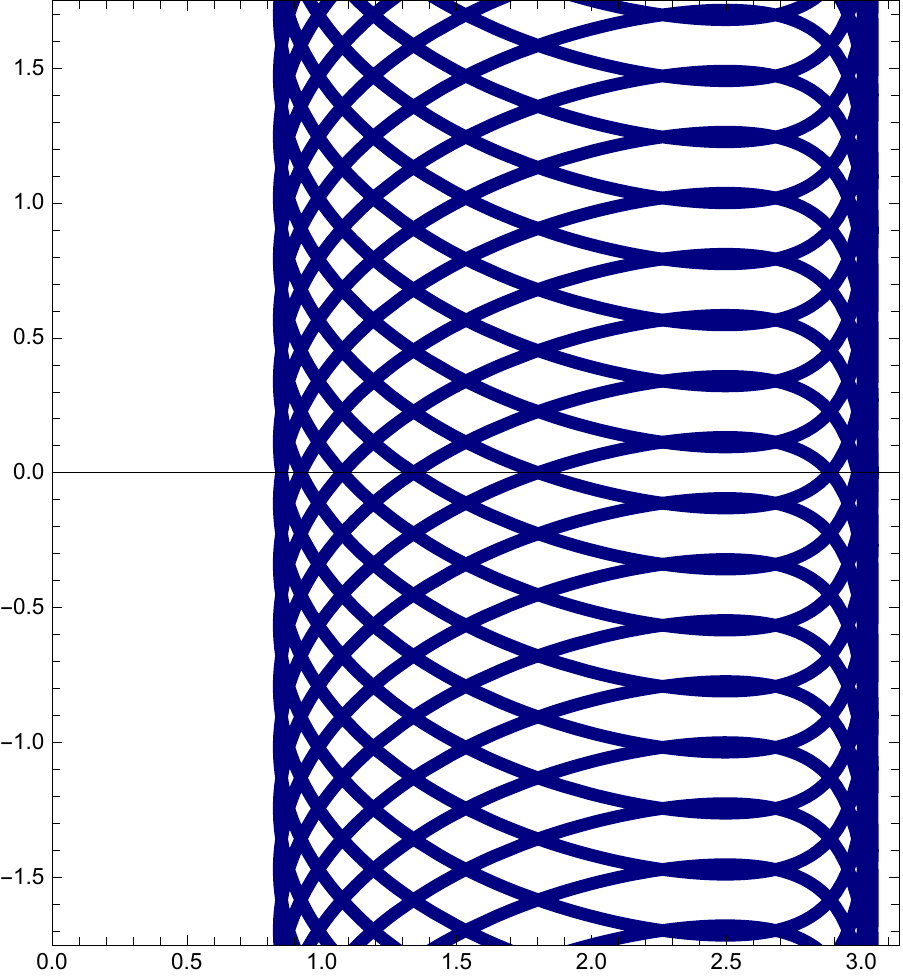}
	\end{minipage}
	\caption{Numerically computed profile curves of the family $\family$ in the Berger sphere $\E(1,0.2)$ for the rotational case $a=0$. The energy decreases from $\Jmax$ in the first row (vertical cylinder) to $J<0$ in the lower rows. The intermediate rows display surfaces of unduloid type, sphere type, nodoid type I, tube, and nodoid type II. Each column represents a fixed value of the mean curvature $H$, from large $H$ in the left column to small $H>0$ in the right column.}
	\label{fig:curves_berger}
\end{figure}

\newpage
\printbibliography		

\end{document}